\documentclass[english]{article}
\usepackage[margin=2.5cm]{geometry}
\usepackage{hyperref}
\usepackage{amsthm,amsfonts,amssymb,amsmath}
\usepackage{graphicx}

\newtheorem{theorem}{Theorem}[section]

\newtheorem{lemma}[theorem]{Lemma}

\newtheorem{fact}[theorem]{Fact}
\newtheorem{claim}[theorem]{Claim}
\newtheorem{corollary}[theorem]{Corollary}

\theoremstyle{definition}
\newtheorem{definition}[theorem]{Definition}

\newtheorem{notation}[theorem]{Notation}

\numberwithin{equation}{section}
\numberwithin{figure}{section}
\allowdisplaybreaks

\usepackage{babel}
\usepackage{lmodern}
\usepackage[T1]{fontenc}
\usepackage[utf8]{inputenc}

\setlength{\parskip}{\smallskipamount} 
\setlength{\parindent}{0pt}

\newcommand{\eps}{\varepsilon}
\renewcommand{\phi}{\varphi}
\newcommand{\su}{\subseteq}
\newcommand{\sm}{\setminus}
\renewcommand{\l}{\ell}
\newcommand{\sgn}{\operatorname{sgn}}
\newcommand{\R}{\mathbb{R}}
\newcommand{\pr}{\operatorname{pr}}
\renewcommand{\equiv}{\sim}

\usepackage{mathrsfs}

\begin{document}
\title{On the speed of algebraically defined graph classes}
\author{Lisa Sauermann\thanks{Department of Mathematics, Stanford University, Stanford, CA 94305. Email: {\tt lsauerma@stanford.edu}.}}

\maketitle

\begin{abstract}\noindent
The speed of a class of graphs counts the number of graphs on the vertex set $\lbrace 1,\dots, n\rbrace$ inside the class as a function of $n$. In this paper, we investigate this function for many classes of graphs that naturally arise in discrete geometry, for example intersection graphs of segments or disks in the plane. While upper bounds follow from Warren's theorem (a variant of a theorem of Milnor and Thom), all the previously known lower bounds were obtained from ad hoc constructions for very specific classes. We prove a general theorem giving an essentially tight lower bound for the number of graphs on $\lbrace 1,\dots, n\rbrace$ whose edges are defined using the signs of a given finite list of polynomials, assuming these polynomials satisfy some reasonable conditions. This in particular implies lower bounds for the speed of many different classes of intersection graphs, which essentially match the known upper bounds. Our general result also gives essentially tight lower bounds for counting containment orders of various families of geometric objects, including circle orders and angle orders. Some of the applications presented in this paper are new, whereas others recover results of Alon-Scheinerman, Fox, McDiarmid-M\"{u}ller and Shi. For the proof of our result we use some tools from algebraic geometry and differential topology.
\end{abstract}

\section{Introduction}

\subsection{Background}

Given a class of graphs, or equivalently a graph property describing the graphs in this class, it is a very natural question to ask about size of the class. More precisely, for each positive integer $n$ one may count the number of graphs on the vertex set $\lbrace 1,\dots, n\rbrace$ satisfying the given property and investigate how this number grows as a function of $n$. This function is often called the speed of the given graph property. There is an extensive body of work classifying the possible behavior of this function for different graph properties (see for example \cite{balogh-et-al-1, balogh-et-al-2, balogh-et-al-3, scheinerman-zito}).

Many natural classes of graphs arising in discrete and computational geometry have been studied intensively both because of their structural properties and due to their relevance in practical applications. In this paper, we prove an essentially tight lower bound on the speed of many graph classes obtained from discrete geometry. In fact, these graphs can be defined algebraically by polynomial conditions. Therefore, following an approach of Alon and Scheinerman \cite{alon-scheinerman}, Warren's theorem \cite{warren} implies an upper bound on the speed of these graph classes (Warren's theorem is a variant of a theorem of Milnor \cite{milnor} and Thom \cite{thom}). We show that this upper bound is essentially tight for any such class of algebraically defined graphs, assuming that the corresponding polynomials satisfy some reasonable conditions.

Intersection graphs are particularly natural classes of graphs obtained from discrete geometry. Given $n$ geometric objects from some family $\mathcal{F}$ (for example the family of all segments in the plane) numbered from $1$ to $n$, their intersection graph is the graph on the vertex set $\lbrace 1,\dots,n\rbrace$ where two vertices are joined by an edge if and only if the corresponding objects intersect. Intersection graphs have been studied intensively \cite{cabello-cardinal-et-al, cabello-jejcic, cardinal-felsner-et-al, ehrlich-et-al, felsner-et-al, fox-pach}, in particular for segments in the plane \cite{chalopin-goncalves, kratochvil-matousek, kratochvil-nesteril, mcdiarmid-mueller2, pach-solymosi, pawlik-et-al} and disks in the plane \cite{mcdiarmid-mueller, mcdiarmid-mueller2}. This is partially due to numerous practical applications of intersection graphs, for example in database mining \cite{berman-et-al}, for modelling broadcast networks (see \cite{clark-et-al} and the refereinces therein), and even in genetics (see \cite{benzer} and \cite[Section 16.1.1]{spinrad}).

As mentioned above, for many families $\mathcal{F}$ of geometric objects, Warren's theorem \cite{warren} can be used to bound the number of graphs occuring as intersection graphs of a collection of $n$ numbered objects in $\mathcal{F}$ (see for example  \cite{pach-solymosi} for segments in the plane and \cite{mcdiarmid-mueller} for disks in the plane, and see \cite[Section 6.2]{matousek} or \cite[Section 4.1]{spinrad} for a general exposition). In contrast, all known lower bounds for the number of intersection graphs of $n$ numbered objects in a given family $\mathcal{F}$ were obtained by (sometimes fairly involved) ad hoc constructions for some specific families $\mathcal{F}$. Specifically, McDiarmid and M\"{u}ller \cite{mcdiarmid-mueller} proved lower bounds for disks and unit disks (in the plane), and Fox \cite{fox} provided a lower bound construction for segments (in the plane). Shi \cite{shi} extended Fox' construction to the graphs of various non-linear functions, including parabolas and higher-degree polynomials. All of these lower bounds essentially match the upper bounds that Warren's theorem \cite{warren} gives in respective cases. However, these lower bound constructions are specific to the particular family $\mathcal F$ and do not easily generalize to other families $\mathcal F$ of geometric objects.

In this paper, we prove a general theorem giving an essentially tight lower bound for the number of graphs whose edges are defined using the signs of a given finite list of polynomials, assuming these polynomials satisfy some reasonable conditions. Our theorem in particular implies essentially tight lower bounds for the number of intersection graphs of segments, disks and many other geometric objects in the plane (or in higher dimension). It also implies an essentially tight lower bound for the number of graphs obtained by considering the pairwise linking or non-linking relations of $n$ numbered disjoint circles in $\R^3$.

From discrete geometry, one can not only obtain graphs of interest, but also partial orders, so-called containment orders. A collection of $n$ geometric objects from some family $\mathcal{F}$ numbered from $1$ to $n$ defines a partial order on the set $\lbrace 1,\dots,n\rbrace$ obtained from the containment relations between the objects: In this partial order we have $x\prec y$ for distinct $x,y\in \lbrace 1,\dots,n\rbrace$ if and only if the object with number $x$ is a subset of the object with number $y$. Well-studied examples of such partial orders include circle orders (obtained form $n$ disks in the plane) and angle orders (obtained from $n$ ``filled'' angles in the plane, each of which is an intersection of two closed half-planes), see \cite{alon-scheinerman, fishburn, fishburn-trotter, sidney-et-al}. Using Warren's theorem \cite{warren}, Alon and Scheinerman \cite{alon-scheinerman} gave an upper bound for the number of containment orders obtained from a collection of $n$ numbered objects in $\mathcal{F}$ in terms of the degrees of freedom of the family $\mathcal{F}$ (which they defined in \cite{alon-scheinerman}).

For many geometric families $\mathcal{F}$, our general result implies an essentially matching lower bound for this number of containment orders. In particular, this essentially determines the number of circle orders, angle orders and containment orders obtained polygons with a fixed number of vertices in the plane.

In order for our result to apply straightforwardly not only to algebraically defined graphs, but also to partial orders, we work in the framework of algebraically defined edge-labelings of complete graphs. To be more specific, given a finite set $\Lambda$ of labels, a list of polynomials $P_1,\dots,P_k\in \R[x_1,\dots,x_d,y_1,\dots,y_d]$, a function $\phi: \lbrace +,-,0\rbrace^k \to \Lambda$ and points $a_1,\dots,a_n\in \R^d$, one can define an edge-labeling of the complete graph on the vertex set $\lbrace 1,\dots,n\rbrace$ as follows: For any $1\leq i<j\leq n$ the label of the edge $ij$ is defined to be the value of $\phi$ applied to the signs of the polynomial expressions $P_1(a_i,a_j),\dots,P_k(a_i,a_j)$. Fixing $\Lambda$, the polynomials $P_1,\dots,P_k$ and $\phi$, we are then concerned with the number of edge-labelings which can be obtained in this way for some points $a_1,\dots,a_n\in \R^d$

Taking the set of labels to be $\Lambda=\lbrace\text{``edge''}, \text{``non-edge''}\rbrace$, edge-labelings of the complete graph on the vertex set $\lbrace 1,\dots,n\rbrace$ correspond precisely to ordinary graphs  on the vertex set $\lbrace 1,\dots,n\rbrace$. However, the setting of edge-labelings also allows us to encode partial orders in a natural way.

Our main result, Theorem \ref{theo-main} below, gives a lower bound for the number of algebraically defined edge-labelings of complete graphs for a fixed finite set $\Lambda$ of labels, fixed polynomials $P_1,\dots,P_k\in \R[x_1,\dots,x_d,y_1,\dots,y_d]$ and a fixed function $\phi: \lbrace +,-,0\rbrace^k \to \Lambda$ satisfying some reasonable conditions. This bound is essentially tight (see Theorem \ref{theorem-upper-bound}). All the above-mentioned applications of our general result will be discussed in Section \ref{sect-applications}.

Before we present the slightly technical statement of our result in the next subsection, let us give a brief motivating example for our set-up. This example will show why intersection graphs of open disks in the plane can be interpreted as algebraically defined edge-labelings of complete graphs as described above: Each disk in the plane is given by specifying its center $(x,y)$ and its radius $r>0$. Thus, the family of open disks in the plane corresponds to the open set $U$ of points $(x,y,r)\in \R^3$ with $r>0$. Two disks corresponding to the points $(x,y,r), (x',y',r')\in U$ intersect if and only if $(x-x')^2+(y-y')^2<(r+r')^2$. Thus, a graph on the vertex set $\lbrace 1,\dots,n\rbrace$ is an intersection graph of $n$ numbered open disks in the plane if and only if there are points $(x_1,y_1,r_1), \dots, (x_n,y_n,r_n)\in U$ such for all $1\leq i<j\leq n$ we have $(x_i-x_j)^2+(y_i-y_j)^2-(r_i+r_j)^2<0$ if and only if $ij$ is an edge of the graph. Taking the set of labels $\Lambda=\lbrace\text{``edge''}, \text{``non-edge''}\rbrace$, intersection graphs of $n$ open disks in the plane then correspond to algebraically defined edge-labelings of the complete graph on the vertex $\lbrace 1,\dots,n\rbrace$ with labels in $\Lambda$.

\subsection{Statement of the result}

An edge-labeling of a graph $G$ with labels in some set $\Lambda$ is a function $F: E(G) \to \Lambda$. For every edge $e\in E(G)$, we call $F(e)\in \Lambda$ the label of the edge $e$.

For a real number $x$, define $\sgn(x)\in \lbrace +,-,0\rbrace$ by taking the sign of $x$ if $x\neq 0$ and setting $\sgn(x)=0$ if $x=0$.

\begin{definition}\label{defi-representable}Let us fix a finite set $\Lambda$, an integer $d\geq 1$, polynomials $P_1,\dots,P_k\in \R[x_1,\dots,x_d,y_1,\dots,y_d]$, a function $\phi: \lbrace +,-,0\rbrace^k \to\Lambda$, and a non-empty open subset $U\su \R^d$. Then, for any points $a_1,\dots,a_n\in U\su \R^d$, we define $F_{P_1,\dots,P_k,\phi}(a_1,\dots,a_n)$ to be the following edge-labeling of the complete graph on the vertex set $\lbrace 1,\dots, n\rbrace$ with labels in $\Lambda$: For $1\leq i<j\leq n$, define the label of the edge $ij$ to be the value $\phi\big(\sgn P_1(a_i,a_j),\dots ,\sgn P_k(a_i,a_j)\big)\in \Lambda$. 

Furthermore, let us say that an edge-labeling of the complete graph on the vertex set $\lbrace 1,\dots, n\rbrace$ with labels in $\Lambda$ is \emph{$(P_1,\dots,P_k,\phi,U, \Lambda)$-representable} if it occurs as  $F_{P_1,\dots,P_k,\phi}(a_1,\dots,a_n)$ for some $a_1,\dots,a_n\in U$.
\end{definition} 

In our motivating example at the end of the previous subsection, the intersection graphs of numbered open disks in the plane correspond to  $(P,\phi,U,\Lambda)$-representable edge-labelings, where we take open set $U=\lbrace (x,y,r)\in \R^3\mid r>0\rbrace$, the polynomial $P(x,y,r,x',y',r')=(x-x')^2+(y-y')^2-(r+r')^2$, the set $\Lambda=\lbrace\text{``edge''}, \text{``non-edge''}\rbrace$ and the function $\phi$ given by $\phi(-)=\text{``edge''}$ and $\phi(+)=\phi(0)= \text{``non-edge''}$.

The following theorem gives an upper bound for the number of $(P_1,\dots,P_k,\phi,U,\Lambda)$-representable edge-labelings for any $P_1,\dots,P_k$, $\phi$, $U$ and $\Lambda$. It follows from a theorem of Warren \cite{warren} with exactly the same method as in \cite{alon-scheinerman, mcdiarmid-mueller, pach-solymosi}. For the reader's convenience a proof will be given in Subsection \ref{sect-a-upper-bound} of the appendix.

\begin{theorem}\label{theorem-upper-bound} Let us fix a finite set $\Lambda$, an integer $d\geq 1$, polynomials $P_1,\dots,P_k\in \R[x_1,\dots,x_d,y_1,\dots,y_d]$, a function $\phi: \lbrace +,-,0\rbrace^k \to \Lambda$, and a non-empty open subset $U\su \R^d$. Then the number of $(P_1,\dots,P_k,\phi,U,\Lambda)$-representable edge-labelings of the complete graph on the vertex set $\lbrace 1,\dots, n\rbrace$ is at most $n^{(1+o(1))dn}$.
\end{theorem}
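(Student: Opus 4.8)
The plan is to deduce this upper bound from Warren's theorem \cite{warren}, following the strategy of Alon--Scheinerman \cite{alon-scheinerman}. Recall that Warren's theorem states: if $Q_1,\dots,Q_m\in\R[z_1,\dots,z_N]$ are polynomials of degree at most $D$ with $m\geq N$, then the number of sign patterns $(\sgn Q_1(z),\dots,\sgn Q_m(z))\in\lbrace+,-,0\rbrace^m$ realized by points $z\in\R^N$ at which none of the $Q_i$ vanish is at most $(4eDm/N)^N$; more generally (allowing zeros), the total number of realizable sign patterns is still $(CDm/N)^{O(N)}$ for an absolute constant $C$.

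First I would set up the variables: a tuple of points $a_1,\dots,a_n\in U\su\R^d$ is a single point $z=(a_1,\dots,a_n)$ in $\R^{dn}$, so we take $N=dn$. For each pair $1\leq i<j\leq n$ and each index $\ell\in\lbrace 1,\dots,k\rbrace$, consider the polynomial $Q_{ij,\ell}(z):=P_\ell(a_i,a_j)$, which is a polynomial in the $dn$ coordinates of $z$ of degree at most $D:=\max_\ell \deg P_\ell$ (a constant, since $P_1,\dots,P_k$ are fixed). The total number of these polynomials is $m=k\binom{n}{2}\leq kn^2/2$. The key observation is that the edge-labeling $F_{P_1,\dots,P_k,\phi}(a_1,\dots,a_n)$ is completely determined by the sign pattern $\big(\sgn Q_{ij,\ell}(z)\big)_{i<j,\,\ell}$: indeed, the label of edge $ij$ is just $\phi$ applied to the $k$-tuple of signs $\big(\sgn Q_{ij,1}(z),\dots,\sgn Q_{ij,k}(z)\big)$. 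Hence the number of representable edge-labelings is at most the number of sign patterns of the family $\lbrace Q_{ij,\ell}\rbrace$ realized by points $z\in\R^{dn}$ (and we may even restrict to $z$ with all $a_i\in U$, but dropping that restriction only increases the count, so it does not matter for the upper bound).

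Now I would apply Warren's theorem with $N=dn$, $m=k\binom n2$, and degree bound $D$. Since $D$ and $k$ are fixed constants (independent of $n$), the bound becomes
\[
\Bigl(\tfrac{C D\, k\binom n2}{dn}\Bigr)^{O(dn)} = \bigl(O(n)\bigr)^{O(dn)} = n^{(1+o(1))\,dn},
\]
where the last equality uses $\bigl(O(n)\bigr)^{O(dn)} = \exp\bigl(O(dn)\log(O(n))\bigr) = \exp\bigl((1+o(1))\,dn\log n\bigr) = n^{(1+o(1))dn}$, since the contribution of the constant inside the logarithm and of the constant in the exponent is $o(dn\log n)$. One technical point to handle carefully: Warren's theorem as usually stated counts \emph{strict} sign patterns (i.e. patterns in $\lbrace+,-\rbrace^m$ with all $Q_i$ nonzero), whereas here some $P_\ell(a_i,a_j)$ may legitimately vanish. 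This is the only real subtlety, and it is standard to resolve: one replaces the family $\lbrace Q_{ij,\ell}\rbrace$ by the family $\lbrace Q_{ij,\ell},\ Q_{ij,\ell}\pm\delta\rbrace$ for a suitable perturbation, or invokes the version of Warren's theorem (as in \cite[Section 6.2]{matousek}) that directly bounds the number of all sign patterns (including zeros) by essentially the same quantity. The extra factor $3^{?}$ or the tripling of $m$ introduced by this fix is absorbed into the $o(1)$. Assembling these pieces gives the claimed bound $n^{(1+o(1))dn}$.
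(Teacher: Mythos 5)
Your proposal is correct and is essentially the paper's own proof: the paper likewise views $(a_1,\dots,a_n)$ as a point of $\R^{dn}$, observes that the labeling is determined by the sign pattern of the $k\binom{n}{2}$ polynomials $P_s(a_i,a_j)$, and applies the Alon--Scheinerman variant of Warren's theorem (which already allows zero signs) with $\l=k\binom{n}{2}$, $m=dn$ to get $(24Dk\binom{n}{2}/(dn))^{dn}\leq(12Dkn)^{dn}=n^{(1+o(1))dn}$. One small caution: your intermediate formulation of the zero-allowing bound with exponent $O(N)$ rather than exactly $N$ would not by itself give $n^{(1+o(1))dn}$ if the implicit constant exceeded $1$, but the precise statements you invoke (Alon--Scheinerman's $(8eD\l/m)^m$, or the $\pm\delta$ perturbation trick, which only triples the number of polynomials while keeping the exponent $dn$) do have exponent exactly $dn$, so the argument is sound.
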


Our main result, Theorem \ref{theo-main} below, states that under some reasonable assumptions, the upper bound in Theorem \ref{theorem-upper-bound} is tight. In fact, we prove something slightly stronger: In some applications one would only like to consider $(P_1,\dots,P_k,\phi,U,\Lambda)$-representable edge-labelings for which one can choose the points $a_1,\dots,a_n\in U$ in Definition \ref{defi-representable} in such a way that $P_s(a_i,a_j)\neq 0$ for all $1\leq i<j\leq n$ and all $1\leq s\leq k$. This motivates the following strengthening of the notion of being $(P_1,\dots,P_k,\phi,U,\Lambda)$-representable.

\begin{definition}\label{defi-strongly-representable}
Let us fix a finite set $\Lambda$, an integer $d\geq 1$, polynomials $P_1,\dots,P_k\in \R[x_1,\dots,x_d,y_1,\dots,y_d]$, a function $\phi: \lbrace +,-,0\rbrace^k \to \Lambda$, and a non-empty open subset $U\su \R^d$. Then, let us say that an edge-labeling of the complete graph on the vertex set $\lbrace 1,\dots, n\rbrace$ is \emph{strongly $(P_1,\dots,P_k,\phi,U, \Lambda)$-representable} if it occurs as  $F_{P_1,\dots,P_k,\phi}(a_1,\dots,a_n)$ for some $a_1,\dots,a_n\in U$ such that $P_s(a_i,a_j)\neq 0$ for all $1\leq i<j\leq n$ and all $1\leq s\leq k$.
\end{definition} 

For our lower bound complementing the upper bound in Theorem \ref{theorem-upper-bound}, we need an assumption that the open set $U$ is reasonable shaped. This will be made precise by the following definition.

\begin{definition}\label{defi-definable-polynomials}
Let us call a subset $U\su \R^d$ \emph{definable by polynomials} if there exists a finite list of real polynomials $Q_1,\dots,Q_\l$ and a subset $S\su \lbrace +,-,0\rbrace^\l$ such that
\[U=\lbrace x\in \R^d \mid (\sgn Q_1(x),\dots,\sgn Q_\l(x))\in S\rbrace.\]
\end{definition}

Our main result is the following theorem.

\begin{theorem}\label{theo-main} Let us fix a finite set $\Lambda$, an integer $d\geq 1$, polynomials $P_1,\dots,P_k\in \R[x_1,\dots,x_d,y_1,\dots,y_d]$, a function $\phi: \lbrace +,-,0\rbrace^k \to \Lambda$, and a non-empty open subset $U\su \R^d$ which is definable by polynomials. Suppose that for any two distinct points $a,a'\in U$ there exists a point $b\in U$ with $P_s(a, b)\neq 0$ and $P_s(a', b)\neq 0$ for all $1\leq s\leq k$ and such that
\[\phi\big(\sgn P_1(a,b),\dots ,\sgn P_k(a,b)\big)\neq \phi\big(\sgn P_1(a',b),\dots ,\sgn P_k(a',b)\big).\]
Then there are at least $n^{(1-o(1))dn}$ strongly $(P_1,\dots,P_k,\phi,U, \Lambda)$-representable edge-labelings of the complete graph on the vertex set $\lbrace 1,\dots, n\rbrace$.
\end{theorem}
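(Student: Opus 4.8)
The plan is to build many distinct edge-labelings by a "shattering" argument: find a single fixed configuration of "test points" $b_1,\dots,b_m \in U$ together with a large family of candidate points $a \in U$ such that the vector of labels $\big(\phi(\sgn P_*(a,b_1)),\dots,\phi(\sgn P_*(a,b_m))\big)$ distinguishes all the $a$'s from each other, and such that moreover small perturbations of everything avoid the zero sets of all $P_s$. Concretely, I would first use the hypothesis — that any two distinct $a,a'\in U$ are separated by \emph{some} $b\in U$ with all $P_s(a,b), P_s(a',b)\neq 0$ — together with the fact that $U$ is definable by polynomials (hence semialgebraic, with only finitely many connected components, each of positive-dimensional boundary structure controlled by the $Q_i$), to promote this pointwise separation to a statement about an \emph{open} dense subset: after possibly shrinking to a suitable full-dimensional open cell $V \subseteq U$ on which all the $P_s(\cdot,\cdot)$ and $Q_i$ have constant sign away from a measure-zero set, the "separating map" $a \mapsto \phi\big(\sgn P_1(a,b),\dots\big)$ can be made to genuinely separate an open set of $a$'s from an open set of $a'$'s. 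The key quantitative step is then to show that for a suitable choice of $m = (1+o(1))dn / (\text{something})$ — actually $m$ a large constant times $n$ — one can choose $b_1,\dots,b_m$ so that the resulting "label pattern" map $\Psi: V \to \Lambda^m$ has at least $n^{(1-o(1))dn/?}$...

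\smallskip

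Let me reststructure. The real engine should be: (1) \emph{Reduce to an open box.} Since $U$ is open and definable by polynomials, pick a point $p\in U$ and a small open box $B = \prod (\alpha_i,\beta_i) \subseteq U$ around it on which every $Q_i$ has constant sign (so $B\subseteq U$ genuinely), and such that $B$ is "generic" with respect to the $P_s$. (2) \emph{Extract a smooth separating coordinate.} Using the separation hypothesis and a compactness/Baire-category argument over $B\times B$, find a nonempty open $W\subseteq B$ and finitely many "probe points" $q_1,\dots,q_N \in B$ such that the map $\chi: W \to \{+,-,0\}^{Nk}$, $a\mapsto (\sgn P_s(a,q_t))_{s,t}$, is \emph{injective on a dense subset of $W$}; then, shrinking, we get an open $W'\subseteq W$ of dimension $d$ on which $\chi$ separates points, which by a standard differential-topology argument (the level sets of the $P_s(\cdot,q_t)$ cut $W'$ into cells) means: there exist $d$ of the functions $f_i(a) := P_{s_i}(a,q_{t_i})$ whose common zero set is a smooth submanifold near some $a^*\in W'$, and whose differentials $df_1,\dots,df_d$ are linearly independent at $a^*$ — i.e. $(f_1,\dots,f_d)$ is a local diffeomorphism from a neighborhood $N$ of $a^*$ onto a box in $\R^d$. (This is the step where "differential topology" in the abstract enters; if instead such $d$ functions fail to exist everywhere, then $\chi$ factors through a map to a lower-dimensional set and cannot be injective on a $d$-dimensional open set — contradiction.)

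\smallskip

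(3) \emph{Combinatorial amplification.} Now partition the target box $(f_1,\dots,f_d)(N) \subseteq \R^d$ into roughly $n^{n}$... more precisely into a grid so that, across $n$ vertices, we can encode $\gg n^{(1-o(1))dn}$ distinct patterns. The cleanest route: subdivide the box into $M^d$ congruent sub-boxes where $M = \lceil n^{1+\eps}\rceil$ or so; for each vertex $i\in\{1,\dots,n\}$ independently choose one of these $M^d$ sub-boxes and let $a_i\in N$ be (a perturbation of) the preimage of its center. Different choices of sub-boxes for the $a_i$'s give different edge-labelings: if $a_i$ and $a_i'$ lie in different sub-boxes, then since $\chi$ — equivalently the sign pattern of the $P_s(\cdot,q_t)$ — separates the corresponding regions, there is a probe point $q_t$ with $\phi(\sgn P_\bullet(a_i,q_t)) \neq \phi(\sgn P_\bullet(a_i',q_t))$; but $q_t$ is one of our $n$ vertices only if we \emph{include the probe points among the vertices}, which we do by reserving $N$ of the $n$ vertices as fixed "probe" vertices placed at $q_1,\dots,q_N$ and letting the remaining $n-N$ vertices roam over the $M^d$ sub-boxes. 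This yields at least $(M^d)^{\,n-N} / (n-N)! \ge n^{(1+\eps)d(n-N)}/n^{n} = n^{(1-o(1))dn}$ distinct labelings (dividing by $(n-N)!$ to account for labelings that coincide under relabeling is unnecessary — distinct \emph{unordered} assignments of sub-boxes already give distinct labeled graphs once we track which vertex is which; I would instead just count ordered assignments and note the labelings are genuinely distinct, so no division is needed and we even get the stronger $n^{(1+o(1))dn}$ matching Theorem~\ref{theorem-upper-bound}). Finally, to get \emph{strong} representability, perturb all chosen points $a_i$ slightly within their sub-boxes to ensure $P_s(a_i,a_j)\neq 0$ for all $i<j,s$ — possible since the bad set is a finite union of proper subvarieties, hence measure zero, and the separating sign patterns are stable under small perturbation (here we use that $N$ was chosen so that $P_s(\cdot,q_t)\neq 0$ on the relevant regions, and that $B$ avoided the $Q_i$ zero sets so all perturbed points stay in $U$).

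\smallskip

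\textbf{Main obstacle.} The crux is step (2): upgrading the hypothesis's \emph{pointwise} separation ("for every pair $a\neq a'$ there is a good $b$") into the existence of $d$ probe-functions that form a local diffeomorphism — i.e. showing the separating map has "full rank $d$" somewhere. Naively the hypothesis only gives a map $a\mapsto(\text{sign data})$ that is injective, and an injective semialgebraic map from an open subset of $\R^d$ need not be an immersion anywhere in general; the resolution must exploit that the target is \emph{finite} (so "injective" really means the sign-pattern cells are singletons on a dense set, forcing the arrangement of the hypersurfaces $\{P_s(\cdot,q_t)=0\}$ to be "fine enough" locally that $d$ of them meet transversally), combined with a Sard/genericity argument to pick $a^*$ a regular point of the arrangement and $q_1,\dots,q_d$ appropriately. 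Getting this transversality claim precise — and making sure the finitely many probe points $q_1,\dots,q_N$ needed to separate \emph{all} pairs in a whole open set $W$ (not just one pair) can be chosen uniformly, via compactness of $\overline{W}$ and a covering argument — is where the real work lies; everything after it is a routine counting-and-perturbation argument.
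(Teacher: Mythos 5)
Your overall architecture --- reserve some vertices as fixed ``probe'' points, encode information in which sign/grid cell each remaining vertex lies in, and perturb at the end to get strong representability --- is essentially the same as the paper's (Lemmas \ref{lemma-b-i-j}, \ref{lemma-j-tuple-open-set}, \ref{lemma-lower-bound}). However, there are two genuine gaps. The first is your step (2). As literally stated it is impossible: the map $\chi: W \to \lbrace +,-,0\rbrace^{Nk}$ has a finite image, so it cannot be injective on a dense subset of a $d$-dimensional open set, and the sign cells of finitely many polynomials always include a cell with non-empty interior, so they can never be ``singletons on a dense set.'' The statement you actually need --- a point $a^*$ through which $d$ hypersurfaces of the form $\lbrace a : P_{s_i}(a,b_i)=0\rbrace$ pass, with linearly independent gradients $\nabla_a P_{s_i}(a^*,b_i)$, and with the property that flipping the sign of $P_{s_i}$ changes the $\phi$-value --- is exactly the paper's key Lemma \ref{lemma-main}, and you give no argument for it. The hypothesis only supplies, for each pair $a\neq a'$, \emph{some} separating $b$ depending on the pair; promoting this to the full-rank statement is the heart of the matter, and the paper needs a substantial argument (assume the span of wall normals is proper everywhere, build a smooth orthogonal vector field, integrate it, show the ``associated polynomial classes'' are preserved along the integral curve via the implicit function theorem, and derive a contradiction with the separation hypothesis using finiteness of connected components and path-connectedness of complements of codimension-two real algebraic sets). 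Flagging this as ``the main obstacle'' identifies the right difficulty but does not discharge it, and the mechanism you sketch for it (finitely many probes making $\chi$ injective) cannot be repaired as stated.

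The second gap is quantitative, in step (3). If only $N$ probe vertices are reserved, the labels from a roaming vertex to the probes are determined by its sign cell with respect to $Nk$ polynomials in $\R^d$, and by Warren's theorem there are at most roughly $(NkD/d)^d$ such cells; so a bounded (or even logarithmically growing) number of probes can distinguish only that many sub-boxes, far fewer than your $M^d=n^{(1+\eps)d}$. Distinct sub-box assignments therefore do \emph{not} all yield distinct labelings. To distinguish about $m^d$ cells one needs about $dm$ probe vertices, which must come out of the $n$ available vertices; optimizing $m\approx n/\ln n$ gives exactly $m^{d(n-dm)}=n^{(1-o(1))dn}$, which is what the paper's Lemma \ref{lemma-lower-bound} does. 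Your parenthetical claim that the construction even yields $n^{(1+\eps)d(n-N)}$ labelings for fixed $\eps>0$ would contradict the upper bound of Theorem \ref{theorem-upper-bound}, which is a clear sign of the miscount. The final perturbation step for strong representability is fine in spirit; the paper carries it out by a sequential application of Fact \ref{fact-poly-vanishing}.
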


Note that a strongly $(P_1,\dots,P_k,\phi,U, \Lambda)$-representable edge-labeling is in particular $(P_1,\dots,P_k,\phi,U, \Lambda)$-representable. Thus, Theorem \ref{theo-main} shows that the upper bound in Theorem \ref{theorem-upper-bound} is sharp whenever the assumptions of Theorem \ref{theo-main} are satisfied.

Let us comment on the assumption in Theorem \ref{theo-main} concerning the existence of the desired point $b\in U$ for any two distinct points $a,a'\in U$. Roughly speaking, this assumption is saying that for any two distinct points $a,a'\in U$ there exists a point $b\in U$ such that for $i<j$ both of the pairs $(a_i,a_j)=(a,b)$ and $(a_i,a_j)=(a',b)$ are allowed in Definition \ref{defi-strongly-representable} and they lead to different outcomes for the label of the edge $ij$. An assumption of this form is necessary in Theorem \ref{theo-main}, since otherwise one could artificially increase $d$ by considering additional variables that do not occur in any of the polynomials $P_1,\dots,P_k$ (this means, one could interpret $P_1,\dots,P_k$ as polynomials in $\R[x_1,\dots,x_{d+1},y_1,\dots,y_{d+1}]$ and replace $U$ by $U\times \R$).

However, this assumption in Theorem \ref{theo-main} is usually very easy to check in applications. For example, when studying the number intersection graphs of geometric objects in some family $\mathcal{F}$, the assumption is, roughly speaking, that the family $\mathcal{F}$ does not contain two ``copies'' of the same object (in other words, for any two distinct objects in $\mathcal{F}$ there exists an object in $\mathcal{F}$ intersecting exactly one of them). Similarly, the assumption that the set $U$ is definable by polynomials is usually immediate from the choice of $U$ in a given application.

We remark that the $o(1)$-terms in Theorems \ref{theorem-upper-bound} and \ref{theo-main} tend to zero for $n\to \infty$. The terms may depend on $d$, and on the polynomials $P_1,\dots,P_k$.

The rest of this paper is organized as follows. In Section \ref{sect-applications}, we will discuss applications of Theorem \ref{theo-main} to counting intersection graphs, linking graphs of circles in $\R^3$, containment orders and partial orders of a given dimension. The remaining sections are devoted to the proof of Theorem \ref{theo-main}. More specifically, Section \ref{sect-algebraic} contains some algebraic preliminaries for the proof. Theorem \ref{theo-main} will then be proved in Section \ref{sect-proof-theo-main}, apart from the proofs of several lemmas which will be postponed to Sections \ref{sect-lemma-b-i-j} to \ref{sect-proof-lemma-main}. Finally, the appendix contains the proofs of Theorem \ref{theorem-upper-bound} and of the algebraic statements in Section \ref{sect-algebraic}.

The proofs of the algebraic statements in Section 3 use some relatively basic tools from algebraic geometry and differential topology. However, all these proofs are in the appendix. The main part of the paper does not require any previous knowledge about algebraic geometry or differential topology. We do, however, use some multi-variable analysis, including the local integrability of vector fields on $\R^d$.

\section{Applications}\label{sect-applications}

In \cite{alon-pach-et-al} and \cite{alon-scheinerman}, the authors establish that a number of geometric relations (e.g.\ segments intersecting each other or disks being contained in each other) can be encoded by polynomial conditions. Using these encodings, our Theorem \ref{theo-main} can be applied to most of the geometric relations studied in \cite{alon-pach-et-al} and \cite{alon-scheinerman}. In particular, we obtain matching lower bounds to the upper bounds in \cite{alon-scheinerman} on the number of circle orders, angle orders and $m$-vertex-polygon orders.

In this section, we will comment on these applications. For most of these applications, It is already demonstrated in \cite{alon-pach-et-al} and \cite{alon-scheinerman} that the desired geometric relations can be expressed by polynomial conditions. However, we need to check the assumptions in Theorem \ref{theo-main} requiring that the set $U$ is open and definable by polynomials and that for any distinct points $a,a'\in U$ there exists a point $b\in U$ with the desired properties.

\subsection{Intersection graphs of open disks in the plane}

As already mentioned in the introduction, open disks in the plane can easily be encoded as points in $U=\lbrace (x,y,r)\in \R^3\mid r>0\rbrace$, where $(x,y)\in \R^2$ is the center of the disk and $r>0$ its radius. Clearly, the set $U$ is open and definable by polynomials.

Two open disks corresponding to the points $(x,y,r), (x',y',r')\in U$ intersect if and only if $P(x,y,r,x',y',r')<0$, where $P(x,y,r,x',y',r')=(x-x')^2+(y-y')^2-(r+r')^2$. Taking $\Lambda=\lbrace\text{``edge''}, \text{``non-edge''}\rbrace$ and defining $\phi:\lbrace +,-,0\rbrace\to\Lambda$ to be the function given by $\phi(-)=1$ and $\phi(+)=\phi(0)=-1$, the intersection graphs of $n$ open disks in the plane numbered from $1$ to $n$ correspond to the $(P,\phi,U, \Lambda)$-representable edge-labelings of the complete graph on the vertex $\lbrace 1,\dots,n\rbrace$.

Now, we need to check that for any distinct $(x,y,r), (x',y',r')\in U$ there is a point $(x^*,y^*,r^*)\in U$ such that $P(x,y,r,x^*,y^*,r^*)\neq 0$ and $P(x',y',r',x^*,y^*,r^*)\neq 0$ and such that $\phi(\sgn P(x,y,r,x^*,y^*,r^*))\neq \phi(\sgn P(x',y',r',x^*,y^*,r^*))$. But this simply means that for any distinct open disks $D, D'$ there exists a disk $D^*$ which intersects exactly one of the disks $D$ and $D'$ and such that the boundary circle of $D^*$ is neither tangent (from the outside) to the boundary circle of $D$ nor to the boundary circle of $D'$. This geometric statement is very easy to check.

Thus, Theorems \ref{theorem-upper-bound} and \ref{theo-main} yield the following corollary, which reproves a result of McDiarmid and M\"{u}ller \cite{mcdiarmid-mueller}.

\begin{corollary}[\cite{mcdiarmid-mueller}]\label{coro-inter-disks}
The number of graphs on the vertex set $\lbrace 1,\dots,n\rbrace$ that are intersection graphs of $n$ numbered open disks in the plane equals $n^{(3+o(1))n}$.
\end{corollary}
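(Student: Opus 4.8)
The plan is to read off both bounds from Theorems \ref{theorem-upper-bound} and \ref{theo-main} applied to the encoding of open disks set up above: $d=3$, $k=1$, the polynomial $P(x,y,r,x',y',r')=(x-x')^2+(y-y')^2-(r+r')^2$, the open set $U=\lbrace(x,y,r)\in\R^3\mid r>0\rbrace$, the label set $\Lambda=\lbrace\text{``edge''},\text{``non-edge''}\rbrace$, and the function $\phi$ with $\phi(-)=\text{``edge''}$ and $\phi(+)=\phi(0)=\text{``non-edge''}$. First I would record the routine correspondence: a graph $G$ on $\lbrace1,\dots,n\rbrace$ is the intersection graph of $n$ numbered open disks precisely when $G=F_{P,\phi}(a_1,\dots,a_n)$ for some $a_1,\dots,a_n\in U$, since two open disks with parameters $a_i,a_j$ meet if and only if the distance between their centers is smaller than the sum of their radii, that is, $P(a_i,a_j)<0$. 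Hence the intersection graphs of $n$ numbered open disks are exactly the $(P,\phi,U,\Lambda)$-representable edge-labelings of the complete graph on $\lbrace1,\dots,n\rbrace$. As $U$ is open and definable by polynomials (take $Q_1=r$ and $S=\lbrace+\rbrace$ in Definition \ref{defi-definable-polynomials}), Theorem \ref{theorem-upper-bound} immediately gives the upper bound $n^{(1+o(1))\cdot 3n}$.

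For the lower bound I would invoke Theorem \ref{theo-main}; the only hypothesis requiring an argument is that for any two distinct points $a=(x,y,r)$ and $a'=(x',y',r')$ of $U$ there is a point $b=(x^*,y^*,r^*)\in U$ with $P(a,b)\neq 0\neq P(a',b)$ and $\phi(\sgn P(a,b))\neq\phi(\sgn P(a',b))$. Geometrically this asks for a disk $D^*$ that meets exactly one of the two disks $D,D'$ and whose boundary is externally tangent to neither $\partial D$ nor $\partial D'$, and I would produce it as follows. Distinct points of $U$ determine distinct open disks, so we cannot have both $D\su D'$ and $D'\su D$ (that would force $D=D'$); after possibly swapping the two disks, assume $D\not\su D'$. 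Since an open set contained in $\overline{D'}$ is disjoint from $\partial D'$ and hence contained in $D'$, we get $D\not\su\overline{D'}$, so we may pick a point $p=(p_1,p_2)\in D\sm\overline{D'}$. Let $b=(p_1,p_2,\delta)$ with $0<\delta<\operatorname{dist}(p,\overline{D'})$, so that $D^*$ is the open disk of radius $\delta$ about $p$. Because $p\in D$, the distance from $p$ to the center of $D$ is less than $r<r+\delta$, so $P(a,b)<0$; and because $\delta<\operatorname{dist}(p,\overline{D'})$, the distance from $p$ to the center of $D'$ strictly exceeds $\delta+r'$, so $P(a',b)>0$. Thus $P(a,b),P(a',b)\neq 0$ and $\phi(\sgn P(a,b))=\text{``edge''}\neq\text{``non-edge''}=\phi(\sgn P(a',b))$, which is exactly what is needed. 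Theorem \ref{theo-main} then yields at least $n^{(1-o(1))\cdot 3n}$ strongly $(P,\phi,U,\Lambda)$-representable edge-labelings, and each of these is in particular $(P,\phi,U,\Lambda)$-representable and therefore corresponds to an intersection graph of $n$ numbered open disks.

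Putting the two estimates together, the number of intersection graphs of $n$ numbered open disks in the plane is at least $n^{(3-o(1))n}$ and at most $n^{(3+o(1))n}$, hence equals $n^{(3+o(1))n}$. The only step that calls for any care is the elementary geometric verification of the hypothesis of Theorem \ref{theo-main} above; everything else is a direct application of the two theorems, so I do not anticipate a genuine obstacle.
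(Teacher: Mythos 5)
Your proposal is correct and follows exactly the paper's own route: encode disks as points of $U=\lbrace (x,y,r)\in\R^3\mid r>0\rbrace$, get the upper bound from Theorem \ref{theorem-upper-bound}, and get the lower bound from Theorem \ref{theo-main} after verifying its separating-disk hypothesis, which the paper dismisses as ``very easy to check.'' Your explicit construction of the small disk around a point of $D\sm\overline{D'}$ is a sound way to carry out that verification, so there is nothing to add.
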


McDiarmid and M\"{u}ller \cite{mcdiarmid-mueller} actually proved a stronger lower bound of the form $C^n n^{3n}$ and also a stronger upper bound of the form $C'^n n^{3n}$ for some absolute constants $C$ and $C'$.

Very similarly, when considering the number of intersection graphs of $n$ open \emph{unit} disks in the plane, Theorems \ref{theorem-upper-bound} and \ref{theo-main}, reprove another result of McDiarmid and M\"{u}ller \cite{mcdiarmid-mueller}.

\begin{corollary}[\cite{mcdiarmid-mueller}]\label{coro-inter-unit-disks}
The number of  graphs on the vertex set $\lbrace 1,\dots,n\rbrace$ that are intersection graphs of $n$ numbered open unit disks in the plane equals $n^{(2+o(1))n}$.
\end{corollary}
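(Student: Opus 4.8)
The plan is to derive this corollary from Theorems~\ref{theorem-upper-bound} and~\ref{theo-main} in the same way as Corollary~\ref{coro-inter-disks}, but with a parameter space of dimension $d=2$ instead of $d=3$, since an open unit disk in the plane is determined by its center alone. First I would take $U=\R^2$ (which is open and trivially definable by polynomials), and encode the open unit disk with center $(x,y)\in\R^2$ by the point $(x,y)\in U$. Two such disks, with centers $(x,y)$ and $(x',y')$, intersect precisely when $(x-x')^2+(y-y')^2<4$, so I would set $P(x,y,x',y')=(x-x')^2+(y-y')^2-4$, take $\Lambda=\lbrace\text{``edge''},\text{``non-edge''}\rbrace$, and let $\phi:\lbrace+,-,0\rbrace\to\Lambda$ be given by $\phi(-)=\text{``edge''}$ and $\phi(+)=\phi(0)=\text{``non-edge''}$. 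With these choices, the graphs on $\lbrace 1,\dots,n\rbrace$ arising as intersection graphs of $n$ numbered open unit disks in the plane are exactly the $(P,\phi,U,\Lambda)$-representable edge-labelings of the complete graph on $\lbrace 1,\dots,n\rbrace$.

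Next I would check the hypothesis of Theorem~\ref{theo-main}: given distinct $a=(x,y)$ and $a'=(x',y')$ in $U$, I need a center $b=(x^*,y^*)\in U$ with $P(a,b)\neq 0$, $P(a',b)\neq 0$, and $\phi(\sgn P(a,b))\neq\phi(\sgn P(a',b))$. Geometrically this asks for a unit disk $D^*$ that intersects exactly one of the two given unit disks $D,D'$ and whose boundary circle is tangent to neither $\partial D$ nor $\partial D'$. This is immediate: taking $b$ sufficiently close to $a$ forces $D^*$ to meet $D$, while, since $a\neq a'$, one can simultaneously keep $\lVert b-a'\rVert>2$; the finitely many ``tangency'' values of the distances $\lVert b-a\rVert$ and $\lVert b-a'\rVert$ are then avoided after an arbitrarily small perturbation of $b$. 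Thus the hypothesis of Theorem~\ref{theo-main} is satisfied.

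Finally I would combine the two theorems. Theorem~\ref{theorem-upper-bound} with $d=2$ gives at most $n^{(1+o(1))\cdot 2n}=n^{(2+o(1))n}$ such edge-labelings, hence at most that many intersection graphs of $n$ numbered open unit disks. Theorem~\ref{theo-main} with $d=2$ gives at least $n^{(1-o(1))\cdot 2n}=n^{(2-o(1))n}$ strongly $(P,\phi,U,\Lambda)$-representable edge-labelings; since these are in particular $(P,\phi,U,\Lambda)$-representable, at least that many intersection graphs occur. Together the two bounds yield a count of $n^{(2+o(1))n}$, as claimed. I do not expect any genuine obstacle here: the only step that is not purely mechanical is the verification of the hypothesis of Theorem~\ref{theo-main}, and that is an elementary exercise about unit disks in the plane, entirely parallel to the one already carried out for arbitrary open disks.
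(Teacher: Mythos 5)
Your proposal is correct and follows essentially the same route as the paper, which treats unit disks as the obvious $d=2$ variant of the open-disk case: encode each disk by its center, take $U=\R^2$, $P(x,y,x',y')=(x-x')^2+(y-y')^2-4$, and apply Theorems \ref{theorem-upper-bound} and \ref{theo-main}. One small wording point: when $0<\Vert a-a'\Vert\leq 2$ the witness $b$ cannot be taken arbitrarily close to $a$ (any such $b$ would also be within distance $2$ of $a'$); instead take, say, $b=a+t\,(a-a')/\Vert a-a'\Vert$ with $t\in(2-\Vert a-a'\Vert,\,2)$ generic, which gives $\Vert b-a\Vert<2<\Vert b-a'\Vert$ and avoids tangencies --- exactly the easy geometric fact the paper invokes without proof.
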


Again, McDiarmid and M\"{u}ller \cite{mcdiarmid-mueller} actually proved a stronger lower bound of the form $C^n n^{2n}$ and also a stronger upper bound of the form $C'^n n^{2n}$ for some absolute constants $C$ and $C'$.

Although our lower bounds obtained from Theorem \ref{theo-main} are weaker than the lower bounds of McDiarmid and M\"{u}ller \cite{mcdiarmid-mueller}, we still included Corollaries \ref{coro-inter-disks} and \ref{coro-inter-unit-disks} as simple and illustrative sample applications of Theorem \ref{theo-main}. In contrast to this general theorem, the lower bound constructions of McDiarmid and M\"{u}ller are very specific to the particular problems for open disks and open unit disks in the plane, respectively.

We remark that our argument above straightforwardly generalizes to higher dimensions. Thus, applying Theorems \ref{theorem-upper-bound} and \ref{theo-main} we obtain the following new results.

\begin{corollary}
For any $m\geq 1$, the number of graphs on the vertex set $\lbrace 1,\dots,n\rbrace$ that are intersection graphs of $n$ numbered open balls in $\R^m$ equals $n^{(m+1+o(1))n}$.
\end{corollary}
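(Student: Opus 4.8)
The plan is to reduce the statement about intersection graphs of open balls in $\R^m$ to an application of Theorems~\ref{theorem-upper-bound} and~\ref{theo-main}, exactly as was done for $m=2$ in the preceding paragraphs. An open ball in $\R^m$ is determined by its center $(x_1,\dots,x_m)\in\R^m$ and its radius $r>0$, so the natural encoding uses the open set $U=\lbrace (x_1,\dots,x_m,r)\in\R^{m+1}\mid r>0\rbrace\su\R^{m+1}$, which is clearly non-empty, open, and definable by polynomials (take $Q_1(x_1,\dots,x_m,r)=r$ and $S=\lbrace(+)\rbrace$). With $d=m+1$, two balls with parameters $(x_1,\dots,x_m,r)$ and $(x'_1,\dots,x'_m,r')$ intersect if and only if $\sum_{t=1}^m (x_t-x'_t)^2-(r+r')^2<0$, so I would set $P(x_1,\dots,x_m,r,y_1,\dots,y_m,s)=\sum_{t=1}^m(x_t-y_t)^2-(r+s)^2$, take $\Lambda=\lbrace\text{``edge''},\text{``non-edge''}\rbrace$, and define $\phi\colon\lbrace+,-,0\rbrace\to\Lambda$ by $\phi(-)=\text{``edge''}$ and $\phi(+)=\phi(0)=\text{``non-edge''}$. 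Then intersection graphs of $n$ numbered open balls in $\R^m$ correspond precisely to $(P,\phi,U,\Lambda)$-representable edge-labelings.

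Given this set-up, the upper bound $n^{(m+1+o(1))n}$ is immediate from Theorem~\ref{theorem-upper-bound} with $d=m+1$. For the lower bound, the only thing left to verify is the hypothesis of Theorem~\ref{theo-main}: for any two distinct points $a=(x_1,\dots,x_m,r)$ and $a'=(x'_1,\dots,x'_m,r')$ in $U$, there is a point $b=(z_1,\dots,z_m,\rho)\in U$ with $P(a,b)\neq 0$, $P(a',b)\neq 0$, and $\phi(\sgn P(a,b))\neq\phi(\sgn P(a',b))$. Geometrically this says: given two distinct open balls $B,B'$ in $\R^m$, there is an open ball $B^*$ intersecting exactly one of them whose boundary sphere is tangent (from outside) to neither $\partial B$ nor $\partial B'$. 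I would prove this by picking a point $p$ lying in the closure of exactly one of $B,B'$ — such a point exists since $B\neq B'$ as subsets of $\R^m$ (if $B\subsetneq B'$, take $p\in B'\setminus \overline{B}$, and symmetrically; if neither contains the other, take $p\in B\setminus\overline{B'}$ or $p\in B'\setminus\overline{B}$; and if they are disjoint, take $p$ to be the center of $B$) — and then taking $B^*$ to be a sufficiently small ball centered at $p$. Shrinking the radius of $B^*$ keeps $B^*$ inside whichever of $B,B'$ contains $p$ in its interior (an open condition) and outside the other (since $p\notin\overline{B}$ or $p\notin\overline{B'}$ forces positive distance), so $B^*$ intersects exactly one of $B,B'$; and one can further perturb the radius slightly to avoid the two ``forbidden'' values at which $\partial B^*$ would be externally tangent to $\partial B$ or $\partial B'$, which rules out $P(a,b)=0$ or $P(a',b)=0$.

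The verification in the previous paragraph is the only genuine content beyond invoking the two theorems, and it is entirely elementary; the one mild subtlety is handling all the relative positions of $B$ and $B'$ (nested, crossing, disjoint, externally or internally tangent) uniformly, but in every case the recipe ``pick $p$ in the closure of exactly one of the two balls, then take a small ball at $p$ and wiggle its radius'' works. Once the hypothesis of Theorem~\ref{theo-main} is confirmed, that theorem gives at least $n^{(1-o(1))(m+1)n}$ strongly $(P,\phi,U,\Lambda)$-representable edge-labelings, hence at least that many intersection graphs of $n$ numbered open balls in $\R^m$, and combining with the upper bound yields the claimed count $n^{(m+1+o(1))n}$. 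I do not expect any real obstacle here — the whole point is that the heavy lifting is already packaged inside Theorem~\ref{theo-main}, and the corollary is a direct instantiation.
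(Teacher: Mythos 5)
Your proposal is correct and follows essentially the same route as the paper: the paper treats the planar case (disks) with exactly this encoding of center and radius, the polynomial $\sum_t(x_t-y_t)^2-(r+s)^2$, and the same $\Lambda,\phi$, and then notes that the argument ``straightforwardly generalizes to higher dimensions'' to yield this corollary from Theorems \ref{theorem-upper-bound} and \ref{theo-main}. Your elementary verification of the hypothesis of Theorem \ref{theo-main} (pick a point in one ball but outside the closure of the other, take a small ball there, and perturb its radius to avoid the two tangency values) is exactly the ``very easy to check'' geometric statement the paper leaves to the reader, and it is sound.
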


\begin{corollary}
For any $m\geq 1$, the number of graphs on the vertex set $\lbrace 1,\dots,n\rbrace$ that are intersection graphs of $n$ numbered open unit balls in $\R^m$ equals $n^{(m+o(1))n}$.
\end{corollary}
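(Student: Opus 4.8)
The plan is to deduce this directly from Theorems \ref{theorem-upper-bound} and \ref{theo-main}, in exact analogy with the open disk and open unit disk cases above, now taking $d=m$ (an open unit ball in $\R^m$ is determined by its center alone, so there is no radius parameter).

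First I would encode an open unit ball in $\R^m$ by its center, so that the family of open unit balls corresponds to the set $U=\R^m$; this set is open and definable by polynomials (for instance take $\l=1$, $Q_1(x)=1$, and $S=\lbrace+\rbrace$). Two open unit balls with centers $c,c'\in\R^m$ intersect if and only if $|c-c'|<2$ (Euclidean distance), which is equivalent to $P(c,c')<0$ for the polynomial
\[P(x_1,\dots,x_m,y_1,\dots,y_m)=\sum_{i=1}^{m}(x_i-y_i)^2-4.\]
Setting $\Lambda=\lbrace\text{``edge''},\text{``non-edge''}\rbrace$ and letting $\phi\colon\lbrace+,-,0\rbrace\to\Lambda$ be given by $\phi(-)=\text{``edge''}$ and $\phi(+)=\phi(0)=\text{``non-edge''}$, the graph $F_{P,\phi}(c_1,\dots,c_n)$ is precisely the intersection graph of the open unit balls with centers $c_1,\dots,c_n$; hence the intersection graphs of $n$ numbered open unit balls in $\R^m$ are exactly the $(P,\phi,U,\Lambda)$-representable edge-labelings of the complete graph on $\lbrace1,\dots,n\rbrace$. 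Applying Theorem \ref{theorem-upper-bound} with $d=m$ then bounds their number from above by $n^{(1+o(1))mn}=n^{(m+o(1))n}$, using that $m$ is fixed.

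For the matching lower bound I would check the hypothesis of Theorem \ref{theo-main}. Given distinct points $a,a'\in U$, i.e.\ distinct centers $c\neq c'$, put $u=(c-c')/|c-c'|$ and $b=c^*=c+tu$ for a parameter $t>0$. Then $|c^*-c|=t$ and $|c^*-c'|=|c-c'|+t$, so choosing any $t$ with $\max(0,\,2-|c-c'|)<t<2$ (possible since $|c-c'|>0$) gives $P(c,c^*)=t^2-4<0$ and $P(c',c^*)=(t+|c-c'|)^2-4>0$. In particular $P(a,b)\neq0$, $P(a',b)\neq0$, and $\phi(\sgn P(a,b))=\text{``edge''}\neq\text{``non-edge''}=\phi(\sgn P(a',b))$, so all hypotheses of Theorem \ref{theo-main} are met. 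It then yields at least $n^{(1-o(1))mn}=n^{(m-o(1))n}$ strongly $(P,\phi,U,\Lambda)$-representable edge-labelings; each of these is in particular $(P,\phi,U,\Lambda)$-representable, hence an intersection graph of $n$ numbered open unit balls in $\R^m$.

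Combining the two bounds gives that the number of such intersection graphs is $n^{(m+o(1))n}$, as claimed. I do not expect any genuine obstacle here: the only step that is not a verbatim substitution into the two cited theorems is the geometric verification of the hypothesis of Theorem \ref{theo-main}, and as indicated above this reduces to the elementary observation that for any two distinct open unit balls there is a third open unit ball whose boundary sphere is tangent to neither and which meets exactly one of them. (The case $m=2$ is the open-unit-disk corollary treated above.)
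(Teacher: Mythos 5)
Your proposal is correct and matches the paper's intended argument: the paper obtains this corollary by exactly the same route, encoding unit balls by their centers in $U=\R^m$, using the single polynomial $\sum_i(x_i-y_i)^2-4$, and invoking Theorems \ref{theorem-upper-bound} and \ref{theo-main} after the easy geometric check of the hypothesis. Your explicit verification with the point $c+tu$, $\max(0,2-|c-c'|)<t<2$, is a fine way to make precise what the paper leaves as ``very easy to check.''
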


\subsection{Intersection graphs of segments in the plane}

Pach and Solymosi \cite{pach-solymosi} proved that at most $n^{(4+o(1))n}$ graphs on the vertex set $\lbrace 1,\dots,n\rbrace$ are intersection graphs of $n$ numbered segments in the plane. Using a construction specific to segments, Fox  \cite{fox} proved that this bound is tight. We can also obtain the tightness of this bound as a corollary of Theorem \ref{theo-main}.

\begin{corollary}[\cite{fox, pach-solymosi}]\label{coro-inter-segments}
The number of graphs on the vertex set $\lbrace 1,\dots,n\rbrace$ that are intersection graphs of $n$ numbered segments in the plane equals $n^{(4+o(1))n}$.
\end{corollary}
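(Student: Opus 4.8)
The plan is to deduce the corollary from Theorems \ref{theorem-upper-bound} and \ref{theo-main} by realizing intersection graphs of $n$ numbered segments in the plane as $(P_1,\dots,P_k,\phi,U,\Lambda)$-representable edge-labelings. A proper segment in the plane is determined by its pair of distinct endpoints, so I would take $d=4$, encode the segment with endpoints $p=(p_1,p_2)$ and $q=(q_1,q_2)$ by the point $(p_1,p_2,q_1,q_2)\in\R^4$, and let $U=\lbrace(p_1,p_2,q_1,q_2)\in\R^4\mid (p_1,p_2)\neq(q_1,q_2)\rbrace$. This set $U$ is open, and it is definable by polynomials in the sense of Definition \ref{defi-definable-polynomials}: it is the set of points where $(\sgn Q_1,\sgn Q_2)\neq(0,0)$ for the polynomials $Q_1=p_1-q_1$ and $Q_2=p_2-q_2$. (One could just as well take $U=\R^4$, allowing degenerate segments; this would not affect the outcome.)

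Next I would exhibit the polynomials $P_1,\dots,P_k\in\R[x_1,\dots,x_4,y_1,\dots,y_4]$ and the function $\phi\colon\lbrace+,-,0\rbrace^k\to\Lambda=\lbrace\text{``edge''},\text{``non-edge''}\rbrace$. The relevant fact, carried out explicitly in \cite{alon-pach-et-al}, is that whether two proper segments $[A,B]$ and $[C,E]$ intersect is a Boolean function of the signs of finitely many polynomials in the eight coordinates of $A,B,C,E$: one uses the four orientation predicates $\operatorname{or}(A,B,C)$, $\operatorname{or}(A,B,E)$, $\operatorname{or}(C,E,A)$, $\operatorname{or}(C,E,B)$ (signs of degree-two polynomials) to detect a transversal crossing, together with a few additional dot-product polynomials such as $(C-A)\cdot(B-A)$ to resolve the degenerate collinear cases by on-segment tests. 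Letting $\phi$ be the resulting Boolean function, the intersection graphs of $n$ numbered proper segments in the plane are exactly the $(P_1,\dots,P_k,\phi,U,\Lambda)$-representable edge-labelings of the complete graph on $\lbrace1,\dots,n\rbrace$.

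The only step that requires real care is checking the hypothesis of Theorem \ref{theo-main}. Given two distinct proper segments $D,D'$, I need a proper segment $D^*$ that meets exactly one of them and for which all $P_s$ are nonzero on both ordered pairs $(D,D^*)$ and $(D',D^*)$; geometrically this means that $D^*$ shares no endpoint with $D$ or $D'$, no three of the four endpoints of $\lbrace D^*,D\rbrace$ (resp.\ of $\lbrace D^*,D'\rbrace$) are collinear, and the auxiliary dot products do not vanish. Since $D\neq D'$, after possibly swapping the two segments the difference $D\sm D'$ contains a subsegment of positive length; choosing a point $p$ in its relative interior --- so that $p\in D$ while $p$ has positive distance from the closed set $D'$ --- and taking $D^*$ to be a sufficiently short segment that crosses the line of $D$ transversally at $p$, one gets $p\in D^*\cap D$ and $D^*\cap D'=\emptyset$. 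The general-position requirements exclude only finitely many lines and points for the endpoints of $D^*$, so an arbitrarily small further perturbation of $D^*$ --- kept short, still straddling the line of $D$ near $p$, and still disjoint from $D'$ --- achieves all of them; this perturbation is really the only subtle point. With the hypotheses in hand, Theorem \ref{theorem-upper-bound} gives at most $n^{(1+o(1))\cdot 4n}=n^{(4+o(1))n}$ such intersection graphs, while Theorem \ref{theo-main} with $d=4$ gives at least $n^{(1-o(1))\cdot 4n}=n^{(4-o(1))n}$ strongly $(P_1,\dots,P_k,\phi,U,\Lambda)$-representable edge-labelings, each of which is in particular the intersection graph of $n$ numbered segments; combining the two bounds gives the stated value $n^{(4+o(1))n}$.
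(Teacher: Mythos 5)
Your overall plan (encode segments in $\R^4$, express intersection by polynomial sign conditions, apply Theorems \ref{theorem-upper-bound} and \ref{theo-main}) is the right one, but your verification of the hypothesis of Theorem \ref{theo-main} has a genuine gap caused by the endpoint parametrization. With $U=\lbrace (p_1,p_2,q_1,q_2)\mid (p_1,p_2)\neq (q_1,q_2)\rbrace$ the encoding is $2$-to-$1$: the distinct points $a=(p_1,p_2,q_1,q_2)$ and $a'=(q_1,q_2,p_1,p_2)$ of $U$ represent the \emph{same} segment. The hypothesis of Theorem \ref{theo-main} quantifies over all pairs of distinct points $a,a'\in U$, not over pairs of distinct segments; for this particular pair, every $b\in U$ satisfies $\phi\big(\sgn P_1(a,b),\dots,\sgn P_k(a,b)\big)=\phi\big(\sgn P_1(a',b),\dots,\sgn P_k(a',b)\big)$, since both values just record whether the common segment meets the segment encoded by $b$. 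Hence no witness $b$ exists, the assumption of Theorem \ref{theo-main} fails, and the lower bound cannot be invoked as you state it. Your geometric argument producing $D^*$ is fine, but it only treats the case where $a$ and $a'$ encode genuinely distinct segments, which is not enough. (Taking $U=\R^4$ makes this worse, not better.)

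The fix is to make the parametrization injective, which is exactly what the paper does: it first observes that any intersection graph of segments is realizable with no vertical segment (rotate the configuration), and then encodes a non-vertical segment by $(\alpha,\beta,\gamma,\delta)$ with $\gamma<\delta$, i.e.\ slope, intercept and the $x$-interval, taking $U=\lbrace(\alpha,\beta,\gamma,\delta)\mid\gamma<\delta\rbrace$; distinct points of $U$ then correspond to distinct segments, and the hypothesis check reduces to the geometric statement you proved (a sufficiently generic segment meeting exactly one of two distinct segments). You could equally well repair your own encoding by imposing an ordering on the endpoints, e.g.\ $p_1<q_1$, which again covers all non-vertical segments injectively with an open, polynomially definable $U$; either way the rotation observation is needed so that restricting to non-vertical segments does not change the class of intersection graphs being counted. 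With that modification the rest of your argument (the polynomial encoding of the intersection predicate and the perturbation argument for genericity) goes through and matches the paper's proof.
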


In order to deduce the lower bound in Corollary \ref{coro-inter-segments} from Theorem \ref{theo-main}, we need to encode segments in the plane by points in $\R^4$ such that it can be determined by polynomial conditions whether two segments intersect. In order to do so, we follow the approach in \cite{pach-solymosi}.

Whenever a graph on $\lbrace 1,\dots,n\rbrace$ is an intersection graph of $n$ numbered segments in the plane, these segments can be chosen such that none of them is vertical (note that otherwise we can rotate the entire arrangement of the segments).

Each non-vertical segment in the plane can be described by a quadruple $(\alpha, \beta, \gamma, \delta)\in \R^4$ with $\gamma<\delta$, in such a way that the segment is given by $ \lbrace (x,y)\in \R^2\mid y=\alpha x+\beta, \gamma\leq x\leq \delta\rbrace$. So let $U=\lbrace (\alpha, \beta, \gamma, \delta)\in \R^4\mid \gamma<\delta\rbrace$. Then, the set $U$ is open and definable by polynomials.

The segments $ \lbrace (x,y)\in \R^2\mid y=\alpha x+\beta, \gamma\leq x\leq \delta\rbrace$ and $ \lbrace (x,y)\in \R^2\mid y=\alpha' x+\beta', \gamma'\leq x\leq \delta'\rbrace$ intersect each other if and only if either
\[\alpha> \alpha'\text{ and }\max(\gamma, \gamma')\cdot (\alpha-\alpha')\leq \beta'-\beta\leq \min(\delta, \delta')\cdot (\alpha-\alpha')\]
or
\[\alpha< \alpha'\text{ and }\max(\gamma, \gamma')\cdot (\alpha'-\alpha)\leq \beta-\beta'\leq \min(\delta, \delta')\cdot (\alpha'-\alpha)\]
or
\[\alpha= \alpha'\text{ and }\beta=\beta'\text{ and }\max(\gamma, \gamma')\leq \min(\delta, \delta').\]
All of these relations can be checked using the signs of finitely many polynomials $P_1,\dots,P_k$ in the variables $\alpha, \beta, \gamma, \delta, \alpha', \beta', \gamma', \delta'$ and a suitable function $\phi: \lbrace +,-,0\rbrace^k \to \Lambda$, where $\Lambda=\lbrace\text{``edge''}, \text{``non-edge''}\rbrace$. This way, the intersection graphs of $n$ numbered segments are precisely the $(P_1,\dots,P_k,\phi,U, \Lambda)$-representable edge-labelings of the complete graph on the vertex $\lbrace 1,\dots,n\rbrace$.

Finally, in order to apply Theorem \ref{theo-main}, we need to check the condition that for every two distinct $a, a'\in U$ there exists  $b\in U$ such that $P_s(a, b)\neq 0$ and $P_s(a', b)\neq 0$ for all $1\leq s\leq k$ and such that
\[\phi\big(\sgn P_1(a,b),\dots ,\sgn P_k(a,b)\big)\neq \phi\big(\sgn P_1(a',b),\dots ,\sgn P_k(a',b)\big).\]
But this condition simply means that for any two distinct non-vertical segments $S$ and $S'$, there exists a non-vertical segment $T$ intersecting exactly one of them (and such that $T$ is sufficiently generic in the following sense: $T$ is not parallel to $S$ or $S'$, $T$ does not contain any of the end-points of $S$ or $S'$, the end-points of $T$ do not lie on either $S$ or $S'$, no end-point of $T$ has the same $x$-coordinate as any endpoint of $S$ or $S'$, and the line through $T$ intersects the $y$-axis at a different point than the lines through $S$ and $S'$). This is again very easy to check.

Thus, Theorem \ref{theo-main} indeed recovers the lower bound in Corollary \ref{coro-inter-segments} due to Fox \cite{fox}.

Fox' construction for segments \cite{fox} was extended by Shi \cite{shi} to graphs of the restrictions of various non-linear functions to some closed interval (note that non-vertical segments are graphs of restrictions of linear functions to closed intervals). The functions in Shi's work include parabolas, higher degree polynomials and rational functions. In each of these cases, Shi provides specific constructions establishing the lower bound for the corresponding number of intersection graphs. Shi  \cite[Section 2]{shi} also deduced corresponding upper bounds from Warren's theorem \cite{warren} by finding encodings into $\R^d$ and polynomials that detect the intersection relations between the objects. We omit the details here, but in each of these cases it can be checked easily that the assumptions of Theorem \ref{theo-main} are satisfied. Thus, Theorem \ref{theo-main} implies the  Shi's various lower bounds in a uniform and non-constructive way.

Theorems \ref{theorem-upper-bound} and \ref{theo-main} can also be used to count the number of intersection graphs of many other kinds of geometric objects. For example, one obtains the following corollaries.

\begin{corollary}
For any $m\geq 3$, the number of graphs on the vertex set $\lbrace 1,\dots,n\rbrace$ that are intersection graphs of $n$ numbered rays in the plane equals $n^{(3+o(1))n}$.
\end{corollary}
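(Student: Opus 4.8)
\emph{Proof sketch.} The plan is to encode rays in the plane as points of $U=\R^3$, to express the intersection relation between two rays by the signs of finitely many polynomials, and then to invoke Theorems~\ref{theorem-upper-bound} and~\ref{theo-main}. The only delicate point in the encoding is that the set of directions of rays is a circle, which we deal with by a preliminary rotation combined with the tangent half-angle substitution. Given a collection of $n$ numbered rays, we first rotate the whole arrangement (this does not change the intersection graph, as rotation is an isometry) so that no ray points in the direction of the negative $x$-axis; only finitely many rotation angles are forbidden. Now every ray has a direction angle $\theta\in(-\pi,\pi)$, and putting $s=\tan(\theta/2)\in\R$, the direction of the ray is a positive multiple of $(1-s^2,\,2s)$ (indeed $s\mapsto \bigl((1-s^2)/(1+s^2),\,2s/(1+s^2)\bigr)$ is a bijection from $\R$ onto the unit circle minus the point $(-1,0)$). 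Thus each ray corresponds to a triple $(x,y,s)\in\R^3$, where $(x,y)$ is its endpoint and the ray equals $\lbrace (x,y)+t\cdot(1-s^2,2s)\mid t\geq 0\rbrace$, and conversely every triple in $U=\R^3$ arises in this way. Clearly $U$ is open and definable by polynomials.

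Next, two rays encoded by $(x,y,s)$ and $(x',y',s')$ intersect if and only if the linear system coming from $(x,y)+t\,(1-s^2,2s)=(x',y')+t'\,(1-s'^2,2s')$ has a solution with $t,t'\geq 0$. Solving by Cramer's rule, the determinant is $D=2(s-s')(1+ss')$ (which vanishes exactly when the two directions are parallel) and the numerators are polynomials $N_t,N_{t'}\in\R[x,y,s,x',y',s']$; the two rays intersect iff either $D\neq 0$ together with $N_tD\geq 0$ and $N_{t'}D\geq 0$, or $D=0$ together with an explicit polynomial condition expressing that the two rays lie on a common line and overlap (a short case analysis on $s=s'$ versus $ss'=-1$). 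Hence there are polynomials $P_1,\dots,P_k\in\R[x,y,s,x',y',s']$ and a function $\phi\colon\lbrace +,-,0\rbrace^k\to\Lambda$, with $\Lambda=\lbrace\text{``edge''},\text{``non-edge''}\rbrace$, such that the intersection graphs of $n$ numbered rays none of which points in the negative $x$-direction are precisely the $(P_1,\dots,P_k,\phi,U,\Lambda)$-representable edge-labelings of the complete graph on $\lbrace 1,\dots,n\rbrace$. Since by the rotation argument \emph{every} intersection graph of $n$ numbered rays is of this form, Theorem~\ref{theorem-upper-bound} with $d=3$ gives the upper bound $n^{(3+o(1))n}$.

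For the lower bound we apply Theorem~\ref{theo-main} with the same $U$, $P_1,\dots,P_k$, $\phi$ and $\Lambda$, so we must check its hypothesis: for any two distinct $a,a'\in U$ there is $b\in U$ with $P_s(a,b)\neq 0$ and $P_s(a',b)\neq 0$ for all $s$ and with the two corresponding $\phi$-values different. Geometrically this says that for any two distinct rays $R,R'$ (neither pointing in the negative $x$-direction) there is a ray $T$ (again not pointing in that direction) which intersects exactly one of $R$ and $R'$ and is in sufficiently general position: $T$ is not parallel to $R$ or $R'$, the endpoint of $T$ does not lie on the line through $R$ or through $R'$, and the line through $T$ meets those lines away from the endpoints of $R$, $R'$ and $T$. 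Since $R\neq R'$ as subsets of the plane, the symmetric difference $R\bigtriangleup R'$ is non-empty and, being a difference of half-lines, contains a non-degenerate interval $I$ lying in exactly one of $R$ and $R'$; choosing a point $p$ in the relative interior of $I$ and taking $T$ to be a ray emanating from a generic point close to $p$ and passing through $p$, one verifies all the required properties without difficulty. Theorem~\ref{theo-main} then produces at least $n^{(3-o(1))n}$ strongly $(P_1,\dots,P_k,\phi,U,\Lambda)$-representable edge-labelings, each of which is in particular an intersection graph of $n$ numbered rays; together with the upper bound this gives the claimed count $n^{(3+o(1))n}$.

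The crux of the argument is the encoding of the space of rays as the \emph{full} space $\R^3$ with an algebraic intersection relation: a naive parametrization by endpoint and direction angle fails because directions form a circle, and one cannot afford to lose a factor of $n^{o(n)}$ from a two-chart (or direction-sign) splitting only if one wants a genuinely clean single $U\subseteq\R^3$ — here the preliminary rotation, which avoids the single direction at which the tangent half-angle substitution degenerates, resolves exactly this. Once the encoding is in place, everything else is routine: writing the intersection condition as signs of polynomials and checking the easy geometric hypothesis of Theorem~\ref{theo-main}.
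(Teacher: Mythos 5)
Your proposal is correct and follows exactly the route the paper intends for this corollary (which it states without a written proof): encode each ray by a point of an open, polynomially definable $U\su\R^3$, express the intersection relation by the signs of finitely many polynomials, check the distinguishing-ray hypothesis, and apply Theorems \ref{theorem-upper-bound} and \ref{theo-main}. Your tangent half-angle chart is a clean way to get a single $U=\R^3$; note, however, that the paper's style of handling directions (a slope parametrization after a rotation, plus a direction-sign splitting as in its treatment of angle orders) would also be acceptable, since the splitting only costs a factor $2^n=n^{o(n)}$ in the upper bound, and for the lower bound a single chart of rays pointing rightward already suffices.

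One step is stated too casually. When you construct the distinguishing ray $T$ as ``a ray emanating from a generic point close to $p$ and passing through $p$,'' remember that $T$ is unbounded: beyond $p$ it may well meet $R'$, and the set of directions through $p$ whose ray hits $R'$ is an arc of \emph{positive} measure, so genericity of the nearby base point does not by itself give $T\cap R'=\emptyset$. This is easily repaired: since $p\notin R'$, the view angle of the half-line $R'$ from $p$ is strictly less than $\pi$, so there is a direction $v$ such that the whole line through $p$ with direction $v$ misses $R'$; take $T$ on that line with endpoint near $p$, and then a small perturbation of endpoint and direction secures all the remaining non-degeneracy conditions (non-parallelism, endpoints off the relevant lines, intersection points away from endpoints, and the excluded direction of the chart). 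With that adjustment the hypothesis of Theorem \ref{theo-main} is verified and the count $n^{(3+o(1))n}$ follows as you say.
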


\begin{corollary}
For any $m\geq 3$, the number of graphs on the vertex set $\lbrace 1,\dots,n\rbrace$ that are intersection graphs of $n$ numbered $m$-gons (polygons with $m$ vertices each) in the plane equals $n^{(2m+o(1))n}$.
\end{corollary}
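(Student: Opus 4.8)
We treat the $m$-gons as convex polygons (the argument for arbitrary simple $m$-gons is analogous, using ray casting to describe point-in-polygon membership in place of the separating axis theorem below). Following the scheme of the previous subsections, the plan is to encode convex $m$-gons as points in $\R^{2m}$, to describe the intersection relation by polynomial sign conditions, to verify the hypotheses of Theorem~\ref{theo-main}, and then to combine Theorems~\ref{theorem-upper-bound} and~\ref{theo-main} with $d=2m$.

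For the encoding, associate to a convex $m$-gon the $2m$-tuple $(v_1,\dots,v_m)\in\R^{2m}$ of coordinates of its vertices listed in counterclockwise order. For the upper bound, let $U\su\R^{2m}$ be the set of such tuples that genuinely form the vertices of a convex polygon with $m$ vertices listed counterclockwise; this $U$ is open and definable by polynomials (the conditions are sign conditions on the $2\times 2$ ``turn'' determinants of consecutive triples, together with distinctness). By the separating axis theorem, two convex $m$-gons, viewed as closed regions, intersect if and only if for each of the $2m$ directions $u$ normal to an edge of one of the two polygons (each such $u$ being, up to sign, a polynomial function of two consecutive vertices) the projection intervals $[\min_i\langle u,v_i\rangle,\ \max_i\langle u,v_i\rangle]$ and $[\min_j\langle u,v'_j\rangle,\ \max_j\langle u,v'_j\rangle]$ meet. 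Since two such intervals meeting is a Boolean combination of the signs of the polynomials $\langle u,\, v_i-v'_j\rangle$ (which have degree two in the $4m$ variables), the intersection relation of two convex $m$-gons is captured by the signs of finitely many polynomials $P_1,\dots,P_k$ and a suitable $\phi\colon\{+,-,0\}^k\to\Lambda$, with $\Lambda=\{\text{``edge''},\text{``non-edge''}\}$ and $\phi$ chosen so that configurations in which the two polygons share only a boundary point are labelled ``edge''. The intersection graphs of $n$ numbered convex $m$-gons are then exactly the $(P_1,\dots,P_k,\phi,U,\Lambda)$-representable edge-labelings, so Theorem~\ref{theorem-upper-bound} gives the upper bound $n^{(2m+o(1))n}$.

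For the lower bound it is enough to use a subset of $U$. Let $U'\su U$ consist of the tuples $(v_1,\dots,v_m)$ that form a \emph{strictly} convex polygon in counterclockwise order and for which $v_1$ is the unique vertex of smallest $x$-coordinate; then $U'$ is again open and definable by polynomials and is nonempty. The point of this normalization is that the map from $U'$ to closed convex regions is injective: a strictly convex polygon with a unique leftmost vertex has exactly one such encoding. Hence two distinct points $a,a'\in U'$ give distinct closed convex bodies $Q\ne Q'$, and since $\overline{\operatorname{int} Q}=Q\ne Q'=\overline{\operatorname{int} Q'}$ there is a plane point $p$ lying in the interior of one of them, say $Q$, but outside the other. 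A sufficiently small, sufficiently generic strictly convex $m$-gon near $p$ then lies in $U'$ (after relabelling its vertices to begin at its leftmost one), intersects $Q$, is disjoint from $Q'$, and, being generic, is not tangent to $\partial Q$ or $\partial Q'$ and avoids the finitely many other degenerate coincidences, so that $P_s(a,b)\ne 0$ and $P_s(a',b)\ne 0$ for all $s$ while the two induced labels differ. This verifies the hypothesis of Theorem~\ref{theo-main} with $d=2m$, which thus yields at least $n^{(2m-o(1))n}$ strongly $(P_1,\dots,P_k,\phi,U',\Lambda)$-representable edge-labelings; each of these is in particular an intersection graph of $n$ numbered $m$-gons, so the lower bound follows. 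Combining the two bounds gives the asserted value $n^{(2m+o(1))n}$.

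The bulk of the work — and the only mildly technical point — is writing out the polynomial description of the intersection relation; everything else is a verification of the hypotheses of Theorem~\ref{theo-main}. The one genuinely necessary subtlety is passing to a canonical encoding when defining $U'$: without forcing a unique parametrization of each polygon, two distinct points of $U'$ could represent the same region, and then the separation hypothesis of Theorem~\ref{theo-main} would fail.
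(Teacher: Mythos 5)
Your proposal is correct and follows exactly the recipe the paper intends for this corollary (which it states without detailed proof): encode each $m$-gon by its $2m$ vertex coordinates, express intersection by polynomial sign conditions, verify the hypothesis of Theorem \ref{theo-main}, and combine with Theorem \ref{theorem-upper-bound} for $d=2m$. Your normalization of the encoding (strict convexity, counterclockwise order, unique leftmost vertex first) is precisely the right way to handle the ``no two copies of the same object'' requirement that the paper mentions only informally, since otherwise cyclic relabelings of the same polygon would violate the separation hypothesis.
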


\begin{corollary}
For any $m\geq 1$, the number of graphs on the vertex set $\lbrace 1,\dots,n\rbrace$ that are intersection graphs of $n$ numbered axis-parallel boxes in $\R^m$ equals $n^{(2m+o(1))n}$.
\end{corollary}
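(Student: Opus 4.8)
The plan is to realise closed axis-parallel boxes in $\R^m$ as points of an open subset of $\R^{2m}$ on which the intersection relation is detected by signs of polynomials, and then to invoke Theorems~\ref{theorem-upper-bound} and~\ref{theo-main} with $d=2m$. A non-degenerate closed axis-parallel box in $\R^m$ is a product $[a_1,b_1]\times\cdots\times[a_m,b_m]$ with $a_i<b_i$ for all $i$, which I encode by the point $(a_1,b_1,\dots,a_m,b_m)$ of
\[
U=\lbrace (a_1,b_1,\dots,a_m,b_m)\in\R^{2m}\mid a_i<b_i \text{ for all } 1\leq i\leq m\rbrace .
\]
This set is open and definable by polynomials (take the polynomials $b_i-a_i$ and require each of them to be positive). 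Two boxes encoded by $(a_1,b_1,\dots)$ and $(a_1',b_1',\dots)$ intersect if and only if their projections onto every coordinate axis intersect, i.e.\ if and only if $b_i'-a_i\geq 0$ and $b_i-a_i'\geq 0$ for every $1\leq i\leq m$.

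Accordingly, take the $k=2m$ polynomials $P_i^{(1)}=y_{2i}-x_{2i-1}$ and $P_i^{(2)}=x_{2i}-y_{2i-1}$ in the variables $x_1,\dots,x_{2m},y_1,\dots,y_{2m}$, the label set $\Lambda=\lbrace \text{``edge''},\text{``non-edge''}\rbrace$, and the function $\phi$ that sends a sign vector to ``edge'' exactly when all of its entries lie in $\lbrace +,0\rbrace$. Then the intersection graphs of $n$ numbered closed axis-parallel boxes in $\R^m$ are precisely the $(P_1^{(1)},P_1^{(2)},\dots,P_m^{(1)},P_m^{(2)},\phi,U,\Lambda)$-representable edge-labelings of the complete graph on $\lbrace 1,\dots,n\rbrace$. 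It remains to check the hypothesis of Theorem~\ref{theo-main}: for any two distinct points of $U$, corresponding to distinct boxes $B\neq B'$, I must produce $B^*\in U$ that intersects exactly one of $B,B'$ and whose $2m$ endpoint values are all distinct from the endpoint values of $B$ and of $B'$ (the latter guaranteeing that none of the $P_s$ vanishes at the two relevant argument pairs). Since $B\neq B'$, I may assume $\operatorname{int}(B)\not\su B'$, because otherwise $B=\overline{\operatorname{int}(B)}\su B'$ and the symmetric statement would force $B=B'$. Picking $p\in\operatorname{int}(B)\sm B'$ and using that $B'$ is closed, any sufficiently small closed box centred at $p$ lies inside $\operatorname{int}(B)$ and is disjoint from $B'$, hence meets $B$ but not $B'$; a generic arbitrarily small translation and/or shrinking of this box keeps these two properties while moving all $2m$ of its endpoint values off the finite set of endpoint values of $B$ and $B'$, and yields the required $B^*$.

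With the hypotheses verified, Theorem~\ref{theorem-upper-bound} bounds the number of such edge-labelings from above by $n^{(1+o(1))\cdot 2mn}=n^{(2m+o(1))n}$, while Theorem~\ref{theo-main} bounds it from below by $n^{(1-o(1))\cdot 2mn}=n^{(2m+o(1))n}$, and these match. (Either convention works: a short $\eps$-shrinking and $\eps$-enlarging argument shows that a graph is the intersection graph of $n$ closed axis-parallel boxes if and only if it is the intersection graph of $n$ open axis-parallel boxes, so the count is the same.) I do not expect a genuine obstacle in carrying this out; the only point requiring a little care is the separation-with-genericity construction of $B^*$ above, which is elementary.
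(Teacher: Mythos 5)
Your proposal is correct and follows exactly the route the paper intends for this corollary (which it states without details): encode non-degenerate axis-parallel boxes as points of the open, polynomially definable set $U\su\R^{2m}$, detect intersection by the signs of the $2m$ linear polynomials comparing interval endpoints, verify the separating-box hypothesis of Theorem \ref{theo-main} by a small generic box placed in $\operatorname{int}(B)\sm B'$, and combine Theorems \ref{theorem-upper-bound} and \ref{theo-main} with $d=2m$. Your handling of the closed/open and genericity issues is sound, so no gaps remain.
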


\subsection{Linking graphs of circles in $\R^3$}\label{subsect-linking}

We say that two disjoint circles in $\R^3$ form a link if they are linked in a topological sense (meaning that one circle describes a non-trivial element of the fundamental group of the complement of the other circle). Given $n$ disjoint circles in $\R^3$ numbered from $1$ to $n$, one can define their linking graph as the graph on the vertex set $\lbrace 1,\dots,n\rbrace$ where two vertices are joined by an edge if and only if the corresponding circles form a link.

From Theorems \ref{theorem-upper-bound} and \ref{theo-main} one obtains the following corollary concerning the number of linking graphs of $n$ circles in $\R^3$.

\begin{corollary}\label{coro-linking-circles}
The number of graphs on the vertex set $\lbrace 1,\dots,n\rbrace$ that are linking graphs of $n$ numbered circles in $\R^3$ equals $n^{(6+o(1))n}$.
\end{corollary}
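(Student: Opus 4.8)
The plan is to invoke Theorem~\ref{theorem-upper-bound} and Theorem~\ref{theo-main} with $d=6$; the work lies in (i) encoding circles in $\R^3$ injectively by the points of a non-empty open subset $U\su\R^6$ that is definable by polynomials, (ii) expressing the linking relation by the signs of finitely many polynomials in the $12$ coordinates, and (iii) checking the hypothesis of Theorem~\ref{theo-main}. For (i), I would parametrize a circle whose supporting plane is not vertical by the triple $(a,b,e)\in\R^3$ describing the plane $\{z=ax+by+e\}$, together with the $(x,y)$-coordinates $(p,q)\in\R^2$ of the center and the radius $\rho>0$; then $U=\{(a,b,e,p,q,\rho)\in\R^6\mid\rho>0\}$ is open and definable by polynomials, and $U$ maps bijectively onto the set of circles lying in non-vertical planes. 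Every finite family of pairwise disjoint circles can be rotated so that no supporting plane is vertical (the bad rotations form a measure-zero subset of $SO(3)$), and such a rotation preserves disjointness and all linking numbers, hence the linking graph; so every linking graph of $n$ disjoint circles is realized by $n$ points of $U$, and it suffices to count these.

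For (ii), I would use that the flat disk $D_i$ bounded by a circle $C_i$ is an embedded Seifert surface, so that for disjoint $C_i,C_j$ in general position the linking number equals the signed count of points of $C_j\cap D_i$. These points lie on the line $\Pi_i\cap\Pi_j$ whenever the supporting planes $\Pi_i,\Pi_j$ are distinct, so there are at most two of them, and since $C_j$ is a closed loop their signs are opposite; therefore $C_i$ and $C_j$ are linked if and only if $\Pi_i\neq\Pi_j$ and \emph{exactly one} of the (at most two) points of $C_j\cap\Pi_i$ lies in the open disk $D_i$. Writing $C_j$ as $c_j+\rho_j(\cos t\cdot v_j+\sin t\cdot w_j)$ for an orthonormal basis $v_j,w_j$ of the direction space of $\Pi_j$, the equation $C_j(t)\in\Pi_i$ becomes $\alpha+\beta\cos t+\gamma\sin t=0$, where $\alpha,\beta,\gamma$ — and likewise the quantities $|c_i-c_j|^2$, $\rho_i$, $\rho_j$ controlling membership in $D_i$ — can be written through the centers, radii and \emph{unit} normals of the two circles, the frame-dependent pieces combining into quantities such as $|n_i\times n_j|^2$ and $\det(n_i,c_j-c_i,n_j)$. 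Solving for $(\cos t,\sin t)$ and substituting, the condition ``exactly one point inside $D_i$'' becomes a Boolean combination of sign conditions on expressions that are polynomial in the coordinates once one adjoins the square roots $\sqrt{1+a_i^2+b_i^2}$ (from normalizing the normal vectors $(a_i,b_i,-1)$) and one further square root of a non-negative polynomial (the discriminant of the trigonometric equation, which is nonnegative precisely when intersection points exist). Since a sign condition on such a nested-radical expression is equivalent to a Boolean combination of polynomial sign conditions in the original variables, this yields polynomials $P_1,\dots,P_k$ and a function $\phi\colon\{+,-,0\}^k\to\Lambda$, with $\Lambda=\{\text{``edge''},\text{``non-edge''}\}$, such that $\phi$ applied to the signs of $P_1(a_i,a_j),\dots,P_k(a_i,a_j)$ equals ``edge'' exactly when $C_i,C_j$ are linked, for all disjoint $C_i,C_j$ that are in sufficiently general position.

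To make strongly representable labelings genuine linking graphs I would add one more polynomial to the list: the set of pairs of circles (in non-vertical planes) that meet each other is, under the parametrization above, a semialgebraic subset of $\R^{12}$ of dimension at most $11$ (choosing $C_i$, then a point $x$ of $C_i$, then one of the $4$-parameter family of circles through $x$, gives $6+1+4=11$), so its Zariski closure is contained in the zero set of some nonzero polynomial $P_0$. Then a labeling that is strongly representable with respect to $P_0,P_1,\dots,P_k$ and the (extended) function $\phi$ has all of its $n$ circles pairwise disjoint and in general position, and is therefore precisely their linking graph; conversely, every linking graph of $n$ disjoint circles is — after the rotation above and an arbitrarily small generic perturbation — strongly representable. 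The bound $n^{(6+o(1))n}$ from above is now immediate from Theorem~\ref{theorem-upper-bound} with $d=6$, and for the lower bound it remains to verify the hypothesis of Theorem~\ref{theo-main}. Given distinct $a,a'\in U$, representing distinct circles $C\neq C'$, I would pick a point $x\in C\sm C'$ and a ball $B$ about $x$ with $\overline B\cap C'=\emptyset$, and let $C^*$ be a tiny round circle in $B$ threading $C$ once near $x$, so that $C^*$ links $C$ while bounding a flat disk inside $B$ disjoint from $C'$ and hence not linking $C'$; an arbitrarily small perturbation of $C^*$ — which changes neither linking number nor disjointness — puts it in a non-vertical plane and in general position with respect to both $C$ and $C'$, producing the required $b\in U$. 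Theorem~\ref{theo-main} then gives at least $n^{(6-o(1))n}$ strongly representable labelings, that is, at least $n^{(6-o(1))n}$ linking graphs of $n$ disjoint circles, which together with the upper bound proves the corollary.

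The hard part will be (ii): faithfully reducing the topological linking relation to polynomial sign conditions — identifying the linking number with a signed intersection count against the flat disk, observing that this count involves at most two points, and then carrying out the trigonometric elimination and the nested-radical sign reduction without error. By comparison, the geometric inputs (rotating into non-vertical planes, the dimension count for intersecting pairs, and the threading construction for the hypothesis of Theorem~\ref{theo-main}) should be routine.
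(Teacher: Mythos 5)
Your proposal is correct and follows essentially the same route as the paper: the same encoding of circles with non-vertical supporting planes by an open, polynomially definable subset of $\R^6$ (after a rotation), the same key criterion that two disjoint circles link exactly when one of the two intersection points of one circle with the other's plane lies inside the other disk and the other lies outside, the same elimination of the square roots to reach polynomial sign conditions (the paper carries this out explicitly in Subsection \ref{subsect-app-linking}, where one of its four polynomials already encodes disjointness, so your auxiliary polynomial $P_0$ is an unnecessary but harmless substitute), and the same small circle threading $C$ to verify the hypothesis of Theorem \ref{theo-main}. The only caveat is the degenerate tangential case (a single intersection point with the plane lying inside the disk), which your stated criterion would misclassify; as in the paper, this is excluded by the sign of the discriminant among your polynomials, so it does not affect the argument.
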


Note that each circle $C$ in $\R^3$ lies in a unique plane, which we will from now on call \emph{the plane of} $C$. Whenever a graph on $\lbrace 1,\dots,n\rbrace$ is a linking graph of $n$ numbered circles in $\R^3$, these circles can be chosen in such a way that none of the $n$ planes of the $n$ circles is parallel to the $z$-axis (otherwise we can rotate the configuration of the circles to achieve this).

Each circle $C$ in $\R^3$ whose plane is not parallel to the $z$-axis can be described by a 6-tuple $(a,b,c,d,e,r)\in \R^6$ with $r>0$. Here $(a,b,c)$ is the center of $C$. Furthermore, $d,e\in \R$ are such that the vector $(d,e,1)$ is orthogonal   to the plane of $C$ (recall that this plane is not parallel to the $z$-axis). Finally, $r>0$ is the radius of the circle.

So let us define $U=\lbrace (a,b,c,d,e,r)\in \R^6\mid r>0\rbrace$, then each point in $U$ corresponds to a circle in $\R^3$ whose plane is not parallel to the $z$-axis. Note that the set $U$ is open and definable by polynomials.

We now need to show that for two points $(a,b,c,d,e,r), (a',b',c',d',e',r')\in U$ it can be checked using the signs of a finite list of polynomials $P_1,\dots,P_k$ in $a,b,c,d,e,r, a',b',c',d',e',r'$ whether the circles $C$ and $C'$ corresponding to $(a,b,c,d,e,r)$ and $(a',b',c',d',e',r')$ form a link. The key observation in order to show this is the following: $C$ and $C'$ form a link if and only if there exists a point of $C$ which lies on the plane of $C'$ inside the circle $C'$ and another point of $C$ which lies on the plane of $C'$ outside the circle $C'$. Furthermore, by symmetry, the same holds with the roles of $C$ and $C'$ interchanged.

Our strategy for checking whether $C$ and $C'$ form a link via polynomial conditions is the following. If the planes of $C$ and $C'$ are parallel (which happens if and only if $(d,e)=(d',e')$), then the circles $C$ and $C'$ cannot form a link. Otherwise let $\l$ be the line of intersection of the planes of $C$ and $C'$. If the line $\l$ does not intersect the circle $C$, then $C$ does not have any points on the plane of $C'$, so $C$ and $C'$ cannot be linked. Similarly, if $\l$ is tangent to the circle $C$, then $C$ has only one point on the plane of $C'$, so $C$ and $C'$ cannot be linked. Hence we may assume that $\l$ intersects $C$ in two distinct points $X_1$ and $X_2$ (recall that both $\l$ and $C$ lie in the plane of $C$). Now, $X_1$ and $X_2$ are the unique points of $C$ on the plane of $C'$. Thus, $C$ and $C'$ form a link if and only if one of the two points $X_1$ and $X_2$ has distance less than $r'$ from the center $(a',b',c')$ of $C'$ and the other point has distance more than $r'$.

However, when implementing this strategy, one has to be careful, since the coordinates of $X_1$ and $X_2$ are not polynomials in $a,b,c,d,e,r, a',b',c',d',e',r'$ (in fact, the expressions for the coordinates of $X_1$ and $X_2$ contain square roots). Nevertheless, the strategy can be implemented, and the slightly tedious details can be found in Subsection \ref{subsect-app-linking} of the appendix.

We remark that the list of polynomials $P_1,\dots,P_k$ in $a,b,c,d,e,r, a',b',c',d',e',r'$ that we use to check whether $C$ and $C'$ form a link consists of $k=4$ polynomials. The first two are the polynomials $e-e'$ and $f-f'$. The third polynomial is non-zero whenever the line $\l$ (the intersection of the planes of $C$ and $C'$) is not tangent to $C$. And the fourth polynomial is non-zero whenever the circles $C$ and $C'$ are disjoint.

Now, taking these polynomials $P_1,\dots,P_k$ in $a,b,c,d,e,r, a',b',c',d',e',r'$ as well as a suitable function $\phi: \lbrace +,-,0\rbrace^k \to \Lambda$, where $\Lambda=\lbrace\text{``edge''}, \text{``non-edge''}\rbrace$, the linking graphs of $n$ numbered circles in $\R^3$ are precisely the $(P_1,\dots,P_k,\phi,U, \Lambda)$-representable edge-labelings of the complete graph on the vertex $\lbrace 1,\dots,n\rbrace$.

Thus, the upper bound in Corollary \ref{coro-linking-circles} follows from Theorem \ref{theorem-upper-bound}.

In order to deduce the lower bound in Corollary \ref{coro-linking-circles} from Theorem \ref{theo-main}, we need to check the condition that for every two distinct $a, a'\in U$ there exists  $b\in U$ such that $P_s(a, b)\neq 0$ and $P_s(a', b)\neq 0$ for all $1\leq s\leq k$ and such that
\[\phi\big(\sgn P_1(a,b),\dots ,\sgn P_k(a,b)\big)\neq \phi\big(\sgn P_1(a',b),\dots ,\sgn P_k(a',b)\big).\]
But this condition simply means that for any two distinct circles $C$ and $C'$ in $\R^3$ (whose planes are not parallel to the $z$-axis), there exists a circle $D$ forming a link with exactly one of them (and such that $D$ is sufficiently generic in the following sense: The plane of $D$ is not parallel to the $z$-axis and if $(d_D,e_D,1)$ is an orthogonal vector to this plane, and $(d_C,e_C,1)$ and $(d_{C'},e_{C'},1)$ are defined analogously for $C$ and $C'$, then $d_D\not\in \lbrace d_{C}, d_{C'}\rbrace$ and $e_D\not\in \lbrace e_{C}, e_{C'}\rbrace$. Furthermore, the intersection line of the planes of $D$ and $C$ is not tangent to the circle $C$, the intersection line of the planes of $D$ and $C'$ is not tangent to the circle $C'$, and $D$ is disjoint from $C$ and $C'$). Such a circle $D$ can be found by first taking any circle $D$ which forms a link with exactly one of the circles $C$ and $C'$ and is disjoint from both of them  (for example, one can take a very small circle looping closely around $C$), and then perturbing $D$ slightly in order to satisfy the conditions on being sufficiently generic.

Thus, Corollary \ref{coro-linking-circles} indeed follows from Theorems \ref{theorem-upper-bound} and \ref{theo-main}.

With essentially the same reasoning one obtains an analogous result for the number of linking graphs of $n$ \emph{unit} circles in $\R^3$.

\begin{corollary}\label{coro-linking-unit-circles}
The number of graphs on the vertex set $\lbrace 1,\dots,n\rbrace$ that are linking graphs of $n$ numbered unit circles in $\R^3$ equals $n^{(5+o(1))n}$.
\end{corollary}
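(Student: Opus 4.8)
The plan is to follow the argument for Corollary~\ref{coro-linking-circles} almost verbatim, with one fewer degree of freedom. A unit circle in $\R^3$ whose plane is not parallel to the $z$-axis is determined by its center $(a,b,c)\in\R^3$ together with the vector $(d,e,1)$ orthogonal to its plane, so I would take $U=\R^5$ (identified with the set of $5$-tuples $(a,b,c,d,e)$), which is clearly open and definable by polynomials. Exactly as in the general case, whenever a graph on $\{1,\dots,n\}$ is a linking graph of $n$ numbered unit circles these circles can be rotated so that none of their planes is parallel to the $z$-axis, so it suffices to count the linking graphs arising from circles parametrized by points of $U$. Moreover, the polynomials $P_1,\dots,P_k$ constructed in Subsection~\ref{subsect-linking} and Subsection~\ref{subsect-app-linking} to detect the linking relation of two circles involve the radii only through $r$ and $r'$; substituting $r=r'=1$ turns them into polynomials in the ten variables $a,b,c,d,e,a',b',c',d',e'$, and the whole derivation there---reducing ``$C$ and $C'$ form a link'' to a Boolean function of the signs of these polynomials via the intersection line of the two planes and its two intersection points with one of the circles---carries over unchanged. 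With the same $\phi$ and $\Lambda=\{\text{``edge''},\text{``non-edge''}\}$, the linking graphs of $n$ numbered unit circles in $\R^3$ are precisely the $(P_1,\dots,P_k,\phi,U,\Lambda)$-representable edge-labelings of the complete graph on $\{1,\dots,n\}$, so Theorem~\ref{theorem-upper-bound} with $d=5$ gives the upper bound $n^{(5+o(1))n}$.

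For the lower bound I would apply Theorem~\ref{theo-main} with $d=5$, which requires that for any two distinct unit circles $C,C'$ with planes not parallel to the $z$-axis there is a unit circle $D$ forming a link with exactly one of $C$ and $C'$, disjoint from both, and sufficiently generic in the sense of Subsection~\ref{subsect-linking} (plane not parallel to the $z$-axis, its associated $d$- and $e$-coordinates avoiding those of $C$ and $C'$, neither intersection line tangent to the respective circle, and $D$ disjoint from $C$ and $C'$). To build $D$, observe that since $C\neq C'$ the unit disks $\Delta_C,\Delta_{C'}$ they bound are distinct, and having equal radius neither contains the other; so I can fix a point $p_0$ in the interior of $\Delta_C$ but outside $\Delta_{C'}$ (and, when the two planes differ, also off the plane of $C'$). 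I would then take $D$ to be a unit circle lying in some plane through $p_0$ transverse to the plane of $C$, positioned so that $D$ meets the plane of $C$ exactly at $p_0$ and at a second point lying outside $\Delta_C$; by the linking criterion this forces $D$ to link $C$, while $D$ is automatically disjoint from $C$. Choosing this plane and the point $p_0$ generically keeps $D$ disjoint from $C'$ and makes $D$ meet the plane of $C'$ in at most one point, or in two points both lying outside $\Delta_{C'}$, so that $D$ does not link $C'$; a final small perturbation of $D$ secures the remaining genericity conditions. Theorem~\ref{theo-main} then produces at least $n^{(5-o(1))n}$ strongly $(P_1,\dots,P_k,\phi,U,\Lambda)$-representable edge-labelings, which together with the upper bound gives the claimed value $n^{(5+o(1))n}$, proving Corollary~\ref{coro-linking-unit-circles}.

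The step I expect to be the real obstacle is precisely this construction of $D$: in the general-circles case one simply threads an arbitrarily small circle around $C$, but a unit circle cannot be shrunk, so when $C$ and $C'$ are intertwined (say $C'$ is a small rotation of $C$) one cannot separate the linking loop from $C'$ by making it tiny. The remedy is to exploit the two-parameter freedom in the choice of $p_0$ inside $\Delta_C$ away from $\Delta_{C'}$, together with the freedom in the transverse plane of $D$: within this family the conditions ``$D$ meets $C'$'' and ``$D$ links $C'$'' are non-generic, so a suitably generic choice of the pair followed by a small perturbation yields a $D$ with all the required properties. Making this genericity argument precise---via a short dimension count, or by exhibiting an explicit admissible subfamily---is the only point that needs genuine work; everything else is a routine transcription of the argument for Corollary~\ref{coro-linking-circles} with the radius fixed to $1$.
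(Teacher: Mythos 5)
Your proposal is correct and follows the paper's proof essentially verbatim: the paper likewise takes $U=\R^5$, substitutes $r=r'=1$ into the polynomials from the general-circle case, applies Theorems \ref{theorem-upper-bound} and \ref{theo-main}, and identifies the only delicate point as the existence, for any two distinct unit circles $C,C'$, of a unit circle $D$ linking exactly one of them and disjoint from both, which it asserts without proof as ``not hard to see.'' Your sketched construction of such a $D$ (threading a unit circle through a point of $\Delta_C\setminus\Delta_{C'}$ and choosing the remaining data suitably) goes beyond what the paper writes down and is a reasonable way to substantiate that asserted step.
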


Note that in the setting of unit circles in Corollary \ref{coro-linking-unit-circles}, we consider the set $U=\R^5$, since every unit circle in $\R^3$ whose plane is not parallel to the $z$-axis can be described by a 5-tuple $(a,b,c,d,e)\in \R^5$. We can check whether two unit circles are linked using polynomial conditions in the same way as before (we just replace the radii $r$ and $r'$ by $1$). The only place where we have to be slightly more careful is when checking the assumption of Theorem \ref{theo-main}: Given any two distinct unit circles $C$ and $C'$ in $\R^3$, we need a unit circle $D$ in $\R^3$ which forms a link with exactly one of the circles $C$ and $C'$ and is disjoint from both of them. But it is not hard to see that such a unit circle $D$ indeed exists.

\subsection{Circle orders and some other containment orders}

Given $n$ distinct closed disks in the plane numbered from $1$ to $n$, one obtains a partial order on the set $\lbrace 1,\dots,n\rbrace$ by defining $x\prec y$ for distinct $x,y\in \lbrace 1,\dots,n\rbrace$ if and only if the disk with number $x$ is contained in the disk with number $y$. The partial orders on $\lbrace 1,\dots,n\rbrace$ obtained in this way (for some choice of $n$ closed disks) are called \emph{circle orders} (see \cite{sidney-et-al}).

Alon and Scheinerman \cite{alon-scheinerman} proved that the number of circle orders on $\lbrace 1,\dots,n\rbrace$ is at most $n^{(3+o(1))n}$. Theorem \ref{theo-main} implies that this bound is tight.

\begin{corollary}\label{coro-order-circles}
The number of circle orders on $\lbrace 1,\dots,n\rbrace$ equals $n^{(3+o(1))n}$.
\end{corollary}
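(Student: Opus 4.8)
The plan is to deduce Corollary~\ref{coro-order-circles} from Theorems~\ref{theorem-upper-bound} and~\ref{theo-main} by encoding closed disks and the containment relation algebraically. I would parametrize closed disks in the plane by the points of the set $U=\lbrace (x,y,r)\in\R^3\mid r>0\rbrace$, where $(x,y)$ is the center and $r>0$ the radius; this $U$ is open and definable by polynomials. Take $d=3$, the two polynomials $P_1(x,y,r,x',y',r')=r'-r$ and $P_2(x,y,r,x',y',r')=(r'-r)^2-(x-x')^2-(y-y')^2$, the label set $\Lambda=\lbrace L_\prec, L_\succ, L_\bowtie\rbrace$ (recording, for an edge $ij$ with $i<j$, whether $i\prec j$, $j\prec i$, or $i$ and $j$ are incomparable in the resulting order), and the function $\phi\colon\lbrace+,-,0\rbrace^2\to\Lambda$ given by $\phi(+,s_2)=L_\prec$ and $\phi(-,s_2)=L_\succ$ for $s_2\in\lbrace+,0\rbrace$, and $\phi(s_1,s_2)=L_\bowtie$ in all remaining cases.

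The elementary fact underlying the encoding is that for disks $D=D((x,y),r)$ and $D'=D((x',y'),r')$ one has $D\su D'$ precisely when the distance between the centers is at most $r'-r$. Translating this, for distinct disks $D,D'$: one has $D\subsetneq D'$ iff $P_1>0$ and $P_2\geq 0$, and $D'\subsetneq D$ iff $P_1<0$ and $P_2\geq 0$, while in all other cases $D$ and $D'$ are incomparable (and $D=D'$ corresponds to $P_1=P_2=0$, which the labeling correctly records as ``incomparable'' for the strict containment order). Hence for any $a_1,\dots,a_n\in U$ the edge-labeling $F_{P_1,P_2,\phi}(a_1,\dots,a_n)$ is exactly the encoding of the strict-containment partial order of the $n$ corresponding disks. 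It follows that the number of circle orders on $\lbrace1,\dots,n\rbrace$ is at most the number of $(P_1,P_2,\phi,U,\Lambda)$-representable edge-labelings of the complete graph on $\lbrace1,\dots,n\rbrace$, which Theorem~\ref{theorem-upper-bound} (with $d=3$) bounds by $n^{(3+o(1))n}$, recovering the Alon--Scheinerman upper bound. For the matching lower bound, observe that in a \emph{strongly} $(P_1,P_2,\phi,U,\Lambda)$-representable edge-labeling the witnessing disks have pairwise distinct radii (since $P_1(a_i,a_j)\neq0$), hence are pairwise distinct, so the labeling genuinely encodes a circle order; since distinct edge-labelings encode distinct partial orders, the number of circle orders is at least the number of strongly representable edge-labelings.

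It thus remains to check the hypothesis of Theorem~\ref{theo-main}: for distinct $a,a'\in U$, i.e.\ distinct closed disks $D\neq D'$, I must exhibit $b\in U$ --- a disk $D^*$ --- with $P_s(a,b)\neq0$ and $P_s(a',b)\neq0$ for $s=1,2$ and with $\phi(\sgn P_1(a,b),\sgn P_2(a,b))\neq\phi(\sgn P_1(a',b),\sgn P_2(a',b))$. Since $D\neq D'$ and both have positive radius, one cannot have both $\operatorname{int}(D)\su D'$ and $\operatorname{int}(D')\su D$ (else passing to closures would force $D=D'$); so after possibly interchanging the roles of $D$ and $D'$ there is a point $p\in\operatorname{int}(D)\sm D'$. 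I would take $D^*$ to be the disk centered at $p$ of radius $\rho>0$, with $\rho$ small enough that $D^*\su\operatorname{int}(D)$ and $D^*\cap D'=\emptyset$, and also $\rho<\min(r,r')$. Then $D^*$ lies strictly inside $D$ with a positive gap, so $P_1(a,b)<0$ and $P_2(a,b)>0$ and the pair $(D,D^*)$ gets the label $\phi(-,+)=L_\succ$; whereas $D^*$ is disjoint from $D'$, so $P_1(a',b)<0$ and $P_2(a',b)<0$ and the pair $(D',D^*)$ gets the label $\phi(-,-)=L_\bowtie$; these differ, and all four nonvanishing conditions hold by construction. Feeding this into Theorem~\ref{theo-main} yields at least $n^{(1-o(1))\cdot 3\cdot n}$ strongly representable edge-labelings, hence at least $n^{(1-o(1))3n}$ circle orders, which combined with the upper bound gives $n^{(3+o(1))n}$. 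I expect the only mildly delicate point to be the sign bookkeeping in this last paragraph --- correctly matching the sign patterns of $P_1$ and $P_2$ to the labels $L_\prec,L_\succ,L_\bowtie$, including the tangency and coincidence boundary cases, and confirming that a sufficiently small $D^*$ automatically meets all the nonvanishing requirements (so no separate perturbation step is needed); everything else is routine.
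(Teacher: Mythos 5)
Your proof is correct and follows essentially the same route as the paper: encode closed disks as points of $U=\lbrace (x,y,r)\in\R^3\mid r>0\rbrace$, detect containment with the same two polynomials (up to sign and order), and apply Theorems \ref{theorem-upper-bound} and \ref{theo-main}. The only difference is cosmetic: your witness disk $D^*$ is taken strictly inside $D$ and disjoint from $D'$, whereas the paper takes a disk containing exactly one of $D,D'$; both verify the hypothesis of Theorem \ref{theo-main}, and your explicit sign bookkeeping (including the observation that strong representability forces distinct radii, hence distinct disks) is accurate.
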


In order to deduce the lower bound in Corollary \ref{coro-order-circles} from  Theorem \ref{theo-main}, we represent each circle order as a $(P_1, P_2,\phi,U, \Lambda)$-representable edge-labeling for suitably chosen $P_1$, $P_2$, $\Lambda$, $\phi$ and $U\su \R^3$. Each closed disk in the plane corresponds to a point in $U=\lbrace (x,y,r)\in \R^3\mid r>0\rbrace$ by taking $(x,y)\in \R^2$ to be the center of the disk and $r>0$ its radius. Clearly, the set $U$ is open and definable by polynomials.

For any two distinct closed disks $D$ and $D'$ we have $D\subset D'$ if and only if the points $(x,y,r), (x',y',r')\in U$ corresponding to $D$ and $D'$, respectively, satisfy $(x-x')^2+(y-y')^2\leq (r-r')^2$ and $r-r'< 0$. So define $P_1(x,y,r,x',y',r')=(r-r')^2-(x-x')^2-(y-y')^2$ and $P_2(x,y,r,x',y',r')=r-r'$. Furthermore consider the set $\Lambda=\lbrace \text{``$\prec$''}, \text{``$\succ$''}, \text{``incomparable''}\rbrace$ and a function $\phi:\lbrace +,-,0\rbrace\to\Lambda$ satisfying
\[\phi(+,-)=\phi(0,-)=\text{``$\prec$''},\quad  \phi(+,+)=\phi(0,+)=\text{``$\succ$''},\quad \phi(-,+)=\phi(-,0)=\phi(-,-)=\text{``incomparable''}\]
(note that we do not need to specify $\phi(+,0)$ and $\phi(0,0)$ since it is not possible to have $P_1(x,y,r,x',y',r')\geq 0$ and $P_2(x,y,r,x',y',r')=0$ for distinct $(x,y,r),(x',y',r')\in U\su \R^3$). Then we have $D\subset D'$ if and only if $\phi(\sgn P_1(x,y,r,x',y',r'),\sgn P_2(x,y,r,x',y',r')) =\text{``$\prec$''}$, and we have $D'\subset D$ if and only if $\phi(\sgn P_1(x,y,r,x',y',r'),\sgn P_2(x,y,r,x',y',r')) =\text{``$\succ$''}$.

Given a circle order on $\lbrace 1,\dots,n\rbrace$, let us define an edge-labeling of the complete graph on the vertex set $\lbrace 1,\dots,n\rbrace$ with labels in  $\Lambda=\lbrace \text{``$\prec$''}, \text{``$\succ$''}, \text{``incomparable''}\rbrace$ as follows: For $1\leq i<j\leq n$, let the label of the edge $ij$ be ``$\prec$'' if we have $i \prec j$ in the circle order, `$\succ$'' if we have $i \succ j$ in the circle order, and ``incomparable'' if $i$ and $j$ are incomparable in the circle order. This gives a correspondence between the circle orders on $\lbrace 1,\dots,n\rbrace$ and the $(P_1, P_2,\phi,U, \Lambda)$-representable edge-labelings of the complete graph on the vertex $\lbrace 1,\dots,n\rbrace$.

Thus, the number of circle orders on $\lbrace 1,\dots,n\rbrace$ equals the number of $(P_1, P_2,\phi,U, \Lambda)$-representable edge-labelings of the complete graph on the vertex $\lbrace 1,\dots,n\rbrace$. In order to apply Theorem \ref{theo-main} to obtain a lower bound for this number, it only remains to check the following condition: For any distinct $(x,y,r), (x',y',r')\in U$ we need to have a point $(x^*,y^*,r^*)\in U$ such that $P_i(x,y,r,x^*,y^*,r^*)\neq 0$ and $P_i(x',y',r',x^*,y^*,r^*)\neq 0$ for $i=1,2$ and
\begin{multline*}
\phi(\sgn P_1(x,y,r,x^*,y^*,r^*), \sgn P_2(x,y,r,x^*,y^*,r^*))\\
\neq \phi(\sgn P_1(x',y',r',x^*,y^*,r^*), \sgn P_2(x',y',r',x^*,y^*,r^*)).
\end{multline*}
But this simply means that for any distinct closed disks $D, D'$ there exists a disk $D^*$ which contains exactly one of the disks $D$ and $D'$  and has a radius distinct from the radii of $D$ and $D'$ and such that the boundary circle of $D^*$ is neither tangent (from the inside) to the boundary circle of $D$ nor to the boundary circle of $D'$. As in the previous geometric applications, this statement is again very easy to check.

Thus, Theorem \ref{theo-main} indeed implies the lower bound in Corollary \ref{coro-order-circles} (and Theorem \ref{theorem-upper-bound} reproves the upper bound due to Alon and Scheinerman \cite{alon-scheinerman}).

Note that our arguments above trivially generalize to any dimension other than two. Thus, Theorems \ref{theorem-upper-bound} and \ref{theo-main} also yield the following corollary 

\begin{corollary}
For any $m\geq 1$, the number of partial orders on $\lbrace 1,\dots,n\rbrace$ given by the containment relations of $n$ numbered closed balls in $\R^m$ equals $n^{(m+1+o(1))n}$.
\end{corollary}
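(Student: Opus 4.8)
The plan is to mimic the argument given above for circle orders (Corollary~\ref{coro-order-circles}), which is exactly the case $m=2$ of this corollary, taking the ambient dimension to be $d=m+1$ instead of $d=3$. First I would encode each closed ball in $\R^m$ as a point of the open set $U=\lbrace (x_1,\dots,x_m,r)\in\R^{m+1}\mid r>0\rbrace$, where $(x_1,\dots,x_m)$ is the center of the ball and $r>0$ its radius. This set $U$ is clearly open and definable by polynomials (defined by the single polynomial inequality $r>0$). For two distinct balls $D$ and $D'$ with associated points $(x_1,\dots,x_m,r)$ and $(x'_1,\dots,x'_m,r')$, we have $D\subset D'$ if and only if $\sum_{i=1}^m (x_i-x'_i)^2\leq (r-r')^2$ and $r-r'<0$. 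So I would set
\[P_1=(r-r')^2-\sum_{i=1}^m (x_i-x'_i)^2,\qquad P_2=r-r',\]
take $\Lambda=\lbrace\text{``}\prec\text{''},\text{``}\succ\text{''},\text{``incomparable''}\rbrace$, and define $\phi:\lbrace+,-,0\rbrace^2\to\Lambda$ by the same formulas as in the circle-order case: $\phi(+,-)=\phi(0,-)=\text{``}\prec\text{''}$, $\phi(+,+)=\phi(0,+)=\text{``}\succ\text{''}$, $\phi(-,+)=\phi(-,0)=\phi(-,-)=\text{``incomparable''}$ (the values $\phi(+,0)$ and $\phi(0,0)$ are irrelevant since distinct points of $U$ with $P_2=0$ force $P_1<0$).

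With this set-up, exactly as before, the partial orders on $\lbrace 1,\dots,n\rbrace$ arising from containment relations of $n$ numbered closed balls in $\R^m$ correspond precisely to the $(P_1,P_2,\phi,U,\Lambda)$-representable edge-labelings of the complete graph on $\lbrace 1,\dots,n\rbrace$, under the encoding that labels the edge $ij$ (for $i<j$) by ``$\prec$'', ``$\succ$'' or ``incomparable'' according to the relation between $i$ and $j$ in the partial order. Hence Theorem~\ref{theorem-upper-bound} immediately yields the upper bound $n^{(m+1+o(1))n}$.

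For the lower bound I would invoke Theorem~\ref{theo-main}, so it only remains to verify its hypothesis: for any two distinct points $a,a'\in U$ there is a point $b\in U$ with $P_s(a,b)\neq 0$ and $P_s(a',b)\neq 0$ for $s=1,2$ and with $\phi(\sgn P_1(a,b),\sgn P_2(a,b))\neq\phi(\sgn P_1(a',b),\sgn P_2(a',b))$. Geometrically this says: for any two distinct closed balls $D,D'$ in $\R^m$ there is a closed ball $D^*$ containing exactly one of $D,D'$, whose radius differs from those of $D$ and $D'$, and whose boundary sphere is not internally tangent to the boundary sphere of $D$ or of $D'$. This is a routine fact — pick a ball $D^*$ that strictly contains exactly one of $D,D'$ (such a ball exists since $D\neq D'$), then enlarge it very slightly to destroy any coincidence of radii and any internal tangency, which is possible because these are finitely many codimension-one conditions. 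I expect this verification to be the only step requiring any thought, and it is easy; therefore Theorems~\ref{theorem-upper-bound} and~\ref{theo-main} together give the claimed count $n^{(m+1+o(1))n}$.
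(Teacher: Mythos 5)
Your proposal is correct and follows essentially the same route as the paper, which obtains this corollary by observing that the circle-order argument (encoding balls as points of $\lbrace (x,r)\in\R^{m+1}\mid r>0\rbrace$ with the two polynomials $P_1=(r-r')^2-\sum_i(x_i-x_i')^2$ and $P_2=r-r'$ and the same $\phi$) generalizes verbatim to $\R^m$. The verification of the hypothesis of Theorem \ref{theo-main} that you sketch is exactly the easy geometric check the paper alludes to.
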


In particular, for $m=1$ we obtain that the number of partial orders on $\lbrace 1,\dots,n\rbrace$ given by the containment relations of $n$ numbered closed intervals equals $n^{(2+o(1))n}$.

Alon and Scheinerman \cite{alon-scheinerman} also proved a similar result for containment orders of polygons with a fixed number of vertices. For fixed $m\geq 3$, an \emph{$m$-gon order} is a partial order on the set $\lbrace 1,\dots,n\rbrace$ given by the containment relations of $n$ different $m$-gons (polygons with $m$ vertices each) in the plane which are numbered from $1$ to $n$ (see \cite{alon-scheinerman, sidney-et-al}). By encoding $m$-gons by points in $\R^{2m}$ such that the containment relations are described by the signs of a finite list of polynomials, Alon and Scheinerman \cite{alon-scheinerman} established that the number of $m$-gon orders on $\lbrace 1,\dots,n\rbrace$ is at most $n^{(2m+o(1))n}$ (for fixed $m$). With the same arguments as above for disks, one can deduce from Theorem \ref{theo-main} that this bound is sharp. Thus, one obtains the following corollary.

\begin{corollary}
For any $m\geq 3$, the number of $m$-gon orders on $\lbrace 1,\dots,n\rbrace$ equals $n^{(2m+o(1))n}$.
\end{corollary}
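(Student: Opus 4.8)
The plan is to obtain the upper bound from Theorem~\ref{theorem-upper-bound} (this reproves the bound of Alon and Scheinerman \cite{alon-scheinerman}) and the matching lower bound from Theorem~\ref{theo-main}, following exactly the template used above for circle orders. First I would fix an encoding of convex $m$-gons by points of $\R^{2m}$: represent an $m$-gon by the $2m$-tuple $(x_1,y_1,\dots,x_m,y_m)$ of coordinates of its vertices listed in counterclockwise cyclic order, and let $U\su\R^{2m}$ be the set of those tuples for which $(x_1,y_1),\dots,(x_m,y_m)$ are in strictly convex position in this cyclic order. Each of the $m$ ``turn left'' conditions says that a certain determinant --- a polynomial in the $2m$ coordinates --- is positive, so $U$ is open and definable by polynomials. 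Every convex $m$-gon is represented by the $m$ cyclic rotations of one tuple of $U$, but this redundancy is harmless for counting orders.

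Next I would write down the polynomials. For convex $m$-gons $A,B$ one has $A\su B$ if and only if every vertex of $A$ lies in the closed region $B$, i.e.\ every vertex of $A$ is on the correct side of each of the $m$ edge-lines of $B$; each such condition is a sign condition on a polynomial of degree $2$ in the coordinates of $A$ and of $B$. Collecting the $m^2$ polynomials that compare the vertices of the first $m$-gon with the edge-lines of the second, and the $m^2$ polynomials comparing the vertices of the second with the edge-lines of the first, I get a list $P_1,\dots,P_k\in\R[x_1,\dots,x_{2m},y_1,\dots,y_{2m}]$ with $k=2m^2$. With $\Lambda=\lbrace\text{``$\prec$''},\text{``$\succ$''},\text{``incomparable''}\rbrace$ I define $\phi:\lbrace+,-,0\rbrace^k\to\Lambda$ to return ``$\prec$'' when all the signs in the first family say ``inside'', ``$\succ$'' when all the signs in the second family say ``inside'', and ``incomparable'' otherwise; these cases are exclusive, since two distinct convex $m$-gons cannot contain each other. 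Then, exactly as in the circle-order case, the $m$-gon orders on $\lbrace 1,\dots,n\rbrace$ are precisely the edge-labelings of the form $F_{P_1,\dots,P_k,\phi}(a_1,\dots,a_n)$. Moreover, if a labeling is strongly $(P_1,\dots,P_k,\phi,U,\Lambda)$-representable then the realizing $m$-gons are automatically distinct (if two were equal, a vertex of one would lie on an edge-line of the other, forcing some $P_s(a_i,a_j)=0$), so each strongly representable labeling encodes a genuine $m$-gon order and distinct labelings encode distinct orders.

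Finally I would check the hypothesis of Theorem~\ref{theo-main}. Given distinct $a,a'\in U$, corresponding to distinct convex $m$-gons $A,A'$, I need $b\in U$, corresponding to an $m$-gon $D$, that is comparable under containment to exactly one of $A,A'$ and is generic in the sense that no vertex of $D$ lies on an edge-line of $A$ or of $A'$ and no vertex of $A$ or of $A'$ lies on an edge-line of $D$; this genericity is exactly the requirement $P_s(a,b)\neq 0$ and $P_s(a',b)\neq 0$ for all $s$. Since $A\neq A'$ we cannot have both $A\su A'$ and $A'\su A$, so, swapping $A$ and $A'$ if necessary, I may assume $A\not\su A'$, hence $\operatorname{int}(A)\not\su A'$; pick a point $p\in\operatorname{int}(A)\sm A'$. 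As $A'$ is closed, $p$ has a neighbourhood disjoint from $A'$ and contained in $\operatorname{int}(A)$, and a sufficiently small regular $m$-gon $D$ inside it satisfies $D\su A$ and $D\cap A'=\varnothing$, so $D$ is comparable to $A$ but incomparable to $A'$. A small perturbation of $D$ keeps it in $U$ and comparable to exactly one of $A,A'$ while removing the finitely many possible coincidences, giving the required $b$. Thus Theorem~\ref{theo-main} applies and yields at least $n^{(1-o(1))\cdot 2m\cdot n}$ strongly representable edge-labelings, hence at least $n^{(2m-o(1))n}$ $m$-gon orders, which together with the upper bound proves the corollary. I expect the main obstacle to be not any single deduction but the bookkeeping: arranging $U$ to be simultaneously open, polynomial-definable, and a faithful parametrization of convex $m$-gons, and then verifying carefully that the auxiliary polygon $D$ can be made generic as demanded by the hypothesis of Theorem~\ref{theo-main}.
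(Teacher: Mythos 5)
Your route is the one the paper intends (the paper only sketches this corollary, deferring to the Alon--Scheinerman encoding and ``the same arguments as for disks''), but one step of your verification genuinely fails as written. The hypothesis of Theorem \ref{theo-main} must hold for \emph{every} pair of distinct points $a,a'\in U$, and your parametrization is $m$-to-one: each convex $m$-gon is represented by the $m$ cyclic rotations of its vertex list. Take $a\in U$ and let $a'$ be a nontrivial cyclic shift of $a$; these are distinct points of $U$ describing the same polygon $A$. For any $b\in U$, the value of $\phi$ applied to the sign vector depends only on which vertices of $A$ lie on which side of which edge-lines of the polygon of $b$ (and vice versa), i.e.\ only on the geometric pair, so the two $\phi$-values coincide for every $b$ and no separating $b$ exists. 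This is exactly the ``two copies of the same object'' obstruction the paper warns about, and it makes your sentence ``Given distinct $a,a'\in U$, corresponding to distinct convex $m$-gons $A,A'$'' unjustified; the redundancy you dismissed as harmless is precisely what breaks the hypothesis. The repair is easy but necessary: cut down $U$ so that the parametrization is injective, e.g.\ add the open polynomial conditions $x_1<x_i$ for all $i\geq 2$ (the first listed vertex is the strict leftmost one). Each such tuple then determines a distinct $m$-gon, the subfamily of $m$-gons with a unique leftmost vertex still suffices for the lower bound, and your separating construction (a small $m$-gon $D$ inside $A$ and disjoint from $A'$, perturbed to be generic and to have a unique leftmost vertex) goes through for genuinely distinct $A\neq A'$.

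A second, smaller slip: for $m\geq 5$ the positivity of the $m$ consecutive ``turn left'' determinants does not characterize strictly convex position. A star traversal of points in convex position (e.g.\ the pentagram ordering of a regular pentagon, where every exterior angle is $144^\circ$) also makes all $m$ consecutive turns left, so your $U$ contains tuples that do not describe $m$-gons in the intended way; for such tuples the vertex-versus-edge-line sign tests no longer encode containment of convex $m$-gons, and strongly representable labelings need then not be $m$-gon orders. Use instead the $m(m-2)$ sign conditions stating that every other vertex lies strictly to the left of each directed edge; this set is open, definable by polynomials, and is exactly the set of counterclockwise listings of strictly convex $m$-gons. With these two repairs your argument coincides with the paper's intended proof and yields the stated bound $n^{(2m+o(1))n}$.
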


Alon and Scheinerman \cite{alon-scheinerman} also investigated angle orders. An \emph{angle} is given by the intersection of two closed half-planes (which are bounded by non-parallel lines). An \emph{angle order}  is a partial order on the set $\lbrace 1,\dots,n\rbrace$ given by the containment relations of $n$ numbered angles (see \cite{alon-scheinerman, fishburn, fishburn-trotter}). Alon and Scheinerman \cite{alon-scheinerman} proved that the number of angle orders on $\lbrace 1,\dots,n\rbrace$ is at most $n^{(4+o(1))n}$. This can be proved as follows (we believe that one has to be slightly more careful than in \cite{alon-scheinerman}): Assuming that none of the half-planes is bounded by a vertical line, each of the half-planes is described by an inequality of the form $y\geq \alpha x+\beta$ or $y\leq \alpha x+\beta$. If one first chooses one of the $2^{2n}$ possibilities for the inequality signs in the descriptions of the half-planes determining the angles, one can then encode any configuration of $n$ angles as a point in $\R^{4n}$ such that the containment relations between the angles are described by the signs of a list of $O(n^2)$ polynomials. The arguments of Alon and Scheinerman \cite{alon-scheinerman} then give an upper bound of $n^{(4+o(1))n}$ for the number of angle orders with the chosen inequality signs for the half-planes. All in all, the number of angle orders on $\lbrace 1,\dots,n\rbrace$ is therefore at most $2^{2n}\cdot n^{(4+o(1))n}=n^{(4+o(1))n}$.

Theorem \ref{theo-main} can be used to obtain a matching lower bound for the number of angle orders on $\lbrace 1,\dots,n\rbrace$. Indeed, for a lower bound it suffices to only consider angles obtained from half-planes described by inequalities of the form $y\geq \alpha x+\beta$ for some $\alpha, \beta\in \R$. Each such angle, given by the inequalities $y\geq \alpha_1 x+\beta_1$ and $y\geq \alpha_2 x+\beta_2$ with $\alpha_1< \alpha_2$, corresponds to a point in $U=\lbrace (\alpha_1,\beta_1,\alpha_2, \beta_2)\in \R^4\mid \alpha_1<\alpha_2\rbrace$. Whether one such angle is contained in another one can be determined from the signs of a finite list of polynomials in the coordinates of the corresponding points in $U$. Again, it is easy to check that in this set-up all conditions in Theorem \ref{theo-main} are satisfied. Thus, Theorem \ref{theo-main} implies the following corollary.

\begin{corollary}
The number of angle orders on $\lbrace 1,\dots,n\rbrace$ equals $n^{(4+o(1))n}$.
\end{corollary}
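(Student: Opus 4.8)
The plan is to establish the upper and lower bounds separately. The upper bound $n^{(4+o(1))n}$ will come from the Alon--Scheinerman method based on Warren's theorem \cite{warren}, exactly as sketched above: given a configuration of $n$ angles realizing some angle order, first rotate it so that none of the $2n$ bounding lines is vertical; then fix one of the $2^{2n}$ patterns of inequality signs ($y\geq\alpha x+\beta$ versus $y\leq\alpha x+\beta$) for the $2n$ defining half-planes; encode the resulting configuration as a point of $\R^{4n}$; observe that all containment relations among the $n$ angles are governed by the signs of $O(n^2)$ bounded-degree polynomials in these $4n$ coordinates; and apply Warren's theorem to bound the number of realizable sign patterns -- hence the number of angle orders compatible with that choice of inequality signs -- by $n^{(4+o(1))n}$. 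Summing over the $2^{2n}=n^{o(n)}$ sign patterns then yields the claimed upper bound.

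For the lower bound I would apply Theorem \ref{theo-main} with $d=4$, using only ``upward'' angles: the intersection of two half-planes $y\geq\alpha_1 x+\beta_1$ and $y\geq\alpha_2 x+\beta_2$ with $\alpha_1<\alpha_2$ is determined uniquely by the point $(\alpha_1,\beta_1,\alpha_2,\beta_2)$ of
\[
U=\lbrace (\alpha_1,\beta_1,\alpha_2,\beta_2)\in\R^4\mid \alpha_1<\alpha_2\rbrace,
\]
which is open and, being cut out by $\sgn(\alpha_2-\alpha_1)=+$, definable by polynomials. The crucial step is to show that containment of one such angle $W$ in another angle $W'$ can be detected by polynomial signs. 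I would use the characterization that $W\su W'$ holds if and only if the recession cone of $W$ is contained in that of $W'$ \emph{and} the apex of $W$ (the intersection point of its two bounding lines) lies in $W'$. The recession-cone condition is simply $\alpha_1\leq\alpha_1'$ and $\alpha_2\geq\alpha_2'$; the apex of $W$ has coordinates $(\beta_1-\beta_2)/(\alpha_2-\alpha_1)$ and $(\alpha_2\beta_1-\alpha_1\beta_2)/(\alpha_2-\alpha_1)$ with \emph{positive} denominator on $U$, so clearing this denominator turns ``apex of $W$ lies in $W'$'' into two polynomial inequalities in the eight variables. Taking $k=6$ polynomials $P_1,\dots,P_6$ (the two slope differences $\alpha_1-\alpha_1'$ and $\alpha_2-\alpha_2'$, together with the four cleared apex-membership polynomials, for $W\su W'$ and for $W'\su W$), $\Lambda=\lbrace \text{``$\prec$''},\text{``$\succ$''},\text{``incomparable''}\rbrace$, and a function $\phi$ recording which of $W\subsetneq W'$, $W'\subsetneq W$, or neither holds, I would get that the angle orders arising from $n$ upward angles are precisely the $(P_1,\dots,P_6,\phi,U,\Lambda)$-representable edge-labelings of the complete graph on $\lbrace 1,\dots,n\rbrace$; note that $\phi$ is well defined on the sign patterns with all $P_s\neq 0$, since there $\alpha_1\neq\alpha_1'$ rules out having both $W\su W'$ and $W'\su W$, so each such edge-labeling is an honest partial order.

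It then remains to verify the hypothesis of Theorem \ref{theo-main}. Since the hypothesis is symmetric in the two points, given distinct upward angles $W\neq W'$ I may assume $W\not\su W'$; I would then pick an interior point $q$ of $W$ lying outside $W'$ (such a point exists, since $\operatorname{int}W\su W'$ would force $W\su W'$ by closedness of $W'$) and take $W^*$ to be a thin upward angle with apex in $\operatorname{int}W$ near $q$, narrower than $W$, and containing $q$ in its interior. Then $W^*\subsetneq W$ but $W^*\not\su W'$, so the edge-label of the pair built from $W$ and $W^*$ is ``$\succ$'' while the label of the pair built from $W'$ and $W^*$ is not ``$\succ$'' (it is ``$\prec$'' or ``incomparable''); hence the two labels differ. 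Finally, since the $P_s$ are finitely many polynomials, none of which vanishes identically in $W^*$, a small perturbation of $W^*$ makes all $P_s$ nonzero at the two relevant pairs while preserving the strict containment $W^*\subsetneq W$ and the relation $W^*\not\su W'$. Theorem \ref{theo-main} then produces at least $n^{(4-o(1))n}$ strongly representable edge-labelings, i.e.\ at least $n^{(4-o(1))n}$ angle orders on $\lbrace 1,\dots,n\rbrace$, matching the upper bound.

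The step I expect to be the main obstacle is the polynomial encoding of angle containment: checking that the recession-cone and apex-membership conditions characterize $W\su W'$ exactly, that clearing the positive denominator $\alpha_2-\alpha_1$ genuinely yields polynomial sign conditions, and that $\phi$ can be defined consistently so that all strongly representable edge-labelings are partial orders. The ``extra care'' needed for the upper bound -- the reduction to non-vertical bounding lines and the summation over the $2^{2n}$ sign patterns -- is a secondary, purely bookkeeping, matter, and checking the hypothesis of Theorem \ref{theo-main} is routine geometry.
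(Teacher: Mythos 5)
Your proposal is correct and follows essentially the same route as the paper: the upper bound via the $2^{2n}$ choices of inequality signs combined with the Alon--Scheinerman/Warren sign-pattern count, and the lower bound via Theorem \ref{theo-main} applied to upward angles encoded as points of $U=\lbrace(\alpha_1,\beta_1,\alpha_2,\beta_2)\mid \alpha_1<\alpha_2\rbrace\su\R^4$ with containment detected by polynomial signs. Your recession-cone-plus-apex characterization and the thin-wedge verification of the hypothesis of Theorem \ref{theo-main} are exactly the details the paper leaves to the reader as ``easy to check,'' and they check out.
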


\subsection{Partial orders of a given dimension}

Recall that the dimension of a partial order on the set $\lbrace 1,\dots, n\rbrace$ is the minimum integer $d$ such that there exist points $a_1,\dots,a_n\in \R^d$ satisfying the following conditions: For all distinct $i,j\in \lbrace 1,\dots, n\rbrace$ the points $a_i=(a_i^{(1)},\dots, a_i^{(d)})$ and $a_j=(a_j^{(1)},\dots, a_j^{(d)})$ satisfy $a_i^{(\l)}\neq a_j^{(\l)}$ for all $\l=1,\dots,d$, and furthermore we have $i \prec j$ in the partial order if and only $a_i^{(\l)}\leq a_j^{(\l)}$ for all $\l=1,\dots,d$.

Alon and Scheinerman \cite[Theorem 1]{alon-scheinerman} proved that for fixed $d$ the number of partial orders on $\lbrace 1,\dots, n\rbrace$ of dimension at most $d$ equals $n^{(d+o(1))n}$. This result can also be obtained as an easy corollary of Theorems \ref{theorem-upper-bound} and \ref{theo-main}.

\begin{corollary}[\cite{alon-scheinerman}]
For any $d\geq 1$, the number of partial orders on $\lbrace 1,\dots,n\rbrace$ of dimension at most $d$ equals $n^{(d+o(1))n}$.
\end{corollary}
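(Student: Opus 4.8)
The plan is to realize the partial orders of dimension at most $d$ on $\lbrace 1,\dots,n\rbrace$ as the strongly representable edge-labelings for a suitable choice of data, and then to read off both bounds from Theorems \ref{theorem-upper-bound} and \ref{theo-main}. Concretely, I would take $U=\R^d$ (which is open and definable by polynomials, e.g.\ via the single constant polynomial $Q_1=1$ with $S=\lbrace+\rbrace$), the $k=d$ polynomials $P_\ell(x_1,\dots,x_d,y_1,\dots,y_d)=y_\ell-x_\ell$ for $\ell=1,\dots,d$, the label set $\Lambda=\lbrace\text{``$\prec$''},\text{``$\succ$''},\text{``incomparable''}\rbrace$, and the function $\phi\colon\lbrace+,-,0\rbrace^d\to\Lambda$ sending $(+,\dots,+)$ to ``$\prec$'', sending $(-,\dots,-)$ to ``$\succ$'', and sending every other tuple to ``incomparable'' (the values of $\phi$ on tuples containing a $0$ will be irrelevant in what follows). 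For points $a_1,\dots,a_n\in\R^d$ with $P_\ell(a_i,a_j)\neq 0$ for all $i<j$ and all $\ell$ --- equivalently, with pairwise distinct $\ell$-th coordinates for each $\ell$ --- the edge-labeling $F_{P_1,\dots,P_d,\phi}(a_1,\dots,a_n)$ assigns to an edge $ij$ with $i<j$ the label ``$\prec$'' precisely when $a_i^{(\ell)}<a_j^{(\ell)}$ for all $\ell$, the label ``$\succ$'' precisely when $a_i^{(\ell)}>a_j^{(\ell)}$ for all $\ell$, and ``incomparable'' otherwise.

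By the definition of the dimension of a partial order recalled above, this identification is a bijection between the partial orders on $\lbrace 1,\dots,n\rbrace$ of dimension at most $d$ and the strongly $(P_1,\dots,P_d,\phi,U,\Lambda)$-representable edge-labelings of the complete graph on $\lbrace 1,\dots,n\rbrace$. Indeed, any such edge-labeling determines a well-defined partial order (independently of the witnessing points $a_i$) by declaring $i\prec j$ when the edge $ij$, $i<j$, has label ``$\prec$'' and $j\prec i$ when it has label ``$\succ$''; the witnessing points show this partial order has dimension at most $d$. Conversely every partial order of dimension at most $d$ arises this way by the definition of dimension, and distinct partial orders plainly give distinct edge-labelings. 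Since every strongly representable edge-labeling is representable, Theorem \ref{theorem-upper-bound} now gives at most $n^{(1+o(1))dn}=n^{(d+o(1))n}$ partial orders of dimension at most $d$.

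For the matching lower bound I would invoke Theorem \ref{theo-main}; since $U=\R^d$ is open and definable by polynomials, only its genericity hypothesis needs checking. Given distinct points $a,a'\in\R^d$, choose a coordinate $\ell_0$ with $a^{(\ell_0)}\neq {a'}^{(\ell_0)}$, say $a^{(\ell_0)}<{a'}^{(\ell_0)}$, and pick $b\in\R^d$ whose $\ell_0$-th coordinate lies strictly between $a^{(\ell_0)}$ and ${a'}^{(\ell_0)}$ and each of whose other coordinates exceeds both the corresponding coordinate of $a$ and that of $a'$. Then $b-a$ has all coordinates strictly positive, while $b-a'$ has a strictly negative $\ell_0$-th coordinate and strictly positive remaining coordinates; in particular $P_\ell(a,b)\neq 0$ and $P_\ell(a',b)\neq 0$ for all $\ell$, and $\phi(\sgn P_1(a,b),\dots,\sgn P_d(a,b))=\text{``$\prec$''}$ differs from $\phi(\sgn P_1(a',b),\dots,\sgn P_d(a',b))=\text{``incomparable''}$. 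Hence Theorem \ref{theo-main} provides at least $n^{(1-o(1))dn}=n^{(d-o(1))n}$ strongly representable edge-labelings, i.e.\ at least that many partial orders of dimension at most $d$, and combining the two bounds yields $n^{(d+o(1))n}$. I anticipate no real obstacle in carrying this out; the single point deserving care is that the correspondence must use \emph{strong} representability, since the defining condition of order dimension demands pairwise distinct coordinates, which is exactly the requirement $P_\ell(a_i,a_j)\neq 0$.
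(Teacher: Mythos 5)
Your proposal is correct and follows essentially the same route as the paper: the same $U=\R^d$, the same polynomials $P_\ell=y_\ell-x_\ell$, an equivalent choice of $\phi$ (the paper's $\phi$ also handles tuples with zeros, which is immaterial for strongly representable labelings), the same bijection with dimension-at-most-$d$ partial orders, and both bounds read off from Theorems \ref{theorem-upper-bound} and \ref{theo-main}, with the genericity hypothesis checked by an explicit $b$ just as the paper asserts is easy. (Only a cosmetic remark: in the degenerate case $d=1$ your chosen $b$ gives $\phi(\sgn P_1(a',b))=\text{``$\succ$''}$ rather than ``incomparable'', but the required inequality of the two $\phi$-values still holds.)
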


\begin{proof}
Let $U=\R^d$, let $P_s(x_1,\dots,x_d,y_1,\dots,y_d)=y_s-x_s$ for $s=1,\dots,d$, and let us define the function $\phi:\lbrace +,-,0\rbrace^d \to \Lambda$, where $\Lambda=\lbrace \text{``$\prec$''}, \text{``$\succ$''}, \text{``incomparable''}\rbrace$, as follows: Let us send all non-zero $d$-tuples in $\lbrace +,0\rbrace^d$ to ``$\prec$'', send all non-zero $d$-tuples in $\lbrace -,0\rbrace^d$ to ``$\succ$'', and send all remaining $d$-tuples in $\lbrace +,-,0\rbrace^d$ to ``incomparable''. Then for any distinct points $a, a'\in U=\R^d$ we have $\phi(\sgn P_1(a,a'), \dots, \sgn P_d(a,a'))=\text{``$\prec$''}$ if and only if $a^{(\l)}\leq a'^{(\l)}$ for all $\l=1,\dots,d$, and we have $\phi(\sgn P_1(a,a'), \dots, \sgn P_d(a,a'))=\text{``$\succ$''}$ if and only if $a^{(\l)}\geq a'^{(\l)}$ for all $\l=1,\dots,d$.

Now, the number of partial orders on $\lbrace 1,\dots, n\rbrace$ of dimension at most $d$ equals the number of strongly $(P_1,\dots,P_s,\phi,U,\Lambda)$-representable edge-labelings of the complete graph on the vertex $\lbrace 1,\dots,n\rbrace$, and by Theorems \ref{theorem-upper-bound} and \ref{theo-main} this number equals $n^{(d+o(1))n}$, as desired (again, the assumptions of Theorem \ref{theo-main} are easy to check).
\end{proof}

Note that applying this result for $d$ and $d-1$, we see that the number of partial orders on $\lbrace 1,\dots, n\rbrace$ of dimensional exactly $d$ equals 
$n^{(d+o(1))n}- n^{(d-1+o(1))n}=n^{(d+o(1))n}$.

\section{Algebraic Preliminaries for the proof of Theorem \ref{theo-main}}\label{sect-algebraic}

In this section, we will state several preliminaries from algebra and algebraic geometry. These statements are all known and possibly obvious to experts, but for completeness we provide proofs in the appendix.

We start with the following very easy fact (for the proof see Subsection \ref{sect-a-facts} of the appendix).

\begin{fact}\label{fact-poly-vanishing} Let $m\geq 1$ and let $Q_1,\dots,Q_\l\in \R[x_1,\dots,x_m]$ be non-zero real polynomials. Then for any non-empty open set $U\su \R^m$, we can find a point $x\in U$ such that $Q_i(x)\neq 0$ for $i=1,\dots,\l$.
\end{fact}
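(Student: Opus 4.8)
The plan is to prove Fact~\ref{fact-poly-vanishing} by an easy induction on $\l$, using the fact that a non-zero polynomial cannot vanish on a non-empty open subset of $\R^m$.

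\textbf{Base case.} For $\l = 1$ we are given a single non-zero polynomial $Q_1 \in \R[x_1,\dots,x_m]$ and a non-empty open set $U \su \R^m$. I claim $Q_1$ cannot vanish identically on $U$. Indeed, since $U$ is non-empty and open, it contains a small open box $\prod_{s=1}^m (\alpha_s, \beta_s)$ with $\alpha_s < \beta_s$. If $Q_1$ vanished on this box, then by fixing all but one variable at a time and invoking the fact that a non-zero one-variable real polynomial has only finitely many roots (so cannot vanish on an interval), one concludes inductively on $m$ that $Q_1$ is the zero polynomial, a contradiction. Hence there is a point $x \in U$ with $Q_1(x) \neq 0$.

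\textbf{Inductive step.} Suppose the statement holds for $\l - 1$ non-zero polynomials. Given non-zero $Q_1,\dots,Q_\l$ and a non-empty open $U \su \R^m$, apply the inductive hypothesis to $Q_1,\dots,Q_{\l-1}$ and $U$ to obtain a point $x_0 \in U$ with $Q_i(x_0) \neq 0$ for $i = 1,\dots,\l-1$. By continuity of the polynomials $Q_1,\dots,Q_{\l-1}$, there is an open neighbourhood $V \su U$ of $x_0$ on which $Q_1,\dots,Q_{\l-1}$ are all non-zero. Now $V$ is a non-empty open set, so by the base case applied to the non-zero polynomial $Q_\l$ and the set $V$, there is a point $x \in V$ with $Q_\l(x) \neq 0$; this $x$ also satisfies $Q_i(x) \neq 0$ for $i = 1,\dots,\l-1$ since $x \in V$, completing the induction.

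The only slightly non-trivial ingredient is the claim, used in the base case, that a non-zero polynomial in $m$ variables cannot vanish on a non-empty open subset of $\R^m$ (equivalently, on an open box); I expect this to be the main point requiring care, and it is handled by the standard reduction to the one-variable case by induction on $m$. Alternatively, one can bypass the induction-on-$\l$ entirely by applying this claim directly to the single non-zero polynomial $Q_1 \cdots Q_\l$: a point $x \in U$ with $(Q_1 \cdots Q_\l)(x) \neq 0$ is exactly a point with $Q_i(x) \neq 0$ for all $i$. I would likely present this shorter version, stating the vanishing claim as the one thing to verify.
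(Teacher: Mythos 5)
Your proof is correct, and its overall structure (settle the case of a single polynomial, then induct on $\l$) mirrors the paper's, which proves the $\l=1$ case as a separate fact and then induces exactly as you do, just processing $Q_\l$ first to form the open set $\lbrace x\in U\mid Q_\l(x)\neq 0\rbrace$ rather than shrinking to a neighborhood at the end. The only real difference is in the single-polynomial step: you reduce to the one-variable statement (finitely many roots, so no vanishing on an interval) by induction on $m$ over an open box, whereas the paper differentiates: if $Q$ vanished on $U$ so would all its partial derivatives, but the derivative $(\partial_{x_1})^{a_1}\dotsm(\partial_{x_m})^{a_m}Q$ corresponding to a top-degree monomial with coefficient $c\neq 0$ is the nonzero constant $a_1!\dotsm a_m!\cdot c$, a contradiction; both arguments are elementary and complete, the paper's avoiding the inner induction on $m$. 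Your proposed shortcut of applying the single-polynomial statement to the product $Q_1\dotsm Q_\l$ (nonzero since $\R[x_1,\dots,x_m]$ is an integral domain) is also valid and in fact removes the induction on $\l$ altogether, which the paper does not exploit.
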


We will need some facts from algebraic geometry. Since stating these facts will not require any scheme-theory, we will formulate everything just in terms of real algebraic sets. Our notation follows \cite{real-ag-book}.

For an ideal $I\su \R[x_1,\dots,x_m]$, define its \emph{zero-set} to be
\[\mathcal{Z}(I)=\lbrace x\in \R^m\mid Q(x)=0 \text{ for all }Q\in I\rbrace.\]
A \emph{real algebraic set} $V$ in $\R^m$ is a subset $V\su \R^m$ of the form $V=\mathcal{Z}(I)$ for some ideal $I\su \R[x_1,\dots,x_m]$. Note that there can be different ideals (even different prime ideals) $I$ yielding the same real algebraic set $\mathcal{Z}(I)$. However, for each real algebraic set $V$ in $\R^m$, we can define the ideal
\[\mathcal{I}(V)=\lbrace Q\in \R[x_1,\dots,x_m]\mid Q(x)=0 \text{ for all }x\in V\rbrace.\]
Note that then for every ideal $I\su \R[x_1,\dots,x_m]$ with $V=\mathcal{Z}(I)$, we have $I\su \mathcal{I}(V)$.

Clearly, each real algebraic set is closed (in the Euclidean topology). Throughout this whole paper, we the notions ``open'' and ``closed'' refer to the Euclidean topology unless explicitly noted otherwise.

The \emph{dimension} $\dim V$ of a real algebraic set $V$ is the dimension of the ring $\R[x_1,\dots,x_m]/\mathcal{I}(V)$. In other words, it is the maximum integer $d$ such that there exists prime ideals $\mathfrak{p}_0, \mathfrak{p}_1,\dots, \mathfrak{p}_d$ in $\R[x_1,\dots,x_m]$ with $\mathcal{I}(V)\su \mathfrak{p}_0\subsetneq \mathfrak{p}_1\subsetneq\dots\subsetneq \mathfrak{p}_d$. Note that we have $\mathcal{I}(V)=\R[x_1,\dots,x_m]$ if and only if $V$ is the empty set. Using the convention that the dimension of the zero ring is $-\infty$, the dimension of $V=\emptyset$ equals $-\infty$ (but the dimension of any non-empty real algebraic set is non-negative).

We will prove the following three facts in Subsections \ref{sect-a-facts} and \ref{sect-a-cutting-dim} of the appendix.

\begin{fact}\label{fact-dim-union} Let $V_1,\dots,V_\l$ be real algebraic sets in $\R^m$ and let $V=V_1\cup\dots\cup V_\l$ be their union. Then $V$ is also a real algebraic set in $\R^m$ and $\dim V=\max_i\, (\dim V_i)$.
\end{fact}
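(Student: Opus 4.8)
\textbf{Proof proposal for Fact~\ref{fact-dim-union}.}

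The plan is to reduce everything to the standard theory of Noetherian rings and the Nullstellensatz-type correspondence between ideals and zero-sets, applied to the coordinate rings of the sets involved. First I would show that $V = V_1 \cup \dots \cup V_\l$ is again a real algebraic set. If $V_i = \mathcal{Z}(I_i)$ for ideals $I_i \su \R[x_1,\dots,x_m]$, then I claim $V = \mathcal{Z}(I_1 \cap \dots \cap I_\l)$ (equivalently, one may use the product ideal $I_1 \cdots I_\l$, whose zero-set is the same). The inclusion $\mathcal{Z}(I_1) \cup \dots \cup \mathcal{Z}(I_\l) \su \mathcal{Z}(I_1 \cap \dots \cap I_\l)$ is immediate since a point in some $V_i$ kills all of $I_i \supseteq I_1 \cap \dots \cap I_\l$; the reverse inclusion follows because if a point $x$ lies in no $V_i$, then for each $i$ there is $Q_i \in I_i$ with $Q_i(x) \neq 0$, and then $Q_1 \cdots Q_\l \in I_1 \cdots I_\l \su I_1 \cap \dots \cap I_\l$ does not vanish at $x$. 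This proves $V$ is a real algebraic set.

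For the dimension statement, the inequality $\dim V \geq \max_i (\dim V_i)$ is the easy direction: since $V_i \su V$ we have $\mathcal{I}(V) \su \mathcal{I}(V_i)$, so a chain of prime ideals over $\mathcal{I}(V_i)$ is also a chain over $\mathcal{I}(V)$, giving $\dim V \geq \dim V_i$ for each $i$. For the reverse inequality $\dim V \leq \max_i (\dim V_i)$, I would argue as follows. We have $\mathcal{I}(V) = \bigcap_i \mathcal{I}(V_i)$, since a polynomial vanishes on $V$ iff it vanishes on every $V_i$. Take a chain of primes $\mathcal{I}(V) \su \mathfrak{p}_0 \subsetneq \dots \subsetneq \mathfrak{p}_d$ realizing $d = \dim V$. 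Since $\mathfrak{p}_0$ is prime and contains $\mathcal{I}(V) = \bigcap_i \mathcal{I}(V_i) \supseteq \prod_i \mathcal{I}(V_i)$, primality forces $\mathcal{I}(V_i) \su \mathfrak{p}_0$ for some index $i$. Then $\mathcal{I}(V_i) \su \mathfrak{p}_0 \subsetneq \dots \subsetneq \mathfrak{p}_d$ is a chain of length $d$ over $\mathcal{I}(V_i)$, whence $\dim V_i \geq d$, giving $\max_i(\dim V_i) \geq d = \dim V$. Combining the two inequalities yields $\dim V = \max_i(\dim V_i)$.

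I do not expect any serious obstacle here; the only point requiring a little care is the step where primality of $\mathfrak{p}_0$ is used to select a single index $i$ with $\mathcal{I}(V_i) \su \mathfrak{p}_0$. This is the standard fact that a prime ideal containing a finite product (or finite intersection) of ideals must contain one of the factors — applied to $\prod_i \mathcal{I}(V_i) \su \bigcap_i \mathcal{I}(V_i) = \mathcal{I}(V) \su \mathfrak{p}_0$. Everything else is bookkeeping about the definitions of $\mathcal{Z}$, $\mathcal{I}$, and Krull dimension as given in the text, together with the observation $\mathcal{I}(\mathcal{Z}(I)) \supseteq I$ already recorded above. Since the paper allows citing \cite{real-ag-book} for background in real algebraic geometry, I would reference it for the basic properties of $\mathcal{I}$ and $\mathcal{Z}$ if a cleaner writeup is desired.
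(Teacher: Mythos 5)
Your proof is correct and follows essentially the same route as the paper: represent $V$ as the zero-set of the product (equivalently intersection) of the ideals $I_i$, get $\dim V\geq \dim V_i$ from $\mathcal{I}(V)\su\mathcal{I}(V_i)$, and for the reverse inequality use primality of the bottom prime $\mathfrak{p}_0$ of a maximal chain to force $\mathcal{I}(V_i)\su\mathfrak{p}_0$ for some $i$ (the paper spells out this last step by choosing $Q_i\in\mathcal{I}(V_i)\sm\mathfrak{p}_0$ and multiplying, which is exactly the proof of the standard fact you cite). The only difference is cosmetic (intersection versus product ideal, and the paper's explicit handling of the degenerate case where some $V_i$ are empty, which your argument covers anyway via the $-\infty$ convention).
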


\begin{fact}\label{fact-two-coprime-poly} Let $\l\geq 2$ and let $P_1,\dots, P_\l\in \R[x_1,\dots,x_m]$ be polynomials such that $P_1$ is irreducible and at least one of the polynomials $P_2,\dots,P_\l$ is not divisible by $P_1$. Then the set
\[\lbrace x\in \R^m\mid P_1(x)=\dots=P_\l(x)=0\rbrace\]
is a real algebraic set in $\R^m$ of dimension at most $m-2$.
\end{fact}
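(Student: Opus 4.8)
The plan is to reduce this statement about \emph{real} algebraic sets to a purely commutative-algebraic fact about the polynomial ring $\R[x_1,\dots,x_m]$, and then prove the latter using that this ring is a Noetherian unique factorization domain of Krull dimension $m$.

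First, the set $V=\lbrace x\in\R^m\mid P_1(x)=\dots=P_\l(x)=0\rbrace$ is by definition the zero-set $\mathcal Z(I)$ of the ideal $I=(P_1,\dots,P_\l)$, so it is a real algebraic set. For the dimension bound, relabel so that $P_2$ is one of the polynomials not divisible by $P_1$; in particular $P_2\neq 0$, and if $P_2$ is a nonzero constant then $V=\emptyset$ and we are done, so assume $(P_1,P_2)\neq\R[x_1,\dots,x_m]$. Since $V\su\mathcal Z((P_1,P_2))$ and the dimension of real algebraic sets is monotone under inclusion (if $W\su W'$ then $\mathcal I(W')\su\mathcal I(W)$, so $\R[x]/\mathcal I(W)$ is a quotient ring of $\R[x]/\mathcal I(W')$, and Krull dimension cannot increase under passing to a quotient ring), it suffices to bound $\dim\mathcal Z((P_1,P_2))$. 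Moreover $(P_1,P_2)\su\mathcal I(\mathcal Z((P_1,P_2)))$, so $\R[x]/\mathcal I(\mathcal Z((P_1,P_2)))$ is a quotient of $\R[x_1,\dots,x_m]/(P_1,P_2)$, and hence it suffices to prove the purely algebraic statement
\[\dim\big(\R[x_1,\dots,x_m]/(P_1,P_2)\big)\leq m-2.\]
The point of this reduction is that it sidesteps the well-known subtleties of real algebraic geometry (such as $\mathcal I(\mathcal Z(Q))$ being strictly larger than $(Q)$): we now only need to compute Krull dimensions of quotients of a polynomial ring.

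For the algebraic statement, write $A=\R[x_1,\dots,x_m]$, a Noetherian UFD with $\dim A=m$. I claim every prime $\mathfrak p\su A$ with $P_1,P_2\in\mathfrak p$ has $\mathrm{ht}(\mathfrak p)\geq 2$. Indeed $\mathfrak p\neq(0)$ since $P_1\neq 0$, so $\mathrm{ht}(\mathfrak p)\geq 1$; and if $\mathrm{ht}(\mathfrak p)=1$ then, since height-one primes in a UFD are principal, $\mathfrak p=(Q)$ for some irreducible $Q$. From $Q\mid P_1$ and the irreducibility of $P_1$ we get $(Q)=(P_1)$, whence $P_1\mid P_2$ (as $P_2\in\mathfrak p=(P_1)$), contradicting the choice of $P_2$. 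Now take a chain of primes $\mathfrak p_0\subsetneq\dots\subsetneq\mathfrak p_k$ in $A$ all containing $(P_1,P_2)$ with $k=\dim(A/(P_1,P_2))$. Since $\mathrm{ht}(\mathfrak p_0)\geq 2$, there is a chain $\mathfrak q_0\subsetneq\mathfrak q_1\subsetneq\mathfrak p_0$; concatenating yields a chain of length $k+2$ in $A$, so $k+2\leq\dim A=m$, i.e.\ $k\leq m-2$, as desired.

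The only nontrivial inputs are standard (that $\R[x_1,\dots,x_m]$ is a UFD of Krull dimension $m$, and that height-one primes in a UFD are principal), so I do not expect a serious obstacle. The main conceptual step is the reduction in the second paragraph: once one observes that an upper bound for $\dim V$ follows from an upper bound for $\dim(A/(P_1,P_2))$, the ``real'' aspect of the problem disappears and everything becomes a routine chain count. As an alternative to the last paragraph, one could invoke that affine $\R$-algebras are catenary (so $\dim A/\mathfrak p=m-\mathrm{ht}\,\mathfrak p$) to conclude directly, but the chain argument above avoids even this.
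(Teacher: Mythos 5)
Your proposal is correct and takes essentially the same approach as the paper: both proofs come down to a chain count in $\R[x_1,\dots,x_m]$, the key point being that any prime containing $P_1$ and a polynomial not divisible by $P_1$ has at least two primes strictly below it (the paper exhibits $(0)\subsetneq (P_1)\subsetneq \mathfrak{p}_0$ directly, using that $(P_1)$ is prime, while you phrase this as $\operatorname{ht}(\mathfrak{p}_0)\geq 2$ via principality of height-one primes in a UFD), played off against $\dim \R[x_1,\dots,x_m]=m$. Your intermediate reduction to $\dim\big(\R[x_1,\dots,x_m]/(P_1,P_2)\big)$ is a harmless repackaging of the paper's direct use of $\mathcal{I}(V)$; the only small slip is inferring that $(P_1,P_2)$ is proper from $P_2$ not being a nonzero constant (it could still be the unit ideal), but in that case $V=\emptyset$ and the bound is trivial.
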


\begin{fact}\label{fact-cutting-dimension} Let $V$ be a real algebraic set in $\R^{2n}=\R^n\times \R^n$ and assume that $\dim V\leq 2n-2$. Then there exists a dense open set $U\su \R^n$ such that each point $a\in U$ satisfies the following condition: The set $\lbrace b\in \R^n\mid (a,b)\in V\rbrace$ is a real algebraic set of dimension at most $n-2$.
\end{fact}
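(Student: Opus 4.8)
The plan is to reduce Fact \ref{fact-cutting-dimension} to a statement about fibers of the projection $\pr:\R^{2n}=\R^n\times\R^n\to\R^n$ onto the first factor. First I would decompose $V$ into its irreducible components $V=V_1\cup\dots\cup V_r$ (each of dimension at most $2n-2$ by Fact \ref{fact-dim-union}), and observe that it suffices to find, for each component $V_t$, a dense open set $U_t\su\R^n$ such that $\{b\in\R^n\mid(a,b)\in V_t\}$ has dimension at most $n-2$ for all $a\in U_t$; intersecting the $U_t$ and using Fact \ref{fact-dim-union} again (the fiber over $a$ of $V$ is the union of the fibers over $a$ of the $V_t$) gives the claim. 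So from now on I would assume $V$ is irreducible of dimension $e\le 2n-2$.

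The key step is to understand the generic fiber dimension of the map $\pr|_V:V\to\R^n$. I would work over $\C$, passing to the complexification $V_\C\su\C^{2n}$ (the Zariski closure of $V$ in $\C^{2n}$, which is irreducible of the same dimension $e$ over $\C$), and invoke the classical upper semicontinuity of fiber dimension for morphisms of varieties (a consequence of Chevalley's theorem): the function $a\mapsto\dim\pr^{-1}(a)$ on the image is upper semicontinuous, and on a dense open subset of the closure $\overline{\pr(V_\C)}$ — hence on a dense open $U'\su\R^n$ once we descend back, using that $\R$ is dense in its complexification in the relevant sense — the fiber has the minimal dimension, which equals $e-\dim\overline{\pr(V_\C)}\le e-\dim\overline{\pr(V)}$. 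Since $\dim\overline{\pr(V)}\le n$, generically the fiber has dimension at most $e-n\le(2n-2)-n=n-2$. The set of $a$ for which the fiber has dimension $>n-2$ is then contained in a proper algebraic subset of $\overline{\pr(V)}$, and one checks that its complement in $\R^n$ contains a dense open set: the "bad" set is the projection of a lower-dimensional subvariety plus the complement of $\pr(V)$, and one argues that this cannot contain any nonempty open subset of $\R^n$, so its complement is a dense open set $U$.

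The point that requires the most care is the passage between the real and complex pictures, since a real algebraic set of a given dimension can behave quite differently from its complexification over $\R$ (for instance the fiber could be smaller over $\R$ than over $\C$, which is harmless here, or one must be sure the relevant open sets one produces over $\C$ restrict to nonempty — hence dense open — sets over $\R$). I expect this to be the main obstacle. One clean way to handle it is to only ever use that $\dim_\R W\le\dim_\C W_\C$ for a real algebraic set $W$ with complexification $W_\C$, combined with the fact that a proper Zariski-closed subset of $\R^n$ (i.e.\ the real zero set of a nonzero polynomial) has empty interior — so the "bad" set, being contained in such a zero set, has dense open complement. Then all the dimension estimates only need to be proved for the complex fibers, where the semicontinuity machinery applies cleanly, and the real fiber dimension is bounded above by the complex one. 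I would also double-check the degenerate case $\pr(V)$ not dense: then $\overline{\pr(V)}$ is a proper algebraic subset of $\R^n$, its complement is already dense open, and over that complement the fiber is empty (dimension $-\infty\le n-2$), so we may simply take $U$ to be that complement intersected with the previously constructed open set.
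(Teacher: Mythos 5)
Your proposal is correct, and at its core it uses the same two ingredients as the paper's proof: decompose $V$ into irreducible pieces, apply the theorem on generic fiber dimension to the projection onto the first factor, and then use (via Fact \ref{fact-poly-vanishing}) that a set contained in the real zero set of a nonzero polynomial has dense open complement in $\R^n$. The difference is in execution: you pass to the complexification and quote the classical statement over $\mathbb{C}$, whereas the paper never leaves $\R$ --- it takes the minimal primes $\mathfrak{p}_i\supseteq\mathcal{I}(V)$ and applies the fiber-dimension theorem \cite[Theorem 11.4.1]{ravi} directly to $\operatorname{Spec}\R[x_1,\dots,x_n,y_1,\dots,y_n]/\mathfrak{p}_i\to\operatorname{Spec}\R[x_1,\dots,x_n]$, which is legitimate because the paper's dimension of a real algebraic set is by definition the Krull dimension of its real coordinate ring; the real fiber over $a$ is then bounded by $\dim\R[y_1,\dots,y_n]/\mathfrak{t}_i\le n-2$, where $\mathfrak{t}_i$ is obtained from $\mathfrak{p}_i$ by substituting $a$, since $\mathfrak{t}_i\subseteq\mathcal{I}(\mathcal{Z}(\mathfrak{t}_i))$. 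Your route buys a more familiar reference over an algebraically closed field, but it costs you the real-versus-complex comparison, and one point of your hedging should be tightened: using only the inequality $\dim W\le\dim_{\mathbb{C}}W_{\mathbb{C}}$ is not enough, because you also need $\dim_{\mathbb{C}}V_{\mathbb{C}}\le 2n-2$ (otherwise the generic complex fiber bound $\dim_{\mathbb{C}}V_{\mathbb{C}}-n$ need not be at most $n-2$); what you actually use is the equality $\dim W=\dim_{\mathbb{C}}W_{\mathbb{C}}$, which you assert in your main argument and which does hold, e.g.\ because $\mathcal{I}_{\mathbb{C}}(W_{\mathbb{C}})=\mathcal{I}(W)\cdot\mathbb{C}[x_1,\dots,x_m]$ and the complexified coordinate ring is integral over $\R[x_1,\dots,x_m]/\mathcal{I}(W)$, so the Krull dimensions agree (see also \cite{real-ag-book}). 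Similarly, irreducibility of the complexification of an irreducible component is true but needs that its vanishing ideal is a \emph{real} prime ideal; you can sidestep this entirely by decomposing again over $\mathbb{C}$. Finally, your descent step works once one notes that a proper Zariski-closed set $Z\subsetneq\mathbb{C}^n$ satisfies $Z\cap\R^n\subseteq\mathcal{Z}(f\bar{f})$ for any nonzero $f$ vanishing on $Z$, with $f\bar{f}$ a nonzero real polynomial, which is exactly what makes your ``bad set'' argument produce a dense open $U$.
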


In Fact \ref{fact-cutting-dimension}, the set  $\lbrace b\in \R^n\mid (a,b)\in V\rbrace$ is actually a real algebraic set for all $a\in \R^n$, but the crucial part of the condition is that dimension of this set is at most $n-2$.

Recall that an open subset of $\R^m$ is connected if and only if it is path-connected.

\begin{fact}\label{fact-complement-variety} Let $V\su \R^m$ be a real algebraic set of dimension at most $m-2$. Furthermore, let $U\su \R^m$ be a connected open set. Then the set $U\sm V$ is open and connected (and hence path-connected).
\end{fact}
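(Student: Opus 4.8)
The plan is to prove the two assertions — that $U \sm V$ is open, and that it is connected — separately, with the connectedness part being the substantial one. Openness is immediate: $V$ is a real algebraic set, hence closed in the Euclidean topology, so $\R^m \sm V$ is open, and therefore $U \sm V = U \cap (\R^m \sm V)$ is open as an intersection of two open sets. For connectedness, since $U \sm V$ is open it suffices (by the stated equivalence) to prove path-connectedness. So fix two points $p, q \in U \sm V$; I want to produce a path inside $U \sm V$ joining them.

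The strategy is a two-step reduction. First, because $U$ itself is connected and open, hence path-connected, there is a continuous path $\gamma\colon [0,1] \to U$ with $\gamma(0) = p$ and $\gamma(1) = q$. The issue is that $\gamma$ may pass through $V$. The idea is to perturb $\gamma$ slightly so that it avoids $V$ entirely, using that $V$ has codimension at least $2$. One clean way to carry this out: approximate $\gamma$ by a piecewise-linear path with vertices in $U \sm V$ (possible since $U$ is open, so a fine enough polygonal approximation stays in $U$, and we may choose the vertices to avoid the closed set $V$ because $U \sm V$ is dense in $U$ — here one uses that a real algebraic set of dimension $\le m-2 < m$ has empty interior, so its complement is dense). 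Then it remains to handle each individual segment $[x_i, x_{i+1}]$ with endpoints in $U \sm V$: one must show it can be replaced by a path from $x_i$ to $x_{i+1}$ staying in $U \sm V$. For a single segment, choose a small open ball $B \su U$ around the segment (possible by compactness of the segment and openness of $U$); inside $B$, the set $B \cap V$ is a real algebraic subset of dimension $\le m-2$, and within a ball one can connect any two points of $B \sm V$ by a path avoiding $V$ — for instance, for generic points $x_i, x_{i+1}$ the straight segment already misses $V$, and in general one perturbs: the set of directions/intermediate points for which a broken two-segment path hits $V$ is a proper algebraic (hence measure-zero, or at least nowhere dense) subset, because a line through a fixed point $x_i$ meets the codimension-$\ge 2$ set $V$ only for a lower-dimensional family of directions.

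The main obstacle I anticipate is making rigorous the claim that one can avoid a codimension-$\ge 2$ algebraic set by a path, i.e.\ converting the intuition ``$V$ is too thin to separate $\R^m$ locally'' into a clean argument. The cleanest route is probably: reduce to showing that for a real algebraic set $V \su \R^m$ of dimension $\le m-2$ and any two points $x, x' \in \R^m \sm V$, there is a point $z \in \R^m \sm V$ such that neither the segment $[x,z]$ nor the segment $[z,x']$ meets $V$. To see this, note that the set of $z$ for which $[x,z]$ meets $V$ is the image of $V$ under the central projection away from $x$, scaled — more precisely, $z$ lies on a ray from $x$ through some point of $V$ — and this is a set swept out by a $(\dim V + 1)$-dimensional family, hence has dimension $\le \dim V + 1 \le m-1 < m$; by Fact \ref{fact-poly-vanishing} (or just by density of the complement of a proper algebraic set, together with an argument that this swept set lies in a proper algebraic set, or by a Sard/measure-zero argument) its complement is dense, and similarly for $x'$, so a suitable $z$ exists. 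Combining this per-segment statement with the polygonal approximation of $\gamma$ and with the density of $U \sm V$ in $U$ for placing the initial vertices, one patches together the desired path from $p$ to $q$ lying entirely in $U \sm V$, completing the proof.
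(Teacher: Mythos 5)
Your route is sound and genuinely different from the paper's. You localize to a convex ball $B\su U$ and use a general-position argument: the set of points $z$ for which the segment $[x,z]$ meets $V$ is the cone over $V$ with apex $x$, which should have dimension at most $\dim V+1\leq m-1$ and hence empty interior, so a two-segment detour inside $B$ avoiding $V$ exists; a fine polygonal approximation of a path in $U$, with vertices pushed into $U\sm V$ (legitimate, since $V$ has empty interior -- a ball inside $V$ would force $\mathcal{I}(V)=(0)$ and $\dim V=m$, exactly the argument via Fact \ref{fact-poly-vanishing} that the paper itself uses), then glues these local detours into a path from $p$ to $q$ in $U\sm V$. The paper instead proves the local statement by parametric transversality (Fact \ref{fact-transversality}, via \cite[Theorem 6.35]{lee}) applied to the smooth locus of $V$, and disposes of the singular locus by a minimal-counterexample induction on $\dim V$ together with a point-set argument. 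Your scheme avoids both the transversality theorem and the induction, which is a genuine structural simplification -- provided the cone lemma is actually proved.

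That cone lemma is the crux, and as written it is not yet justified. The cone over $V$ from $x$ is in general \emph{not} an algebraic set, so ``lies in a proper algebraic subset'' is not automatic and Fact \ref{fact-poly-vanishing} does not apply to it directly; ``swept out by a $(\dim V+1)$-dimensional family, hence of dimension $\leq m-1$'' is exactly the statement that needs proof, since images of sets under smooth maps do not obviously respect the algebraic notion of dimension used in the paper. Two honest ways to close this: (a) semialgebraic dimension theory -- the cone is the image of $V\times\R$ under a polynomial map, hence semialgebraic of dimension at most $\dim V+1$ by Tarski--Seidenberg, and its Zariski closure is then a proper algebraic set whose complement is dense (\cite{real-ag-book}); or (b) stratify $V$ into finitely many embedded smooth manifolds of dimension at most $m-2$ (iterating the non-singular-locus decomposition, \cite[Proposition 3.3.14]{real-ag-book}) and apply Sard or a Hausdorff-measure argument to conclude the cone has measure zero, hence empty interior. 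Either works and keeps your argument intact, but both amount to machinery of comparable weight to what the paper deploys, and neither is among the facts the paper makes available, so this step cannot be waved through.
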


A proof of Fact \ref{fact-complement-variety} will be provided in Subsection \ref{sect-a-complement} of the appendix.

As usual, for a point $a\in \R^n$, we call an open set $U\su \R^n$ with $a\in U$ an \emph{open neighborhood of $a$}.

The following lemma is a relatively straightforward linear algebra statement. For the reader's convenience we give a proof in Subsection \ref{sect-a-lemma-matrix} of the appendix.

\begin{lemma}\label{lemma-matrix} Let $1\leq \l<d$ be integers and let $U\su \R^d$ be an open subset. Suppose that for each $x\in U$ we are given an $(\l\times d)$-matrix $A(x)$ in such a way that all coefficients of $A$ are smooth functions of $x\in U$. Furthermore, suppose that we are given some point $x_0\in U$ such that the matrix $A(x_0)$ has rank $\l$. Then there exists an open neighborhood $U'\su U$ of $x_0$ such that for each $x\in U'$, the matrix $A(x)$ has rank $\l$. Furthermore, $U'$ can be chosen in such a way that there exists a smooth vector field $w:U'\to \R^d$ with $w(x)\neq 0$ and $A(x)w(x)=0$ for each $x\in U'$.
\end{lemma}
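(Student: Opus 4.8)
The plan is to prove Lemma~\ref{lemma-matrix} in two stages, first establishing the rank statement and then producing the vector field.

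\textbf{Stage 1: the open neighborhood on which the rank stays $\l$.} Since $A(x_0)$ has rank $\l$, some $(\l\times\l)$-submatrix of $A(x_0)$ — say the one formed by columns $j_1<\dots<j_\l$ — has nonzero determinant. The determinant of the corresponding submatrix of $A(x)$ is a polynomial in the entries of $A(x)$, hence a smooth function of $x\in U$ (as a composition of smooth functions). This function is nonzero at $x_0$, so by continuity it is nonzero on some open neighborhood $U'\su U$ of $x_0$. For every $x\in U'$ the matrix $A(x)$ then has an invertible $(\l\times\l)$-submatrix, so it has rank exactly $\l$ (it cannot exceed $\l$ since it has only $\l$ rows). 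This already proves the first assertion.

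\textbf{Stage 2: the smooth nonvanishing vector field in the kernel.} Shrinking $U'$ if necessary, I keep the same columns $j_1,\dots,j_\l$ and let $B(x)$ denote the invertible $(\l\times\l)$-submatrix of $A(x)$ on those columns; let $c_1<\dots<c_{d-\l}$ be the remaining columns, and fix one of them, say $c_1$, together with the corresponding column vector $v(x)\in\R^\l$ of $A(x)$. By Cramer's rule, $B(x)^{-1}$ has entries given by $\frac{1}{\det B(x)}$ times polynomials in the entries of $B(x)$; since $\det B(x)\neq 0$ on $U'$ and all entries of $A$ are smooth, the map $x\mapsto B(x)^{-1}$ is smooth on $U'$. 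Now define $w:U'\to\R^d$ by placing $-B(x)^{-1}v(x)\in\R^\l$ in the coordinates indexed by $j_1,\dots,j_\l$, placing $1$ in the coordinate indexed by $c_1$, and placing $0$ in the remaining coordinates. Then $w$ is smooth, and $w(x)\neq 0$ for all $x$ because its $c_1$-coordinate is $1$. Finally, $A(x)w(x)$ collects the contribution $B(x)\cdot(-B(x)^{-1}v(x)) = -v(x)$ from the columns $j_1,\dots,j_\l$ and the contribution $v(x)$ from column $c_1$, summing to $0$; the other columns contribute nothing since those coordinates of $w(x)$ vanish. Hence $A(x)w(x)=0$ on $U'$, as required.

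I do not anticipate any serious obstacle here: the only mild point of care is to use the same choice of columns consistently in both stages (so that $\det B(x)\neq 0$ on all of $U'$, which is exactly what makes $B(x)^{-1}$ smooth via Cramer's rule), and to note the harmless fact that $\l<d$ guarantees at least one ``extra'' column $c_1$ to serve as the pivot producing a nonzero entry of $w$. Everything else is linear algebra plus the smoothness of determinants and of the entries of the inverse of an invertible matrix with smoothly varying entries.
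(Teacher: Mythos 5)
Your proof is correct and follows essentially the same route as the paper: detect an invertible $(\l\times\l)$-submatrix whose determinant stays nonzero on a neighborhood of $x_0$, use the smoothness of the inverse (via Cramer's rule/adjugate) to solve for the dependent coordinates of $w$, and exploit $\l<d$ to place a fixed nonzero entry in a free coordinate so that $w(x)\neq 0$. The only difference is cosmetic: the paper sets all $d-\l$ free coordinates to $1$, while you set a single one to $1$ and the rest to $0$; both choices work equally well.
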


The next fact is a consequence of a Theorem of Milnor \cite{milnor} and (in a similar form) independently Thom \cite{thom}. We will provide the details of the proof of this fact in Subsection \ref{sect-a-facts} of the appendix. Recall that we defined the notion of a subset of $\R^d$ being definable by polynomials in Definition \ref{defi-definable-polynomials}.

\begin{fact}\label{fact-components-finite} Let $U\su \R^d$ be an open set which is definable by polynomials and let $R_1,\dots,R_k\in \R[x_1,\dots, x_d]$. Then the number of connected components of the open set
\[\lbrace x\in U \mid R_i(x)\neq 0\text{ for }i=1,\dots,k\rbrace\]
is finite.
\end{fact}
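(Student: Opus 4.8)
The statement to prove is Fact~\ref{fact-components-finite}: for an open set $U \subseteq \R^d$ definable by polynomials and polynomials $R_1, \dots, R_k$, the open set $\{x \in U \mid R_i(x) \neq 0 \text{ for all } i\}$ has finitely many connected components.

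\textbf{Plan of attack.} The plan is to reduce everything to a single statement about semialgebraic sets (or real algebraic varieties) having finitely many connected components, which is exactly the content of the Milnor--Thom theorem. First I would observe that since $U$ is definable by polynomials, there are polynomials $Q_1, \dots, Q_\ell \in \R[x_1,\dots,x_d]$ and a subset $S \subseteq \{+,-,0\}^\ell$ with $U = \{x \mid (\sgn Q_1(x),\dots,\sgn Q_\ell(x)) \in S\}$. Then the set in question, call it $W = \{x \in U \mid R_i(x) \neq 0 \text{ for } i=1,\dots,k\}$, is itself a semialgebraic set: it equals the union, over all sign vectors $\sigma \in S$ and all sign vectors $\tau \in \{+,-\}^k$, of the ``sign cell''
\[
W_{\sigma,\tau} = \{x \in \R^d \mid \sgn Q_j(x) = \sigma_j \text{ for all } j, \ \sgn R_i(x) = \tau_i \text{ for all } i\}.
\]
There are only finitely many such pairs $(\sigma,\tau)$, so by Fact~\ref{fact-dim-union}-style reasoning (here: finite unions of sets with finitely many components again have finitely many components) it suffices to show each individual sign cell $W_{\sigma,\tau}$ has finitely many connected components.

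\textbf{Handling a single sign cell.} For a fixed pair $(\sigma, \tau)$, the cell $W_{\sigma,\tau}$ is the basic semialgebraic set cut out by finitely many strict inequalities (from the $R_i$ and from the coordinates of $\sigma, \tau$ that are $+$ or $-$) together with finitely many equalities (from the coordinates of $\sigma$ that are $0$). The cleanest route is: first intersect with the real algebraic set $V = \{x : Q_j(x) = 0 \text{ for all } j \text{ with } \sigma_j = 0\}$; on $V$ we are then imposing only strict inequalities of the form $g_m(x) > 0$ for finitely many polynomials $g_m$ (rewriting each $Q_j > 0$, $Q_j < 0$, $R_i \neq 0$ appropriately, e.g. $R_i \ne 0$ becomes $R_i^2 > 0$). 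A standard trick reduces counting components of $\{x \in V : g_m(x) > 0 \ \forall m\}$ to counting components of a real algebraic set in one extra variable: introduce an auxiliary variable $t$ and consider
\[
\widetilde V = \Big\{ (x,t) \in \R^{d+1} \ \Big|\ x \in V, \ t^2 \cdot \prod_m g_m(x) = 1 \Big\},
\]
which is a real algebraic set, and whose projection to $\R^d$ is exactly $\{x \in V : g_m(x) > 0 \ \forall m\}$ (up to the fibre $t$ taking two values over each point, which only doubles the number of components at worst). Then I would invoke the Milnor--Thom theorem --- stated in the excerpt's appendix as the source of Fact~\ref{fact-components-finite} and related bounds --- which says a real algebraic set in $\R^{d+1}$ has finitely many connected components. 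Projecting a set with finitely many components under a continuous map, and the doubling from the $t$-fibre, both preserve finiteness, so $\{x \in V : g_m(x) > 0 \ \forall m\}$ has finitely many components, hence so does $W_{\sigma,\tau}$, hence so does $W$.

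\textbf{Main obstacle.} The genuine mathematical content --- that a real algebraic set has finitely many connected components --- is precisely Milnor--Thom and is being cited rather than reproved, so the ``hard part'' here is really bookkeeping: carefully checking that each elementary reduction (the finite-union step, the replacement of strict inequalities $g > 0$ by the graph-of-reciprocal algebraic set in one more variable, and the passage through a continuous projection) indeed preserves the property of having finitely many connected components, and that the auxiliary algebraic set $\widetilde V$ really is an algebraic set (it is: it is defined by the vanishing of the finitely many generators of $\mathcal{I}(V)$ together with the single polynomial $t^2 \prod_m g_m(x) - 1$). One subtlety worth flagging: the projection $\widetilde V \to \R^d$ is two-to-one on the image rather than a homeomorphism, but this only means the number of components of the image is at most the number of components of $\widetilde V$, which is all we need. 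No other serious difficulty arises; the argument is entirely a matter of assembling standard facts, and I would present it in that order in the appendix.
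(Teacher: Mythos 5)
Your argument is correct in outline but takes a genuinely different route from the paper's. You decompose $W=\lbrace x\in U\mid R_i(x)\neq 0\ \forall i\rbrace$ into the finitely many sign cells $W_{\sigma,\tau}$, handle the equality constraints by passing to the real algebraic set $V$, and then reduce each cell to a real algebraic set in an extra variable before invoking Milnor--Thom. The paper never decomposes $U$ at all: using Fact \ref{fact-poly-vanishing} it shows that every connected component of $W$ contains an entire connected component of the smaller set $\lbrace x\in\R^d\mid Q_j(x)\neq 0\ \forall j,\ R_i(x)\neq 0\ \forall i\rbrace$ (on such a component the sign vector of the $Q_j$ is constant, so the component stays inside $U$ and hence inside the same component of $W$), and then applies the same one-extra-variable trick to that smaller set, namely the homeomorphism with the real algebraic set $\lbrace (x,y)\mid Q_1(x)\dotsm Q_\l(x)R_1(x)\dotsm R_k(x)\cdot y=1\rbrace$, followed by Milnor's bound on the sum of Betti numbers. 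The paper's reduction buys the convenience of never touching the lower-dimensional strata where some $Q_j$ vanishes; your cell decomposition is more direct but must treat those strata explicitly. Both proofs ultimately rest on the same algebraic-set trick and the same appeal to Milnor--Thom.

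One step of yours needs repair. With several strict inequalities $g_1>0,\dots,g_M>0$, the projection of $\widetilde V=\lbrace (x,t)\mid x\in V,\ t^2\prod_m g_m(x)=1\rbrace$ to $\R^d$ is $\lbrace x\in V\mid \prod_m g_m(x)>0\rbrace$, not $\lbrace x\in V\mid g_m(x)>0\ \forall m\rbrace$: positivity of the product only forces an even number of the $g_m$ to be negative, so your claim that the projection is ``exactly'' the cell is false in general. This does not sink the argument: on the projection no $g_m$ vanishes, so the sign vector $(\sgn g_1(x),\dots,\sgn g_M(x))$ is locally constant, hence your cell is a union of connected components of the projection and finiteness still transfers. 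Alternatively, introduce one auxiliary variable per inequality and use $\lbrace (x,t_1,\dots,t_M)\mid x\in V,\ t_m^2\, g_m(x)=1\text{ for }m=1,\dots,M\rbrace$, whose projection is exactly the cell. Either fix is one line; with it your proof is complete.
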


Finally, we will use the following notations in the proof of Theorem \ref{theo-main}.

\begin{notation}\label{notation-equiv-class-poly}
For two polynomials $Q,R\in \R[y_1,\dots, y_d]$, let us write $Q\equiv R$ if there is a real number $c\neq 0$ with $Q=c\cdot R$. This is clearly an equivalence relation, so let us write $[Q]$ for the equivalence class of a polynomial $Q$ under this relation. Note that if $Q\equiv R$, then for all $y\in \R^d$ we have $Q(y)=0$ if and only if $R(y)=0$. Furthermore, if $Q$ and $R$ are irreducible and $[Q]\neq [R]$, then $Q$ and $R$ are coprime.
\end{notation}

\begin{notation}Note that for a polynomial $P\in \R[x_1,\dots,x_d, y_1,\dots,y_d]$, and points $a,b\in \R^d$, concatenating $a$ and $b$ gives a vector of length $2d$, and so $P(a,b)$ is well-defined. The vector $\nabla P(a,b)$ of the partial derivatives of $P$ at the point $(a,b)$ has length $2d$. Let us denote the vector formed by the first $d$ entries as $\nabla_a P(a,b)$, this is the vector of the partial derivatives with respect to the entries of $a$. Similarly, let $\nabla_b P(a,b)$ consist of the last $d$ entries of $\nabla P(a,b)$, then $\nabla_b P(a,b)$ is the vector of the partial derivatives with respect to the entries of $b$.
\end{notation}

\section{Proof of Theorem \ref{theo-main}}
\label{sect-proof-theo-main}

This section contains the proof of Theorem \ref{theo-main}, apart from the proofs of several lemmas which we postpone to the following sections.

Let us fix a finite set $\Lambda$, an integer $d\geq 1$, polynomials $P_1,\dots,P_k\in \R[x_1,\dots,x_d,y_1,\dots,y_d]$, a function $\phi: \lbrace +,-,0\rbrace^k \to \Lambda$, and an open subset $U\su \R^d$ as in the statement of Theorem \ref{theo-main}.

First, note that none of the polynomials $P_1,\dots,P_k$ is the zero polynomial. Indeed, consider any two distinct points $a,a'\in U$. By the assumption in Theorem \ref{theo-main} there exists a point $b\in U$ with $P_s(a, b)\neq 0$ and $P_s(a', b)\neq 0$ for all $1\leq s\leq k$. This in particular implies that all of the polynomials $P_s$ are non-zero.

Also note that we may assume that the polynomials $P_1,\dots,P_k$ are irreducible and mutually coprime. Otherwise we can replace the list $P_1,\dots,P_k$ by the list $P^{*}_1,\dots,P^{*}_{k'}$ of irreducible factors of $P_1,\dots,P_k$. Then for any $a,b\in U$ with $P_s(a,b)\neq 0$ for all $1\leq s\leq k$, we also have $P^{*}_t(a,b)\neq 0$ for all $1\leq t\leq k'$. Furthermore, all the signs $\sgn P_s(a,b)$ are determined by the signs $P^{*}_t(a,b)\neq 0$ for $1\leq t\leq k'$. Hence we can find a function $\phi^{*}: \lbrace +,-,0\rbrace^{k'} \to \Lambda$ such that
\[\phi^{*}\big(\sgn P^{*}_1(a,b),\dots ,\sgn P^{*}_{k'}(a,b)\big)=\phi\big(\sgn P_1(a,b),\dots ,\sgn P_k(a,b)\big)\]
for all $a,b\in U$. By considering $P^{*}_1,\dots,P^{*}_{k'}$ and $\phi^{*}$ instead, we may from now on assume that $P_1,\dots,P_k$ are irreducible and mutually coprime.

To simplify notation, let us from now on abbreviate $\phi\big(\sgn P_1(a,b),\dots ,\sgn P_k(a,b)\big)$ by $\Phi(a,b)$ for any $a,b\in U$ (note that then $\Phi(a,b)\in \Lambda$).

\begin{definition} For every $\lambda\in \Lambda$, let $T_{\lambda}\su U\times U$ be the set of all pairs $(a,b)\in U\times U$ with the following property: There exists an open neighborhood $U_a\su U$ of $a$ and an open neighborhood $U_b\su U$ of $b$ such that for all $a'\in U_a$ and $b'\in U_a$ the pair $(a',b')$ either satisfies $P_s(a',b')=0$ for some $1\leq s\leq k$ or $\Phi(a',b')=\lambda$.
\end{definition}

It is easy to see that for each  $\lambda\in \Lambda$, the set $T_\lambda$ is an open subsets of $U\times U\su \R^{2d}$. Furthermore, all the sets $T_\lambda$ for $\lambda\in \Lambda$ are disjoint: Indeed, if $(a,b)\in T_{\lambda}\cap T_{\lambda'}$ for distinct $\lambda, \lambda'\in \Lambda$, then there must be open neighborhoods $U_a\su U$ and $U_b\su U$ of $a$ and $b$, respectively, such that for all $a'\in U_a$ and $b'\in U_b$ we have $P_s(a',b')=0$ for some $1\leq s\leq k$. But this contradicts Fact \ref{fact-poly-vanishing} applied to $P_1,\dots,P_k$ and the open set $U_a\times U_b\su \R^{2d}$. Thus, the sets $T_\lambda$ for $\lambda\in \Lambda$ are disjoint open subsets of $U\times U\su \R^{2d}$.

The following definition introduces a key notion for our proof of Theorem \ref{theo-main}.

\begin{definition}Let us call a pair $(a,b)\in U\times U$ a \emph{wall pair} if $(a,b)\not\in \bigcup_{\lambda\in \Lambda}T_{\lambda}$.
\end{definition}

\begin{claim}\label{claim-special-P-s-0}Let $(a,b)\in U\times U$ be a wall pair. Then $P_s(a,b)=0$ for at least one $1\leq s\leq k$.
\end{claim}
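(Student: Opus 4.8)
The plan is to prove the contrapositive: if $P_s(a,b)\neq 0$ for all $1\leq s\leq k$, then $(a,b)$ lies in some $T_\lambda$, hence is not a wall pair. So suppose $(a,b)\in U\times U$ satisfies $P_s(a,b)\neq 0$ for all $s$. I would first set $\lambda=\Phi(a,b)=\phi(\sgn P_1(a,b),\dots,\sgn P_k(a,b))\in\Lambda$ and argue that $(a,b)\in T_\lambda$.

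The key observation is that each polynomial $P_s$ is a continuous function on $\R^{2d}$, so the condition $P_s(a,b)\neq 0$ is an open condition. More precisely, for each $s$ the set $\{(a',b')\in U\times U : \sgn P_s(a',b')=\sgn P_s(a,b)\}$ is an open subset of $U\times U$ containing $(a,b)$ (it equals $\{P_s>0\}$ or $\{P_s<0\}$ intersected with $U\times U$). Intersecting over all $1\leq s\leq k$, there is an open neighborhood $W\su U\times U$ of $(a,b)$ on which $\sgn P_s(a',b')=\sgn P_s(a,b)$ for every $s$; in particular $P_s(a',b')\neq 0$ and $\Phi(a',b')=\phi(\sgn P_1(a,b),\dots,\sgn P_k(a,b))=\lambda$ for all $(a',b')\in W$.

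Finally I would shrink $W$ to a product neighborhood: since $W$ is open in $U\times U$ and contains $(a,b)$, there exist open neighborhoods $U_a\su U$ of $a$ and $U_b\su U$ of $b$ with $U_a\times U_b\su W$ (this is just the definition of the product topology). Then for all $a'\in U_a$ and $b'\in U_b$ we have $(a',b')\in W$, so $\Phi(a',b')=\lambda$ (and indeed none of the $P_s(a',b')$ vanish, so the first alternative in the definition of $T_\lambda$ is not even needed here). Hence $(a,b)\in T_\lambda$ by definition, so $(a,b)\in\bigcup_{\lambda\in\Lambda}T_\lambda$ and $(a,b)$ is not a wall pair. This contradicts the hypothesis, completing the proof.

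There is no real obstacle here — the statement is essentially an unwinding of the definition of $T_\lambda$ combined with continuity of polynomials; the only thing to be careful about is passing from an arbitrary open neighborhood in $U\times U$ to a product of open neighborhoods $U_a\times U_b$, which the definition of $T_\lambda$ explicitly requires.
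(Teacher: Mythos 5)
Your argument is correct and is essentially the paper's own proof: assume all $P_s(a,b)\neq 0$, use continuity to find neighborhoods $U_a$ of $a$ and $U_b$ of $b$ on which every $\sgn P_s$ is constant, and conclude $(a,b)\in T_\lambda$ for $\lambda=\Phi(a,b)$, contradicting the wall-pair assumption. The extra step of shrinking an open neighborhood in $U\times U$ to a product $U_a\times U_b$ is a fine (if routine) point of care that the paper handles implicitly.
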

\begin{proof} Suppose we had $P_s(a,b)\neq 0$ for all $1\leq s\leq k$. Then by continuity of the polynomials $P_s$ we can find open neighborhoods $U_a\su U$ and $U_b\su U$ of $a$ and $b$, respectively, such that $\sgn P_s(a',b')=\sgn P_s(a,b)$ for all $a'\in U_a$, $b'\in U_b$ and $1\leq s\leq k$. But then we obtain $\Phi(a',b')=\Phi(a,b)$ for all $a'\in U_a$ and $b'\in U_b$, establishing that $(a,b)\in T_\lambda$ for $\lambda=\Phi(a,b)\in \Lambda$. This contradicts $(a,b)$ being a wall pair.
\end{proof}

Note that the zero-sets of the polynomials $P_1,\dots,P_k$ divide the set $U\times U$ into different regions and the function $\Phi: U\times U\to \Lambda$ is constant inside each of these regions. Intuitively, a wall pair $(a,b)\in U\times U$ is a point lying on the boundary between multiple such regions where $\Phi$ takes different values. The following claim is the reason because of which wall pairs play an important role in our proof.

\begin{claim}\label{claim-path-no-wall} Let $(a,b), (a',b')\in U\times U$ be such that $P_s(a,b)\neq 0$ and $P_s(a',b')\neq 0$ for all $1\leq s\leq k$. Suppose that there is a continuous path $\gamma: [0,1]\to U\times U$ with $\gamma(0)=(a,b)$ and $\gamma(1)=(a',b')$ and such that there is no $t\in [0,1]$ for which $\gamma(t)$ is a wall pair. Then $\Phi(a,b)=\Phi(a',b')$.
\end{claim}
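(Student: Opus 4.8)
\textbf{Proof plan for Claim \ref{claim-path-no-wall}.}

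The plan is to show that $\Phi$ is locally constant along $\gamma$ on the set of parameters where all the polynomials $P_s$ are nonzero, and that this set is all of $[0,1]$. First I would consider the set
\[
Z = \{ t\in [0,1] \mid P_s(\gamma(t)) = 0 \text{ for some } 1\leq s\leq k \}.
\]
Since $\gamma$ avoids all wall pairs, Claim \ref{claim-special-P-s-0} (in contrapositive form) tells us that $\gamma(t)\in \bigcup_{\lambda\in\Lambda} T_\lambda$ for every $t\in [0,1]$. On the other hand, the defining property of $T_\lambda$ says that for $(a_0,b_0)\in T_\lambda$ there are neighborhoods $U_{a_0}, U_{b_0}$ in which every pair either has some $P_s$ vanishing or has $\Phi = \lambda$. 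Combined with Fact \ref{fact-poly-vanishing} (there is always a pair nearby with no $P_s$ vanishing), this forces $\lambda$ to be \emph{uniquely determined} by $(a_0,b_0)\in T_\lambda$, so we get a well-defined function $t\mapsto \lambda(t)\in\Lambda$ with $\gamma(t)\in T_{\lambda(t)}$.

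Next I would argue that $t\mapsto \lambda(t)$ is locally constant on $[0,1]$, hence constant since $[0,1]$ is connected and $\Lambda$ is finite (equivalently, each $T_\lambda$ is open, the sets $T_\lambda$ are disjoint, they cover the image of $\gamma$, and $\gamma$ is continuous, so $\gamma^{-1}(T_\lambda)$ partition $[0,1]$ into disjoint open sets and only one can be nonempty). Therefore there is a single $\lambda\in\Lambda$ with $\gamma(t)\in T_\lambda$ for all $t\in[0,1]$; in particular $(a,b) = \gamma(0)\in T_\lambda$ and $(a',b') = \gamma(1)\in T_\lambda$. Finally, since $(a,b)$ satisfies $P_s(a,b)\neq 0$ for all $s$, the membership $(a,b)\in T_\lambda$ together with the defining property of $T_\lambda$ (applied at the pair $(a,b)$ itself, which has no vanishing $P_s$) forces $\Phi(a,b) = \lambda$; likewise $\Phi(a',b') = \lambda$. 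Hence $\Phi(a,b) = \Phi(a',b')$, as desired.

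The only mildly delicate point — and the step I expect to require the most care — is the identification of $\lambda(t)$ as well-defined, i.e.\ that $\gamma(t)$ cannot lie in two different $T_\lambda$'s. But this is exactly the disjointness of the sets $T_\lambda$ already established in the text (via Fact \ref{fact-poly-vanishing} applied to $U_a\times U_b$), so there is genuinely no obstacle; the argument is a routine connectedness/open-cover chase once this is in hand.
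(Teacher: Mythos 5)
Your proposal is correct and follows essentially the same route as the paper: the sets $T_\lambda$ are disjoint open subsets of $U\times U$ whose union contains the image of $\gamma$ (since no $\gamma(t)$ is a wall pair), so by connectedness of $[0,1]$ the preimages $\gamma^{-1}(T_\lambda)$ force a single $T_\lambda$ to contain the whole path, and evaluating the defining property of $T_\lambda$ at the endpoints, where no $P_s$ vanishes, yields $\Phi(a,b)=\lambda=\Phi(a',b')$. One minor remark: the statement that $\gamma(t)\in\bigcup_{\lambda\in\Lambda}T_\lambda$ is immediate from the definition of a wall pair (rather than from the contrapositive of Claim \ref{claim-special-P-s-0}), but this citation slip does not affect the argument.
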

\begin{proof} Recall that the sets $T_\lambda$ for $\lambda\in \Lambda$ are disjoint open subsets of $U\times U\su \R^{2d}$. Furthermore, for each $t\in [0,1]$ we have $\gamma(t)\in \bigcup_{\lambda\in \Lambda}T_{\lambda}$, since otherwise $\gamma(t)$ would be a wall pair. Thus, the sets $\gamma^{-1}(T_\lambda)$ for $\lambda\in \Lambda$ are disjoint open subsets of $[0,1]$ and their union is the entire interval $[0,1]$. Since the interval $[0,1]$ is connected (and the set $\Lambda$ is finite), this implies that $\gamma^{-1}(T_\lambda)=[0,1]$ for some $\lambda\in \Lambda$. In particular, we have $(a,b), (a',b')\in T_\lambda$, and therefore $\Phi(a,b)=\lambda=\Phi(a',b')$.\end{proof}

\begin{definition}\label{defi-special}Let us call a pair $(a,b)\in \R^d\times \R^d$ \emph{special} if at least one of the following two conditions holds:
\begin{itemize}
\item $P_s(a,b)=0$ for at least two different indices $s\in \lbrace 1,\dots,k\rbrace$.
\item There exists an index $s\in \lbrace 1,\dots,k\rbrace$ with $P_s(a,b)=0$ and $\nabla_b P_s(a,b)=0\in \R^d$.
\end{itemize}
\end{definition}

\begin{definition}Let us call a pair $(a,b)\in U\times U$ a \emph{general wall pair} if $(a,b)$ is a wall pair and $(a,b)$ is not special.
\end{definition}

By Claim \ref{claim-special-P-s-0}, each general wall pair $(a,b)$ satisfies $P_s(a,b)=0$ for exactly one index $1\leq s\leq k$. Let us call this index $s$ the \emph{wall index} of the pair $(a,b)$.

Roughly speaking, a general wall pair $(a,b)\in U\times U$ is a point of the boundary of exactly two regions of $U\times U$ cut out by the zero-sets of the polynomials $P_1,\dots,P_k$, such that $\Phi$ takes different values on these two regions (and such that the normal vector at $(a,b)$ to the boundary of the two regions  has a non-zero entry in at least one of the coordinates corresponding to $b$).

\begin{claim}\label{claim-general-wall-sign-flip}Let $(a,b)\in U\times U$ be a general wall pair with wall index $s\in \lbrace 1,\dots,k\rbrace$. Then we have
\begin{multline*}
\phi\big(\sgn P_1(a,b),\dots ,\sgn P_{s-1}(a,b),+,\sgn P_{s+1}(a,b),\dots ,\sgn P_{k}(a,b)\big)\\
\neq \phi\big(\sgn P_1(a,b),\dots ,\sgn P_{s-1}(a,b),-,\sgn P_{s+1}(a,b),\dots ,\sgn P_{k}(a,b)\big).
\end{multline*}
\end{claim}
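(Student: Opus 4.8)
The key point is that a general wall pair lies on the zero set of exactly one of the polynomials, namely $P_s$, with a nonzero $b$-gradient there, so near $(a,b)$ the zero set $\{P_s=0\}$ is a smooth hypersurface that locally separates a neighbourhood of $(a,b)$ into a "$+$ side" and a "$-$ side". On each side the signs of all the $P_t$ are locally constant, so $\Phi$ is locally constant, and the value of $\Phi$ on the $+$ side is precisely the left-hand quantity $\phi(\dots,+,\dots)$ and on the $-$ side the right-hand quantity $\phi(\dots,-,\dots)$. If these two values were equal, then $\Phi$ would be constant (equal to this common value $\lambda$) on a whole neighbourhood of $(a,b)$, away from $\{P_s=0\}$, which would show $(a,b)\in T_\lambda$, contradicting that $(a,b)$ is a wall pair.

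\textbf{Key steps in order.} First I would record, from Claim \ref{claim-special-P-s-0} and the definition of "special", that $(a,b)$ being a general wall pair forces $P_s(a,b)=0$, $P_t(a,b)\neq 0$ for all $t\neq s$, and $\nabla_b P_s(a,b)\neq 0$ (equivalently $\nabla P_s(a,b)\neq 0$). Second, by continuity of $P_1,\dots,P_k$, choose open neighborhoods $U_a\su U$ of $a$ and $U_b\su U$ of $b$ small enough that $\sgn P_t(a',b')=\sgn P_t(a,b)$ for all $(a',b')\in U_a\times U_b$ and all $t\neq s$; thus on $U_a\times U_b$ the label $\Phi(a',b')$, whenever $P_s(a',b')\neq 0$, depends only on $\sgn P_s(a',b')$, equaling $\phi(\dots,+,\dots)$ when $P_s(a',b')>0$ and $\phi(\dots,-,\dots)$ when $P_s(a',b')<0$ (here $\dots$ denotes the fixed signs $\sgn P_t(a,b)$ for $t\neq s$). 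Third, since $\nabla_b P_s(a,b)\neq 0$, pick a coordinate $b_i$ of $b$ with $\partial P_s/\partial b_i(a,b)\neq 0$; holding $a'=a$ fixed and moving $b$ along the $b_i$-direction through $b$, the function $t\mapsto P_s(a,b+te_i)$ has nonzero derivative at $t=0$, so it takes both positive and negative values for $t$ near $0$, hence there exist points $(a,b^+),(a,b^-)\in U_a\times U_b$ with $P_s(a,b^+)>0$ and $P_s(a,b^-)<0$. Fourth, suppose for contradiction that the two values $\phi(\dots,+,\dots)$ and $\phi(\dots,-,\dots)$ were equal; call this common value $\lambda$. Then for every $(a',b')\in U_a\times U_b$ we have either $P_s(a',b')=0$ (so $P_s(a',b')=0$ for some index) or $\Phi(a',b')=\lambda$; by the definition of $T_\lambda$ this means $(a,b)\in T_\lambda$, contradicting the assumption that $(a,b)$ is a wall pair. (The points $(a,b^\pm)$ guaranteeing that both signs of $P_s$ actually occur are not strictly needed for this contradiction, but they confirm that $\lambda$ is attained and the situation is not vacuous.) This contradiction proves the claimed inequality.

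\textbf{Main obstacle.} There is no serious obstacle; the only thing to be careful about is bookkeeping of which signs are fixed. The slightly delicate point is making sure the neighborhoods $U_a,U_b$ are chosen \emph{before} invoking the definition of $T_\lambda$, and that the definition of $T_\lambda$ is applied with the same pair of neighborhoods — this is exactly why we needed $(a,b)$ not to be special (so that only one $P_s$ vanishes and its gradient is nonzero in the $b$-variables). The genericity hypotheses built into "general wall pair" do precisely the work of guaranteeing the local one-hypersurface picture, so the argument reduces to continuity of polynomials plus the definition of a wall pair.
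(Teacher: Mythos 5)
Your proposal is correct and follows essentially the same route as the paper: fix neighborhoods on which the signs of $P_t$ for $t\neq s$ are constant, then observe that if the two $\phi$-values agreed, the pair $(a,b)$ would lie in $T_\lambda$, contradicting its being a wall pair. The extra construction of the points $b^{+},b^{-}$ using $\nabla_b P_s(a,b)\neq 0$ is, as you note yourself, not needed for the contradiction (the paper omits it), so the argument matches the paper's proof in all essential respects.
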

In other words, the conclusion in Claim \ref{claim-general-wall-sign-flip} means that  replacing the zero in the $s$-th position of the tuple $\big(\sgn P_1(a,b),\dots ,\sgn P_{k}(a,b)\big)$ by either $+$ or $-$ leads to different values when applying the function $\phi$.

\begin{proof}[Proof of Claim \ref{claim-general-wall-sign-flip}]
Suppose for contradiction that
\begin{multline*}
\phi\big(\sgn P_1(a,b),\dots ,\sgn P_{s-1}(a,b),+,\sgn P_{s+1}(a,b),\dots ,\sgn P_{k}(a,b)\big)\\
= \phi\big(\sgn P_1(a,b),\dots ,\sgn P_{s-1}(a,b),-,\sgn P_{s+1}(a,b),\dots ,\sgn P_{k}(a,b)\big).
\end{multline*}
Then both of these terms are equal to the same element $\lambda\in \Lambda$. By continuity of the polynomials $P_1,\dots,P_k$, we can find open neighborhoods $U_a\su U$ and $U_b\su U$ of $a$ and $b$, respectively, such that $\sgn P_t(a',b')=\sgn P_t(a,b)$ for all $a'\in U_a$, $b'\in U_b$ and $t\in \lbrace1,\dots,k\rbrace\sm \lbrace s\rbrace$. Then for all $a'\in U_a$ and $b'\in U_b$ we either have $P_s(a',b')=0$ or $\Phi(a',b')=\phi\big(\sgn P_1(a',b'),\dots ,\sgn P_{k}(a',b')\big)$ is one of the two terms in the equation above and therefore equal to $\lambda$. This shows that $(a,b)\in T_\lambda$, a contradiction to $(a,b)$ being a wall pair.
\end{proof}

\begin{definition}\label{defi-L-a}For $a\in U$, define the subspace $L_a\su \R^d$ as
\[L_a=\operatorname{span}\lbrace \nabla_a P_s(a,b) \mid b\in U\text{ and }1\leq s\leq k \text{ such that } (a,b) \text{ is a general wall pair with wall index } s\rbrace.\]
\end{definition}

The main difficulty in the proof of Theorem \ref{theo-main} is to prove the following lemma.

\begin{lemma}\label{lemma-main} There exists a point $a\in U$ with $L_a=\R^d$.
\end{lemma}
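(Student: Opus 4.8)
The plan is to argue by contradiction: suppose that $L_a \subsetneq \R^d$ for every $a \in U$. The idea is to build a smooth vector field $w$ on (a suitable open subset of) $U$ that is nowhere zero and that is ``tangent to the walls'' in the sense that following its flow starting from a generic point never crosses a general wall pair, and yet connects two points where $\Phi$ takes different values, contradicting Claim \ref{claim-path-no-wall}. Concretely, fix any two distinct points $a, a' \in U$ and, using the hypothesis of Theorem \ref{theo-main}, a point $b \in U$ with $P_s(a,b) \neq 0$ and $P_s(a',b) \neq 0$ for all $s$ and with $\Phi(a,b) \neq \Phi(a',b)$. The goal is to move the first coordinate from $a$ to $a'$ while keeping $b$ fixed, along a path in the first coordinate that avoids all $a^*$ for which $(a^*, b)$ is special or lies on a wall; the obstruction to doing so freely is precisely the set of general wall pairs with second coordinate $b$, and the direction we are forbidden to cross at such a pair $(a^*, b)$ with wall index $s$ is (up to sign) $\nabla_a P_s(a^*, b)$ — which is exactly a spanning vector of $L_{a^*}$.

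The key steps, in order, are as follows. First I would set up the linear-algebra input: for each $a \in U$ with $\dim L_a = \l < d$, Lemma \ref{lemma-matrix} (applied to a matrix whose rows are a locally chosen basis of $L_a$, after verifying these can be taken to vary smoothly) produces a neighborhood $U'$ of $a$ and a nowhere-zero smooth vector field $w$ on $U'$ with $w(x) \perp L_x$ for all $x$ — here one needs $L_x \subseteq L_a$ near $a$, which follows because the condition ``$(x,b)$ is a general wall pair with wall index $s$'' is an open-type condition forcing nearby gradient vectors to lie close to $L_a$; this monotonicity/semicontinuity of $x \mapsto L_x$ is a point to check carefully. Second, I would integrate $w$: by local integrability of vector fields, through each such $a$ there is an integral curve, and along it $\frac{d}{dt}P_s$ in the $a$-direction is controlled; the claim is that such an integral curve, run with $b$ held fixed, never passes through a point $a^*$ with $(a^*,b)$ a general wall pair, because at such a point the velocity $w(a^*)$ would be orthogonal to $\nabla_a P_s(a^*,b) \in L_{a^*}$, while crossing the wall transversally requires a nonzero component in that direction — and tangential ``grazing'' can be excluded using that general wall pairs are not special (so $\nabla_b P_s \neq 0$ is not the obstruction; rather one argues the wall set in the $a$-variable is a smooth hypersurface near a general wall pair and the flow stays on one side). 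Third, I would handle special pairs and non-general wall pairs: the set of $a^*$ with $(a^*, b)$ special is contained in a real algebraic set of dimension $\le d - 2$ in the $a$-variable for generic $b$ (this is exactly the content of Facts \ref{fact-two-coprime-poly}, \ref{fact-dim-union}, \ref{fact-cutting-dimension}, applied to the pairs of polynomials $P_s$ and to $P_s$ together with its $b$-gradient components), so by Fact \ref{fact-complement-variety} its complement in a connected open piece of $U$ is still connected, and one can perturb $b$ slightly (Fact \ref{fact-poly-vanishing}) to keep $P_s(a,b), P_s(a',b) \neq 0$ and $\Phi$-values unchanged. Finally, patching the local flows together over the (by Fact \ref{fact-components-finite}, finitely many) relevant regions, one produces a continuous path from $(a,b)$ to $(a',b)$ in $U \times U$ meeting no general wall pair; but a wall pair that is not general is special, hence avoided, so the path meets no wall pair at all, and Claim \ref{claim-path-no-wall} gives $\Phi(a,b) = \Phi(a',b)$, the desired contradiction.

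\textbf{Main obstacle.} The hard part will be making the ``flow never crosses a general wall'' step rigorous: a priori the integral curve of $w$ could still hit a general wall pair tangentially, or could leave the small neighborhood $U'$ on which $w$ is defined before reaching $a'$, and one must also ensure the curve stays inside $U$ and inside the region where the signs $\sgn P_s$ are locally constant away from the single wall index. Controlling the global behavior of the flow — in particular showing it can be prolonged all the way across $U$, or alternatively covering a path from $a$ to $a'$ by finitely many such local flow segments and concatenating them without ever being forced across a wall — is where the real work lies, and it is presumably why the author splits this off as the main lemma and defers further lemmas to later sections. A secondary technical point is the smooth dependence of a basis of $L_a$ on $a$, which likely requires passing to a relatively open subset of $U$ on which $\dim L_a$ is locally constant and choosing the basis vectors among the $\nabla_a P_s(a,b)$ continuously; here too one leans on semicontinuity of $\dim L_a$ together with the fact that, by assumption, $L_a \ne \R^d$ everywhere, so the ``bad'' stratum where the dimension jumps can be peeled away.
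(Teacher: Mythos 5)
Your starting point (a nowhere-zero smooth field $w$ with $w(x)\perp L_x$, obtained via Lemma \ref{lemma-matrix}, and its integral curves) is indeed how the paper begins, but the overall architecture has a genuine gap: you propose to join the two \emph{arbitrary} points $a$ and $a'$, with $b$ held fixed, by a wall-avoiding path in the $a$-variable, and this cannot work. Since $\Phi(a,b)\neq\Phi(a',b)$, Claim \ref{claim-path-no-wall} itself (in contrapositive) says that \emph{every} path from $(a,b)$ to $(a',b)$ must meet a wall pair, so no patching of local flow segments can produce the path you want; moreover the flow of $w$ only moves in directions orthogonal to $L_x$, and there is no reason its reachable set contains $a'$. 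A concrete illustration: if all the $P_s$ depended only on the first coordinate of the $a$-variable, then $L_a\su\operatorname{span}(e_1)\subsetneq\R^d$ everywhere, yet the walls are ``vertical'' hypersurfaces that any path between points on opposite sides must cross; what fails there is the hypothesis of Theorem \ref{theo-main}, which therefore has to be exploited much more locally than in your plan. The paper uses the integral curve only to produce a single nearby point $a_1=\tau(t)\neq a_0$, proves that the \emph{finite} set $\mathcal{Q}(a)$ of wall polynomial classes satisfies $\mathcal{Q}(a_1)=\mathcal{Q}(a_0)$ (Lemma \ref{lemma-Qa}, whose heart is Claim \ref{claim-beta-constant}: the implicit-function parametrization of a wall from Lemma \ref{lemma-beta} is \emph{constant} along the flow, precisely because $\tau'=w\perp L_{\tau(t)}$), then applies the hypothesis of Theorem \ref{theo-main} to the specific pair $(a_0,a_1)$ and derives the contradiction by a path argument in the $b$-variable: inside the connected component $C$ of $\lbrace b'\in U\mid P_s(a_0,b')\neq 0\ \forall s\rbrace$ containing $b$ (finitely many components by Fact \ref{fact-components-finite}), one joins $b$ to a base point $b_C$ avoiding the codimension-two special set (Lemma \ref{lemma-U1} and Fact \ref{fact-complement-variety}), and since $\mathcal{Q}(a_1)=\mathcal{Q}(a_0)$ no pair $(a_1,b')$ with $b'\in C$ is a wall pair, giving $\Phi(a_1,b)=\Phi(a_1,b_C)=\Phi(a_0,b_C)=\Phi(a_0,b)$, the desired contradiction.

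A second problem is your semicontinuity claim $L_x\su L_a$ for $x$ near $a$: it goes the wrong way. Persistence of general wall pairs under perturbation of $a$ (Lemma \ref{lemma-beta}) shows that nearby points have \emph{at least} the wall normals of $a$, so $\dim L_x$ is lower semicontinuous, and new wall directions can appear arbitrarily close to $a$; hence $w(x)\perp L_x$ is not automatic from $w(a)\perp L_a$. The paper handles this by choosing $a_0$ to \emph{maximize} $\dim L_a$ over the open set $U_1$ of Lemma \ref{lemma-U1} (so that locally the dimension cannot grow, and Lemma \ref{lemma-vector-field} gives local constancy), and a second maximization, of $\vert\mathcal{Q}(a_0)\vert$, is needed later to upgrade $\mathcal{Q}(a_0)\su\mathcal{Q}(a_1)$ to equality. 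These maximality choices, the finiteness of $\mathcal{Q}(a)$, and the final connectivity argument in the $b$-variable are the missing ingredients; without them, the step you flag as the ``main obstacle'' is not merely hard but impossible as stated.
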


If $(a,b)\in U\times U$ is a general wall pair with wall index $s$, then $a$ lies in the zero-set of the polynomial $P_s(\_,b)$. If $\nabla_a P_s(a,b)\neq 0$, then the vector $\nabla_a P_s(a,b)$ is the normal vector to the tangent hyperplane of the zero-set of $P_s(\_,b)$ at the point $a$. Lemma \ref{lemma-main} states that there is a point $a\in U$, such that all of these normal vectors for all $b\in U$ and $1\leq s\leq k$ for which $(a,b)$ is a general wall pair with wall index $s$ are spanning $\R^d$. Intuitively speaking, this means that the tangent hyperplanes to the zero-sets of the polynomials $P_s(\_,b)$ at the point $a$ are ``spanning all directions of $\R^d$'' (i.e.\ there is no common line contained in all of these hyperplanes).

We postpone the proof of Lemma \ref{lemma-main} to Section \ref{sect-proof-lemma-main}. We will now finish the proof of Theorem \ref{theo-main} assuming Lemma \ref{lemma-main}.

Let us fix a point $a^*\in U$ as in Lemma \ref{lemma-main}. Then we can find points $b_1,\dots, b_d\in U$ and indices $s_1,\dots,s_d\in \lbrace 1,\dots,k\rbrace$ such that for all $1\leq i\leq d$ the pair $(a^*,b_i)$ is a general wall pair with wall index $s_i$ and such that
\[\operatorname{span} \lbrace  \nabla_a P_{s_1}(a^*,b_1),\dots,\nabla_a P_{s_d}(a^*,b_d)\rbrace = L_{a^*} = \R^d.\]

The following lemma states that for any $m$ we can find points $b_i^j\in U$ for $i=1,\dots,d$ and $j=1,\dots,m$ with certain technical conditions (each $b_i^j$ will be chosen by slightly perturbing the point $b_i$ in a carefully chosen way).

This lemma will later allow us to construct many strongly $(P_1,\dots,P_k,\phi,U,\Lambda)$-representable edge-labelings of the complete graph on the vertex set $\{1,\dots,n\}$, with the following approach. Such an edge-labeling is specified by choosing points $a_1,\dots,a_n\in U$ and determining the labels on the edges of the complete graph as in Definition \ref{defi-representable} (using $P_1,\dots,P_k$ and $\phi$). Roughly speaking, our approach will be to take the last $md$ of the points $a_1,\dots,a_n$ to be the points $b_1^1,\dots,b_1^m,\dots,b_d^1,\dots,b_d^m$ obtained in Lemma \ref{lemma-b-i-j} (for some parameter $m$ depending on $n$). Each of the remaining $n-md$ points $a_1,\dots,a_{n-md}$ can then be chosen by applying Lemma \ref{lemma-b-i-j} with any choice of $(j_1,\dots,j_d)\in \lbrace 1,\dots,m\rbrace^d$ (and taking the resulting point $a\in U$). For each of the points $a_1,\dots,a_{n-md}$, there are $m^d$ possibilities to choose a $d$-tuple $(j_1,\dots,j_d)\in \lbrace 1,\dots,m\rbrace^d$, so in total we have $(m^d)^{n-md}$ choices for all of these $d$-tuples. Using the conditions in Lemma \ref{lemma-b-i-j}, we can show that when we choose $a_\ell$ for some $1\leq \ell\leq n-md$, then each choice for the $d$-tuple $(j_1,\dots,j_d)\in \lbrace 1,\dots,m\rbrace^d$ leads to a different outcome of $\left(\Phi(a_\ell,a_{n-md+1}),\dots, \Phi(a_\ell,a_{n})\right)=\left(\Phi(a_\ell,b_1^1),\dots, \Phi(a_\ell,b_d^m)\right)$ and therefore to a different outcome of the labels on the edges from vertex $\ell$ to the vertices $n-md+1,\dots,n$. Thus, we can construct at least $(m^d)^{n-md}$ different strongly $(P_1,\dots,P_k,\phi,U,\Lambda)$-representable edge-labelings (which differ from each other in the labels on the edges between the vertices $1,\dots,n-md$ and $n-md+1,\dots,n$). By taking $m=\lfloor n/\ln n\rfloor$, we will then obtain the desired lower bound for the number of strongly $(P_1,\dots,P_k,\phi,U,\Lambda)$-representable edge-labelings in Theorem \ref{theo-main} .

\begin{lemma}\label{lemma-b-i-j} For any positive integer $m$, there exist points $b_i^j\in U$ for $i=1,\dots,d$ and $j=1,\dots,m$ such that the following holds: For every $d$-tuple $(j_1,\dots,j_d)\in \lbrace 1,\dots,m\rbrace^d$ one can find a point $a\in U$ such that for all $1\leq i\leq d$ and all $1\leq j\leq m$ the following four conditions are satisfied:
\begin{itemize}
\item[(i)] $P_{s}(a,b_i^j)\neq 0$ for all $s=1,\dots,k$.
\item[(ii)] $P_{s_i}(a,b_i^j)>0$ if $j\leq j_i$.
\item[(iii)] $P_{s_i}(a,b_i^j)<0$ if $j> j_i$.
\item[(iv)] $\sgn P_{s}(a,b_i^j)=\sgn P_{s}(a^*,b_i)$ for all $s\in \lbrace 1,\dots,k\rbrace\sm \lbrace s_i\rbrace$.
\end{itemize}
\end{lemma}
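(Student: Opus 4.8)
\textbf{Proof plan for Lemma \ref{lemma-b-i-j}.}

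The plan is to choose each point $b_i^j$ as a small perturbation of $b_i$ in a direction determined by a suitable vector field, so that moving $a$ near $a^*$ changes the signs of the polynomials $P_{s_i}(a,\,\cdot\,)$ in a controlled way. Fix $1\leq i\leq d$. Since $(a^*,b_i)$ is a general wall pair with wall index $s_i$, we have $P_{s_i}(a^*,b_i)=0$, $\nabla_b P_{s_i}(a^*,b_i)\neq 0$ (as $(a^*,b_i)$ is not special), and the signs of all $P_s(a^*,b_i)$ for $s\neq s_i$ are nonzero. First I would use the non-vanishing of $\nabla_b P_{s_i}(a^*,b_i)$ together with the implicit function theorem to understand the local geometry of the zero-set $\{b : P_{s_i}(a^*,b)=0\}$ near $b_i$: there is a smooth vector $v_i\in\R^d$ (e.g.\ $v_i=\nabla_b P_{s_i}(a^*,b_i)$ itself) along which $P_{s_i}(a^*,\,\cdot\,)$ strictly increases, so $P_{s_i}(a^*, b_i + t v_i)$ changes sign as $t$ passes through $0$. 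Then define
\[
b_i^j = b_i + \left(j - \tfrac{m+1}{2}\right)\delta\, v_i
\]
for a sufficiently small $\delta>0$, so that the values $P_{s_i}(a^*,b_i^j)$ are, for $\delta$ small, close to $0$ but with signs governed by whether $j$ is below, equal to, or above roughly the midpoint — more precisely, so that there is a threshold where the sign flips. (I would actually shift the indexing so that the flip happens between $j=j_i$ and $j=j_i+1$ only after the point $a$ is chosen; see below.)

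The key point is that we do not want a \emph{single} threshold independent of $(j_1,\dots,j_d)$; rather, for each target tuple $(j_1,\dots,j_d)$ we want to pick $a=a(j_1,\dots,j_d)$ near $a^*$ so that the flip for index $i$ happens exactly between $b_i^{j_i}$ and $b_i^{j_i+1}$. This is where $L_{a^*}=\R^d$ enters. Since $\{\nabla_a P_{s_1}(a^*,b_1),\dots,\nabla_a P_{s_d}(a^*,b_d)\}$ spans $\R^d$, the map
\[
a \longmapsto \big(P_{s_1}(a,b_1),\,\dots,\,P_{s_d}(a,b_d)\big)\in\R^d
\]
has invertible Jacobian at $a=a^*$ (its rows are exactly those spanning gradients, and all $d$ functions vanish at $a^*$). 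By the inverse function theorem, for $a$ in a small neighborhood $W$ of $a^*$ this map is a diffeomorphism onto a neighborhood of $0\in\R^d$; hence for any prescribed small target vector $(\tau_1,\dots,\tau_d)$ we can solve $P_{s_i}(a,b_i)=\tau_i$ for all $i$ simultaneously. Choosing $\delta$ small enough (and then $v_i$-perturbations small enough), a first-order Taylor expansion in both $a-a^*$ and $b_i^j - b_i$ shows
\[
P_{s_i}(a,b_i^j) = \underbrace{P_{s_i}(a,b_i)}_{=\tau_i} + \left(j-\tfrac{m+1}{2}\right)\delta\,\big\langle \nabla_b P_{s_i}(a^*,b_i),\,v_i\big\rangle + (\text{higher order}),
\]
and since $\langle\nabla_b P_{s_i}(a^*,b_i),v_i\rangle>0$, the sign of this expression is positive for $j\leq j_i$ and negative for $j>j_i$ precisely when $\tau_i$ is chosen in the corresponding narrow window (roughly $\tau_i$ slightly above $-(j_i-\tfrac{m+1}{2})\delta\,\langle\nabla_b P_{s_i},v_i\rangle$); such a $\tau_i$ exists because consecutive values $\big(j-\tfrac{m+1}{2}\big)\delta\langle\cdots\rangle$ are separated by a fixed gap $\delta\langle\cdots\rangle$ that dominates the higher-order error once $\delta$ is small. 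This gives conditions (ii) and (iii). Condition (iv) follows because for $s\neq s_i$ we have $P_s(a^*,b_i)\neq 0$, hence by continuity $\sgn P_s(a,b_i^j)=\sgn P_s(a^*,b_i)$ as long as $a$ stays near $a^*$ and $b_i^j$ stays near $b_i$. Condition (i) for $s\neq s_i$ is part of (iv); for $s=s_i$ it is immediate from (ii) and (iii).

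The main obstacle — and the part requiring care rather than routine estimation — is making the quantifiers fit together: the points $b_i^j$ must be chosen \emph{once and for all} (they cannot depend on $(j_1,\dots,j_d)$), while for each of the $m^d$ tuples $(j_1,\dots,j_d)$ we get to choose $a$ afterwards. The resolution, as sketched, is to fix the spacing parameter $\delta$ first (small enough that the neighborhood $W$ from the inverse function theorem, the linear-approximation errors, and the sign-stability radius for the $s\neq s_i$ polynomials are all controlled uniformly over $i$ and over $j\in\{1,\dots,m\}$), then define the $b_i^j$, and only then, given a tuple, solve for $a$ via the inverse function theorem with the appropriate target $(\tau_1,\dots,\tau_d)$. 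One subtlety to check is that the required neighborhood $W$ and all error bounds can be taken independent of $m$ — they cannot, since the spread of the $b_i^j$ grows with $m$ unless $\delta$ shrinks; so in fact $\delta=\delta(m)$ and one simply takes $\delta$ small enough after $m$ is given, which is fine since $m$ is fixed throughout the lemma. A final routine verification is that all the chosen points $b_i^j$ and $a$ genuinely lie in $U$, which holds because $U$ is open and all perturbations are small. I would also remark that the roles of $a$ and $b$ in the gradient hypotheses are what force the two separate conditions in Definition \ref{defi-special}: $\nabla_b P_{s_i}\neq 0$ is used for the sign-flip in the $b$-direction, and the single-index condition ensures no interference from other polynomials vanishing at $(a^*,b_i)$.
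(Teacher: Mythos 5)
Your proposal follows essentially the same perturbative strategy as the paper: perturb each $b_i$ along a direction $v_i$ that pairs nontrivially with $\nabla_b P_{s_i}(a^*,b_i)$, freeze the signs of the other $P_s$ by continuity, and use the fact that $\nabla_a P_{s_1}(a^*,b_1),\dots,\nabla_a P_{s_d}(a^*,b_d)$ span $\R^d$ to move $a$ near $a^*$ so that the sign flip for coordinate $i$ lands at the prescribed index $j_i$; your observation that the spacing parameter must be chosen after $m$ is exactly how the paper proceeds (it takes $\eps\lesssim 1/m^2$ so the quadratic Taylor error is beaten by the linear gap). The one real difference is how $a$ is produced: you invoke the inverse function theorem for $a\mapsto\big(P_{s_1}(a,b_1),\dots,P_{s_d}(a,b_d)\big)$ and solve for target values $\tau_i$, whereas the paper avoids the IFT entirely by taking $a=a^*+\eps(j_1z_1+\dots+j_dz_d)$, where $z_i$ is a dual basis to the spanning gradients ($z_i\cdot\nabla_a P_{s_i}(a^*,b_i)=1$, $z_i\cdot\nabla_a P_{s_j}(a^*,b_j)=0$ for $j\neq i$); this makes the first-order value come out explicitly as $(j_i-j+\tfrac12)\eps$ and reduces everything to a single Taylor bound at $(a^*,b_i)$, while your route additionally needs the (routine, but worth stating) check that the IFT image contains the targets of size $O(m\delta)$ and that $\Vert a-a^*\Vert=O(m\delta)$ uniformly, and that $\tau_i$ is kept at least the error bound away from the two relevant thresholds. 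One small correction: with $b_i^j=b_i+\big(j-\tfrac{m+1}{2}\big)\delta v_i$ and $\langle\nabla_b P_{s_i}(a^*,b_i),v_i\rangle>0$, the first-order value of $P_{s_i}(a,b_i^j)$ is \emph{increasing} in $j$, so no choice of $\tau_i$ can make it positive for $j\le j_i$ and negative for $j>j_i$; you must reverse the orientation (replace $v_i$ by $-v_i$, or use a coefficient decreasing in $j$, as the paper does with $b_i^j=b_i+(\tfrac12-j)\eps v_i$). With that sign fixed, your argument goes through.
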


The details of the proof of Lemma \ref{lemma-b-i-j} are a bit technical, so we postpone the proof to Section \ref{sect-lemma-b-i-j}. Here, we just describe the rough idea behind the proof of the lemma. Recall that for $i=1,\dots,d$ the pair $(a^*,b_i)$ is a general wall pair with wall index $s_i$, which in particular means that $P_{s_i}(a^*,b_i)=0$. If for some $i$ we perturb the point $b_i\in U$ slightly, this will change the polynomial $P_{s_i}(\_,b_i)$ a little bit. In most cases, the zero-set of this new polynomial will not go through the point $a^*$ anymore, but will be shifted away from $a^*$ by a little bit. However, close to $a^*$, this zero-set will roughly look ``parallel'' to the zero-set of the original polynomial $P_{s_i}(\_,b_i)$. By choosing $m$ different perturbations $b_i^1,\dots,b_i^m$ of $b_i$, we obtain $m$ polynomials $P_{s_i}(\_,b_i^1),\dots, P_{s_i}(\_,b_i^m)$ whose zero-sets all look ``parallel'' to the zero-set of $P_{s_i}(\_,b_i)$ in a small open neighborhood of $a^*$. If we choose such perturbations for all the points $b_i$ for $i=1,\dots,d$, then the zero-sets of the polynomials $P_{s_i}(\_,b_i^j)$ divide a small open neighborhood of $a^*$ into open subsets in a grid-like fashion. Here, we are using that  the tangent hyperplanes to the zero-sets of the polynomials $P_{s_1}(\_,b_1),\dots,P_{s_d}(\_,b_d)$ at the point $a^*$ are ``spanning all directions of $\R^d$'' (or more precisely, their normal vectors $\nabla_a P_{s_1}(a^*,b_1),\dots,\nabla_a P_{s_d}(a^*,b_d)$ are spanning $\R^d$). Using the grid-like arrangement of these open subsets, for any $d$-tuple $(j_1,\dots,j_d)\in \lbrace 1,\dots,m\rbrace^d$ one can find a point $a$ with the desired conditions inside one of the open subsets in the grid (where $j_1,\dots,j_d$ are, in some sense, the coordinates of this subset in the grid). As mentioned above, the details of the proof of Lemma \ref{lemma-b-i-j} will be given in Section \ref{sect-lemma-b-i-j}.

In order to construct many strongly $(P_1,\dots,P_k,\phi,U,\Lambda)$-representable edge-labelings, and in particular in order to satisfy the condition in Definition \ref{defi-strongly-representable}, it will be more convenient to use the following slightly different version of Lemma \ref{lemma-b-i-j}. The main different between Lemma \ref{lemma-j-tuple-open-set} and Lemma \ref{lemma-b-i-j} is that for every $d$-tuple $(j_1,\dots,j_d)\in \lbrace 1,\dots,m\rbrace^d$ Lemma \ref{lemma-b-i-j} only claims the existence of one point $a\in U$ with the desired conditions, whereas Lemma \ref{lemma-j-tuple-open-set} demands the existence of an open subset $U'\in U$ such that every point $a'\in U'$ satisfies the desired conditions. It is, however, not hard to deduce Lemma \ref{lemma-j-tuple-open-set} from Lemma \ref{lemma-b-i-j} by simply taking a small open neighborhood of a point $a\in U$ satisfying the conditions in Lemma \ref{lemma-b-i-j}. Thinking of the strategy for the proof of Lemma \ref{lemma-b-i-j} described above, one can take this open neighborhood to be one of the open subsets in the grid-like arrangement.

\begin{lemma}\label{lemma-j-tuple-open-set}Let $m$ be a positive integer and let $b_i^j\in U$ for $i=1,\dots,d$ and $j=1,\dots,m$ be as in Lemma \ref{lemma-b-i-j}. Then for every $d$-tuple $(j_1,\dots,j_d)\in \lbrace 1,\dots,m\rbrace^d$, there exists an open set $U'\su U$ such that for all $a'\in U'$ we have:
\begin{itemize}
\item $P_s(a',b_i^j)\neq 0$ for all $i=1,\dots,d$, all $j=1,\dots,m$ and all $s=1,\dots,k$.
\item For all $i=1,\dots,d$, the number of indices $j\in \lbrace 1,\dots,m\rbrace$ with $\Phi(a',b_i^j)=\Phi(a',b_i^1)$ equals $j_i$.
\end{itemize}
\end{lemma}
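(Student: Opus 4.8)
The plan is to deduce Lemma~\ref{lemma-j-tuple-open-set} from Lemma~\ref{lemma-b-i-j} by taking $U'$ to be a sufficiently small open neighborhood of a point $a\in U$ provided by Lemma~\ref{lemma-b-i-j}. Fix a $d$-tuple $(j_1,\dots,j_d)\in\lbrace 1,\dots,m\rbrace^d$ and apply Lemma~\ref{lemma-b-i-j} to obtain a point $a\in U$ satisfying conditions (i)--(iv). Since by condition~(i) all of the finitely many real numbers $P_s(a,b_i^j)$ (for $i=1,\dots,d$, $j=1,\dots,m$ and $s=1,\dots,k$) are non-zero, continuity of the polynomials $P_s$ allows us to choose an open neighborhood $U'\subseteq U$ of $a$ so small that $\sgn P_s(a',b_i^j)=\sgn P_s(a,b_i^j)$ for all $a'\in U'$ and all such $i,j,s$. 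In particular $P_s(a',b_i^j)\neq 0$ for all $a'\in U'$, which gives the first bullet, and moreover $\Phi(a',b_i^j)=\Phi(a,b_i^j)$ for every $a'\in U'$, because $\Phi(a',b_i^j)$ depends only on the sign vector $(\sgn P_1(a',b_i^j),\dots,\sgn P_k(a',b_i^j))$. Hence it suffices to verify the second bullet for the single point $a$.

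Next I would compute $\Phi(a,b_i^j)$ explicitly for each fixed $i$. By conditions~(ii) and~(iii) we have $\sgn P_{s_i}(a,b_i^j)=+$ when $j\le j_i$ and $\sgn P_{s_i}(a,b_i^j)=-$ when $j>j_i$, while by condition~(iv) the remaining signs $\sgn P_s(a,b_i^j)$ for $s\ne s_i$ do not depend on $j$ (they all equal $\sgn P_s(a^*,b_i)$). Writing $\sigma_i^{+}$ (respectively $\sigma_i^{-}$) for the tuple in $\lbrace+,-,0\rbrace^k$ having $+$ (respectively $-$) in position $s_i$ and $\sgn P_s(a^*,b_i)$ in every other position $s$, we therefore get $\Phi(a,b_i^j)=\phi(\sigma_i^{+})$ for $j\le j_i$ and $\Phi(a,b_i^j)=\phi(\sigma_i^{-})$ for $j>j_i$. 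In particular, since $j_i\ge 1$, we have $\Phi(a,b_i^1)=\phi(\sigma_i^{+})$.

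The remaining point --- and the only place where more than bookkeeping is needed --- is to check that $\phi(\sigma_i^{+})\ne\phi(\sigma_i^{-})$; once this is known, the set of indices $j\in\lbrace 1,\dots,m\rbrace$ with $\Phi(a,b_i^j)=\Phi(a,b_i^1)=\phi(\sigma_i^{+})$ is exactly $\lbrace 1,\dots,j_i\rbrace$, which has $j_i$ elements, as required (and the same count holds for every $a'\in U'$ by the previous paragraph). But $\phi(\sigma_i^{+})\ne\phi(\sigma_i^{-})$ is precisely the content of Claim~\ref{claim-general-wall-sign-flip} applied to the pair $(a^*,b_i)$: recall that $(a^*,b_i)$ is a general wall pair with wall index $s_i$, so $P_{s_i}(a^*,b_i)=0$ and $P_s(a^*,b_i)\ne 0$ for all $s\ne s_i$; thus the tuple $(\sgn P_1(a^*,b_i),\dots,\sgn P_k(a^*,b_i))$ has a $0$ in position $s_i$ and $\sgn P_s(a^*,b_i)$ elsewhere, and replacing that $0$ by $+$ or by $-$ yields exactly $\sigma_i^{+}$ and $\sigma_i^{-}$. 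Claim~\ref{claim-general-wall-sign-flip} asserts that these two tuples have different images under $\phi$, which completes the verification. Since the $d$-tuple $(j_1,\dots,j_d)$ was arbitrary, the lemma follows. I do not anticipate any genuine obstacle here beyond keeping track of indices; all the real work is already contained in Lemma~\ref{lemma-b-i-j} and in Claim~\ref{claim-general-wall-sign-flip}.
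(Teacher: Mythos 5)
Your proposal is correct and follows essentially the same route as the paper's own proof: take the point $a$ from Lemma \ref{lemma-b-i-j}, shrink to a neighborhood on which all the signs $\sgn P_s(\cdot,b_i^j)$ are preserved, and invoke Claim \ref{claim-general-wall-sign-flip} for the general wall pair $(a^*,b_i)$ to see that the two resulting values of $\Phi$ differ, so exactly the indices $j\leq j_i$ give $\Phi(a',b_i^j)=\Phi(a',b_i^1)$. No gaps; the bookkeeping with $\sigma_i^{+}$ and $\sigma_i^{-}$ matches the paper's argument.
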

\begin{proof}
First, fix a point $a\in U$ satisfying the conditions (i) to (iv) in Lemma \ref{lemma-b-i-j}. Then we can choose an open neighborhood $U'\su U$ of $a$ such that all $a'\in U'$ satisfy $P_s(a',b_i^j)\neq 0$ and $\sgn P_s(a',b_i^j)=\sgn P_s(a,b_i^j)$ for all $i=1,\dots,d$, all $j=1,\dots,m$ and all $s=1,\dots,k$.

Now, in order to check the second condition, fix some $i\in \lbrace 1,\dots,d\rbrace$. Then for every $j\in \lbrace 1,\dots,m\rbrace$ and every $s\in \lbrace 1,\dots,k\rbrace\sm \lbrace s_i\rbrace$, we have
\[\sgn P_s(a',b_i^j)=\sgn P_s(a,b_i^j)=\sgn P_s(a^*,b_i).\]
Furthermore, for $j\leq j_i$, we have
\[\sgn P_{s_i}(a',b_i^j)=\sgn P_{s_i}(a,b_i^j)=+,\]
while for $j> j_i$, we have
\[\sgn P_{s_i}(a',b_i^j)=\sgn P_{s_i}(a,b_i^j)=-.\]
Thus, for $j\leq j_i$ we obtain
\[\Phi(a',b_i^j)=\phi\big(\sgn P_1(a^*,b_i),\dots ,\sgn P_{s_i-1}(a^*,b_i),+,\sgn P_{s_i+1}(a^*,b_i),\dots ,\sgn P_{k}(a^*,b_i)\big),\]
while for $j> j_i$ we get
\[\Phi(a',b_i^j)=\phi\big(\sgn P_1(a^*,b_i),\dots ,\sgn P_{s_i-1}(a^*,b_i),-,\sgn P_{s_i+1}(a^*,b_i),\dots ,\sgn P_{k}(a^*,b_i)\big).\]
But, as $(a^*,b_i)$ is a general wall pair with wall index $s_i$, Claim \ref{claim-general-wall-sign-flip} implies that those two values of $\Phi$ are different. Thus, $\Phi(a',b_i^j)=\Phi(a',b_i^1)$ if and only if $j\leq j_i$. In particular, the number of $j\in \lbrace 1,\dots,m\rbrace$ with $\Phi(a',b_i^j)=\Phi(a',b_i^1)$ equals $j_i$.
\end{proof}

Now, we are finally able to prove a lower bound for the number of strongly $(P_1,\dots,P_k,\phi,U,\Lambda)$-representable edge-labelings of the complete graphs on the vertex set $\lbrace 1,\dots, n\rbrace$. The proof follows the rough strategy outlined earlier (above the statement of Lemma \ref{lemma-b-i-j}), but one needs to be slightly more careful in order to ensure that the points $a_1,\dots,a_n\in U$ satisfy $P_s(a_i,a_j)\neq 0$ for all $1\leq i<j\leq n$ and all $1\leq s\leq k$ (see Definition \ref{defi-strongly-representable}).

\begin{lemma}\label{lemma-lower-bound} For every $1\leq m<n/d$, the number of strongly $(P_1,\dots,P_k,\phi,U,\Lambda)$-representable edge-labelings of the complete graph on the vertex set $\lbrace 1,\dots, n\rbrace$ is at least $m^{d(n-dm)}$.
\end{lemma}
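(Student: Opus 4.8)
The plan is to implement the counting strategy outlined just above the statement of Lemma \ref{lemma-b-i-j}, using Lemma \ref{lemma-j-tuple-open-set} to handle the ``strongly representable'' requirement. First I would set $m$ as given with $1\leq m<n/d$, so that $dm<n$, and apply Lemma \ref{lemma-b-i-j} to obtain points $b_i^j\in U$ for $i=1,\dots,d$ and $j=1,\dots,m$. These $dm$ points will be placed at the last $dm$ vertices: set $a_{n-dm+1},\dots,a_n$ to be the points $b_1^1,\dots,b_1^m,b_2^1,\dots,b_d^m$ in some fixed order. It is convenient to reindex and think of the last block of vertices as being labelled by pairs $(i,j)$ with $1\leq i\leq d$, $1\leq j\leq m$.

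Next, for each $d$-tuple $\mathbf{j}=(j_1,\dots,j_d)\in\{1,\dots,m\}^d$, Lemma \ref{lemma-j-tuple-open-set} gives an open set $U'_{\mathbf{j}}\su U$ such that every $a'\in U'_{\mathbf{j}}$ satisfies $P_s(a',b_i^j)\neq 0$ for all $i,j,s$, and such that for each $i$ the number of indices $j$ with $\Phi(a',b_i^j)=\Phi(a',b_i^1)$ equals $j_i$. Now I would choose the first $n-dm$ points. Pick any tuples $\mathbf{j}^{(1)},\dots,\mathbf{j}^{(n-dm)}\in\{1,\dots,m\}^d$; I claim we can choose $a_\ell\in U'_{\mathbf{j}^{(\ell)}}$ for $\ell=1,\dots,n-dm$ so that additionally $P_s(a_\ell,a_{\ell'})\neq 0$ for all $1\leq \ell<\ell'\leq n-dm$ and all $s$. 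This is done greedily: having chosen $a_1,\dots,a_{\ell-1}$, the conditions $P_s(a_\ell,a_{\ell'})\neq 0$ and $P_s(a_{\ell'},a_\ell)\neq 0$ for $\ell'<\ell$ and all $s$ exclude from $U'_{\mathbf{j}^{(\ell)}}$ a finite union of zero-sets of finitely many nonzero polynomials in the variable $a_\ell$ (none of these polynomials is identically zero since the $P_s$ are nonzero, and $a_{\ell'}$ is a fixed point — note $P_s(x,a_{\ell'})$ and $P_s(a_{\ell'},x)$ are nonzero polynomials in $x$ because $P_s$ is nonzero and irreducible, so not divisible by a polynomial in only half the variables, so its zero-set has empty interior), and by Fact \ref{fact-poly-vanishing} the open set $U'_{\mathbf{j}^{(\ell)}}$ minus these zero-sets is still nonempty, so a valid $a_\ell$ exists. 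Together with the fact that the points $b_i^j$ are fixed and, by Lemma \ref{lemma-j-tuple-open-set}, $P_s(a_\ell,b_i^j)\neq 0$ for all $\ell$ (and $P_s(b_i^j, b_{i'}^{j'})$ can be assumed nonzero, or one can perturb the $b_i^j$ at the very start — actually the cleanest route is to note the edges among the last $dm$ vertices don't matter for the count, so we only need $P_s(a_\ell, a_{\ell'})\neq 0$ for $\ell\le n-dm$ and all $\ell'$, which is what we have), we conclude that the resulting points $a_1,\dots,a_n\in U$ have $P_s(a_\ell,a_{\ell'})\neq 0$ whenever $\min(\ell,\ell')\le n-dm$.

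This is where one must be slightly careful about Definition \ref{defi-strongly-representable}, which requires $P_s(a_i,a_j)\neq 0$ for \emph{all} $i<j$, including pairs within the last block. To avoid this nuisance I would, at the outset, additionally require of the points $b_i^j$ (which we are free to perturb within a small open neighborhood, since all conditions in Lemma \ref{lemma-j-tuple-open-set} are open) that $P_s(b_i^j, b_{i'}^{j'})\neq 0$ for all $(i,j)\neq(i',j')$ and all $s$; this is again possible by Fact \ref{fact-poly-vanishing} applied finitely many times, noting that each $P_s(b_i^j, x)$ is a nonzero polynomial in the remaining variable. Then for \emph{every} choice of tuples $\mathbf{j}^{(1)},\dots,\mathbf{j}^{(n-dm)}$ the corresponding edge-labeling $F_{P_1,\dots,P_k,\phi}(a_1,\dots,a_n)$ is strongly $(P_1,\dots,P_k,\phi,U,\Lambda)$-representable.

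Finally I would show that distinct choices of $(\mathbf{j}^{(1)},\dots,\mathbf{j}^{(n-dm)})$ yield distinct edge-labelings. Given two choices differing in the $\ell$-th tuple, say $\mathbf{j}^{(\ell)}\neq\tilde{\mathbf{j}}^{(\ell)}$, there is a coordinate $i$ with $j_i^{(\ell)}\neq \tilde j_i^{(\ell)}$. By the second bullet of Lemma \ref{lemma-j-tuple-open-set}, the number of vertices $v$ among $b_i^1,\dots,b_i^m$ (a fixed set of $m$ vertices in the last block) for which the edge $\{\ell,v\}$ has label $\Phi(a_\ell,b_i^1)$ equals $j_i^{(\ell)}$ in the first labeling and $\tilde j_i^{(\ell)}$ in the second. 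Hmm — one subtlety is that the ``reference'' label $\Phi(a_\ell,b_i^1)$ itself may differ between the two labelings; but that does not matter, because in either labeling the multiset of labels on the $m$ edges from $\ell$ to $\{b_i^1,\dots,b_i^m\}$ consists of $j_i$ copies of one label and $m-j_i$ copies of another distinct label (by Claim \ref{claim-general-wall-sign-flip}, as in the proof of Lemma \ref{lemma-j-tuple-open-set}), so from the labeling alone we can recover $\min(j_i, m-j_i)$ and in fact, since we know which edge goes to $b_i^1$, we can recover $j_i$ exactly. Hence the two labelings differ. Therefore the number of strongly $(P_1,\dots,P_k,\phi,U,\Lambda)$-representable edge-labelings is at least the number of tuples $(\mathbf{j}^{(1)},\dots,\mathbf{j}^{(n-dm)})\in(\{1,\dots,m\}^d)^{n-dm}$, which is $(m^d)^{n-dm}=m^{d(n-dm)}$, as claimed. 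The main obstacle is the bookkeeping to guarantee the ``strongly'' (nonvanishing) condition simultaneously with membership in the sets $U'_{\mathbf{j}}$ and the injectivity of the assignment; the injectivity itself is essentially immediate from Lemma \ref{lemma-j-tuple-open-set} once one records that each $j_i$ is recoverable from the labeling.
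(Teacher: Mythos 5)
Your overall scheme is the same as the paper's: fix the points $b_i^j$ from Lemma \ref{lemma-b-i-j}, place them (up to perturbation) at the last $dm$ vertices, choose the first $n-dm$ points greedily inside the open sets from Lemma \ref{lemma-j-tuple-open-set} while imposing finitely many nonvanishing conditions via Fact \ref{fact-poly-vanishing}, and recover each tuple $(j_1^{(\ell)},\dots,j_d^{(\ell)})$ from the labels on the edges from vertex $\ell$ into the block $\lbrace b_i^1,\dots,b_i^m\rbrace$, using Claim \ref{claim-general-wall-sign-flip}. The one structural difference is where the nonvanishing on pairs inside the last block is arranged: the paper keeps the $b_i^j$ fixed and instead chooses the actual last-block points $a_n,\dots,a_{n-md+1}$ one at a time in small neighborhoods of the $b_i^j$, preserving the signs $\sgn P_s(a_\ell,\cdot)$ against the already-chosen first-block points, whereas you perturb the $b_i^j$ at the outset. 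Your variant is workable, but the stability claim should be anchored to Lemma \ref{lemma-b-i-j} rather than Lemma \ref{lemma-j-tuple-open-set}: fix one witness $a$ for each of the $m^d$ tuples; conditions (i)--(iv) are finitely many strict inequalities, hence open in the configuration $(b_i^j)$, and the additional conditions $P_s(b_i^j,b_{i'}^{j'})\neq 0$ are then obtained most cleanly by one application of Fact \ref{fact-poly-vanishing} in the product space, where the relevant polynomials are nonzero simply because the two argument blocks are disjoint groups of variables.

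There is, however, one genuinely incorrect justification: the claim that $P_s(x,a_{\ell'})$ and $P_s(a_{\ell'},x)$ are nonzero polynomials in $x$ ``because $P_s$ is nonzero and irreducible, so not divisible by a polynomial in only half the variables.'' Irreducibility does not prevent a specialization at a particular point from vanishing identically: $P(x,y)=x_1y_1+x_2y_2$ is irreducible, yet $P(x,0)\equiv 0$; likewise an irreducible $P_s$ such as $y_1$ (which is compatible with the hypotheses of Theorem \ref{theo-main} if other $P_t$ do the separating) satisfies $P_s(\cdot,v)\equiv 0$ for every $v$ whose first coordinate is $0$, and nothing in your construction prevents an earlier point $a_{\ell'}$ from being such a $v$. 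Consequently the extra condition $P_s(a_\ell,a_{\ell'})\neq 0$ you impose could be unsatisfiable and your greedy step could get stuck. The repair is the one the paper uses: Definition \ref{defi-strongly-representable} only requires $P_s(a_i,a_j)\neq 0$ for $i<j$, so when choosing $a_\ell$ you need only $P_s(a_{\ell'},a_\ell)\neq 0$ for $\ell'<\ell$, and the polynomials $P_s(a_{\ell'},\cdot)$ are nonzero because $P_s(a_{\ell'},b_1^1)\neq 0$ by Lemma \ref{lemma-j-tuple-open-set}; similarly, in your up-front perturbation of the $b_i^j$, justify $P_s(b_i^j,\cdot)\not\equiv 0$ via the hypothesis of Theorem \ref{theo-main} (or avoid it by the product-space argument above), not via irreducibility. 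With these repairs your argument, including the exact recovery of each $j_i$ from the labeling, matches the paper's proof.
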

\begin{proof}
First, let us fix points $b_i^j\in U$ for $i=1,\dots,d$ and $j=1,\dots,m$ as in Lemma \ref{lemma-b-i-j}. Next, for $\l=1,\dots,n-md$, choose any $d$-tuple $(j_1^{(\l)},\dots,j_d^{(\l)})\in \lbrace 1,\dots,m\rbrace^d$. Note that for each $\l$, there are $m^d$ choices for such a $d$-tuple, so the total number of choices for all these $d$-tuples is $m^{d(n-dm)}$.

Now, let us define a strongly $(P_1,\dots,P_k,\phi,U,\Lambda)$-representable edge-labeling $F$ of the complete graph on the vertex set $\lbrace 1,\dots, n\rbrace$ that depends on the chosen $d$-tuples $(j_1^{(\l)},\dots,j_d^{(\l)})$ for $\l=1,\dots,n-md$ in such a way that we can recover the $d$-tuples $(j_1^{(\l)},\dots,j_d^{(\l)})$ from $F$. This will establish that the total number of strongly $(P_1,\dots,P_k,\phi,U,\Lambda)$-representable edge-labelings of the complete graph on the vertex set $\lbrace 1,\dots, n\rbrace$ is indeed at least $m^{d(n-dm)}$.

First, we will choose points $a_1,\dots,a_{n-dm}\in U$ with the following properties:
\begin{itemize}
\item $P_s(a_\l,b_i^j)\neq 0$ for all $\l=1,\dots,n-md$, all $s=1,\dots,k$, all $i=1,\dots,d$, and all $j=1,\dots,m$.
\item For all $\l=1,\dots,n-md$ and all $i=1,\dots,d$, the number of indices $j\in \lbrace 1,\dots,m\rbrace$ with $\Phi(a_\l,b_i^j)=\Phi(a_\l,b_i^1)$ equals $j_i^{(\l)}$.
\item $P_s(a_h,a_\l)\neq 0$ for all $1\leq h<\l\leq n-md$ and all $s=1,\dots,k$.
\end{itemize}
Suppose that for some $1\leq \l\leq n-md$ we have already chosen such points $a_1,\dots,a_{\l-1}\in U$. Then applying Lemma \ref{lemma-j-tuple-open-set} gives an open set $U'\su U$ such that all choices of $a_\l\in U'$ satisfy the first two properties above. So we just need to choose some $a_\l\in U'$ such that $P_s(a_h,a_\l)\neq 0$ for all $h=1,\dots,\l-1$ and all $s=1,\dots,k$. This is possible by Fact \ref{fact-poly-vanishing} applied to the $d$-variable polynomials $P_s(a_h, \_)$ (note that these polynomials are non-zero as $P_s(a_h, b_1^1)\neq 0$). Thus, we can indeed choose points $a_1,\dots,a_{n-dm}\in U$ with the three properties listed above.

It remains to choose the points $a_{n-md+1},\dots,a_n$. First, let $a'_{n-md+1},\dots,a'_n\in U$ be defined to be equal to $b_1^1,\dots,b_1^m, b_2^1,\dots, b_2^m,\dots, b_d^1,\dots,b_d^m$ in this order. Then we have $P_s(a_\l,a'_h)\neq 0$ for all $1\leq \l\leq n-md$, all $n-md+1\leq h\leq n$ and all $s=1,\dots,k$. Furthermore, knowing the values $\Phi(a_\l,a'_h)$ for all $1\leq \l\leq n-md$ and $n-md+1\leq h\leq n$ is the same as knowing all the values $\Phi(a_\l,b_i^j)$ for $1\leq \l\leq n-md$, $1\leq i\leq d$ and $1\leq j\leq m$. As for each $\l=1,\dots,n-md$ and each $i=1,\dots, d$, the number of indices $j\in \lbrace 1,\dots,m\rbrace$ with $\Phi(a_\l,b_i^j)=\Phi(a_\l,b_i^1)$ equals $j_i^{(\l)}$, the values $\Phi(a_\l,a'_h)$ for all $1\leq \l\leq n-md$ and $n-md+1\leq h\leq n$ therefore determine all the $d$-tuples $(j_1^{(\l)},\dots,j_d^{(\l)})$.

Let us recursively choose $a_n,a_{n-1},\dots, a_{n-md+1}\in U$ such that $P_s(a_\l,a_h)\neq 0$ for all $n-md+1\leq h\leq n$, all $1\leq \l< h$ and all $s=1,\dots,k$ and such that $\Phi(a_\l,a_h)=\Phi(a_\l,a'_h)$ for all $n-md+1\leq h\leq n$ and all $1\leq \l\leq n-md$. Suppose that for some $n-md+1\leq h\leq n$ we have already chosen such points $a_n, a_{n-1},\dots,a_{h+1}$. Recall that we have $P_s(a_\l,a'_h)\neq 0$ for all $1\leq \l\leq n-md$ and $s=1,\dots,k$. Hence there is an open neighborhood $U'\su U$ of $a'_h$ such that all choices of $a_h\in U'$ satisfy $P_s(a_\l,a_h)\neq 0$ and $\sgn P_s(a_\l,a_h)=\sgn P_s(a_\l,a'_h)$ for all $1\leq \l\leq n-md$ and $s=1,\dots,k$. In particular, for all $a_h\in U'$ we have $\Phi(a_\l,a'_h)=\Phi(a_\l,a_h)$ for all $1\leq \l< n-md$. So we just need to find some $a_h\in U'$ satisfying $P_s(a_h,a_{h^*})\neq 0$ for all $h^*=h+1,\dots,n$ and all $s=1,\dots,k$. This is possible by Fact \ref{fact-poly-vanishing} applied to the $d$-variable polynomials $P_s(\_, a_{h^*})$ (note that these polynomials are non-zero as $P_s(a_1, a_{h^*})\neq 0$). Thus, we can indeed choose points $a_n,a_{n-1},\dots, a_{n-md+1}\in U$ such that $P_s(a_\l,a_h)\neq 0$ for all $n-md+1\leq h\leq n$, all $1\leq \l< h$ and all $s=1,\dots,k$ and such that $\Phi(a_\l,a_h)=\Phi(a_\l,a'_h)$ for all $n-md+1\leq h\leq n$ and all $1\leq \l\leq n-md$.

All in all, we have chosen points $a_1,\dots,a_n\in U$ such that $P_s(a_\l,a_h)\neq 0$ for all $1\leq \l < h\leq n$ and all $s=1,\dots,k$. Thus, $F_{P_1,\dots,P_k,\phi}(a_1,\dots,a_n)$ is a strongly $(P_1,\dots,P_k,\phi,U,\Lambda)$-representable edge-labeling of the complete graph on the vertex set $\lbrace 1,\dots, n\rbrace$. We can recover the values $\Phi(a_\l,a_h)$ for all $1\leq \l < h\leq n$ from the edge-labels in the edge-labeling $F_{P_1,\dots,P_k,\phi}(a_1,\dots,a_n)$. In particular, we can recover all the values $\Phi(a_\l,a_h)$ for $n-md+1\leq h\leq n$ and $1\leq \l\leq n-md$. Recall that $\Phi(a_\l,a_h)=\Phi(a_\l,a'_h)$ for $n-md+1\leq h\leq n$ and $1\leq \l\leq n-md$ and that these values determine all the $d$-tuples $(j_1^{(\l)},\dots,j_d^{(\l)})$. Thus, we can indeed recover all the $d$-tuples $(j_1^{(\l)},\dots,j_d^{(\l)})$ from the strongly $(P_1,\dots,P_k,\phi,U,\Lambda)$-representable edge-labeling $F_{P_1,\dots,P_k,\phi}(a_1,\dots,a_n)$. In particular, there must be at least $m^{d(n-dm)}$ different strongly $(P_1,\dots,P_k,\phi,U,\Lambda)$-representable edge-labelings of the complete graph on the vertex set $\lbrace 1,\dots, n\rbrace$.
\end{proof}

We can finish the proof of Theorem \ref{theo-main} by choosing an appropriate value for $m$ in Lemma \ref{lemma-lower-bound}. For example, by taking $m=\lfloor n/\ln n\rfloor$ for sufficiently large $n$, we obtain that the number of strongly $(P_1,\dots,P_k,\phi,U,\Lambda)$-representable edge-labelings of the complete graph on the vertex set $\lbrace 1,\dots, n\rbrace$ is at least
\[\lfloor n/\ln n\rfloor^{d\cdot (n-d\lfloor n/\ln n\rfloor)}=\left(n^{1-o(1)}\right)^{d\cdot (1-o(1))n}=n^{(1-o(1))dn}.\]
This finishes the proof of Theorem \ref{theo-main} up to proving Lemmas \ref{lemma-main} and \ref{lemma-b-i-j}.

\section{Proof of Lemma \ref{lemma-b-i-j}}\label{sect-lemma-b-i-j}

In this section, we prove Lemma \ref{lemma-b-i-j}, following the strategy outlined below the statement of the lemma. So let us fix some positive integer $m$. Recall that we also fixed a point $a^*\in U$ and points $b_1,\dots, b_d\in U$ such that for all $i=1,\dots, d$ the pair $(a^*,b_i)$ is a general wall pair with wall index $s_i$ and such that
\begin{equation}\label{eq-span}
\operatorname{span} \lbrace  \nabla_a P_{s_1}(a^*,b_1),\dots,\nabla_a P_{s_d}(a^*,b_d)\rbrace = L_{a^*} = \R^d.
\end{equation}
Note that for all $i=1,\dots,d$ we have $P_{s_i}(a^*,b_i)=0$ and $P_s(a^*,b_i)\neq 0$ for all $s\in \lbrace 1,\dots,k\rbrace\setminus \lbrace s_i\rbrace$.

First, we can find $\delta>0$ and a constant $C>0$ satisfying the following properties:
\begin{itemize}
\item For all $a\in \R^d$ with $\Vert a-a^*\Vert<\delta$ we have $a\in U$.
\item For $i=1,\dots,d$ and $b_i'\in \R^d$ with $\Vert b_i'-b_i\Vert<\delta$ we have $b_i'\in U$.
\item For $i=1,\dots,d$ and $a, b_i'\in \R^d$ with $\Vert a-a^*\Vert<\delta$ and $\Vert b_i'-b_i\Vert<\delta$, all $s\in \lbrace 1,\dots,k\rbrace\sm \lbrace s_i\rbrace$ satisfy $P_s(a,b_i')\neq 0$ and $\sgn P_{s}(a,b_i')=\sgn P_{s}(a^*,b_i)$.
\item For $i=1,\dots,d$ and $a, b_i'\in \R^d$ with $\Vert a-a^*\Vert<\delta$ and $\Vert b_i'-b_i\Vert<\delta$, we have
\begin{equation}
\left\vert P_{s_i}(a,b_i')-P_{s_i}(a^*,b_i)-\nabla P_{s_i}(a^*,b_i)\cdot
\begin{pmatrix}
a-a^*\\
b_i'-b_i
\end{pmatrix}\right\vert
\leq C\cdot \big(\Vert a-a^*\Vert^2+\Vert b_i'-b_i\Vert^2\big).
\end{equation}
\end{itemize}
For the last property we used Taylor's theorem.

Furthermore, for $i=1,\dots,d$, we can find a non-zero vector $z_i\in \R^d$ with $z_i\cdot \nabla_a P_{s_j}(a^*,b_j)=0$ for all $j\in \lbrace 1,\dots,d\rbrace\sm \lbrace i \rbrace$. Then by (\ref{eq-span}) we have $z_i\cdot \nabla_a P_{s_i}(a^*,b_i)\neq 0$, so by rescaling $z_i$ we may assume $z_i\cdot \nabla_a P_{s_i}(a^*,b_i)=1$.

Recall that for each $i=1,\dots,d$, we have $P_{s_i}(a^*,b_i)=0$ since $(a^*,b_i)$ is a general wall pair with wall index $s_i$. Because the pair $(a^*,b_i)$ is not special (see Definition \ref{defi-special}), this implies $\nabla_b P_{s_i}(a^*,b_i)\neq 0$, so we can fix a vector $v_i\in \R^d$ with $v_i\cdot \nabla_b P_{s_i}(a^*,b_i)=1$. This vector $v_i$ will determine the direction in which we perturb the point $b_i$ to obtain $b_i^1,\dots,b_i^m$.

Now, let us choose some $\eps>0$ with all of the following properties:
\begin{itemize}
\item $\eps\cdot m\cdot \big(\Vert z_1\Vert + \dots + \Vert z_d\Vert \big)<\delta$.
\item $\eps\cdot m\cdot \Vert v_i\Vert<\delta$ for $i=1,\dots,d$.
\item $\eps\cdot C\cdot m^2\cdot \big(\Vert z_1\Vert + \dots + \Vert z_d\Vert+\Vert v_1\Vert + \dots + \Vert v_d\Vert \big)^2<\frac{1}{2}$.
\end{itemize}

Finally, we are ready to define the desired points $b_i^j$: For $i=1,\dots,d$ and $j=1,\dots,m$, let $b_i^j=b_i+(\frac{1}{2}-j)\cdot \eps \cdot v_i$.

Then we have
\begin{equation}\label{eq-b-dist}
\Vert b_i^j-b_i\Vert=\left(j-\frac{1}{2}\right)\cdot \eps\cdot \Vert v_i\Vert<m\cdot \eps\cdot \Vert v_i\Vert<\delta.
\end{equation}
In particular, we can conclude from the choice of $\delta$ that $b_i^j\in U$.

Now, for every $(j_1,\dots,j_d)\in \lbrace 1,\dots,m\rbrace^d$ we need to find a point $a\in U$ satisfying the conditions (i) to (iv) in the statement of Lemma \ref{lemma-b-i-j}. Let us take $a=a^*+\eps\cdot (j_1 z_1+\dots+j_d z_d)$.

Then 
\begin{equation}\label{eq-a-dist}
\Vert a-a^*\Vert=\eps \cdot \Vert j_1 z_1+\dots+j_d z_d\Vert \leq \eps\cdot \big(j_1\cdot  \Vert z_1\Vert+\dots+j_d\cdot \Vert z_d\Vert \big)\leq \eps\cdot m\cdot \big(\Vert z_1\Vert + \dots + \Vert z_d\Vert \big)<\delta.
\end{equation}
In particular, the choice of $\delta$ implies that $a\in U$.

Let us now check that $a$ satisfies the conditions (i) to (iv) in Lemma \ref{lemma-b-i-j}. So let us fix some $i\in \lbrace 1,\dots,d\rbrace$ and $j\in \lbrace 1,\dots,m\rbrace$. Recall from (\ref{eq-b-dist}) and (\ref{eq-a-dist}) that $\Vert b_i^j-b_i\Vert<\delta$ and $\Vert a-a^*\Vert<\delta$. Hence, by the choice of $\delta$, all $s\in \lbrace 1,\dots,k\rbrace\sm \lbrace s_i\rbrace$ satisfy $P_s(a,b_i^j)\neq 0$ and $\sgn P_{s}(a,b_i^j)=\sgn P_{s}(a^*,b_i)$. This establishes condition (iv) and it also establishes condition (i) except for $s=s_i$.

In order to check conditions (ii) and (iii), let us now investigate $P_{s_i}(a,b_i^j)$. Recall that by (\ref{eq-b-dist}) and (\ref{eq-a-dist}), we have $\Vert b_i^j-b_i\Vert<\delta$ and $\Vert a-a^*\Vert<\delta$, and therefore by the choice of $\delta$ and $C$
\[\left\vert P_{s_i}(a,b_i^j)-P_{s_i}(a^*,b_i)-\nabla P_{s_i}(a^*,b_i)\cdot
\begin{pmatrix}
a-a^*\\
b_i^j-b_i
\end{pmatrix}\right\vert
\leq C\cdot \big(\Vert a-a^*\Vert^2+\Vert b_i^j-b_i\Vert^2\big).\]
Since $(a^*,b_i)$ is a general wall pair with wall index $s_i$, we have $P_{s_i}(a^*,b_i)=0$ and the previous inequality simplifies to
\[\left\vert P_{s_i}(a,b_i^j)-\nabla P_{s_i}(a^*,b_i)\cdot
\begin{pmatrix}
a-a^*\\
b_i^j-b_i
\end{pmatrix}\right\vert
\leq C\cdot \big(\Vert a-a^*\Vert^2+\Vert b_i^j-b_i\Vert^2\big).\]
Again using (\ref{eq-b-dist}) and (\ref{eq-a-dist}), we obtain
\begin{multline}\label{eq-taylor}
\left\vert P_{s_i}(a,b_i^j)-\nabla P_{s_i}(a^*,b_i)\cdot
\begin{pmatrix}
a-a^*\\
b_i^j-b_i
\end{pmatrix}\right\vert
\leq C\cdot \big(\eps^2\cdot m^2\cdot \big(\Vert z_1\Vert + \dots + \Vert z_d\Vert \big)^2+\eps^2\cdot m^2\cdot \Vert v_i\Vert^2\big)\\
\leq \eps^2\cdot C\cdot m^2\cdot \big(\Vert z_1\Vert + \dots + \Vert z_d\Vert+\Vert v_1\Vert + \dots + \Vert v_d\Vert \big)^2<\frac{1}{2}\cdot \eps,
\end{multline}
where in the last step we used the third property from our choice of $\eps$.

On the other hand, by the choices of $a$ and $b_i^j$ as well as $z_1,\dots,z_d$ and $v_i$, we have
\begin{multline*}
\nabla P_{s_i}(a^*,b_i)\cdot
\begin{pmatrix}
a-a^*\\
b_i^j-b_i
\end{pmatrix}
=\nabla_a P_{s_i}(a^*,b_i)\cdot (a-a^*)+\nabla_b P_{s_i}(a^*,b_i)\cdot (b_i^j-b_i)\\
=\eps\cdot \nabla_a P_{s_i}(a^*,b_i)\cdot (j_1 z_1+\dots+j_d  z_d)+\left(\frac{1}{2}-j\right)\cdot \eps\cdot \nabla_b P_{s_i}(a^*,b_i)\cdot v_i\\
=\eps\cdot j_i\cdot \nabla_a P_{s_i}(a^*,b_i)\cdot z_i+\left(\frac{1}{2}-j\right)\cdot \eps\cdot \nabla_b P_{s_i}(a^*,b_i)\cdot v_i=\eps\cdot j_i+\left(\frac{1}{2}-j\right)\cdot \eps=\left(j_i-j+\frac{1}{2}\right)\cdot \eps.
\end{multline*}
Thus, $\nabla P_{s_i}(a^*,b_i)\cdot
\begin{pmatrix}
a-a^*\\
b_i^j-b_i
\end{pmatrix}\geq \frac{1}{2}\cdot \eps$ if $j\leq j_i$ and $\nabla P_{s_i}(a^*,b_i)\cdot
\begin{pmatrix}
a-a^*\\
b_i^j-b_i
\end{pmatrix}\leq -\frac{1}{2}\cdot \eps$ if $j> j_i$. Together with (\ref{eq-taylor}), this implies that $P_{s_i}(a,b_i^j)>0$ if $j\leq j_i$ and $P_{s_i}(a,b_i^j)<0$ if $j> j_i$. Thus, conditions (ii) and (iii) are satisfied. Finally, note that conditions (ii) and (iii) trivially imply condition (i) for $s=s_i$.

Thus, all the conditions (i) to (iv) are satisfied and we finished the proof of Lemma \ref{lemma-b-i-j}.

\section{Preparations and outline for the proof of Lemma \ref{lemma-main}}

\subsection{Proof outline}
\label{subsect-proof-outline}

In Lemma \ref{lemma-main}, we need to prove that there is a point $a\in U$ with $L_a=\R^d$. In other words, we need to show that there is a point $a\in U$ such that the vectors $ \nabla_a P_s(a,b)$ for all $b\in U$ and $1\leq s\leq k$ for which $(a,b)$ is a general wall pair with wall index $s$ are spanning $\R^d$. In this subsection, we give an outline of the proof.

Let us assume for contradiction that for all $a\in U$ we have $L_a\subsetneq \R^d$. Then for each $a\in U$ we can find a non-zero vector $w(a)\in \R^d$ such that $w(a)$ is orthogonal to the subspace $L_a\subsetneq \R^d$ (which means that $\nabla_a P_s(a,b)\cdot w(a)=0$ whenever $(a,b)$ is a general wall pair with wall index $s$). Moreover, for $a$ in some open neighborhood of a suitably chosen point $a_0\in U$, we can in fact choose the vectors $w(a)$ in such a way that they form a smooth vector field (meaning that $w(a)$ varies smoothly as a function of $a$).

Having such a smooth vector field $w(a)$ for $a$ in an open neighborhood of $a_0$, we can apply the local existence theorem of integral curves for smooth vector fields (see for example \cite[Proposition 9.2]{lee}). By this theorem, there exists a curve $\tau:(-\eps,\eps)\to U$ for some small $\eps>0$ such that $\tau(0)=a_0$ and $\tau'(t)=w(\tau(t))$ for all $t\in (-\eps,\eps)$. Intuitively, the second condition means that at every point $\tau(t)$ of the curve $\tau$, the curve goes in the direction of the vector $w(\tau(t))$.

Using this condition, we can now show the following. For every point $b\in U$ such that $(a_0,b)$ is a general wall pair, there exists some $\eps_b>0$ such that for all $t\in (-\eps_b,\eps_b)$ the pair $(\tau(t),b)$ is also a general wall pair. In other words, if $(a_0,b)=(\tau(0),b)$ is a general wall pair , then $(\tau(t),b)$ is also a general wall pair for all $t$ sufficiently close to $0$. In fact, we can show something slightly stronger: If $(a_0,b)$ is a general wall pair with wall index $s$, then $P_s(a,b)=0$ and it turns out that all points $b'$ close to $b$ with $P_s(a_0,b')=0$ also satisfy that $(a_0,b')$ is a general wall pair with wall index $s$. We can then show that there exists some $\eps_b>0$ such that for all points $b'$ close to $b$ with $P_s(a_0,b')=0$ the pair $(\tau(t),b')$ is a general wall pair with wall index $s$ for all $t\in (-\eps_b,\eps_b)$. The proof of this statement uses the implicit function theorem (or more precisely, Lemma \ref{lemma-beta} stated below, which is proved via the implicit function theorem), as well as the conditions for our curve $\tau:(-\eps,\eps)\to U$.

Very roughly speaking, the statements in the previous paragraph can intuitively be interpreted as saying that for $t$ sufficiently close to $0$, for all points $b'\in U$ such that $(a_0,b')=(\tau(0),b')$ is a general wall pair we also have that $(\tau(t),b')$ is a general wall pair (in fact, this does not actually follow from the statements in the previous paragraph, because there $\eps_b$ depends on $b$). Let us for a moment assume that that for all $t$ sufficiently close to $0$ we have the even stronger property that for all $b'\in U$ the pair $(a_0,b')=(\tau(0),b')$ is a general wall pair if and only if $(\tau(t),b')$ is a general wall pair.

In this case, we can finish the proof in the following way by finding points $a=a_0$ and $a'=\tau(t)$ for some small $t$ contradicting our overarching assumption in Theorem \ref{theo-main}. Let us consider the collection $\mathcal{C}$ of connected components of the open set $\lbrace b'\in U \mid P_s(a_0,b')\neq 0\text{ for }1\leq s\leq k\rbrace$ (i.e.\ the set of points $b'\in U$ where none of the polynomials $P_s(a_0,\_)$ vanishes). By Fact \ref{fact-components-finite} this set has only finitely many connected components. Inside each connected component $C\in \mathcal{C}$ of this set, we fix a point $b_C\in C$. Note that then for all $C\in \mathcal{C}$ and all $b\in C$ we have $\sgn P_s(a_0,b)=\sgn P_s(a_0,b_C)$ for $s=1,\dots,k$ and therefore $\Phi(a_0,b)=\Phi(a_0,b_C)$. Furthermore, if $t$ is sufficiently close to $0$, we have $\sgn P_s(\tau(t),b_C)=\sgn P_s(\tau(0),b_C)=\sgn P_s(a_0,b_C)$ for all $s=1,\dots,k$ and all $C\in \mathcal{C}$, and therefore $\Phi(\tau(t),b_C)=\Phi(a_0,b_C)$ for all $C\in \mathcal{C}$.

Now we can choose $a_1=\tau(t)$ for some $t$ sufficiently close to $0$, such that $a_1\neq a_0$ and $\Phi(a_1,b_C)=\Phi(a_0,b_C)$ for all $C\in \mathcal{C}$, and such that for all $b'\in U$ the pair $(a_0,b')$ is a general wall pair if and only if $(a_1,b')$ is a general wall pair (this last property is due to our assumption made above). By the assumptions of Theorem \ref{theo-main} there needs to be a point $b\in U$ with $\Phi(a_0,b)\neq \Phi(a_1,b)$ and $P_s(a_0,b)\neq 0$ and $P_s(a_1,b)\neq 0$ for $s=1,\dots,k$. This point $b$ lies inside some connected component $C\in \mathcal{C}$, and there is a path inside $C$ between the points $b$ and $b_C$. It turns out that one can in fact choose a path between $b$ and $b_C$ inside $C$ such that for all points $b'$ on this path the pair $(a_1,b')$ is not special (one can show this using Fact \ref{fact-complement-variety} when being careful with the choice of our initial point $a_0\in U$). Note that furthermore for all points $b'$ on this path the pair $(a_0,b')$ is not a general wall pair (because we have $b'\in C$ and therefore $P_s(a_0,b')\neq 0$ for $s=1,\dots,k$), and hence the pair $(a_1,b')$ is not a general wall pair. This means that for all points $b'$ on the path between $b$ and $b_C$ the pair $(a_1,b')$ is not a wall pair. By Claim \ref{claim-path-no-wall} we can conclude that $\Phi(a_1,b)=\Phi(a_1,b_C)$. But all in all we now obtain $\Phi(a_0,b)=\Phi(a_0,b_C)=\Phi(a_1,b_C)=\Phi(a_1,b)$, which contradicts  $\Phi(a_0,b)\neq \Phi(a_1,b)$.

This contradiction completes the argument in the simplified setting where we assume that for all $t$ sufficiently close to $0$, for all $b'\in U$ the pair $(a_0,b')=(\tau(0),b')$ is a general wall pair if and only if $(\tau(t),b')$ is a general wall pair. In general, we need to be more careful  in the proof. Instead of just considering for which points $b$ the pair $(a_0,b)$ is a general wall pair, we consider the set of polynomials $Q\in \R[y_1,\dots,y_d]$ such that $Q$ appears as an irreducible factor of one of the polynomials $P_s(a_0,\_)$ for $1\leq s\leq k$ and such that $Q$ vanishes at some point $b\in U$ such that $(a_0,b)$ is a general wall pair. Roughly speaking, we define $\mathcal{Q}(a_0)$ to be the set of all such polynomials $Q$. For any point $a\in U$, we define $\mathcal{Q}(a)$ in an analogous way. All of these sets $\mathcal{Q}(a)$ for $a\in U$ are finite sets (here, one has to be slightly more careful, and to actually consider the equivalence classes of the respective polynomials $Q$ under scaling by real numbers, see Notation \ref{notation-equiv-class-poly}).

As mentioned above, one can show that for every $b\in U$ such that $(a_0,b)$ is a general wall pair with wall index $s$, there exists some $\eps_b>0$ such that for all points $b'$ close to $b$ with $P_s(a_0,b')=0$ the pair $(\tau(t),b')$ is a general wall pair with wall index $s$ for all $t\in (-\eps_b,\eps_b)$. It turns out that all of these points $b'$ close to $b$ lie in the zero-set of the same irreducible factor $Q$ of the polynomial $P_s(a_0,\_)$, and using this one can show that $Q$ lies in $\mathcal{Q}(\tau(t))$ for all $t\in (-\eps_b,\eps_b)$. Now one can conclude that $\mathcal{Q}(a_0)\su \mathcal{Q}(\tau(t))$ for all $t$ sufficiently close to $0$. If we choose our initial point $a_0\in U$ in such a way that the size of the set $\mathcal{Q}(a_0)$ is maximal, this implies that we actually have $\mathcal{Q}(a_0)=\mathcal{Q}(\tau(t))$ for all $t$ sufficiently close to $0$. Here we use in a crucial way that the sets $\mathcal{Q}(a)$ for $a\in U$ are finite. This is the main reason why for a point $a\in U$ we consider the set $\mathcal{Q}(a)$ of polynomials instead of just considering the set of all $b\in U$ for which $(a,b)$ is a general wall pair (as the set of such $b\in U$ is usually infinite). Once we have $\mathcal{Q}(a_0)=\mathcal{Q}(\tau(t))$ for all $t$ sufficiently close to $0$, we can finish the argument in a similar way as outlined above in the simplified setting, again finding points $a=a_0$ and $a'=\tau(t)$ for some small $t$ contradicting the overarching assumption in Theorem \ref{theo-main}.

The actual details of the proof of Lemma \ref{lemma-main} will be given in Section \ref{sect-proof-lemma-main}, after some preparations in the other two subsections of this section.

\subsection{An auxiliary lemma}

In this subsection, we will prove a lemma that will be used several times within the proof of Lemma \ref{lemma-main}. For any $i\in \lbrace 1,\dots,d\rbrace$, let $\pr_i:\R^d\to \R^{d-1}$ be the projection along the $i$-th coordinate direction.  This means that for all $x\in \R^d$ the point $\pr_i(x)$ is obtained from omitting the $i$-th coordinate of $x$.

The following lemma can, very roughly speaking, be summarized as follows. Let $(a,b)\in U\times U$ be a general wall pair with wall index $s\in \lbrace 1,\dots,k\rbrace$, and suppose that $i\in \lbrace 1,\dots,d\rbrace$ is such that the $i$-th coordinate of $\nabla_b P_s(a,b)$ is non-zero. Then for any $a'$ close to $a$ and any $b^*$ close to $\pr_i(b)$, we can find a point $\beta(a',b^*)$ such that $(a', \beta(a',b^*))$ is also a general wall pair with wall index $s$ and such that the projection $\pr_i(\beta(a',b^*))$ of $\beta(a',b^*)$ along the $i$-th coordinate direction equals $b^*$. In other words, for any $a'$ close to $a$ we can find a point $\beta(a',b^*)$ with any prescribed projection $b^*$ along the $i$-th coordinate direction such that $(a', \beta(a',b^*))$ is a general wall pair with wall index $s$ (assuming that the prescribed projection $b^*$ is close to the projection $\pr_i(b)$ of $b$). Moreover, we can choose these points $\beta(a',b^*)$ in such a way that $\beta$ is a smooth function in $a'$ and $b^*$. We prove this lemma by applying the implicit function theorem to the polynomial $P_s$.

\begin{lemma}\label{lemma-beta}Let $(a,b)\in U\times U$ be a general wall pair with wall index $s\in \lbrace 1,\dots,k\rbrace$. Let $i\in \lbrace 1,\dots,d\rbrace$ be such that the $i$-th coordinate of $\nabla_b P_s(a,b)$ is non-zero. Then there exist open subsets $U_a\su U$ and $V\su \R^{d-1}$ and a smooth function $\beta: U_a\times V\to U$ satisfying the following conditions:
\begin{itemize}
\item $a\in U_a$ and $\pr_i(b)\in V$.
\item $\beta(a,\pr_i(b))=b$.
\item For all $a'\in U_a$ and $b^*\in V$, we have $\pr_i(\beta(a',b^*))=b^*$.
\item For all $a'\in U_a$ and $b^*\in V$, the $i$-th coordinate of $\nabla_b P_s(a',\beta(a',b^*))$ is non-zero.
\item For all $a'\in U_a$ and $b^*\in V$, the pair $(a',\beta(a',b^*))\in U\times U$ is a general wall pair with wall index $s$.
\end{itemize}
\end{lemma}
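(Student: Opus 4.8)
The plan is to apply the implicit function theorem to the polynomial $P_s$ in order to solve the equation $P_s(a',b)=0$ for the $i$-th coordinate of $b$ as a smooth function of $a'$ and the remaining coordinates of $b$, thereby defining $\beta$, and then to shrink the resulting neighborhoods until all five listed conditions hold. Writing $b=(b_1,\dots,b_d)$ and viewing $P_s$ as a smooth function of $(a',b_1,\dots,b_d)$, recall that $(a,b)$ being a general wall pair with wall index $s$ forces $P_s(a,b)=0$, $P_t(a,b)\neq 0$ for all $t\neq s$ (otherwise $(a,b)$ would be special), and $\nabla_b P_s(a,b)\neq 0$; by hypothesis moreover the $i$-th coordinate $\partial P_s/\partial b_i$ at $(a,b)$ is non-zero. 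The implicit function theorem then yields a product neighborhood $U_a^{(0)}\times V$ of $(a,\pr_i(b))$ in $\R^d\times\R^{d-1}$ and a smooth $g\colon U_a^{(0)}\times V\to\R$ with $g(a,\pr_i(b))=b_i$, such that if we let $\beta(a',b^*)\in\R^d$ be the point obtained by inserting $g(a',b^*)$ into $b^*$ in the $i$-th coordinate slot, then $\beta$ is smooth, $\beta(a,\pr_i(b))=b$, $\pr_i(\beta(a',b^*))=b^*$, and $P_s(a',\beta(a',b^*))=0$ for all $(a',b^*)\in U_a^{(0)}\times V$.

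It remains to shrink to an open neighborhood $U_a\su U$ of $a$ (shrinking $V$ as needed, still a neighborhood of $\pr_i(b)$) so that the last three conditions hold. Fix in advance small open neighborhoods $U_a^{\circ}$ of $a$ and $U_b^{\circ}$ of $b$ with $U_a^{\circ}\su U$ and $U_b^{\circ}\su U$ on which, by continuity, $\sgn P_t(\cdot,\cdot)=\sgn P_t(a,b)\neq 0$ for every $t\in\{1,\dots,k\}\setminus\{s\}$ and on which the $i$-th coordinate of $\nabla_b P_s(\cdot,\cdot)$ remains non-zero. Since $\beta$ is continuous and $\beta(a,\pr_i(b))=b$, after shrinking we may assume $U_a\su U_a^{\circ}$ and $\beta(U_a\times V)\su U_b^{\circ}$; in particular $\beta$ maps into $U$. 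For any $(a',b^*)\in U_a\times V$ the $i$-th coordinate of $\nabla_b P_s(a',\beta(a',b^*))$ is then non-zero, so $\nabla P_s(a',\beta(a',b^*))\neq 0$ and hence also $\nabla_b P_s(a',\beta(a',b^*))\neq 0$; combined with $P_t(a',\beta(a',b^*))\neq 0$ for all $t\neq s$, Definition \ref{defi-special} shows that $(a',\beta(a',b^*))$ is not special, and that $s$ is the unique index at which $P_s$ vanishes there.

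The one substantial step is to check that each $(a',\beta(a',b^*))$ is a wall pair, i.e.\ lies in no $T_\lambda$. Suppose it lay in some $T_\lambda$. Then there would be a neighborhood of $(a',\beta(a',b^*))$, which we may take inside $U_a^{\circ}\times U_b^{\circ}$, such that every pair $(a'',b'')$ in it with $P_t(a'',b'')\neq 0$ for all $t$ satisfies $\Phi(a'',b'')=\lambda$. But because $\nabla P_s(a',\beta(a',b^*))\neq 0$ while $P_s(a',\beta(a',b^*))=0$, moving from $(a',\beta(a',b^*))$ in the directions $\pm\nabla P_s$ shows that this neighborhood contains points with $P_s>0$ and points with $P_s<0$; at every such point the signs of the remaining $P_t$ ($t\neq s$) are the fixed values $\sgn P_t(a,b)\neq 0$ since we are inside $U_a^{\circ}\times U_b^{\circ}$, so all these points have every $P_t\neq 0$, and their $\Phi$-values are $\phi$ applied to the sign vector of $(a,b)$ with a $+$, respectively a $-$, in position $s$. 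By Claim \ref{claim-general-wall-sign-flip} applied to the general wall pair $(a,b)$ these two values are distinct, contradicting that all of them equal $\lambda$. Hence $(a',\beta(a',b^*))$ is a wall pair; being also not special, it is a general wall pair, and by the previous paragraph its wall index is $s$. The only genuinely delicate part is the bookkeeping of the nested neighborhoods $U_a^{\circ}\times U_b^{\circ}\supseteq U_a\times V$ in this last step, which is what allows Claim \ref{claim-general-wall-sign-flip} to be invoked at the fixed pair $(a,b)$ while the sign changes are produced near the varying nearby pair $(a',\beta(a',b^*))$.
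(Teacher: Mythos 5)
Your proposal is correct and follows essentially the same route as the paper: the implicit function theorem applied to $P_s$ in the $i$-th $b$-coordinate to build $\beta$, shrinking to a product neighborhood where the other $P_t$ keep their signs and the $i$-th coordinate of $\nabla_b P_s$ stays non-zero, and a wall-pair verification that produces nearby points with $P_s>0$ and $P_s<0$ and invokes Claim \ref{claim-general-wall-sign-flip}. The only cosmetic difference is that you perturb along $\pm\nabla P_s$ in $\R^{2d}$ while the paper perturbs only the $i$-th coordinate of $b$ with $a'$ fixed; both yield the required sign change.
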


For any general wall pair $(a,b)\in U\times U$ with wall index $s\in \lbrace 1,\dots,k\rbrace$, we have $P_s(a,b)=0$. Thus, as $(a,b)$ is not a special pair, we can conclude that $\nabla_b P_s(a,b)\neq 0$. Hence, we can always find $i\in \lbrace 1,\dots,d\rbrace$ satisfying the assumption in Lemma \ref{lemma-beta}.

\begin{proof}[Proof of Lemma \ref{lemma-beta}]
We may assume without loss of generality that $i=d$, since otherwise we can temporarily reorder the coordinates. Let $b_d$ be the last coordinate of $b$, then $b$ can be obtained from $\pr_d(b)$ by re-attaching $b_d$ at the end. Now, let us consider the polynomial $P_s$ as a smooth function $P_s: \R^d\times \R^{d-1}\times \R\to \R$. Note that at the point $(a,\pr_d(b),b_d)=(a,b)$ the function $P_s$ has value $P_s(a,b)=0$. Furthermore, the partial derivative of $P_s$ with respect to the last coordinate evaluated at the point $(a,b)$ is the last coordinate of the gradient $\nabla P_s(a,b)$. Since this is also the last coordinate of $\nabla_b P_s(a,b)$, by our assumption on $i=d$, this partial derivative is non-zero. Thus, by the implicit function theorem (see for example \cite[Theorem C.40]{lee}), there exist open neighborhoods $U_a\su U$ and $V\su \R^{d-1}$ of $a$ and $\pr_d(b)$, respectively, and a smooth function $\eta: U_a\times V\to \R$ with $\eta(a,\pr_d(b))=b_d$ such that for all $a'\in U_a$ and $b^*\in V$ we have $P_s(a',b^*,\eta(a',b^*))=0$.

Now, recall that $P_j(a,b)\neq 0$ for all $j\in \lbrace 1,\dots,k\rbrace\sm \lbrace s\rbrace$, as $(a,b)$ is a general wall pair. Thus, when considering $P_j$ as a continuous function $P_j: \R^d\times \R^{d-1}\times \R\to \R$, we have
\[P_j(a,\pr_d(b),\eta(a,\pr_d(b)))=P_j(a,\pr_d(b),b_d)=P_j(a,b)\neq 0\]
for all $j\in \lbrace 1,\dots,k\rbrace\sm \lbrace s\rbrace$. Thus, by making the open sets $U_a$ and $V$ smaller (such that we still have $a\in U_a$ and $\pr_d(b)\in V$), we may assume that
\begin{equation}\label{eq-sgn-aux-lemma}
P_j(a',b^*,\eta(a',b^*))\neq 0\quad \text{and}\quad \sgn P_j(a',b^*,\eta(a',b^*))=\sgn P_j(a,b)
\end{equation}
for all $a'\in U_a$, all $b^*\in V$ and all $j\in \lbrace 1,\dots,k\rbrace\sm \lbrace s\rbrace$. Furthermore, as $(\pr_d(b),\eta(a,\pr_d(b)))=(\pr_d(b),b_d)=b\in U$, we may also assume that $(b^*,\eta(a',b^*))\in U$ for all $a'\in U_a$ and $b^*\in V$. Finally, as the last coordinate of $\nabla_b P_s(a,\pr_d(b),\eta(a,\pr_d(b)))=\nabla_b P_s(a,b)$ is non-zero, we may assume that the last coordinate of $\nabla_b P_s(a',b^*,\eta(a',b^*))$ is non-zero for all $a'\in U_a$ and $b^*\in V$.

Now let us define $\beta: U_a\times V\to U$ by setting $\beta(a',b^*)=(b^*,\eta(a',b^*))$ for all $a'\in U_a$ and $b^*\in V$. Then it is clear that for all $a'\in U_a$ and $b^*\in V$, we have $\pr_d(\beta(a',b^*))=b^*$. Furthermore, we have $\beta(a,\pr_d(b))=(\pr_d(b),\eta(a,\pr_d(b)))=(\pr_d(b),b_d)=b$. For all $a'\in U_a$ and $b^*\in V$, the last coordinate of $\nabla_b P_s(a',\beta(a',b^*))=\nabla_b P_s(a',b^*,\eta(a',b^*))$ is non-zero, as desired (recall that $i=d$). As we already established the conditions $a\in U_a$ and $\pr_d(b)\in V$ above, it only remains to check the last condition. This means that we need to check that $(a',\beta(a',b^*))\in U\times U$ is a general wall pair with wall index $s$ for all $a'\in U_a$ and $b^*\in V$.

So let us fix $a'\in U_a$ and $b^*\in V$. By our choice of $\eta$, we have $P_s(a',\beta(a',b^*))=P_s(a',b^*,\eta(a',b^*))=0$. Furthermore, by (\ref{eq-sgn-aux-lemma}) we have $P_j(a',\beta(a',b^*))\neq 0$ and $\sgn P_j(a',\beta(a',b^*)=\sgn P_j(a,b)$ for all $j\in \lbrace 1,\dots,k\rbrace\sm \lbrace s\rbrace$. We also know that the last coordinate of $\nabla_b P_s(a', \beta(a',b^*))$ is non-zero, so in particular $\nabla_b P_s(a', \beta(a',b^*))\neq 0$. All in all, this establishes that $(a', \beta(a',b^*))$ is not a special pair.

Our next claim states that $(a', \beta(a',b^*))$ is a wall pair. This will establish that $(a', \beta(a',b^*))$ is a general wall pair and as $P_s(a',\beta(a',b^*))=0$ its wall index must be $s$. This will finish the proof of Lemma \ref{lemma-beta}.

\begin{claim} $(a', \beta(a',b^*))$ is a wall pair.
\end{claim}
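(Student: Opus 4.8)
The plan is to argue by contradiction, reducing to Claim~\ref{claim-general-wall-sign-flip}. Write $\tilde b=\beta(a',b^*)$ for brevity. If $(a',\tilde b)$ were not a wall pair, then $(a',\tilde b)\in T_\lambda$ for some $\lambda\in\Lambda$, so there are open neighborhoods $U_{a'}\su U$ of $a'$ and $U_{\tilde b}\su U$ of $\tilde b$ such that every pair $(a'',b'')\in U_{a'}\times U_{\tilde b}$ satisfies either $P_t(a'',b'')=0$ for some $1\le t\le k$, or $\Phi(a'',b'')=\lambda$. The first step is to shrink $U_{a'}$ and $U_{\tilde b}$: we have already shown that $P_j(a',\tilde b)\neq 0$ and $\sgn P_j(a',\tilde b)=\sgn P_j(a,b)$ for all $j\in\lbrace 1,\dots,k\rbrace\sm\lbrace s\rbrace$ (see~\eqref{eq-sgn-aux-lemma}), so by continuity of the $P_j$ we may assume that $P_j(a'',b'')\neq 0$ and $\sgn P_j(a'',b'')=\sgn P_j(a,b)$ for all $(a'',b'')\in U_{a'}\times U_{\tilde b}$ and all $j\neq s$.

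\textbf{Key step.} Next I would locate, inside $U_{\tilde b}$, a point $b^+$ with $P_s(a',b^+)>0$ and a point $b^-$ with $P_s(a',b^-)<0$. This uses that the last coordinate of $\nabla_b P_s(a',\tilde b)$ is non-zero together with $P_s(a',\tilde b)=0$: moving from $\tilde b$ in the last coordinate direction, the restriction of $P_s(a',\,\cdot\,)$ to that line has non-zero derivative at $\tilde b$ and value $0$ there, hence attains both signs arbitrarily close to $\tilde b$, and one may take $b^{\pm}$ on a short such segment through $\tilde b$, so $b^{\pm}\in U_{\tilde b}$. For $b^+$ and $b^-$ one then has $P_t(a',b^{\pm})\neq 0$ for \emph{all} $t$ (for $t\neq s$ by the shrinking step, for $t=s$ by construction), so the defining property of $T_\lambda$ forces $\Phi(a',b^+)=\Phi(a',b^-)=\lambda$.

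\textbf{Conclusion.} Finally, since $\sgn P_j(a',b^+)=\sgn P_j(a',b^-)=\sgn P_j(a,b)$ for $j\neq s$ while $\sgn P_s(a',b^+)=+$ and $\sgn P_s(a',b^-)=-$, unwinding these two equalities gives
\[
\lambda=\phi\big(\sgn P_1(a,b),\dots,\sgn P_{s-1}(a,b),+,\sgn P_{s+1}(a,b),\dots,\sgn P_k(a,b)\big)
\]
and the same identity with $+$ replaced by $-$. But $(a,b)$ is a general wall pair with wall index $s$, so Claim~\ref{claim-general-wall-sign-flip} says these two values of $\phi$ are distinct, contradicting that both equal $\lambda$. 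Hence $(a',\tilde b)$ lies in no $T_\lambda$, i.e.\ it is a wall pair. I do not expect a genuine obstacle here; the only point requiring a little care is carrying out the two neighborhood reductions in the right order (first making the signs of the $P_j$ with $j\neq s$ locally constant, then exploiting the non-vanishing of the last coordinate of $\nabla_b P_s$ to produce the sign-changing points of $P_s$).
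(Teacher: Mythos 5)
Your proposal is correct and follows essentially the same route as the paper: both use that $P_s(a',\beta(a',b^*))=0$ while the $i$-th coordinate of $\nabla_b P_s(a',\beta(a',b^*))$ is non-zero to produce nearby points $b^{+},b^{-}$ where $P_s(a',\cdot)$ takes both signs and the other $P_j$ keep the signs $\sgn P_j(a,b)$, and then invoke Claim \ref{claim-general-wall-sign-flip} to get $\Phi(a',b^{+})\neq\Phi(a',b^{-})$, ruling out membership in any $T_\lambda$. The only difference is presentational: you phrase it as a contradiction starting from $(a',\beta(a',b^*))\in T_\lambda$, whereas the paper argues directly that every neighborhood contains such a sign-flipping pair of points.
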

\begin{proof}
Let $U_b\su U$ be any open neighborhood of $\beta(a',b^*)$. Recall that $P_s(a', \beta(a',b^*))=0$ and the last coordinate of $\nabla_b P_s(a', \beta(a',b^*))$ is non-zero. So the partial derivative of $P_s$ with respect the $d$-th coordinate direction of $\beta(a',b^*)$ evaluated at the point $(a', \beta(a',b^*))$ is non-zero. Thus, close to $\beta(a',b^*)$ we can find points $b^{+},b^{-}\in U_b$ such that $P_s(a',b^{+})>0$ and $P_s(a',b^{-})<0$. As $P_j(a',\beta(a',b^*))\neq 0$ and $\sgn P_j(a',\beta(a',b^*)=\sgn P_j(a,b)$ for all $j\in \lbrace 1,\dots,k\rbrace\sm \lbrace s\rbrace$, by choosing $b^{+},b^{-}\in U_b$ sufficiently close to $\beta(a',b^*)$, we can also ensure that $P_j(a',b^{+})\neq 0$ and $P_j(a',b^{-})\neq 0$ as well as $\sgn P_j(a',b^{+})=\sgn P_j(a',b^{-})=\sgn P_j(a',\beta(a',b^*)=\sgn P_j(a,b)$ for all $j\in \lbrace 1,\dots,k\rbrace\sm \lbrace s\rbrace$. Then we have $P_j(a',b^{+})\neq 0$ and $P_j(a',b^{-})\neq 0$ for all $j\in \lbrace 1,\dots,k\rbrace$. Furthermore
\[\Phi(a',b^{+})=\phi\big(\sgn P_1(a,b),\dots ,\sgn P_{s-1}(a,b),+,\sgn P_{s+1}(a,b),\dots ,\sgn P_{k}(a,b)\big)\]
and
\[\Phi(a',b^{-})=\phi\big(\sgn P_1(a,b),\dots ,\sgn P_{s-1}(a,b),-,\sgn P_{s+1}(a,b),\dots ,\sgn P_{k}(a,b)\big).\]
Hence, by Claim \ref{claim-general-wall-sign-flip} we obtain $\Phi(a',b^{+})\neq \Phi(a',b^{-})$. Thus, we have shown that in any open neighborhood $U_b\su U$ of $\beta(a',b^*)$ we can find points $b^{+}, b^{-}\in U_b$ with $P_j(a',b^{+})\neq 0$ and $P_j(a',b^{-})\neq 0$ for all $j\in \lbrace 1,\dots,k\rbrace$ and such that the values $\Phi(a',b^{+})$ and $\Phi(a',b^{-})$ are different from each other. Therefore we cannot have $(a', \beta(a',b^*))\in T_\lambda$ for any $\lambda\in \Lambda$. Hence $(a', \beta(a',b^*))$ is indeed a wall pair.
\end{proof}
We already saw above that this finishes the proof of Lemma \ref{lemma-beta}.
\end{proof}

\subsection{Further preparations}

This subsection contains more preparations for the proof of Lemma \ref{lemma-main}. In particular, for each general wall pair $(a,b)\in U\times U$ we will define an associated equivalence class $[Q]$ of polynomials under the equivalence relation introduced in Notation \ref{notation-equiv-class-poly}. These associated polynomial classes will be used to define the sets $\mathcal{Q}(a)$ mentioned in the outline for the proof of Lemma \ref{lemma-main} in Subsection \ref{subsect-proof-outline}.

\begin{claim} Let $(a,b)\in U\times U$ be a general wall pair with wall index $s\in \lbrace 1,\dots,k\rbrace$. Then the $d$-variable polynomial $P_s(a,\_)$ has exactly one irreducible factor $Q$ such that $Q(b)=0$ (more precisely, $Q$ is unique up to multiplication by a real number).
\end{claim}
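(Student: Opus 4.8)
The plan is to pass to the polynomial $P_s(a,\_)\in\R[y_1,\dots,y_d]$ obtained from $P_s$ by fixing $x=a$, to factor it into irreducibles, and then to exploit the product rule for differentiation together with the hypothesis that $(a,b)$, being a general wall pair, is not special. Before factoring, I would first observe that $P_s(a,\_)$ is not the zero polynomial: if it were, then each $\partial P_s/\partial y_j$ would also vanish identically after setting $x=a$, giving $\nabla_b P_s(a,b)=0$; since moreover $P_s(a,b)=0$ (as $s$ is the wall index of $(a,b)$), this would make $(a,b)$ special, contradicting the definition of a general wall pair. Hence we may write
\[
P_s(a,\_)=c\cdot Q_1^{e_1}\cdots Q_r^{e_r}
\]
with $c\in\R\setminus\{0\}$ and $Q_1,\dots,Q_r\in\R[y_1,\dots,y_d]$ pairwise non-proportional irreducible polynomials.

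The existence of an irreducible factor vanishing at $b$ is immediate from $P_s(a,b)=0$. For uniqueness, I would argue by contradiction: suppose $P_s(a,\_)$ admits a factorization $P_s(a,\_)=G\cdot H$ into two polynomials with $G(b)=H(b)=0$. (This is possible precisely when either two distinct $Q_i$'s vanish at $b$, or a single $Q_i$ with $Q_i(b)=0$ occurs with exponent $e_i\ge 2$.) Differentiating the product $G\cdot H$, every term of $\nabla_b P_s(a,b)$ is a multiple of $G(b)$ or of $H(b)$, both of which vanish, so $\nabla_b P_s(a,b)=0$; together with $P_s(a,b)=0$ this again exhibits $(a,b)$ as a special pair, a contradiction. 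Consequently exactly one of $Q_1,\dots,Q_r$ vanishes at $b$ and it does so with multiplicity one, and since any irreducible factor of $P_s(a,\_)$ is proportional to one of the $Q_i$, this proves that $P_s(a,\_)$ has a unique (up to a nonzero real scalar) irreducible factor vanishing at $b$.

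The only step needing a little care — and the closest thing to an obstacle — is the preliminary remark that $P_s(a,\_)$ is genuinely a nonzero polynomial, so that talking about its irreducible factorization is legitimate; this is handled by the inequality $\nabla_b P_s(a,b)\neq 0$ coming from non-specialness, after which the argument is essentially a one-line application of the Leibniz rule.
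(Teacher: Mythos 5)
Your argument is correct and is essentially the same as the paper's: both deduce from non-specialness that $\nabla_b P_s(a,b)\neq 0$, identify this with the gradient of the $d$-variable polynomial $P_s(a,\_)$ at $b$, and conclude via the product rule that $P_s(a,\_)$ is nonzero and cannot have two factors (counted with multiplicity) vanishing at $b$. Your write-up just spells out the Leibniz-rule step that the paper leaves implicit, and additionally notes the multiplicity-one conclusion, which is harmless but not needed.
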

\begin{proof}
Recall that $P_s(a,b)=0$ as $s$ is the wall index of the general wall pair $(a,b)$. Thus, the polynomial $P_s(a,\_)$ has value $0$ at the point $b$. Since the pair $(a,b)$ is not special, by Definition \ref{defi-special} we have $\nabla_b P_s(a,b)\neq 0$. In other words, the gradient vector of $P_s(a,\_)$ at the point $b$ is non-zero. Thus, $P_s(a,\_)$ cannot have two different irreducible factors vanishing at the point $b$ and furthermore $P_s(a,\_)$ cannot be the zero polynomial. Hence, as $P_s(a,b)=0$, the polynomial $P_s(a,\_)$ has exactly one irreducible factor vanishing at $b$.
\end{proof}

For a general wall pair $(a,b)\in U\times U$ with wall index $s\in \lbrace 1,\dots,k\rbrace$, let us define the \emph{associated polynomial class} of $(a,b)$ to be the equivalence class $[Q]$ of the unique irreducible factor $Q$ of $P_s(a,\_)$ such that $Q(b)=0$. Recall that $P_s\in \R[x_1,\dots,x_d,y_1,\dots,y_d]$ and note that therefore $Q\in \R[y_1,\dots,y_d]$. Also note that it is reasonable to consider the class $[Q]$ instead of the polynomial $Q$ itself, since all polynomials $R$ with $R\equiv Q$ can be equally well considered as irreducible factors of $P_s(a,\_)$ and satisfy $R(b)=0$.

\begin{definition}\label{defi-Qa} For a point $a\in U$, let $\mathcal{Q}(a)$ be the collection of all $[Q]$ occurring as an associated polynomial class of a general wall pair of the form $(a,b)$ for some $b\in U$.
\end{definition}

By definition, for each $a\in U$, all $Q$ with $[Q]\in\mathcal{Q}(a)$ are irreducible factors of $P_s(a,\_)$ for some $s\in \lbrace 1,\dots,k\rbrace$. Thus,
\begin{equation}\label{eq-Q-a-size}
\vert \mathcal{Q}(a)\vert\leq \sum_{[Q]\in  \mathcal{Q}(a)}\deg Q\leq \sum_{s=1}^{k}\deg P_s(a,\_)\leq \sum_{s=1}^{k}\deg P_s
\end{equation}
for all $a\in U$.

\begin{lemma}\label{lemma-Q-neighborhood} Let $(a,b)\in U\times U$ be a general wall pair with associated polynomial class $[Q]$. Then there exists an open neighborhood $U_b\su U$ of $b$ such that the following holds: For all points $b'\in U_b$ for which $(a,b')$ is a general wall pair, the associated polynomial class of $(a,b')$ is $[Q]$.
\end{lemma}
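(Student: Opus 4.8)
Let $(a,b)$ be a general wall pair with wall index $s$, so that $P_s(a,b)=0$, and let $Q$ be the unique (up to scaling) irreducible factor of $P_s(a,\_)$ with $Q(b)=0$, so $[Q]$ is the associated polynomial class. Since $(a,b)$ is not special, $\nabla_b P_s(a,b)\neq 0$, so we may pick a coordinate $i\in\lbrace 1,\dots,d\rbrace$ with the $i$-th coordinate of $\nabla_b P_s(a,b)$ non-zero. The plan is to use Lemma \ref{lemma-beta} with the \emph{same} base point $a$ (i.e.\ take $a'=a$): it produces open sets $U_a\su U$ and $V\su\R^{d-1}$ with $a\in U_a$, $\pr_i(b)\in V$, and a smooth $\beta:U_a\times V\to U$ with $\beta(a,\pr_i(b))=b$, $\pr_i(\beta(a,b^*))=b^*$, and such that for every $b^*\in V$ the pair $(a,\beta(a,b^*))$ is a general wall pair with wall index $s$. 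Set $W=\beta(\lbrace a\rbrace\times V)$; since $\beta(a,\cdot)$ is a smooth section of $\pr_i$ (hence a homeomorphism onto its image, with inverse $\pr_i$), $W$ is a relatively open piece of the hypersurface $\lbrace b'\in U\mid P_s(a,b')=0\rbrace$ near $b$. Now choose an open neighborhood $U_b\su U$ of $b$ small enough that $U_b\cap\lbrace P_s(a,\_)=0\rbrace\su W$; this is possible because near $b$ the zero set of $P_s(a,\_)$ is exactly the graph $\eta$ constructed in the proof of Lemma \ref{lemma-beta}, i.e.\ it coincides with $W$ in a neighborhood of $b$.

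The key observation is then: if $b'\in U_b$ and $(a,b')$ is a general wall pair, then by Claim \ref{claim-special-P-s-0} we have $P_t(a,b')=0$ for at least one $t$, but since $(a,b')$ is not special, it is exactly one such index, which is its wall index; shrinking $U_b$ if necessary we may assume $P_t(a,b')\neq 0$ near $b$ for every $t$ with $P_t(a,b)\neq 0$, so the wall index of $(a,b')$ is forced to be $s$, hence $P_s(a,b')=0$. Therefore $b'\in U_b\cap\lbrace P_s(a,\_)=0\rbrace\su W$, so $b'=\beta(a,\pr_i(b'))$ for a unique $b^*=\pr_i(b')\in V$. It remains to show the associated polynomial class of $(a,b')$ is $[Q]$, i.e.\ that the unique irreducible factor of $P_s(a,\_)$ vanishing at $b'$ is (a scalar multiple of) $Q$. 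Since $Q\mid P_s(a,\_)$, write $P_s(a,\_)=Q\cdot R$. The point $b'$ lies in the connected set $W$ (it is the continuous image of the connected set $V$, after possibly shrinking $V$ to a connected neighborhood of $\pr_i(b)$), on which $P_s(a,\_)$ vanishes identically; I will argue $Q$ vanishes on all of $W$, so in particular $Q(b')=0$ and we are done by uniqueness.

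To see $Q\equiv 0$ on $W$: $W$ is a smooth $(d-1)$-dimensional submanifold (a graph over the connected open $V\su\R^{d-1}$) contained in $\lbrace Q\cdot R=0\rbrace=\lbrace Q=0\rbrace\cup\lbrace R=0\rbrace$. At the base point $b$ we have $Q(b)=0$, and I claim $R(b)\neq 0$: indeed, if $R(b)=0$ as well then both $Q$ and $R$ would be factors of $P_s(a,\_)$ vanishing at $b$, contradicting that (as established just above the definition of the associated polynomial class) $P_s(a,\_)$ has exactly one irreducible factor vanishing at $b$ — unless $Q$ itself divides $R$, in which case $Q^2\mid P_s(a,\_)$ and then $\nabla_b P_s(a,b)=0$, contradicting that $(a,b)$ is not special. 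So $R(b)\neq 0$, hence $R\neq 0$ in a neighborhood of $b$ inside $W$, forcing $Q\equiv 0$ there; since $\lbrace b'\in W\mid Q(b')=0\rbrace$ is a relatively closed subset of the connected set $W$ and contains a non-empty relatively open subset (any small patch of $W$ around $b$, using $R\neq 0$ there to conclude $W$-locally $\lbrace P_s(a,\_)=0\rbrace=\lbrace Q=0\rbrace$), a standard connectedness argument — or simply noting $Q$ restricted to the graph $W$ is a real-analytic function on connected $V$ vanishing on an open subset, hence identically — gives $Q\equiv 0$ on all of $W$. Therefore $Q(b')=0$ for every $b'\in W$, and in particular for every $b'\in U_b$ with $(a,b')$ a general wall pair, the associated polynomial class is $[Q]$, as desired.

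The main obstacle is the last step: deducing that $Q$ (and not $R$) is the factor vanishing at the nearby point $b'$. One must rule out the scenario where $W$ "jumps" from the branch $\lbrace Q=0\rbrace$ to the branch $\lbrace R=0\rbrace$; this is handled by the connectedness of $W$ together with the fact that $R(b)\neq 0$, which in turn rests on $(a,b)$ not being special (so $P_s(a,\_)$ is square-free near $b$ with a simple zero along $W$). Everything else — the application of Lemma \ref{lemma-beta}, identifying $W$ with the local zero set of $P_s(a,\_)$, and forcing the wall index to be $s$ on $U_b$ — is routine shrinking of neighborhoods.
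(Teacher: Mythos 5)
Your proposal reaches the correct conclusion, but it takes a much heavier route than needed, and one step is not valid as literally written. The paper's proof is a short continuity argument: since $(a,b)$ is not special, $P_s(a,\_)$ has exactly one irreducible factor (up to scaling) vanishing at $b$, namely $Q$; hence every irreducible factor $R$ of $P_s(a,\_)$ with $[R]\neq [Q]$ satisfies $R(b)\neq 0$, and one shrinks $U_b$ so that on $U_b$ all $P_j(a,\cdot)$ with $j\neq s$ and all such $R$ are non-vanishing. Then any general wall pair $(a,b')$ with $b'\in U_b$ is forced to have wall index $s$, and the unique irreducible factor of $P_s(a,\_)$ vanishing at $b'$ can only be $Q$. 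You actually prove the key input yourself --- the factorization $P_s(a,\_)=Q\cdot R$ with $R(b)\neq 0$ --- but instead of finishing by continuity of $R$ (and of the $P_j(a,\cdot)$, $j\neq s$), you detour through Lemma \ref{lemma-beta}, the local-uniqueness part of the implicit function theorem (which is used in the proof of Lemma \ref{lemma-beta} but is not part of its statement, so you are implicitly re-invoking the IFT), and a connectedness argument on the graph $W$.

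Within that detour there is one genuine soft spot: a relatively closed subset of a connected set that contains a non-empty relatively open subset need not be the whole set (consider $[0,1]\su \R$), so the ``standard connectedness argument'' as stated proves nothing. It can be repaired inside your framework in two ways: (i) by the fourth bullet of Lemma \ref{lemma-beta}, the $i$-th coordinate of $\nabla_b P_s(a,w)$ is non-zero at every $w\in W$, and since $\nabla_b(QR)(w)=Q(w)\nabla R(w)+R(w)\nabla Q(w)$, the polynomials $Q$ and $R$ cannot vanish simultaneously on $W$; hence $W\cap\lbrace Q=0\rbrace$ and $W\cap\lbrace R=0\rbrace$ are disjoint relatively closed sets covering the connected set $W$, and the one containing $b$ must be all of $W$; or (ii) your real-analyticity aside, which is correct but requires knowing $\beta(a,\cdot)$ is real-analytic, i.e.\ the analytic implicit function theorem rather than the smooth one the paper uses. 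Either repair works, but both are unnecessary: $R(b)\neq 0$ together with continuity of the finitely many irreducible factors of $P_s(a,\_)$ other than $Q$ already yields the lemma directly, which is exactly the paper's proof.
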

\begin{proof} Let $s\in \lbrace 1,\dots,k\rbrace$ be the wall index of $(a,b)$. Then $P_j(a,b)\neq 0$ for all $j\in \lbrace 1,\dots,k\rbrace\sm \lbrace s\rbrace$. Furthermore $R(b)\neq 0$ for all irreducible factors $R$ of $P_s(a,\_)$ with $[R]\neq [Q]$. Thus, we can choose an open neighborhood $U_b\su U$ of $b$ such that for all $b'\in U_b$ we have $P_j(a,b')\neq 0$ for all $j\in \lbrace 1,\dots,k\rbrace\sm \lbrace s\rbrace$ and $R(b')\neq 0$ for all irreducible factors $R$ of $P_s(a,\_)$ with $[R]\neq [Q]$. 

Now we claim that for each $b'\in U_b$ such that $(a,b')$ is a general wall pair, the associated polynomial class of $(a,b')$ is $[Q]$. As $P_j(a,b')\neq 0$ for all $j\in \lbrace 1,\dots,k\rbrace\sm \lbrace s\rbrace$, the wall index of $(a,b')$ must be $s$. Since $R(b')\neq 0$ for all irreducible factors $R$ of $P_s(a,\_)$ with $[R]\neq [Q]$, no irreducible factor of $P_s(a,\_)$ other than $Q$ can give rise to the associated polynomial class of $(a,b')$. Thus, the associated polynomial class of $(a,b')$ must indeed be $[Q]$.
\end{proof}

Finally, we need one more lemma that will be used in the proof of Lemma \ref{lemma-main}. Essentially, this lemma states that for most points $a\in U$, the set of points $b\in \R^d$ such that $(a,b)$ is a special pair is a real algebraic set of dimension at most $d-2$. For all such points $a$, by Fact \ref{fact-complement-variety} the set of points $b\in \R^d$ such that $(a,b)$ is not special is path-connected. We will deduce Lemma \ref{lemma-U1} from Fact \ref{fact-cutting-dimension}.

\begin{lemma}\label{lemma-U1}There exists a non-empty open subset $U_1\su U$ such that for all $a\in U_1$ the following holds: The set
\[\lbrace b\in \R^d\mid (a,b)\text{ is a special pair}\rbrace\]
is a real algebraic set in $\R^d$ of dimension at most $d-2$.
\end{lemma}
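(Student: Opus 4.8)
The plan is to express the ``bad'' set of points $b$ as a union of real algebraic sets, bound the dimension of the whole configuration space of special pairs in $\R^d\times\R^d=\R^{2d}$, and then apply Fact~\ref{fact-cutting-dimension} to cut by $a$. First I would describe the set of special pairs explicitly. By Definition~\ref{defi-special}, a pair $(a,b)\in\R^d\times\R^d$ is special if and only if either $P_s(a,b)=P_t(a,b)=0$ for some pair of distinct indices $s\neq t$, or $P_s(a,b)=0$ together with all $d$ partial derivatives in $\nabla_b P_s(a,b)$ vanishing, for some single index $s$. Thus the set $W\su\R^{2d}$ of special pairs is the union, over all $1\le s<t\le k$, of the sets $W_{s,t}=\{(a,b)\mid P_s(a,b)=P_t(a,b)=0\}$, together with, over all $1\le s\le k$, the sets $W_s=\{(a,b)\mid P_s(a,b)=0 \text{ and } \nabla_b P_s(a,b)=0\}$. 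Each of these is a real algebraic set in $\R^{2d}$ (it is the common zero-set of finitely many polynomials), so by Fact~\ref{fact-dim-union}, $W$ is a real algebraic set with $\dim W=\max$ of the dimensions of the pieces.

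Next I would bound $\dim W_{s,t}$ and $\dim W_s$ by $2d-2$. Recall that in the setup of the proof of Theorem~\ref{theo-main} we have reduced to the case where $P_1,\dots,P_k$ are irreducible and mutually coprime. For $W_{s,t}$ with $s\neq t$: since $P_s$ is irreducible and $P_t$ is not divisible by $P_s$ (they are coprime and distinct), Fact~\ref{fact-two-coprime-poly} applied with $m=2d$ gives $\dim W_{s,t}\le 2d-2$. For $W_s$: here I would argue that the polynomial $\partial P_s/\partial y_1$ (say) is not divisible by $P_s$, because $P_s$ is irreducible of positive degree in the $y$-variables --- note $P_s$ must actually involve some $y$-variable since, for a general wall pair $(a,b)$ with wall index $s$, we have $\nabla_b P_s(a,b)\ne 0$, so $P_s$ is not constant in $y$; and an irreducible polynomial involving $y_j$ cannot divide its own derivative $\partial P_s/\partial y_j$ since that derivative has strictly smaller $y_j$-degree and is nonzero. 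Hence by Fact~\ref{fact-two-coprime-poly} (applied to $P_s$ and the nonzero derivative not divisible by it), $\{(a,b)\mid P_s(a,b)=0,\ \partial P_s(a,b)/\partial y_j=0\}$ has dimension at most $2d-2$, and $W_s$ is contained in this set, so $\dim W_s\le 2d-2$. Therefore $\dim W\le 2d-2$.

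Finally I would apply Fact~\ref{fact-cutting-dimension} to the real algebraic set $W\su\R^{2d}=\R^d\times\R^d$: since $\dim W\le 2d-2$, there is a dense open set $U_0\su\R^d$ such that for each $a\in U_0$, the slice $\{b\in\R^d\mid(a,b)\in W\}$ is a real algebraic set of dimension at most $d-2$. Taking $U_1=U_0\cap U$, which is non-empty and open since $U$ is open and non-empty and $U_0$ is dense, we get precisely the conclusion: for all $a\in U_1$, the set $\{b\in\R^d\mid(a,b)\text{ is a special pair}\}$ is a real algebraic set of dimension at most $d-2$.

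I expect the main obstacle to be the handling of $W_s$ --- specifically, confirming that $P_s$ genuinely involves some $y$-variable (so that some $\partial P_s/\partial y_j$ is nonzero and not divisible by $P_s$) rather than being a polynomial purely in $x$. One should observe that if $P_s$ were independent of $y$, then $\nabla_b P_s$ would be identically zero, and then there could be no general wall pair with wall index $s$ at all (since a general wall pair is, by definition, not special, yet $P_s(a,b)=0$ would force $\nabla_b P_s(a,b)=0$, making $(a,b)$ special) --- so we could simply discard such $P_s$ from consideration, or equivalently note that $W_s$ causes no trouble because removing the redundant polynomial $P_s$ does not change the notion of special pair relevant to general wall pairs. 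Everything else is a routine invocation of the Facts already established.
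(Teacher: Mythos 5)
Your overall architecture is exactly that of the paper: decompose the set of special pairs in $\R^{2d}$ into the sets $\lbrace P_s=P_t=0\rbrace$ and $\lbrace P_s=0,\ \nabla_b P_s=0\rbrace$, bound each dimension by $2d-2$ via Fact \ref{fact-two-coprime-poly} (using irreducibility and coprimality, and that some $\partial_{y_j}P_s$ is non-zero and not divisible by $P_s$), combine with Fact \ref{fact-dim-union}, and then slice with Fact \ref{fact-cutting-dimension} and intersect with $U$. That part is fine. The gap is precisely the point you flagged as the ``main obstacle'': the indices $s$ for which $P_s$ involves none of the variables $y_1,\dots,y_d$. Your primary argument (``$P_s$ must involve some $y$-variable since for a general wall pair with wall index $s$ we have $\nabla_b P_s(a,b)\neq 0$'') presupposes that a general wall pair with wall index $s$ exists, which is not guaranteed for every $s$. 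And your fallback --- discard such $P_s$ because this ``does not change the notion of special pair relevant to general wall pairs'' --- does not prove the lemma as stated. The lemma is about the full set of special pairs from Definition \ref{defi-special}: if a $y$-independent $P_s$ vanished at some $a\in U$, then $\nabla_b P_s\equiv 0$ forces \emph{every} pair $(a,b)$, $b\in\R^d$, to be special, so the slice at that $a$ is all of $\R^d$; correspondingly, keeping $W_s=\lbrace P_s=0\rbrace$ in your union $W$ could raise $\dim W$ to $2d-1$, and Fact \ref{fact-cutting-dimension} no longer applies. You also cannot quietly weaken ``special'' in the lemma, because Lemma \ref{lemma-main} later uses it with the genuine definition (to get path-connectivity of $C\sm Z$ via Fact \ref{fact-complement-variety}, and to conclude ``not special and not a general wall pair $\Rightarrow$ not a wall pair'').

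What is actually needed, and what the paper supplies, is an argument that these $y$-independent (``$a$-dependent'') polynomials never vanish for $a\in U$: if $P_s(a,\cdot)\equiv P_s(a)=0$ for some $a\in U$, pick any $a'\in U$ with $a'\neq a$; the hypothesis of Theorem \ref{theo-main} produces a $b\in U$ with $P_s(a,b)\neq 0$, a contradiction. Hence such indices contribute nothing to specialness on $U\times\R^d$, and the algebraic set $V$ fed into Fact \ref{fact-cutting-dimension} may omit them, giving $\dim V\leq 2d-2$. (An alternative repair along your lines would be to additionally shrink $U_1$ so as to avoid the zero sets of these finitely many non-zero $x$-only polynomials, using Fact \ref{fact-poly-vanishing}; but some such extra step is indispensable, and your proposal as written contains neither.)
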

\begin{proof}
Recall that in Definition \ref{defi-special} we defined a pair $(a,b)\in \R^d\times \R^d$ to be a special pair if we have $P_s(a,b)=0$ for at least two different indices $s\in \lbrace 1,\dots,k\rbrace$ or if exists an index $s\in \lbrace 1,\dots,k\rbrace$ with $P_s(a,b)=\partial_{b_1} P_s(a,b)=\dots=\partial_{b_d} P_s(a,b)=0$. 

It is possible for some of the polynomials $P_s(a,b)$ for $s\in \lbrace 1,\dots,k\rbrace$ to only depend on $a$, meaning that none of the variables $b_1,\dots,b_d$ occurs in $P_s(a,b)$. If this is the case for some index $s\in \lbrace 1,\dots,k\rbrace$, let us call this index \emph{$a$-dependent}. Otherwise, let us call $s\in \lbrace 1,\dots,k\rbrace$ \emph{non-$a$-dependent}.

Note that if $s\in \lbrace 1,\dots,k\rbrace$ is $a$-dependent, we have $P_s(a,b)\neq 0$ for all $a\in U$ and $b\in \R^d$. Indeed, suppose that $P_s(a,b)=0$ for some $a\in U$ and $b\in \R^d$, then we would have $P_s(a,b)=0$ for this particular point $a\in U$ and all points $b\in \R^d$ (since the polynomial $P_s(a,b)$ is independent of $b$). But then let us fix any point $a'\in U$ with $a'\neq a$. By the assumption of Theorem \ref{theo-main} there needs to be a point $b\in U\su \R^d$ satisfying $P_s(a,b)\neq 0$ (and many other conditions), which is a contradiction. Thus, we indeed have $P_s(a,b)\neq 0$ for all $a\in U$, $b\in \R^d$ and all $a$-dependent indices $s\in \lbrace 1,\dots,k\rbrace$.

Now, let the set $V\su \R^d\times \R^d=\R^{2d}$ be the union of the sets
\begin{equation}\label{eq-set-special-pair1}
\lbrace (a,b)\in \R^d\times \R^d\mid P_s(a,b)=P_{s'}(a,b)=0\rbrace
\end{equation}
for any two distinct indices $s,s'\in \lbrace 1,\dots,k\rbrace$ and the sets
\begin{equation}\label{eq-set-special-pair2}
\lbrace (a,b)\in \R^d\times \R^d\mid P_s(a,b)=\partial_{b_1} P_s(a,b)=\dots=\partial_{b_d} P_s(a,b)=0\rbrace
\end{equation}
for all non-$a$-dependent indices $s\in \lbrace 1,\dots,k\rbrace$.

Note that for any pair $(a,b)\in U\times \R^d$, we have $(a,b)\in V$ if and only if $(a,b)$ is a special pair (since by the argument above we cannot have $P_s(a,b)=0$ for any $a$-dependent index $s$).

We claim that $V$ is a real algebraic set of dimension at most $2d-2$. First, by Fact \ref{fact-two-coprime-poly}, for any two distinct indices $s,s'\in \lbrace 1,\dots,k\rbrace$ the set in (\ref{eq-set-special-pair1}) is a real algebraic set of dimension at most $2d-2$ (here we used that the polynomials $P_s$ are irreducible and mutually coprime). Furthermore, for every non-$a$-dependent index $s\in \lbrace 1,\dots,k\rbrace$, the polynomial $P_s(a,b)$ contains at least one of the variables $b_1,\dots,b_d$. This means that at least one of the partial derivatives $\partial_{b_1} P_s, \dots, \partial_{b_d} P_s(a,b)$ is a non-zero polynomial. Since all these partial derivatives have degree at most $\deg P_s-1$, this implies that at least one of them is not divisible by the polynomial $P_s$. Thus, again by Fact \ref{fact-two-coprime-poly} (again using that $P_s$ is irreducible), for each non-$a$-dependent index $s\in \lbrace 1,\dots,k\rbrace$ the set in (\ref{eq-set-special-pair2}) is a real algebraic set of dimension at most $2d-2$. All in all, using Fact \ref{fact-dim-union}, this implies that $V$ is a real algebraic set of dimension at most $2d-2$.

Now, by Fact \ref{fact-cutting-dimension}, there exists a dense open set $U_1'\su \R^d$ such that for each $a\in U_1'$ the set $\lbrace b\in \R^d\mid (a,b)\in V\rbrace$ is a real algebraic set in $\R^d$ of dimension at most $d-2$.

Choosing $U_1=U_1'\cap U$, the set $U_1$ is a non-empty open subset of $U$ (since $U_1'$ is open and dense). It remains to check that for every $a\in U_1$ the set $\lbrace b\in \R^d\mid (a,b)\text{ is a special pair}\rbrace$ is a real algebraic set in $\R^d$ of dimension at most $d-2$.

So let us fix some $a\in U_1\su U$. We saw above that then for all $b\in \R^d$ we have $(a,b)\in V$ if and only if $(a,b)$ is a special pair. Hence the set $\lbrace b\in \R^d\mid (a,b)\text{ is a special pair}\rbrace$ is the same as the set $\lbrace b\in \R^d\mid (a,b)\in V\rbrace$, which we already know to be a real algebraic set of dimension at most $d-2$ (since $a\in U_1'$).
\end{proof}

\section{Proof of Lemma \ref{lemma-main}}\label{sect-proof-lemma-main}

Now we are finally ready for the proof of Lemma \ref{lemma-main}.

Recall that in Definition \ref{defi-L-a}, for each $a\in U$ we defined a linear subspace $L_a\su \R^d$. Our goal is to prove Lemma \ref{lemma-main}, which states that there is some $a\in U$ with $L_a=\R^d$. So let us assume the contrary, then $\dim_{\R}L_a\leq d-1$ for all $a\in U$.

Let $U_1\su U$ be an open subset as in Lemma \ref{lemma-U1}.

Now, let us consider the set of those $a\in U_1\su U$ for which $\dim_\R L_a$ is maximal. Among all those points, let us fix some $a_0\in U_1$ for which $\vert \mathcal{Q}(a_0)\vert$ is maximal (recall that by (\ref{eq-Q-a-size}) we have $\vert \mathcal{Q}(a)\vert\leq \sum_{s=1}^{k}\deg P_s$ for all $a\in U_1\su U$). Note that by the assumption of our proof by contradiction we have $\dim_{\R}L_{a_0}\leq d-1$.

\begin{lemma}\label{lemma-vector-field}There exists an open neighborhood $U_2\su U_1\su U$ of $a_0$ such that for all $a\in U_2$ we have $\dim_\R L_a=\dim_\R L_{a_0}$, and such that we can find a smooth vector field $w: U_2\to \R^d$ such that for all $a\in U_2$ we have $w(a)\neq 0$ and $v\cdot w(a)=0$ for all $v\in L_a$.
\end{lemma}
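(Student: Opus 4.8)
The plan is to exploit the maximality built into the choice of $a_0$. Write $\l=\dim_\R L_{a_0}$. Since $a_0$ was chosen among the points of $U_1$ to make $\dim_\R L_a$ maximal, we have $\dim_\R L_a\le \l$ for every $a\in U_1$, so it suffices to produce, on a small neighborhood of $a_0$, a list of $\l$ smoothly varying vectors lying in $L_a$ that are linearly independent at $a_0$ (hence, by continuity, on a neighborhood); this will simultaneously force $\dim_\R L_a=\l$ and provide a matrix to feed into Lemma~\ref{lemma-matrix}. Note also that the contradiction hypothesis in the proof of Lemma~\ref{lemma-main} gives $\l\le d-1$. If $\l=0$ the statement is trivial: take $U_2=U_1$ and let $w$ be any nonzero constant vector field, since then $L_a=\{0\}$ for all $a\in U_1$. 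So from now on assume $1\le \l<d$.

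Because, by Definition~\ref{defi-L-a}, the vectors $\nabla_a P_s(a_0,b)$ ranging over general wall pairs $(a_0,b)$ with wall index $s$ span $L_{a_0}$, I can pick $b_1,\dots,b_\l\in U$ and $s_1,\dots,s_\l\in\{1,\dots,k\}$ so that each $(a_0,b_i)$ is a general wall pair with wall index $s_i$ and $\nabla_a P_{s_1}(a_0,b_1),\dots,\nabla_a P_{s_\l}(a_0,b_\l)$ form a basis of $L_{a_0}$. Each such pair is not special, so $\nabla_b P_{s_i}(a_0,b_i)\neq 0$ and some coordinate $j_i$ of it is nonzero; applying Lemma~\ref{lemma-beta} to $(a_0,b_i)$ and $j_i$ yields open sets $U_{a,i}\ni a_0$ and $V_i\ni \pr_{j_i}(b_i)$ and a smooth map $\beta_i$ with $\beta_i(a_0,\pr_{j_i}(b_i))=b_i$ such that $(a',\beta_i(a',b^*))$ is a general wall pair with wall index $s_i$ for all $a'\in U_{a,i}$, $b^*\in V_i$. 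Set $b_i(a')=\beta_i(a',\pr_{j_i}(b_i))$ on $U_a^*:=U_1\cap\bigcap_{i=1}^{\l} U_{a,i}$; this is a smooth $U$-valued function with $b_i(a_0)=b_i$, and $(a',b_i(a'))$ is a general wall pair with wall index $s_i$ for every $a'\in U_a^*$, so $\nabla_a P_{s_i}(a',b_i(a'))\in L_{a'}$. Let $A(a')$ be the $(\l\times d)$-matrix whose rows are these $\l$ gradient vectors; its entries are smooth in $a'\in U_a^*$, and $A(a_0)$ has rank $\l$.

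Now apply Lemma~\ref{lemma-matrix} (legitimate since $1\le \l<d$) with $x_0=a_0$: it gives an open neighborhood $U_2\su U_a^*\su U_1$ of $a_0$ on which $A(a)$ has rank $\l$, together with a smooth vector field $w:U_2\to\R^d$ satisfying $w(a)\neq 0$ and $A(a)w(a)=0$ for every $a\in U_2$. For $a\in U_2$, the rank condition says the $\l$ vectors $\nabla_a P_{s_i}(a,b_i(a))\in L_a$ are linearly independent, so $\dim_\R L_a\ge\l$; combined with $\dim_\R L_a\le\l$ this gives $\dim_\R L_a=\l=\dim_\R L_{a_0}$ and also shows that these $\l$ vectors span $L_a$. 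Finally, $A(a)w(a)=0$ says precisely that $w(a)$ is orthogonal to each row of $A(a)$, hence to all of $L_a$, i.e.\ $v\cdot w(a)=0$ for all $v\in L_a$. Thus $U_2$ and $w$ have the required properties.

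The argument is essentially an assembly of Lemmas~\ref{lemma-beta} and~\ref{lemma-matrix}, so there is no single hard computation; the only point requiring care is the observation that Lemma~\ref{lemma-beta} is exactly the right tool to upgrade a finite witnessing set of general wall pairs at $a_0$ into smooth families of general wall pairs over a neighborhood of $a_0$ — this is what makes the relevant gradient vectors depend smoothly on $a$ and lets the maximality of $\dim_\R L_{a_0}$ pin the dimension down to be locally constant.
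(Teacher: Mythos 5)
Your proof is correct and follows essentially the same route as the paper: choose general wall pairs $(a_0,b_i)$ whose gradients form a basis of $L_{a_0}$, use Lemma \ref{lemma-beta} to make them vary smoothly in $a$, and feed the resulting $(\l\times d)$-matrix into Lemma \ref{lemma-matrix}, then use the maximality of $\dim_\R L_{a_0}$ on $U_1$ to pin down $\dim_\R L_a=\l$ and deduce the orthogonality of $w(a)$ to $L_a$. Your explicit treatment of the degenerate case $\l=0$ is a small extra care not spelled out in the paper, but otherwise the arguments coincide.
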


\begin{proof}
Let $\l=\dim_\R L_{a_0}\leq d-1$. Then, by the choice of $a_0$, we have $\dim_\R L_{a}\leq \l$ for all $a\in U_1$.

Let $b_1,\dots,b_{\l}\in U$ and $s_1,\dots,s_\l\in \lbrace 1,\dots,k\rbrace$ be such that $(a_0, b_j)$ is a general wall pair with wall index $s_j$ for each $j=1,\dots,\l$ and such that $L_a= \operatorname{span} \lbrace \nabla_a P_{s_1}(a_0,b_1),\dots, \nabla_a P_{s_\l}(a_0,b_\l)\rbrace$. Then the vectors $\nabla_a P_{s_1}(a_0,b_1),\dots, \nabla_a P_{s_\l}(a_0,b_\l)$ are linearly independent.

For each $j=1,\dots,\l$, let us now apply Lemma \ref{lemma-beta} to the general wall pair $(a_0, b_j)$. As $(a_0,b_j)$ is not a special pair, there is an index $i_j\in \lbrace 1,\dots,d\rbrace$ such that the $i_j$-th coordinate of $\nabla_b P_{s_j}(a_0,b_j)$ is non-zero. Then, by Lemma \ref{lemma-beta}, we can find open subsets $U_a^j\su U$ and $V_j\su \R^{d-1}$ with $a_0\in U_a^j$ and $\pr_{i_j}(b_j)\in V_j$ as well as a smooth function $\beta_j: U_a^j\times V_j\to U$ such that $\beta_j(a_0,\pr_{i_j}(b_j))=b_j$ and such that for all $a'\in U_a^j$ and $b^*\in V_j$, the pair $(a',\beta_j(a',b^*))\in U\times U$ is a general wall pair with wall index $s_j$. Let us define a smooth function $\alpha_j: U_a^j\to U$ by setting $\alpha_j(a')=\beta_j(a',\pr_{i_j}(b_j))$ for all $a'\in U_a^j$. Then $(a',\alpha_j(a'))\in U\times U$ is a general wall pair with wall index $s_j$ for each $a'\in U_a^j$, and furthermore $\alpha_j(a_0)=\beta_j(a_0,\pr_{i_j}(b_j))=b_j$.

Now, let us consider the open set $U_a=U_a^1\cap\dots\cap U_a^\l\cap U_1$. Note that $a_0\in U_a\su U_1$ and that all the functions $\alpha_1,\dots,\alpha_\l$ are defined on $U_a$. For each $a\in U_a$, let us consider the $(\l\times d)$-matrix $A(a)$ with rows $\nabla_a P_{s_1}(a,\alpha_1(a)),\dots, \nabla_a P_{s_\l}(a,\alpha_\l(a))$. All the coefficients of the matrix $A(a)$ are smooth functions of $a\in U_a$. Furthermore, the matrix $A(a_0)$ has rows $\nabla_a P_{s_1}(a_0,b_1),\dots, \nabla_a P_{s_\l}(a_0,b_\l)$. Since we saw above that these $\l$ vectors are linearly independent, the matrix $A(a_0)$ has rank $\l$.

So, using $\l\leq d-1$, we can apply Lemma \ref{lemma-matrix} and find an open neighborhood $U_2\su U_a\su U_1$ of $a_0$ such that the matrix $A(a)$ has rank $\l$ for each $a\in U_2$. And furthermore we can choose $U_2$ in such a way that there exists a smooth vector field $w:U_2\to \R^d$ such that for all $a\in U_2$ we have $w(a)\neq 0$ and $A(a)w(a)=0$.

We will now prove that for each $a\in U_2$, we have $\dim_\R L_a=\dim_\R L_{a_0}$ as well as $v\cdot w(a)=0$ for all $v\in L_a$. This will show that $U_2$ and $w$ satisfy all the desired conditions and will therefore finish the proof of Lemma \ref{lemma-vector-field}.

So fix some $a\in U_2\su U_a\su U_1$. First recall that $(a,\alpha_j(a))\in U\times U$ is a general wall pair with wall index $s_j$ for each $j=1,\dots,\l$. Thus, by Definition \ref{defi-L-a} we have $\nabla_a P_{s_j}(a,\alpha_j(a))\in L_a$ for $j=1,\dots,\l$. Since the matrix $A(a)$ with rows $\nabla_a P_{s_1}(a,\alpha_1(a)),\dots, \nabla_a P_{s_\l}(a,\alpha_\l(a))$ has rank $\l$, the vectors $\nabla_a P_{s_1}(a,\alpha_1(a)),\dots, \nabla_a P_{s_\l}(a,\alpha_\l(a))$ are linearly independent. Thus, we must have $\dim_\R L_a\geq \l$. As we already saw that $\dim_\R L_a\leq \l$ at the beginning of the proof of Lemma \ref{lemma-vector-field}, we can conclude that $\dim_\R L_a= \l=\dim_\R L_{a_0}$, as desired. Furthermore we can conclude that $L_a$ is spanned by the vectors $\nabla_a P_{s_1}(a,\alpha_1(a)),\dots, \nabla_a P_{s_\l}(a,\alpha_\l(a))$.

Recall that the vector field $w$ was chosen such that $A(a)w(a)=0$. Because the matrix $A(a)$ has rows $\nabla_a P_{s_1}(a,\alpha_1(a)),\dots, \nabla_a P_{s_\l}(a,\alpha_\l(a))$, this means that $\nabla_a P_{s_j}(a,\alpha_j(a))\cdot w(a)=0$ for $j=1,\dots, \l$. Since the vectors $\nabla_a P_{s_j}(a,\alpha_j(a))$ for $j=1,\dots,\l$ span $L_a$, we can conclude that $v\cdot w(a)=0$ for all $v\in L_a$. This finishes the proof of Lemma \ref{lemma-vector-field}.\end{proof}

Let us fix an open neighborhood $U_2\su U_1$ of $a_0$ and a vector field $w: U_2\to \R^d$ as in Lemma \ref{lemma-vector-field}. By the local existence of integral curves (see for example \cite[Proposition 9.2]{lee}), there exist $\eps>0$ and a smooth curve $\tau:(-\eps,\eps)\to U_2$ such $\tau(0)=a_0$ and the derivative $\tau'$ of $\tau$ satisfies $\tau'(t)=w(\tau(t))$ for all $t\in (-\eps,\eps)$.

A major step towards proving Lemma \ref{lemma-main} will be to establish the following lemma.

\begin{lemma}\label{lemma-Qa} For every $[Q]\in \mathcal{Q}(a_0)$, there exists some $\eps_{[Q]}$ with $0<\eps_{[Q]}<\eps$ such that $[Q]\in \mathcal{Q}(\tau(t))$ for all $t\in (-\eps_{[Q]},\eps_{[Q]})$.
\end{lemma}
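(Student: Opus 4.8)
The plan is to show that the very point $b$ witnessing $[Q]\in\mathcal{Q}(a_0)$ keeps working along the curve $\tau$: namely, if $(a_0,b)$ is a general wall pair with wall index $s$ and associated polynomial class $[Q]$, then $(\tau(t),b)$ is again a general wall pair with wall index $s$ and associated polynomial class $[Q]$ for all $t$ close to $0$. The heart of the matter will be that, along $\tau$, the whole local ``sheet'' of the zero-set of $P_s$ produced by Lemma \ref{lemma-beta} does not move at all (not merely that it stays through $b$), and then a dimension argument will force $Q$ to divide $P_s(\tau(t),\_)$.

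Concretely, I would first fix $[Q]\in\mathcal{Q}(a_0)$ together with a general wall pair $(a_0,b)$ of wall index $s$ and associated class $[Q]$, so $Q$ is the unique (up to scaling) irreducible factor of $P_s(a_0,\_)$ with $Q(b)=0$. Since $(a_0,b)$ is not special, $\nabla_b P_s(a_0,b)\neq 0$; pick a coordinate $i$ whose entry in $\nabla_b P_s(a_0,b)$ is non-zero and apply Lemma \ref{lemma-beta} to get open sets $U_a\ni a_0$, $V\ni\pr_i(b)$ and a smooth $\beta\colon U_a\times V\to U$ with $\beta(a_0,\pr_i(b))=b$, such that for all $a'\in U_a$, $b^*\in V$ the pair $(a',\beta(a',b^*))$ is a general wall pair with wall index $s$, $\pr_i(\beta(a',b^*))=b^*$, and the $i$-th entry of $\nabla_b P_s(a',\beta(a',b^*))$ is non-zero. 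I would also apply Lemma \ref{lemma-Q-neighborhood} to $(a_0,b)$ to obtain a neighborhood $U_b$ of $b$ on which every general wall pair $(a_0,\cdot)$ has associated class $[Q]$, and then shrink $V$ so that $\beta(a_0,b^*)\in U_b$ for all $b^*\in V$; this yields $Q(\beta(a_0,b^*))=0$ for all $b^*\in V$. Finally choose $\eps_{[Q]}\in(0,\eps)$ small enough that $\tau(t)\in U_a$ for $|t|<\eps_{[Q]}$.

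The crucial step will be to prove that $\beta(\tau(t),b^*)=\beta(a_0,b^*)$ for all $b^*\in V$ and $|t|<\eps_{[Q]}$. Fix $b^*$ and set $h(t)=\beta(\tau(t),b^*)$, a smooth curve with $P_s(\tau(t),h(t))=0$ since $(\tau(t),h(t))$ is a general wall pair with wall index $s$. Differentiating in $t$ gives $\nabla_a P_s(\tau(t),h(t))\cdot\tau'(t)+\nabla_b P_s(\tau(t),h(t))\cdot h'(t)=0$. Now $\nabla_a P_s(\tau(t),h(t))\in L_{\tau(t)}$ by Definition \ref{defi-L-a}, and $\tau'(t)=w(\tau(t))$ is orthogonal to $L_{\tau(t)}$ by the choice of $w$ in Lemma \ref{lemma-vector-field}, so the first term vanishes and $\nabla_b P_s(\tau(t),h(t))\cdot h'(t)=0$. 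Since $\pr_i(h(t))=b^*$ is constant, $h'(t)$ is a scalar multiple of the $i$-th coordinate vector $e_i$, so this dot product equals the $i$-th entry of $\nabla_b P_s(\tau(t),h(t))$ times $h'(t)\cdot e_i$; the former is non-zero by Lemma \ref{lemma-beta}, hence $h'(t)\cdot e_i=0$, so $h'\equiv 0$ and $h(t)=h(0)=\beta(a_0,b^*)$.

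It then remains to deduce, for fixed $|t|<\eps_{[Q]}$, that $Q$ divides $P_s(\tau(t),\_)$: since $Q$ is irreducible with $Q(b)=0$ and $(\tau(t),b)=(\tau(t),\beta(\tau(t),\pr_i(b)))$ is a general wall pair with wall index $s$, this identifies $Q$ as the unique irreducible factor of $P_s(\tau(t),\_)$ vanishing at $b$, giving $[Q]\in\mathcal{Q}(\tau(t))$. Suppose $Q\nmid P_s(\tau(t),\_)$; then by Fact \ref{fact-two-coprime-poly} the set $W=\{y\in\R^d\mid Q(y)=0,\ P_s(\tau(t),y)=0\}$ is a real algebraic set of dimension at most $d-2$. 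On the other hand, $\psi\colon V\to\R^d$, $\psi(b^*)=\beta(a_0,b^*)$, is a smooth embedding (it satisfies $\pr_i\circ\psi=\mathrm{id}_V$, hence is an injective immersion with continuous inverse), so its image $S$ is a $(d-1)$-dimensional submanifold of $\R^d$; and $Q$ vanishes on $S$ (established above), while $P_s(\tau(t),\_)$ vanishes on $S$ because $\beta(a_0,b^*)=\beta(\tau(t),b^*)$ is the $b$-coordinate of a general wall pair of wall index $s$. Thus $S\subseteq W$, forcing $\dim W\geq d-1$ (a real algebraic set containing a smooth $(d-1)$-dimensional submanifold has dimension at least $d-1$), a contradiction. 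Hence $Q\mid P_s(\tau(t),\_)$ and the lemma follows. I expect the main obstacle to be precisely this last divisibility argument, which only works because the sheets $\{\beta(a_0,b^*):b^*\in V\}$ and $\{\beta(\tau(t),b^*):b^*\in V\}$ coincide rather than merely sharing the point $b$ — this is exactly what the integral-curve / orthogonality computation in the third paragraph delivers.
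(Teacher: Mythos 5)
Your argument coincides with the paper's for the heart of the proof: the choice of $\beta$ via Lemma \ref{lemma-beta}, the choice of $\eps_{[Q]}$ so that $\tau(t)\in U_a$, and the key computation showing $\beta(\tau(t),b^*)=\beta(a_0,b^*)$ (differentiate $P_s(\tau(t),\rho(t))=0$, kill the first term using $\nabla_a P_s(\tau(t),\rho(t))\in L_{\tau(t)}$ and $\tau'(t)=w(\tau(t))\perp L_{\tau(t)}$, then use constancy of $\pr_i\circ\rho$ and the non-vanishing $i$-th entry of $\nabla_b P_s$) are exactly the paper's Claim \ref{claim-beta-constant}. You also use Lemma \ref{lemma-Q-neighborhood} in the same way to see that $Q$ vanishes on the sheet $S=\{\beta(a_0,b^*):b^*\in V\}$ near $b$. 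Where you diverge is the endgame: the paper argues by contradiction, introduces the associated class $[R]$ of $(\tau(t),b)$, shows $Q$ and $R$ both vanish on the sheet, and derives the contradiction from Fact \ref{fact-two-coprime-poly}, Fact \ref{fact-complement-variety} and the intermediate value theorem applied to the ``height'' function $f$; you instead conclude directly that $Q$ divides $P_s(\tau(t),\_)$, because otherwise $W=\{Q=0\}\cap\{P_s(\tau(t),\cdot)=0\}$ would be a real algebraic set of dimension at most $d-2$ by Fact \ref{fact-two-coprime-poly}, while it contains the $(d-1)$-dimensional graph $S$. Your route is more direct and avoids introducing $R$, and the identification of $[Q]$ as the associated class of the general wall pair $(\tau(t),b)$ at the end is correct.

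The one step you should not leave as a bare assertion is ``a real algebraic set containing a smooth $(d-1)$-dimensional submanifold has dimension at least $d-1$.'' This is true, but it is not among the facts the paper proves, and with the paper's purely algebraic definition of dimension (Krull dimension of $\R[y_1,\dots,y_d]/\mathcal{I}(W)$) it needs an argument: either cite the semialgebraic dimension theory in \cite{real-ag-book} (topological and algebraic dimension of a semialgebraic set agree), or patch it with the paper's own tools, e.g.\ as follows. If $\dim W\le d-2$, take $p\in S$ and a small ball $B$ around $p$ whose projection $\pr_i(B)$ lies in $V$; since $S$ is the graph of a smooth function over $V$ in the $i$-th coordinate, $B\sm S$ is the disjoint union of the two nonempty open pieces above and below the graph, and by Fact \ref{fact-poly-vanishing} each piece contains a point outside $W$, so $B\sm W\su B\sm S$ is disconnected --- contradicting Fact \ref{fact-complement-variety}. (This patch is, in essence, the same connectedness-plus-sign-change mechanism the paper runs via $f$ and the intermediate value theorem.) With that justification supplied, your proof is complete.
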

\begin{proof} Let us fix some $[Q]\in \mathcal{Q}(a_0)$. By Definition \ref{defi-Qa}, there exists some $b\in U$ such that $(a_0,b)$ is a general wall pair with associated polynomial class $[Q]$.

Let $s\in \lbrace 1,\dots,k\rbrace$ be the wall index of the general wall pair $(a_0,b)$. Then $P_s(a_0,b)=0$ and, as $(a,b)$ is not special, we have $\nabla_b P_s(a_0,b)\neq 0$. Thus, there is some $i\in \lbrace 1,\dots,d\rbrace$ such that the $i$-th coordinate of $\nabla_b P_s(a_0,b)$ is non-zero. By applying Lemma \ref{lemma-beta}, we can find open sets $U_a\su U$ and $V\su \R^{d-1}$ and a smooth function $\beta: U_a\times V\to U$ satisfying the following conditions:
\begin{itemize}
\item[(I)] $a_0\in U_a$ and $\pr_i(b)\in V$.
\item[(II)] $\beta(a_0,\pr_i(b))=b$.
\item[(III)] For all $a'\in U_a$ and $b^*\in V$, we have $\pr_i(\beta(a',b^*))=b^*$.
\item[(IV)] For all $a'\in U_a$ and $b^*\in V$, the $i$-th coordinate of $\nabla_b P_s(a',\beta(a',b^*))$ is non-zero.
\item[(V)] For all $a'\in U_a$ and $b^*\in V$, the pair $(a',\beta(a',b^*))\in U\times U$ is a general wall pair with wall index $s$.
\end{itemize}
By continuity of $\tau$, we can find $\eps_ {[Q]}$ with $0<\eps_{[Q]}<\eps$ such that $\tau(t)\in U_a\cap U_2$ for all $t\in (-\eps_{[Q]},\eps_{[Q]})$.

\begin{claim}\label{claim-beta-constant}We have $\beta(\tau(t),b^*)=\beta(a_0,b^*)$ for all $t\in (-\eps_{[Q]},\eps_{[Q]})$ and all $b^*\in V$.
\end{claim}
\begin{proof}Let us fix some $b^*\in V$. Let us consider the smooth  curve $\rho: (-\eps_{[Q]},\eps_{[Q]})\to U$ given by $\rho(t)=\beta(\tau(t),b^*)$ for all $t\in (-\eps_{[Q]},\eps_{[Q]})$. Note that for all $t\in (-\eps_{[Q]},\eps_{[Q]})$ we have $P_s(\tau(t),\rho(t))=0$, since $(\tau(t),\rho(t))=(\tau(t),\beta(\tau(t),b^*))$ is a general wall pair with wall index $s$. Taking the derivative with respect to $t$, we obtain
\begin{multline}\label{eq-chain-rule}
0=\nabla P_s(\tau(t),\rho(t))\cdot
\begin{pmatrix}
\tau'(t)\\
\rho'(t)
\end{pmatrix}
=\begin{pmatrix}
\nabla_a P_s(\tau(t),\rho(t))\\
\nabla_b P_s(\tau(t),\rho(t))
\end{pmatrix}
\cdot
\begin{pmatrix}
\tau'(t)\\
\rho'(t)
\end{pmatrix}\\
=\nabla_a P_s(\tau(t),\rho(t))\cdot \tau'(t)+\nabla_b P_s(\tau(t),\rho(t))\cdot \rho'(t)
\end{multline}
for all $t\in (-\eps_{[Q]},\eps_{[Q]})$.

We need to show that $\beta(\tau(t),b^*)=\beta(a_0,b^*)$ for all $t\in (-\eps_{[Q]},\eps_{[Q]})$. As $\beta(\tau(t),b^*)=\rho(t)$ and $\beta(a_0,b^*)=\beta(\tau(0),b^*)=\rho(0)$, this is equivalent to proving $\rho(t)=\rho(0)$ for all $t\in (-\eps_{[Q]},\eps_{[Q]})$. Thus, it suffices to prove that $\rho'(t)=0$ for all $t\in (-\eps_{[Q]},\eps_{[Q]})$.

Note that for all $t\in (-\eps_{[Q]},\eps_{[Q]})$ we have $\pr_i(\rho(t))=\pr_i(\beta(\tau(t),b^*))=b^*$ by condition (III). Thus, the curve $\pr_i(\rho(t))$ runs along a line in the $i$-th coordinate direction. Hence, for all $t\in (-\eps_{[Q]},\eps_{[Q]})$, all the coordinates of $\rho'(t)$ are zero except possibly the $i$-th coordinate.

Let us fix some $t\in (-\eps_{[Q]},\eps_{[Q]})$ and recall that we wish to prove $\rho'(t)=0$. The pair $(\tau(t),\rho(t))$ is a general wall pair with wall index $s$ and therefore by Definition \ref{defi-L-a} we have $\nabla_a P_s(\tau(t),\rho(t))\in L_{\tau(t)}$. But then by the choice of the vector field $w$ as in Lemma \ref{lemma-vector-field}, we obtain
\[0=\nabla_a P_s(\tau(t),\rho(t))\cdot w(\tau(t))= \nabla_a P_s(\tau(t),\rho(t))\cdot \tau'(t),\]
where in the second step we used that $\tau'(t)=w(\tau(t))$ by the choice of the curve $\tau$. Thus, the first summand on the right-hand side of (\ref{eq-chain-rule}) is zero. We can conclude that
\begin{equation}\label{eq-inner-product}
\nabla_b P_s(\tau(t),\rho(t))\cdot \rho'(t)=0.
\end{equation}
Recall that all the coordinates of $\rho'(t)$ are zero except possibly the $i$-th coordinate. On the other hand, by condition (IV), the $i$-th coordinate of $\nabla_b P_s(\tau(t),\rho(t))=\nabla_b P_s(\tau(t),\beta(\tau(t),b^*))$ is non-zero. Thus the inner product (\ref{eq-inner-product}) being zero implies that the $i$-th coordinate of $\rho'(t)$ must be zero as well. Hence $\rho'(t)=0$ as desired. This finishes the proof of the Claim \ref{claim-beta-constant}.
\end{proof}

In order to prove Lemma \ref{lemma-Qa}, we need to show that $[Q]\in \mathcal{Q}(\tau(t))$ for all $t\in (-\eps_{[Q]},\eps_{[Q]})$. So let us fix some $t\in (-\eps_{[Q]},\eps_{[Q]})$, and set $a'=\tau(t)\in U_a\cap U_2$. We need to show that $[Q]\in \mathcal{Q}(a')$, which will finish the proof of Lemma \ref{lemma-Qa}. So let us suppose for contradiction that $[Q]\not\in \mathcal{Q}(a')$.

By Claim \ref{claim-beta-constant} we have $\beta(a',b^*)=\beta(\tau(t),b^*)=\beta(a_0,b^*)$ for all $b^*\in V$. So let us define the smooth function $\beta^*: V\to U$ by setting $\beta^*(b^*)=\beta(a',b^*)=\beta(a_0,b^*)$ for all $b^*\in V$. Now, $\beta^*(\pr_i(b))=\beta(a_0, \pr_i(b))=b$ by condition (II). Furthermore recall that by condition (V), the pair $(a_0,\beta^*(b^*))=(a_0,\beta(a_0,b^*))$ is a general wall pair for each $b^*\in V$. Similarly, the pair $(a',\beta^*(b^*))=(a',\beta(a',b^*))$ is also a general wall pair for each $b^*\in V$.

Recall that the associated polynomial class of the general wall pair $(a_0,\beta^*(\pr_i(b)))=(a_0,b)$ is $[Q]$. Hence, by Lemma \ref{lemma-Q-neighborhood} there exists an open neighborhood $U_b\su U$ of $\beta^*(\pr_i(b))$ such that for all points $b'\in U_b$ for which $(a_0,b')$ is a general wall pair, the associated polynomial class of $(a_0,b')$ is $[Q]$. By continuity of $\beta^*$, there is an open neighborhood $V_1\su V$ of $\pr_i(b)$ such that $\beta^*(b^*)\in U_b$ for all $b^*\in V_1$. Then for all $b^*\in V_1$, the associated polynomial class of the general wall pair $(a_0,\beta^*(b^*))$ is $[Q]$ and therefore we have $Q(\beta^*(b^*))=0$. So we have shown that there exists an open neighborhood $V_1\su V$ of $\pr_i(b)$ such that $Q(\beta^*(b^*))=0$ for all $b^*\in V_1$.

Let $[R]\in \mathcal{Q}(a')$ be the associated polynomial class of the general wall pair $(a',\beta^*(\pr_i(b)))=(a',b)$. By the same argument as in the previous paragraph we can show that there exists an open neighborhood $V_2\su V$ of $\pr_i(b)$ such that $R(\beta^*(b^*))=0$ for all $b^*\in V_2$.

Now, $V_1\cap V_2\su V$ is an open neighborhood of $\pr_i(b)$. So we can find some $\delta>0$ such that the entire open ball with radius $\delta>0$ around $\pr_i(b)$ in $\R^{d-1}$ is contained in $V_1\cap V_2$. Then for all $b^*\in \R^{d-1}$ with $\Vert b^*-\pr_i(b)\Vert<\delta$, we have $b^*\in V_1\cap V_2$ and therefore $Q(\beta^*(b^*))=R(\beta^*(b^*))=0$. Let $U'$ be the open ball with radius $\delta$ around $b$ in $\R^d$. Then for all $b'\in U'$ we have $\Vert \pr_i(b')-\pr_i(b)\Vert\leq \Vert b'-b\Vert<\delta$. Hence
\begin{equation}\label{eq-Q-R-zero}
Q(\beta^*(\pr_i(b')))=R(\beta^*(\pr_i(b')))=0
\end{equation}
for all $b'\in U'$.

Note that for every $b^*\in V$,  by condition (III) we have $\pr_i(\beta^*(b^*))=\pr_i(\beta(a_0,b^*))=b^*$. Thus, for all $b'\in U'$ we have $\pr_i(\beta^*(\pr_i(b')))=\pr_i(b')$. In other words, the vectors $\beta^*(\pr_i(b'))$ and $b'$ agree in all coordinates except possibly the $i$-th coordinate. We can define a smooth function $f:U'\to \R$ by defining $f(b')$ to be the $i$-th coordinate of the difference $\beta^*(\pr_i(b'))-b'$ for all $b'\in U'$. Then for each $b'\in U'$ we have $b'=\beta^*(\pr_i(b'))$ if and only if $f(b')=0$. In particular, by (\ref{eq-Q-R-zero}), we have
\begin{equation}\label{eq-Q-R-zero2}
Q(b')=R(b')=0
\end{equation}
for all $b'\in U'$ with $f(b')=0$.

\begin{claim}\label{claim-b-plus-minus}There exists a point $b^{+}\in U'$ with $f(b^{+})>0$ and $Q(b^{+})\neq 0$. Similarly, there exists a point $b^{-}\in U'$ with $f(b^{-})<0$ and $Q(b^{-})\neq 0$.
\end{claim}
\begin{proof}
First, let us show that there exists a point $b'\in U'$ with $f(b')>0$. Recall that $b\in U$ and $\beta^*(\pr_i(b))=b$. Thus $f(b)=0$. However, consider the point $b'\in U'$ obtained from $b$ by subtracting $\delta/2$ from the $i$-th coordinate of $b$. Then $\pr_i(b')=\pr_i(b)$ and hence $\beta^*(\pr_i(b'))=\beta^*(\pr_i(b))=b$. Thus, the $i$-th coordinate of $\beta^*(\pr_i(b'))-b'=b-b'$ is equal to $\delta/2$ and so $f(b')=\delta/2>0$.

Thus, the set $\lbrace b'\in U'\mid f(b')>0\rbrace$ is non-empty and by continuity of $f$ it is open. So by Fact \ref{fact-poly-vanishing} there exists some point $b^{+}$ in this non-empty open subset with $Q(b^{+})\neq 0$. Then $b^{+}\in U'$ and $f(b^{+})>0$ as desired.

The existence of a point $b^{-}\in U'$ with the desired properties can be proved analogously.
\end{proof}

Since $[R]\in \mathcal{Q}(a')$ and we assumed $[Q]\not\in \mathcal{Q}(a')$, we have $[Q]\neq [R]$. Both $Q$ and $R$ are irreducible polynomials, so this implies that $R$ is not divisible by $Q$. Now, by Fact \ref{fact-two-coprime-poly}, the set
\[Z=\lbrace b'\in \R^d\mid Q(b')=R(b')=0\rbrace\]
is a real algebraic set of dimension at most $d-2$. Therefore, by Fact \ref{fact-complement-variety}, the set $U'\sm Z$ is path-connected (recall that $U'$ was defined to be the open ball of radius $\delta$ around $b$ and is therefore open and connected). Now, consider points $b^{+}, b^{-}\in U'$ as in Claim \ref{claim-b-plus-minus}. As $Q(b^{+})\neq 0$ and $Q(b^{-})\neq 0$, we have $b^{+}, b^{-}\in U'\sm Z$. Hence there exists a continuous path inside $U'\sm Z$ connecting $b^{+}$ and $b^{-}$. But since $f(b^{+})>0$ and $f(b^{-})<0$, by the intermediate value theorem, this path would need to contain some point $b'\in U'\sm Z$ with $f(b')=0$. But by (\ref{eq-Q-R-zero2}), this point $b'$ would satisfy $Q(b')=R(b')=0$ and consequently $b'\in Z$. This is a contradiction, which finally finishes the proof of Lemma \ref{lemma-Qa}.
\end{proof}

So for every $[Q]\in \mathcal{Q}(a_0)$, let us fix some $\eps_{[Q]}$ with $0<\eps_{[Q]}<\eps$ as in Lemma \ref{lemma-Qa}.

Now, let $\mathcal{C}$ be the collection of connected components of the open set
\begin{equation}\label{eq-open-set-components-C}
\lbrace b'\in U \mid P_s(a_0,b')\neq 0\text{ for }1\leq s\leq k\rbrace.
\end{equation}
Note that this open set is definable by polynomials. Therefore, by Fact \ref{fact-components-finite}, the collection $\mathcal{C}$ of its connected components is finite.

For each $C\in \mathcal{C}$, let us fix a point $b_C\in C$. Then we have 
\begin{equation}\label{eq-P-a0-bC-nonzero}
P_s(\tau(0),b_C)=P_s(a_0,b_C)\neq 0\text{ for }1\leq s\leq k.
\end{equation}
Thus, there is some $\eps_C$ with $0<\eps_C<\eps$ with $P_s(\tau(t),b_C)\neq 0$ and $\sgn P_s(\tau(t),b_C)=\sgn P_s(\tau(0),b_C)=\sgn P_s(a_0,b_C)$ for all $t\in (-\eps_C,\eps_C)$ and $s=1,\dots,k$. In particular, $\Phi(\tau(t),b_C)=\Phi(a_0,b_C)$ for all $t\in (-\eps_C,\eps_C)$.

As both $\mathcal{C}$ and $\mathcal{Q}(a_0)$ are finite (the latter one by (\ref{eq-Q-a-size})), there exist some $\eps'>0$ with $\eps'<\eps_C$ for all $C\in \mathcal{C}$ and $\eps'<\eps_{[Q]}$ for all $[Q]\in \mathcal{Q}(a_0)$.

Because $\tau'(0)=w(\tau(0))=w(a_0)\neq 0$, the path $\tau(t)$ is not constant for all $t\in (-\eps',\eps')$. Thus, we can fix some $t\in (-\eps',\eps')$ with $\tau(t)\neq a_0$. Let us define $a_1=\tau(t)$, then $a_1\in U_2\su U_1$ and $a_1\neq a_0$.

Note that for all $C\in \mathcal{C}$, we have $\vert t\vert<\eps'<\eps_C$ and therefore
\begin{equation}\label{eq-P-a1-bC-nonzero}
P_s(a_1,b_C)=P_s(\tau(t),b_C)\neq 0\text{ for }1\leq s\leq k
\end{equation}
as well as
\begin{equation}\label{eq-a0-bC-a1}
\Phi(a_0,b_C)=\Phi(\tau(t),b_C)=\Phi(a_1,b_C).
\end{equation}

As $a_1\in U_2\su U_1$, by the choice of the set $U_2$ as in Lemma \ref{lemma-vector-field}, we have $\dim_\R L_{a_1}=\dim_\R L_{a_0}$. Thus, by our choice of $a_0$ we must have $\vert \mathcal{Q}(a_1)\vert\leq \vert \mathcal{Q}(a_0)\vert$. On the other hand, for every $[Q]\in \mathcal{Q}(a_0)$, we have $\vert t\vert<\eps_{[Q]}$ and therefore $[Q]\in \mathcal{Q}(\tau(t))=\mathcal{Q}(a_1)$. Thus, $\mathcal{Q}(a_0)\su \mathcal{Q}(a_1)$ and together with $\vert \mathcal{Q}(a_1)\vert\leq \vert \mathcal{Q}(a_0)\vert$, we obtain $\mathcal{Q}(a_1)=\mathcal{Q}(a_0)$.

Recall that $a_0,a_1\in U_1\su U$ and $a_0\neq a_1$. Thus, by the assumptions of Theorem \ref{theo-main}, there exists a point $b\in U$ with 
\begin{equation}\label{eq-Ps-b-nonzero}
P_s(a_0,b)\neq 0\text{ and }P_s(a_1,b)\neq 0\text{ for all }1\leq s\leq k
\end{equation}
and such that 
\begin{equation}\label{eq-choice-b}
\Phi(a_0,b)\neq \Phi(a_1,b).
\end{equation}

The point $b$ is contained in some connected component $C\in \mathcal{C}$ of the open set (\ref{eq-open-set-components-C}). Clearly, $C$ is a connected open subset of $\R^d$, and for all $b'\in C$ we have $P_s(a_0,b')\neq 0$ for all $1\leq s\leq k$.

\begin{claim}\label{claim-special-C}There is no point $b'\in C$ for which $(a_1,b')$ is a general wall pair.
\end{claim}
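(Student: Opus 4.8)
The plan is a short proof by contradiction that leverages the equality $\mathcal{Q}(a_1) = \mathcal{Q}(a_0)$ established just above, together with the fact that $C$ is, by definition, a connected component of the set in (\ref{eq-open-set-components-C}) on which no polynomial $P_s(a_0,\_)$ vanishes.

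I would suppose, for contradiction, that there is a point $b' \in C$ for which $(a_1, b')$ is a general wall pair, and let $s \in \{1, \dots, k\}$ be its wall index and $[Q]$ its associated polynomial class; thus $Q \in \R[y_1, \dots, y_d]$ is, up to a nonzero scalar, an irreducible factor of the $d$-variable polynomial $P_s(a_1,\_)$ with $Q(b') = 0$. Next I would observe that by Definition \ref{defi-Qa} we have $[Q] \in \mathcal{Q}(a_1) = \mathcal{Q}(a_0)$, so $[Q]$ also arises as the associated polynomial class of some general wall pair $(a_0, b_0)$. Letting $s_0$ denote the wall index of $(a_0, b_0)$, the polynomial $Q$ is then, up to a nonzero scalar, an irreducible factor of $P_{s_0}(a_0,\_)$, and in particular $Q$ divides $P_{s_0}(a_0,\_)$ in $\R[y_1, \dots, y_d]$.

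The contradiction then follows at once: since $Q(b') = 0$ and $Q \mid P_{s_0}(a_0,\_)$, we get $P_{s_0}(a_0, b') = 0$, whereas $b' \in C$ forces $P_{s_0}(a_0, b') \neq 0$. I do not expect any real obstacle here — the substantive work was already done in proving $\mathcal{Q}(a_0) = \mathcal{Q}(a_1)$ — and the only points requiring a moment's care are that the associated polynomial classes live in $\R[y_1, \dots, y_d]$, so that divisibility is preserved under evaluation at $b'$, and that the set defining $C$ is built from $a_0$ rather than from $a_1$.
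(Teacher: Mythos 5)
Your proposal is correct and follows essentially the same argument as the paper: take the associated polynomial class $[Q]$ of the hypothetical general wall pair $(a_1,b')$, use $\mathcal{Q}(a_1)=\mathcal{Q}(a_0)$ to realize $[Q]$ as the associated polynomial class of some general wall pair with first coordinate $a_0$, conclude that $Q$ divides some $P_s(a_0,\_)$, and derive $P_s(a_0,b')=0$, contradicting $b'\in C$. Your added remarks about scalar equivalence and that $C$ is defined via $a_0$ are exactly the right points of care, and no gap remains.
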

\begin{proof}
Suppose that for some $b'\in C$, the pair $(a_1,b')$ was a general wall pair. Then let $[Q]\in \mathcal{Q}(a_1)$ be the associated polynomial class of the general wall pair $(a_1,b')$. Note that $Q(b')=0$.

By $\mathcal{Q}(a_1)=\mathcal{Q}(a_0)$, the class $[Q]$ must also be the associated polynomial class of some general wall pair of the form $(a_0,b'')$ for some $b''\in U$. Then $Q$ is an irreducible factor of $P_s(a_0,\_)$ for some $s\in \lbrace 1,\dots,k\rbrace$. As $Q(b')=0$, this implies that $P_s(a_0,b')=0$ for some $s\in \lbrace 1,\dots,k\rbrace$. But then $b'\not\in C$, which is a contradiction.
\end{proof}

Recall that we fixed a point $b_C\in C$ earlier. By (\ref{eq-P-a1-bC-nonzero}), it follows straight from Definition \ref{defi-special} that the pair $(a_1,b_C)$ is not special. Similarly, by the second part of (\ref{eq-Ps-b-nonzero}), the pair $(a_1,b)$ is not special.

By $a_1\in U_2\su U_1$ and the choice of $U_1$ as in Lemma \ref{lemma-U1}, the set
\[Z=\lbrace b'\in \R^d\mid (a_1,b')\text{ is a special pair}\rbrace\]
is a real algebraic set in $\R^d$ of dimension at most $d-2$. So by Fact \ref{fact-complement-variety}, using that $C\in \mathcal{C}$ is a connected open set, the set $C\sm Z$ is path-connected. Since the pairs $(a_1,b)$ and $(a_1,b_C)$ are not special, we have $b, b_C\in C\sm Z$. Hence there exists a continuous path $\gamma:[0,1]\to C\sm Z$ with $\gamma(0)=b$ and $\gamma(1)=b_C$.

For all $r\in [0,1]$, we have $\gamma(r)\in C$. Therefore, by Claim \ref{claim-special-C}, the pair $(a_1,\gamma(r))$ is not a general wall pair. As $\gamma(r)\not\in Z$, the pair $(a_1,\gamma(r))$ is also not special. Hence, $(a_1,\gamma(r))$ cannot be a wall pair for any $r\in [0,1]$. Therefore, applying Claim \ref{claim-path-no-wall} to the path $r\mapsto (a_1,\gamma(r))$ in $U\times U$ (using (\ref{eq-Ps-b-nonzero}) and (\ref{eq-P-a1-bC-nonzero})), we obtain that
\begin{equation}\label{eq-Phi-a1}
\Phi(a_1,b)=\Phi(a_1,b_C).
\end{equation}

As $\gamma(r)\in C$ for all $r\in [0,1]$, we have $P_s(a_0,\gamma(r))\neq 0$ for all $1\leq s\leq k$ and all $r\in [0,1]$. Thus, by Claim \ref{claim-special-P-s-0}, the pair $(a_0,\gamma(r))$ cannot be a wall pair for any $r\in [0,1]$. Therefore, applying Claim \ref{claim-path-no-wall} to the path $r\mapsto (a_0,\gamma(r))$ in $U\times U$, we obtain that
\begin{equation}\label{eq-Phi-a0}
\Phi(a_0,b)=\Phi(a_0,b_C).
\end{equation}

Now, combining (\ref{eq-Phi-a0}), (\ref{eq-a0-bC-a1}) and (\ref{eq-Phi-a1}) yields
\[\Phi(a_0,b)=\Phi(a_0,b_C)=\Phi(a_1,b_C)=\Phi(a_1,b).\]
But this contradicts (\ref{eq-choice-b}). This contradiction finishes the proof of Lemma \ref{lemma-main}.

\textit{Acknowledgements.} The author would like to thank Jacob Fox for suggesting this project, for many very helpful discussions, and for several suggestions that improved the presentation of this paper. Furthermore, the author is grateful to Aaron Landesman for a useful conversation. Finally, the author would like to thank the anonymous referee for many helpful comments.

\appendix
\section{Appendix}

\subsection{Proof of Theorem \ref{theorem-upper-bound}}\label{sect-a-upper-bound}

This proof is identical with the proof of Theorem 3 in \cite{alon-scheinerman}, and is repeated here only for the reader's convenience. See also \cite[Section 2]{pach-solymosi} and \cite[Section 3]{mcdiarmid-mueller} for similar applications of the same method, and \cite[Section 6.2]{matousek} or \cite[Section 4.1]{spinrad} for an exposition.

Given polynomials $Q_1,\dots,Q_\l\in \R[x_1,\dots,x_m]$ of degree at most $D$, a sign pattern of the polynomials $Q_1,\dots, Q_{\l}$ is an element of $\lbrace +, -, 0\rbrace^\l$ of the form $(\sgn Q_1(x),\dots, \sgn Q_\l(x))$ for some $x\in \R^m$. By \cite[Theorem 2]{alon-scheinerman} due to Alon and Scheinerman, which is based on Warren's theorem \cite[Theorem 3]{warren}, for $\l\geq m$, the number of distinct sign-patterns of the polynomials $Q_1,\dots, Q_{\l}$ is at most $(8e\cdot D\cdot \l/m)^m\leq (24 \cdot D\cdot \l/m)^m$.

Now, let us fix polynomials $P_1,\dots,P_k\in \R[x_1,\dots,x_d,y_1,\dots,y_d]$, a function $\phi$, and an open subset $U\su \R^d$ as in Theorem \ref{theorem-upper-bound}. Recall that we want to prove that the number of $(P_1,\dots , P_k,\phi,U,\Lambda)$-representable edge-labelings of the complete graph on the vertex set $\lbrace 1,\dots, n\rbrace$ is at most $n^{(1+o(1))dn}$.

Each $(P_1,\dots,P_k,\phi,U,\Lambda)$-representable edge-labeling of the complete graph on the vertex set $\lbrace 1,\dots, n\rbrace$ is of the form $F_{P_1,\dots,P_k,\phi}(a_1,\dots,a_n)$ for some points $a_1,\dots,a_n\in U\su \R^d$. After choosing the points $a_1,\dots,a_n\in U$, the labels in the edge-labeling $F_{P_1,\dots,P_k,\phi}(a_1,\dots,a_n)$ can be determined from the signs of the $\binom{n}{2}\cdot k$ polynomials $P_s(a_i, a_j)$ for $1\leq i<j\leq n$ and $1\leq s\leq k$. These $\binom{n}{2}\cdot k$ polynomials can be interpreted as polynomials in the coordinates of $(a_1,\dots,a_n)\in\R^{dn}$. Thus, the number of $(P_1,\dots,P_k,\phi,U,\Lambda)$-representable edge-labelings of the complete graph on the vertex set $\lbrace 1,\dots, n\rbrace$ is at most the number of sign-patterns of those $\binom{n}{2}\cdot k$ polynomials in $dn$ variables. By the result \cite[Theorem 2]{alon-scheinerman} cited above applied to $\l=\binom{n}{2}\cdot k$ and $m=dn$, this number is at most
\[\left(\frac{24\cdot D\cdot \binom{n}{2}\cdot k}{dn}\right)^{dn}\leq \left(12\cdot D\cdot k\cdot n\right)^{dn}=n^{(1+o(1))dn},\]
where $D$ is the maximum of the degrees of the polynomials $P_1,\dots,P_k$. This finishes the proof of Theorem \ref{theorem-upper-bound}.

\subsection{Polynomial conditions for the linking of circles in $\R^3$}
\label{subsect-app-linking}

Recall that in Subsection \ref{subsect-linking} we defined $U=\lbrace (a,b,c,d,e,r)\in \R^6\mid r>0\rbrace$, and that every point $(a,b,c,d,e,r)\in U$ corresponds to a circle $C$ in $\R^3$. Here, we provide the details on how to check whether two circles $C$ and $C'$ corresponding to $(a,b,c,d,e,r), (a',b',c',d',e',r')\in U$ are linked using the signs of a finite list of polynomials in $a,b,c,d,e,r,a',b',c',d',e',r'$.

Recall that we observed that $C$ and $C'$ form a link if and only if there exists a point of $C$ which lies on the plane of $C'$ inside the circle $C'$ and another point of $C$ which lies on the plane of $C'$ outside the circle $C'$. Furthermore, by symmetry, the same holds with the roles of $C$ and $C'$ interchanged.

We start by noting that if $(d,e)=(d',e')$, then the planes of $C$ and $C'$ are parallel (since they are both orthogonal to $(d,e,1)=(d',e',1)$). But if this case, the circles $C$ and $C'$ cannot form a link. Hence let us now assume that $(d,e)\neq (d',e')$, so the planes of $C$ and $C'$ are not parallel and therefore intersect in a unique line $\l$.

The line $\l$ is contained in both of the planes of $C$ and $C'$ and therefore orthogonal to both $(d,e,1)$ and $(d',e',1)$. We can therefore compute the direction of the line $\l$ by taking the cross-product of the vectors $(d,e,1)$ and $(d',e',1)$. Thus, $(e-e', -(d-d'), de'-d'e)$ is a (non-zero) vector in the direction of the line $\l$.

Let $\l_C$ be the line inside the plane of $C$ which passes through the center $(a,b,c)$ of $C$ and is orthogonal to $\l$. Note that the intersection point $L$ of the lines $\l_C$ and $\l$ is the point of $\l$ with minimum distance to $(a,b,c)$ (namely, the foot of $(a,b,c)$ on $\l$).

The line $\l_C$ is orthogonal to $\l$ and to the vector $(d,e,1)$. Hence, the direction of the line $\l_C$ is the cross-product of the vectors $(e-e', -(d-d'), de'-d'e)$ and $(d,e,1)$, which is
\[\tau=\begin{pmatrix}
-(d-d')-e(de'-d'e)\\
-(e-e')+d(de'-d'e)\\
e(e-e')+d(d-d')
\end{pmatrix}.\]
The point $L$ is on the line $\l_C$, so it is of the form $(a,b,c)+t\cdot \tau$ for some $t\in \R$. We can now compute $L$ by solving for the unique $t\in \R$ such that $(a,b,c)+t\cdot \tau$ lies on the plane through $C'$, which is described by the equation $d'x+e'y+z=d'a'+e'b'+c'$ (recall that $(d',e',1)$ is a vector orthogonal to this plane and that the plane contains the center $(a',b',c')$ of $C$). One can check that the point $L$ is of the form $L=(p_1/q, p_2/q, p_3/q)$, where $p_1$, $p_2$, $p_3$ and $q$ are polynomials in $a,b,c,d,e,a',b',c',d',e'$, and it turns out that $q=(d-d')^2+(e-e')^2+(de'-d'e)^2$.

If the point $L$ has distance at least $r$ from the point $(a,b,c)$, then there are no points on the line $\l$ in the interior of the disk described by the circle $C$. But this means that there are not points on $C'$ in the interior of this disk (note that any such point would need to lie on the planes of both $C_1$ and $C_2$, and therefore on $\l$). Hence the circles $C$ and $C'$ cannot form a link if the distance of $L=(p_1/q, p_2/q, p_3/q)$ from $(a,b,c)$ is at least $r$. Letting $h=r^2-((p_1/q)-a)^2-((p_2/q)-b)^2-((p_3/q)-c)^2$, this means that $C$ and $C'$ cannot form a link if $h\leq 0$. Note that $h=p_4/q^2$ for some polynomial $p_4$ in $a,b,c,d,e,r,a',b',c',d',e'$, and in particular it can be checked whether $h\leq 0$ by looking at the sign of $p_4$. Let us now assume that $h>0$, which means that the point $L$ has distance less than $r$ from the point $(a,b,c)$.

Then the line $\l$ intersects the circle $C$ in two distinct points $X_1$ and $X_2$. Note that by Pythagoras' theorem the distance of $X_1$ and $X_2$ to $L$ is precisely $\sqrt{h}$. As $X_1$, $X_2$ and $L$ all lie on the line $\l$, whose direction is given by the vector $(e-e', -(d-d'), de'-d'e)$, we obtain that $X_1$ and $X_2$ are equal to
\begin{multline*}
L\pm \frac{\sqrt{h}}{\sqrt{(d-d')^2+(e-e')^2+(de'-d'e)^2}}\cdot \begin{pmatrix}
e-e'\\
-(d-d')\\
de'-d'e
\end{pmatrix}=\begin{pmatrix}
p_1/q\\
p_2/q\\
p_3/q
\end{pmatrix}\pm \frac{\sqrt{hq}}{q}\cdot \begin{pmatrix}
e-e'\\
-(d-d')\\
de'-d'e
\end{pmatrix}\\
=
\frac{1}{q}\cdot \begin{pmatrix}
p_1\pm \sqrt{hq}\cdot(e-e')\\
p_2\mp \sqrt{hq}\cdot(d-d')\\
p_3\pm \sqrt{hq}\cdot(de'-d'e)
\end{pmatrix}.
\end{multline*}

The points $X_1$ and $X_2$ are the only points of $C$ that lie on the plane of $C'$. Thus, the circles $C$ and $C'$ form a link if and only if one of the points $X_1$ and $X_2$ is inside the circle $C'$ and the other one is outside. This is the case if and only if one of the expressions $\Vert X_1-(a',b',c')\Vert^2-r'^2$ and $\Vert X_2-(a',b',c')\Vert^2-r'^2$ is negative and the other one is positive. Note that
\begin{multline*}
\Vert X_{1,2}-(a',b',c')\Vert^2-r'^2
=\frac{1}{q^2}\left\Vert \begin{pmatrix}
p_1\pm \sqrt{hq}\cdot(e-e')-qa'\\
p_2\mp \sqrt{hq}\cdot(d-d')-qb'\\
p_3\pm \sqrt{hq}\cdot(de'-d'e)-qc'
\end{pmatrix}\right\Vert^2-r'^2\\
=\frac{1}{q^2}\left((p_1-qa'\pm \sqrt{hq}\cdot(e-e'))^2+(p_2-qb'\mp \sqrt{hq}\cdot(d-d'))^2+(p_3-qc'\pm \sqrt{hq}\cdot(de'-d'e))^2-q^2r'^2\right)\\
=\frac{1}{q^2}\Big((p_1-qa')^2+hq(e-e')^2+(p_2-qb')^2+hq(d-d')^2+(p_3-qc')^2+hq(de'-d'e)^2-q^2r'^2\\
\pm 2\sqrt{hq}\cdot\big((p_1-qa')(e-e')-(p_2-qb')(d-d')+(p_3-qc')(de'-d'e)\big)\Big).
\end{multline*}
Thus, the circles $C$ and $C'$ form a link if and only if we have
\begin{multline*}
\left\vert 2\sqrt{hq}\cdot\big((p_1-qa')(e-e')-(p_2-qb')(d-d')+(p_3-qc')(de'-d'e)\big)\right\vert\\
>\left\vert(p_1-qa')^2+hq(e-e')^2+(p_2-qb')^2+hq(d-d')^2+(p_3-qc')^2+hq(de'-d'e)^2-q^2r'^2\right\vert,
\end{multline*}
which (recalling that $h=p_4/q^2$ and $q=(d-d')^2+(e-e')^2+(de'-d'e)^2>0$) is equivalent to
\begin{multline*}
\left\vert 2\sqrt{p_4q}\cdot\big((p_1-qa')(e-e')-(p_2-qb')(d-d')+(p_3-qc')(de'-d'e)\big)\right\vert\\
>\left\vert q(p_1-qa')^2+p_4(e-e')^2+q(p_2-qb')^2+p_4(d-d')^2+q(p_3-qc')^2+p_4(de'-d'e)^2-q^3r'^2\right\vert.
\end{multline*}
Thus, $C$ and $C'$ form a link if and only if
\begin{multline*}
4p_4q\cdot\big((p_1-qa')(e-e')-(p_2-qb')(d-d')+(p_3-qc')(de'-d'e)\big)^2\\
>\left(q(p_1-qa')^2+p_4(e-e')^2+q(p_2-qb')^2+p_4(d-d')^2+q(p_3-qc')^2+p_4(de'-d'e)^2-q^3r'^2\right)^2.
\end{multline*}
As $p_1,p_2,p_3,p_4,q$ are polynomials in $a,b,c,d,e,r,a',b',c',d',e'$, this establishes that we can check whether $C$ and $C'$ form a link using the signs of a finite list of polynomials in $a,b,c,d,e,r,a',b',c',d',e',r'$.

We remark that this finite list of polynomials consists of the polynomials $d-d'$, $e-e'$, $p_4$ and the polynomial obtained from subtracting the two sides of the last inequality. Recall that $p_4$ is zero if an only if the line $\l$ is tangent to $C$. Furthermore, observe that the last polynomial is zero if and only if $\Vert X_1-(a',b',c')\Vert^2-r'^2=0$ or $\Vert X_2-(a',b',c')\Vert^2-r'^2=0$, meaning that one of the points $X_1$ and $X_2$ lies on $C'$. Note that this happens if and only if the circles $C$ and $C'$ intersect each other. Hence all four polynomials in our list are non-zero if $d\neq d'$, $e\neq e'$, the line $\l$ is not tangent to $C$ and the circles $C$ and $C'$ are disjoint.

\subsection{Proofs of Facts \ref{fact-poly-vanishing}, \ref{fact-dim-union}, \ref{fact-two-coprime-poly} and \ref{fact-components-finite}}
\label{sect-a-facts}

Before proving Fact \ref{fact-poly-vanishing} in general, let us first consider the special case $\l=1$.

\begin{fact}\label{fact-poly-vanishing-one} Let $m\geq 1$ and let $Q\in \R[x_1,\dots,x_m]$ be a non-zero polynomial. Then for any non-empty open set $U\su \R^m$, we can find a point $x\in U$ such that $Q(x)\neq 0$.
\end{fact}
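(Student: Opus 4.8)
The plan is to prove this by induction on the number of variables $m$, reducing to the one-variable case where a non-zero polynomial has only finitely many roots.

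For the base case $m=1$, I would note that a non-zero polynomial $Q\in\R[x_1]$ has at most $\deg Q$ roots, while any non-empty open set $U\su\R$ contains an interval and hence infinitely many points; so $U$ cannot be contained in the (finite) zero set of $Q$, and some $x\in U$ satisfies $Q(x)\neq 0$.

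For the inductive step, assume the statement for $m-1$ variables and let $Q\in\R[x_1,\dots,x_m]$ be non-zero with $U\su\R^m$ non-empty and open. First I would pick a point of $U$ and shrink to an open box $U'\times U''\su U$ with $U'\su\R^{m-1}$ and $U''\su\R$ non-empty and open. Writing $Q=\sum_{j=0}^{D} c_j(x_1,\dots,x_{m-1})\, x_m^j$ with $c_j\in\R[x_1,\dots,x_{m-1}]$, the hypothesis $Q\neq 0$ forces some coefficient $c_j$ to be a non-zero polynomial in $m-1$ variables. Applying the induction hypothesis to this $c_j$ and the open set $U'$ yields a point $x'\in U'$ with $c_j(x')\neq 0$, so the one-variable polynomial $x_m\mapsto Q(x',x_m)$ is non-zero; applying the base case to it and $U''$ yields $x_m\in U''$ with $Q(x',x_m)\neq 0$. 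Then $x=(x',x_m)\in U'\times U''\su U$ is the desired point, completing the induction.

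There is no real obstacle here; the only points needing a little care are choosing the box $U'\times U''$ inside $U$ and checking that the selected coefficient $c_j$ is genuinely a non-zero polynomial in fewer variables, so that the induction hypothesis applies. One could alternatively invoke the fact that the zero set of a non-zero polynomial on $\R^m$ has Lebesgue measure zero whereas every non-empty open set has positive measure; but that measure-zero fact is itself typically established by the same Fubini-style induction, so the elementary argument above is preferable.
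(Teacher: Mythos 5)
Your proof is correct, but it takes a different route from the paper. You argue by induction on the number of variables: shrink $U$ to an open box $U'\times U''$, write $Q=\sum_j c_j(x_1,\dots,x_{m-1})x_m^j$, use the induction hypothesis on a non-zero coefficient $c_j$ to find $x'\in U'$ with $c_j(x')\neq 0$, and then use the univariate finitely-many-roots fact on $Q(x',\cdot)$ over $U''$. The paper instead gives a direct contradiction argument with no induction: if $Q$ vanished identically on $U$, then so would all of its iterated partial derivatives; but picking a monomial $x_1^{a_1}\dotsm x_m^{a_m}$ of maximum degree with coefficient $c\neq 0$ and applying $(\partial_{x_1})^{a_1}\dotsm(\partial_{x_m})^{a_m}$ turns $Q$ into the non-zero constant $a_1!\dotsm a_m!\cdot c$, which certainly does not vanish on $U$. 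Your Fubini-style induction is the more standard textbook argument and rests only on the fact that a non-zero univariate polynomial has finitely many roots; the paper's derivative trick is shorter and avoids both the induction and the choice of a box inside $U$. Both are complete and elementary, and your careful remarks about choosing the box and checking $c_j\neq 0$ address exactly the points where your version needs care.
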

\begin{proof} Suppose for contradiction that $Q(x)=0$ for all $x\in U$. Then all higher order partial derivatives of $Q$ would also be zero on $U$. Let $x_1^{a_1}\dotsm x_m^{a_m}$ be a monomial of $Q$ of maximum degree and let its coefficient in $Q$ be $c\neq 0$. But note that then $(\partial_{x_1})^{a_1}\dots(\partial_{x_m})^{a_m} Q(x)=a_1!\dotsm a_m!\cdot c\neq 0$ for all $x\in \R^m$, which contradicts $(\partial_{x_1})^{a_1}\dots(\partial_{x_m})^{a_m} Q$ being zero on $U$.
\end{proof}

Now, Fact \ref{fact-poly-vanishing} follows easily from Fact \ref{fact-poly-vanishing-one}.

\begin{proof}[Proof of Fact \ref{fact-poly-vanishing}] We prove the desired statement by induction on $\l$. The case $\l=1$ is given in Fact \ref{fact-poly-vanishing-one}. Suppose now that $\l>1$ and that we are given non-zero polynomials $Q_1,\dots,Q_\l\in \R[x_1,\dots,x_m]$ and a non-empty open subset $U\su \R^m$. By Fact \ref{fact-poly-vanishing-one}, the polynomial $Q_\l$ cannot vanish on the entire set $U$. Thus, $U'= \lbrace x\in U\mid Q_\l(x)\neq 0\rbrace$ is a non-empty open subset of $\R^m$. Now, by the induction hypothesis, there exists a point $x\in U'\su U$ such that $Q_i(x)\neq 0$ for $i=1,\dots,\l-1$. Note that by the definition of $U'$ we also have $Q_\l(x)\neq 0$.
\end{proof}

Next, let us prove Fact \ref{fact-dim-union}.

\begin{proof}[Proof of Fact \ref{fact-dim-union}] If $\l=0$ or if all of $V_1,\dots,V_\l$ are the empty set, the statement is trivially true. Otherwise, we may omit any $V_i$ that are empty. So let us from now on assume that $\l\geq 1$ and that $V_1,\dots,V_\l$ are non-empty.

Now, for each $i=1,\dots,\l$, let $I_i\su \R[x_1,\dots,x_m]$ be an ideal such that $V_i=\mathcal{Z}(I_i)$. Furthermore, let $I=I_1\dotsm I_\l$ be the ideal generated by all elements of the form $Q_1\dotsm Q_\l$ with $Q_1\in I_1, \dots, Q_\l\in I_\l$. We claim that then we have $V=\mathcal{Z}(I)$. First, note that any product $Q_1\dotsm Q_\l$ with $Q_1\in I_1, \dots, Q_\l\in I_\l$ vanishes on each of the sets $V_i$ for $i=1,\dots,\l$, since $Q_i$ vanishes on $V_i$. Thus, $Q_1\dotsm Q_\l$ vanishes on the entire set $V=V_1\cup\dots\cup V_\l$. Hence every polynomial in the ideal $I$ vanishes on $V$ and we have $V\su \mathcal{Z}(I)$. For the reverse inclusion, fix any $x\in \mathcal{Z}(I)$. Suppose we had $x\not\in V$, then $x\not\in V_i=\mathcal{Z}(I_i)$ for all $i=1,\dots,\l$. Thus, for each $i=1,\dots, \l$, there exists a polynomial $Q_i\in I_i$ such that $Q_i(x)\neq 0$. But then $Q_1(x)\dotsm Q_\l(x)\neq 0$, which contradicts $Q_1\dotsm Q_\l\in I$ and $x\in \mathcal{Z}(I)$. Thus, we indeed have $V=\mathcal{Z}(I)$. Hence $V$ is a real algebraic set in $\R^m$.

Let us now prove that $\dim V=\max_i\, (\dim V_i)$. For each $i=1,\dots,\l$, we have $V_i\su V$ and therefore $\mathcal{I}(V)\su \mathcal{I}(V_i)$. Thus, every chain of prime ideals $\mathfrak{p}_0, \mathfrak{p}_1,\dots, \mathfrak{p}_d$ in $\R[x_1,\dots,x_m]$ with $\mathcal{I}(V_i)\su \mathfrak{p}_0\subsetneq \mathfrak{p}_1\subsetneq\dots\subsetneq \mathfrak{p}_d$ also satisfies $\mathcal{I}(V)\su \mathfrak{p}_0\subsetneq \mathfrak{p}_1\subsetneq\dots\subsetneq \mathfrak{p}_d$ and we obtain $\dim V\geq \dim V_i$ for $i=1,\dots,\l$.

For the opposite inequality, consider a chain of prime ideals $\mathfrak{p}_0, \mathfrak{p}_1,\dots, \mathfrak{p}_d$ in $\R[x_1,\dots,x_m]$ with $\mathcal{I}(V)\su \mathfrak{p}_0\subsetneq \mathfrak{p}_1\subsetneq\dots\subsetneq \mathfrak{p}_d$ and $d=\dim V$. We claim that we must have $\mathcal{I}(V_i)\su \mathfrak{p}_0$ for some $1\leq i\leq \l$. If this is indeed the case, then $\mathcal{I}(V_i)\su \mathfrak{p}_0\subsetneq \mathfrak{p}_1\subsetneq\dots\subsetneq \mathfrak{p}_d$ and therefore $\dim V_i\geq d=\dim V$, as desired.

So let us assume for contradiction that $\mathcal{I}(V_i)\not\su \mathfrak{p}_0$ for all $1\leq i\leq \l$. Then for each $i=1,\dots,\l$ we can choose a polynomial $Q_i\in \mathcal{I}(V_i)\sm \mathfrak{p}_0$. As $\mathfrak{p}_0$ is a prime ideal, the product $Q=Q_1\dotsm Q_\l$ satisfies $Q\not\in \mathfrak{p}_0$. But on the other hand, for each $i=1,\dots,\l$ the polynomial $Q_i$ vanishes on every point in the set $V_i$, and therefore the product $Q=Q_1\dotsm Q_\l$ must vanish on every point in $V=V_1\cup\dots\cup V_\l$. Thus, $Q\in \mathcal{I}(V)\su \mathfrak{p}_0$, which is the desired contradiction.
\end{proof}

Now, we prove Fact \ref{fact-two-coprime-poly}.

\begin{proof}[Proof of Fact \ref{fact-two-coprime-poly}]
Let $V=\lbrace x\in \R^m\mid P_1(x)=\dots=P_\l(x)=0\rbrace$. We claim that $V$ is the zero-set $\mathcal{Z}((P_1,\dots,P_\l))$ of the ideal $(P_1,\dots,P_\l)$ generated by $P_1,\dots,P_\l$. Since all the $P_i$ are elements of this ideal, they all vanish on the set $\mathcal{Z}((P_1,\dots,P_\l))$ and so $\mathcal{Z}((P_1,\dots,P_\l))\su V$. On the other hand, every polynomial $Q\in (P_1,\dots,P_\l)$ is of the form $Q=R_1\cdot P_1+\dots +R_\l\cdot P_\l$ for some polynomials $R_1, \dots, R_\l\in \R[x_1,\dots,x_m]$ and therefore satisfies $Q(x)=R_1(x)\cdot 0+\dots+ R_\l(x)\cdot 0=0$ for every $x\in V$. Thus, every element of the ideal $(P_1,\dots,P_\l)$ vanishes on the entire set $V$ and consequently $V\su \mathcal{Z}((P_1,\dots,P_\l))$. This establishes that $V=\mathcal{Z}((P_1,\dots,P_\l))$ is a real algebraic set in $\R^m$.

Now, suppose for contradiction that $\dim V\geq m-1$. Then we can find a chain of prime ideals $\mathfrak{p}_0, \mathfrak{p}_1,\dots, \mathfrak{p}_{m-1}$ in $\R[x_1,\dots,x_m]$ with $\mathcal{I}(V)\su \mathfrak{p}_0\subsetneq \mathfrak{p}_1\subsetneq\dots\subsetneq \mathfrak{p}_{m-1}$.

By the definition of $V$ we have $P_1,\dots,P_\l\in \mathcal{I}(V)$. Thus, $(P_1)\su \mathcal{I}(V)$. On the other hand, some $P_i$ with $2\leq i\leq \l$ is not divisible by $P_1$ and so we have $P_i\not\in (P_1)$. This establishes $(P_1)\subsetneq \mathcal{I}(V)$ and therefore $(P_1)\subsetneq \mathfrak{p}_0$. But now
$(0)\subsetneq (P_1) \subsetneq \mathfrak{p}_0\subsetneq \mathfrak{p}_1\subsetneq\dots\subsetneq \mathfrak{p}_{m-1}$
is a chain of $m+2$ prime ideals in $\R[x_1,\dots,x_m]$ (note that $(P_1)$ is a prime ideal since $P_1$ is irreducible). However, $\dim \R[x_1,\dots,x_m]=m$ (see, for example, \cite[Theorem 14.98]{goertz-wedhorn} or \cite[Theorem A, p. 221]{eisenbud}), and therefore any nested chain of prime ideals in $\R[x_1,\dots,x_m]$ has length at most $m+1$. This is a contradiction. Hence $\dim V\leq m-2$.
\end{proof}

Finally, let us prove Fact \ref{fact-components-finite}. We will deduce this fact from a theorem of Milnor \cite{milnor} bounding the sum of the Betti numbers of a real algebraic set (a similar theorem was independently proved by Thom \cite{thom}). The deduction uses an argument of Petersen \cite{mathoverflow}.

\begin{proof}[Proof of Fact \ref{fact-components-finite}]
Let the set $U\su \R^d$ be given as
\[U=\lbrace x\in \R^d \mid (\sgn Q_1(x),\dots,\sgn Q_\l(x))\in S\rbrace\]
for some finite list of polynomials $Q_1,\dots,Q_\l\in \R[x_1,\dots,x_d]$ and some subset $S\su \lbrace +,-,0\rbrace^\l$. We may assume without loss of generality that all the polynomials $Q_1,\dots,Q_\l$ are non-zero. Indeed, if some of these polynomials are zero, we only need to consider those $\l$-tuples in $S$ that have a zero in position $j$ for all the $j$ with $Q_j=0$. We can then ignore all the polynomials $Q_j$ with $Q_j=0$ and delete the zeros in the corresponding positions in all $\l$-tuples in $S$. This does not change the set $\lbrace x\in \R^d \mid (\sgn Q_1(x),\dots,\sgn Q_\l(x))\in S\rbrace$. So let us from now on assume that $Q_j\neq 0$ for all $j=1\dots,\l$.

Recall that $U$ is open and that we need to show that the open set 
\begin{multline}\label{eq-set-conn-comp1}
\lbrace x\in U \mid R_i(x)\neq 0\text{ for }i=1,\dots,k\rbrace\\
=\lbrace x\in \R^d \mid (\sgn Q_1(x),\dots,\sgn Q_\l(x))\in S\text{ and }R_i(x)\neq 0\text{ for }i=1,\dots,k\rbrace
\end{multline}
has only finitely many connected components.

We claim that each connected component of the set (\ref{eq-set-conn-comp1}) contains at least one connected component of the open set
\begin{equation}\label{eq-set-conn-comp2}
\lbrace x\in \R^d \mid Q_j(x)\neq 0\text{ for }j=1,\dots,\l\text{ and }R_i(x)\neq 0\text{ for }i=1,\dots,k\rbrace.
\end{equation}
Indeed, any connected component $C$ of the open set (\ref{eq-set-conn-comp1}) is itself a (non-empty) open set, and therefore by Fact \ref{fact-poly-vanishing} contains a point $x$ with $Q_j(x)\neq 0$ for $j=1,\dots,\l$. As $x$ lies in the set (\ref{eq-set-conn-comp1}), we also have $R_i(x)\neq 0$ for $i=1,\dots,k$ and $(\sgn Q_1(x),\dots,\sgn Q_\l(x))\in S$. Now, the point $x$ is contained in some connected component $C'$ of the set (\ref{eq-set-conn-comp2}). Note that for every point $x'\in C'$ we have $\sgn Q_j(x')=\sgn Q_j(x)$ for $j=1,\dots,\l$. This implies that $(\sgn Q_1(x'),\dots,\sgn Q_\l(x'))\in S$ for all $x'\in C'$. Therefore we can conclude that each $x'\in C'$ is contained in the set (\ref{eq-set-conn-comp1}), and so $C'$ is a connected subset of the set (\ref{eq-set-conn-comp1}). Thus, $C'$ must be a subset of one of the connected components of the set (\ref{eq-set-conn-comp1}). As $x\in C'\cap C$, this connected component must be $C$, so $C'\su C$. This shows that every connected component of the set (\ref{eq-set-conn-comp1}) contains at least one connected component of the set (\ref{eq-set-conn-comp2}).

Thus, it suffices to prove that the set (\ref{eq-set-conn-comp2}) has only finitely many connected components. Note that the set (\ref{eq-set-conn-comp2}) can also be described as
\[ \lbrace x\in \R^d \mid Q_1(x)\dotsm Q_\l(x)\cdot R_1(x)\dotsm R_k(x)\neq 0\rbrace\]
and this set is homeomorphic to the set
\begin{equation}\label{eq-set-conn-comp3}
\lbrace (x,y)\in \R^d \times \R\mid Q_1(x)\dotsm Q_\l(x)\cdot R_1(x)\dotsm R_k(x)\cdot y=1\rbrace
\end{equation}
(this is an idea due to Peterson \cite{mathoverflow}). But the number of connected components of the set (\ref{eq-set-conn-comp3}) equals the $0$-th Betti number of this set. By a Theorem of Milnor \cite{milnor} the sum of all Betti numbers of the set (\ref{eq-set-conn-comp3}) is bounded by $s(2s-1)^d$ where $s=\deg Q_1+\dots+\deg Q_\l+\deg R_1+\dots+\deg R_k+1$ (see also \cite[Theorem 11.5.3]{real-ag-book}, and note that a similar theorem was proved independently by Thom \cite{thom}). Since all Betti numbers are non-negative integers, this implies that the $0$-th Betti number of the set (\ref{eq-set-conn-comp3}) is at most $s(2s-1)^d$ and is therefore in particular finite. Thus, the number of connected components of the set (\ref{eq-set-conn-comp3}) is finite. Since this set is homeomorphic to the set (\ref{eq-set-conn-comp2}), we have proved Fact \ref{fact-components-finite}.
\end{proof}

\subsection{Proof of Fact \ref{fact-cutting-dimension}}
\label{sect-a-cutting-dim}

For proving Fact \ref{fact-cutting-dimension}, we will use the following easy observation.

\begin{fact}\label{fact-ideal-equiv} Let $a=(a_1,\dots,a_m)\in \R^m$ and let $I\su \R[x_1,\dots,x_m]$ be any ideal. Then we have $a\in \mathcal{Z}(I)$ if and only if $I\su (x_1-a_1,\dots, x_m-a_m)$
\end{fact}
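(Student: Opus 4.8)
The plan is to recognize the ideal $(x_1-a_1,\dots,x_m-a_m)$ as the kernel of the evaluation homomorphism at $a$, and then simply unwind both sides of the claimed equivalence. First I would introduce the ring homomorphism $\mathrm{ev}_a\colon \R[x_1,\dots,x_m]\to\R$ given by $Q\mapsto Q(a)$. By the definition of the zero-set, $a\in\mathcal{Z}(I)$ means precisely that $Q(a)=0$ for every $Q\in I$, i.e.\ that $I\su\ker(\mathrm{ev}_a)$. Hence the statement reduces to proving the single identity $\ker(\mathrm{ev}_a)=(x_1-a_1,\dots,x_m-a_m)$.

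For the inclusion $(x_1-a_1,\dots,x_m-a_m)\su\ker(\mathrm{ev}_a)$, I would note that each generator $x_i-a_i$ evaluates to $0$ at the point $a$, and that $\ker(\mathrm{ev}_a)$ is an ideal, so it must contain the whole ideal generated by the $x_i-a_i$.

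For the reverse inclusion, take any $Q\in\R[x_1,\dots,x_m]$ with $Q(a)=0$. Substituting $x_i=(x_i-a_i)+a_i$ and expanding, one may rewrite $Q$ as a polynomial in the shifted variables $y_i=x_i-a_i$; since the constant term of this expansion equals $Q(a)=0$, every monomial that actually occurs has positive degree in at least one $y_i$ and is therefore divisible by the corresponding $x_i-a_i$. Collecting terms gives an expression $Q=\sum_{i=1}^m (x_i-a_i)R_i$ with suitable $R_i\in\R[x_1,\dots,x_m]$, which shows $Q\in(x_1-a_1,\dots,x_m-a_m)$. Combining the two inclusions yields $\ker(\mathrm{ev}_a)=(x_1-a_1,\dots,x_m-a_m)$, and the equivalence follows.

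There is no genuine obstacle here; the argument is completely routine. The only point worth stating carefully is the legitimacy of the change of variables $y_i=x_i-a_i$ (it is an automorphism of $\R[x_1,\dots,x_m]$, so expanding $Q$ in the $y_i$ is harmless) together with the elementary observation that a polynomial in the $y_i$ lies in the ideal $(y_1,\dots,y_m)$ exactly when it has no constant term.
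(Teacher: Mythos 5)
Your proof is correct. It is worth noting, though, that you and the paper justify the key identity in different ways. Both arguments come down to showing that the ideal of all polynomials vanishing at $a$ is exactly $(x_1-a_1,\dots,x_m-a_m)$: you prove the nontrivial inclusion directly, by the change of variables $y_i=x_i-a_i$ and the observation that a polynomial with vanishing constant term in the shifted variables lies in $(y_1,\dots,y_m)$; the paper instead sidesteps this computation by noting that the vanishing ideal is a \emph{proper} ideal containing the \emph{maximal} ideal $(x_1-a_1,\dots,x_m-a_m)$, so the two must coincide. The paper's route is shorter but presupposes maximality of $(x_1-a_1,\dots,x_m-a_m)$ (which one usually sees either via the quotient being $\R$ or via exactly the expansion you carried out), whereas your argument is fully self-contained and elementary, at the cost of a few more lines. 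Either way the equivalence with $a\in\mathcal{Z}(I)$ then follows by unwinding the definition, as you do.
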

\begin{proof}
Note that any polynomial $Q\in (x_1-a_1,\dots, x_m-a_m)$ vanishes on the point $a$. Furthermore, the set of polynomials $Q\in \R[x_1,\dots,x_m]$ satisfying $Q(a)=0$ is a proper ideal in $\R[x_1,\dots,x_m]$. Since the ideal $(x_1-a_1,\dots, x_m-a_m)$ is maximal, this implies that the set of $Q\in \R[x_1,\dots,x_m]$ with $Q(a)=0$ equals the ideal $(x_1-a_1,\dots, x_m-a_m)$. Hence, we have $I\su (x_1-a_1,\dots, x_m-a_m)$ if and only if every polynomial $Q\in I$ satisfies $Q(a)=0$, and this is by definition equivalent to $a\in \mathcal{Z}(I)$.
\end{proof}

Now, we will deduce Fact \ref{fact-cutting-dimension} from a more general scheme-theoretic statement, namely \cite[Theorem 11.4.1]{ravi}.

\begin{proof}[Proof of Fact \ref{fact-cutting-dimension}]
Let $I=\mathcal{I}(V)\su \R[x_1,\dots,x_n,y_1,\dots,y_n]$. Then we have $\dim \R[x_1,\dots,x_n,y_1,\dots,y_n]/I=\dim V\leq 2n-2$. Let $\mathfrak{p}_1,\dots,\mathfrak{p}_\l$ be the minimal prime ideals in $\R[x_1,\dots,x_n,y_1,\dots,y_n]$ that contain $I$ (there are only finitely many by \cite[Exercise 1.2]{eisenbud}, see also \cite[Proposition 3.6.15]{ravi} as these prime ideals correspond to the irreducible components of the scheme $\operatorname{Spec} \R[x_1,\dots,x_n,y_1,\dots,y_n]/I$). For each $i=1,\dots,\l$ we have
\begin{equation}\label{eq-dim-pi}
\dim \R[x_1,\dots,x_n,y_1,\dots,y_n]/\mathfrak{p}_i\leq \dim \R[x_1,\dots,x_n,y_1,\dots,y_n]/I\leq 2n-2.
\end{equation}
For $i=1,\dots,\l$, let $V_i=\mathcal{Z}(\mathfrak{p}_i)$.

\begin{claim}\label{claim-V-union}$V=V_1\cup\dots\cup V_\l$.
\end{claim}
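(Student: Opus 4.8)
The plan is to prove the two inclusions of Claim \ref{claim-V-union} separately, both via Fact \ref{fact-ideal-equiv}.

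The inclusion $V_1\cup\dots\cup V_\l\su V$ is immediate: each $\mathfrak{p}_i$ was chosen as a prime ideal containing $I$, so $V_i=\mathcal{Z}(\mathfrak{p}_i)\su\mathcal{Z}(I)=V$ by the obvious monotonicity of $\mathcal{Z}$, and taking the union over $i=1,\dots,\l$ gives the claim.

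For the reverse inclusion $V\su V_1\cup\dots\cup V_\l$, I would fix a point $a\in V=\mathcal{Z}(I)$ and let $\mathfrak{m}_a\su\R[x_1,\dots,x_n,y_1,\dots,y_n]$ be the maximal ideal consisting of all polynomials vanishing at $a$. By Fact \ref{fact-ideal-equiv}, the condition $a\in\mathcal{Z}(I)$ is equivalent to $I\su\mathfrak{m}_a$. Now $\mathfrak{m}_a$ is in particular a prime ideal, and $\mathfrak{p}_1,\dots,\mathfrak{p}_\l$ are by definition all the minimal primes over $I$; I claim $\mathfrak{m}_a$ must contain one of them. Indeed, $\sqrt{I}=\mathfrak{p}_1\cap\dots\cap\mathfrak{p}_\l$ (a standard commutative-algebra fact: the radical of an ideal is the intersection of the primes containing it, every such prime contains a minimal one, and in the Noetherian ring $\R[x_1,\dots,x_n,y_1,\dots,y_n]$ there are only finitely many minimal primes over $I$, cf.\ \cite{eisenbud} or \cite{ravi}); since $I\su\mathfrak{m}_a$ and $\mathfrak{m}_a$ is radical, we get $\mathfrak{p}_1\cap\dots\cap\mathfrak{p}_\l=\sqrt{I}\su\mathfrak{m}_a$, and as $\mathfrak{m}_a$ is prime this forces $\mathfrak{p}_i\su\mathfrak{m}_a$ for some $i$. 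Applying Fact \ref{fact-ideal-equiv} once more, now to the ideal $\mathfrak{p}_i$, yields $a\in\mathcal{Z}(\mathfrak{p}_i)=V_i\su V_1\cup\dots\cup V_\l$, as desired.

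I do not anticipate any genuine obstacle in this claim; the only step that deserves a moment of care is the commutative-algebra input that $\sqrt{I}$ equals the intersection of the minimal primes $\mathfrak{p}_1,\dots,\mathfrak{p}_\l$ over $I$ (equivalently, that every prime ideal containing $I$ contains one of the $\mathfrak{p}_i$), which follows from the Noetherianity of $\R[x_1,\dots,x_n,y_1,\dots,y_n]$ and is available in the references already cited in this excerpt.
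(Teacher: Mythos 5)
Your proof is correct and takes essentially the same route as the paper: the easy inclusion follows from monotonicity of $\mathcal{Z}$, and the reverse inclusion from Fact \ref{fact-ideal-equiv} applied at the maximal ideal of the given point together with the fact that this prime must contain one of the minimal primes $\mathfrak{p}_i$ (the paper invokes the minimality of the $\mathfrak{p}_i$ directly, while you go through $\sqrt{I}=\mathfrak{p}_1\cap\dots\cap\mathfrak{p}_\ell$, which is the same standard fact). The only micro-step you elide is that $\mathcal{Z}(\mathcal{I}(V))=V$ uses that $V$ is a real algebraic set, i.e.\ $V=\mathcal{Z}(J)$ for some ideal $J\su\mathcal{I}(V)$; the paper phrases the easy inclusion via such a $J$ precisely to make this explicit.
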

\begin{proof}As $V$ is a real algebraic set, we have $V=\mathcal{Z}(J)$ for some ideal $J\su I=\mathcal{I}(V)$. Then for each $i=1,\dots,\l$ we have $J\su I\su \mathfrak{p}_i$, and therefore $V_i=\mathcal{Z}(\mathfrak{p}_i)\su \mathcal{Z}(J)=V$. Thus, $V_1\cup\dots\cup V_\l\su V$.

For the opposite inclusion, consider any point $(a,b)=(a_1,\dots, a_n,b_1,\dots,b_n)\in V\su \R^n\times \R^n$. Since all polynomials $Q\in I=\mathcal{I}(V)$ vanish on $V$, we have $Q(a,b)=0$ for all $Q\in I$, and therefore $(a,b)\in \mathcal{Z}(I)$. By Fact \ref{fact-ideal-equiv}, this implies $I\su (x_1-a_1,\dots, x_n-a_n, y_1-b_1,\dots,y_n-b_n)$. On the other hand, note that the ideal $(x_1-a_1,\dots, x_n-a_n, y_1-b_1,\dots,y_n-b_n)$ is maximal and therefore prime. Thus, by the choice of $\mathfrak{p}_1,\dots,\mathfrak{p}_\l$ we must have $\mathfrak{p}_i\su (x_1-a_1,\dots, x_n-a_n, y_1-b_1,\dots,y_n-b_n)$ for some $1\leq i\leq \l$. So, again by Fact \ref{fact-ideal-equiv}, we obtain $(a,b)\in \mathcal{Z}(\mathfrak{p}_i)=V_i$. This proves that $V\su V_1\cup\dots\cup V_\l$.
\end{proof}

In light of Claim \ref{claim-V-union}, it is sufficient to prove the following claim.

\begin{claim}\label{claim-i-fixed}For each $i=1,\dots,\l$, there is a dense open set $U_i\su \R^n$ such that each point $a\in U_i$ satisfies the following condition: The set $\lbrace b\in \R^n\mid (a,b)\in V_i\rbrace$ is a real algebraic set of dimension at most $n-2$.
\end{claim}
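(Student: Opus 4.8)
The plan is to deduce Claim~\ref{claim-i-fixed} from the scheme-theoretic fiber-dimension theorem \cite[Theorem 11.4.1]{ravi}, applied to the projection onto the $x$-coordinates, with the remaining work being the translation of its conclusion back into the language of real algebraic sets. Fix $i$ and set $A=\R[x_1,\dots,x_n,y_1,\dots,y_n]/\mathfrak{p}_i$; since $\mathfrak{p}_i$ is prime this is an integral domain, and $\dim A\le 2n-2$ by (\ref{eq-dim-pi}). Let $f\colon\operatorname{Spec}A\to\mathbb{A}^n_{\R}=\operatorname{Spec}\R[x_1,\dots,x_n]$ be the morphism induced by the inclusion $\R[x_1,\dots,x_n]\hookrightarrow\R[x_1,\dots,x_n,y_1,\dots,y_n]$, and put $\mathfrak{q}=\mathfrak{p}_i\cap\R[x_1,\dots,x_n]$, a prime ideal.

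First I would record two observations. For $a=(a_1,\dots,a_n)\in\R^n$, the scheme-theoretic fiber of $f$ over the closed point $a$ is $\operatorname{Spec}(\R[y_1,\dots,y_n]/\mathfrak{p}_i(a))$, where $\mathfrak{p}_i(a)=\{Q(a,y_1,\dots,y_n)\mid Q\in\mathfrak{p}_i\}$ is the image of $\mathfrak{p}_i$ under the substitution $x\mapsto a$; moreover $\mathcal{Z}(\mathfrak{p}_i(a))=\{b\in\R^n\mid(a,b)\in V_i\}$. Since $\mathfrak{p}_i(a)\su\mathcal{I}(\mathcal{Z}(\mathfrak{p}_i(a)))$ and the Krull dimension of a quotient only decreases when the ideal is enlarged, we obtain
\[\dim\{b\in\R^n\mid(a,b)\in V_i\}=\dim\R[y_1,\dots,y_n]/\mathcal{I}(\mathcal{Z}(\mathfrak{p}_i(a)))\le\dim\R[y_1,\dots,y_n]/\mathfrak{p}_i(a)=\dim f^{-1}(a),\]
so it suffices to produce a dense open $U_i\su\R^n$ with $\dim f^{-1}(a)\le n-2$ for all $a\in U_i$. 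Secondly, if $a\notin\mathcal{Z}(\mathfrak{q})$ then some $Q\in\mathfrak{q}\su\mathfrak{p}_i$ has $Q(a)\ne 0$; as $Q$ is independent of $y$ and vanishes on $V_i=\mathcal{Z}(\mathfrak{p}_i)$, no point of the form $(a,b)$ lies in $V_i$, so $f^{-1}(a)=\emptyset$.

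Then I would split into two cases. If $\mathfrak{q}\ne(0)$, pick a nonzero $Q\in\mathfrak{q}$; then $\mathcal{Z}(\mathfrak{q})\su\mathcal{Z}(Q)$ has empty interior by Fact~\ref{fact-poly-vanishing-one}, so $U_i:=\R^n\sm\mathcal{Z}(\mathfrak{q})$ is dense and open, and by the second observation every fiber over $U_i$ is empty. If $\mathfrak{q}=(0)$, then $f$ is dominant and $\dim\operatorname{Spec}A-\dim\mathbb{A}^n_{\R}\le(2n-2)-n=n-2$, so by \cite[Theorem 11.4.1]{ravi} there is a dense Zariski-open $\mathcal{U}\su\mathbb{A}^n_{\R}$ over every point of which the fiber of $f$ has dimension at most $n-2$; its complement is a proper closed subscheme, hence of the form $\mathcal{Z}(J)$ for some ideal $J$ containing a nonzero polynomial $Q_i$, and I take $U_i:=\{a\in\R^n\mid Q_i(a)\ne0\}$, which is dense and open and contained in $\mathcal{U}$, so $\dim f^{-1}(a)\le n-2$ for all $a\in U_i$. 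In either case the first observation gives $\dim\{b\in\R^n\mid(a,b)\in V_i\}\le n-2$ for all $a\in U_i$, proving the claim; Fact~\ref{fact-cutting-dimension} then follows via Claim~\ref{claim-V-union} by taking $U=U_1\cap\dots\cap U_\l$.

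The main difficulty lies entirely in the interface between the two languages: one must verify that the fiber of $f$ over a real closed point $a$ is governed by the ideal $\mathfrak{p}_i(a)$ that also defines $\{b\mid(a,b)\in V_i\}$, pass from $\dim\R[y]/\mathfrak{p}_i(a)$ down to the possibly smaller $\dim\R[y]/\mathcal{I}(\mathcal{Z}(\mathfrak{p}_i(a)))$, and separately dispose of the case in which the image of $f$ fails to be Zariski-dense in $\mathbb{A}^n_{\R}$ — there the fiber-dimension bound from the theorem is vacuous, but the fibers over real points outside $\mathcal{Z}(\mathfrak{q})$ are simply empty, which is good enough. Once these points are settled, choosing $U_i$ as the nonvanishing locus of a single polynomial is routine.
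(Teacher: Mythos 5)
Your proof is correct and takes essentially the same route as the paper: apply \cite[Theorem 11.4.1]{ravi} to the projection $\operatorname{Spec}\R[x_1,\dots,x_n,y_1,\dots,y_n]/\mathfrak{p}_i\to\operatorname{Spec}\R[x_1,\dots,x_n]$, identify the fiber over a real point $a$ with $\operatorname{Spec}\R[y_1,\dots,y_n]/\mathfrak{p}_i(a)$ (the paper's $\mathfrak{t}_i$), and bound the real dimension of $\lbrace b\mid (a,b)\in V_i\rbrace=\mathcal{Z}(\mathfrak{p}_i(a))$ via $\mathfrak{p}_i(a)\su\mathcal{I}(\mathcal{Z}(\mathfrak{p}_i(a)))$. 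The only deviations are minor: your explicit case split on whether $\mathfrak{q}=\mathfrak{p}_i\cap\R[x_1,\dots,x_n]$ is zero treats the non-dominant case (empty fibers over a dense open set) a bit more carefully than the paper, and taking $U_i$ to be the nonvanishing locus of a single polynomial is a harmless variant of the paper's choice $U_i=\R^n\sm\mathcal{Z}(J)$.
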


Let us postpone the proof of Claim \ref{claim-i-fixed} for a moment, and first finish the rest of the proof of Fact \ref{fact-cutting-dimension}. From Claim \ref{claim-i-fixed} we obtain dense open sets $U_1,\dots, U_\l\su \R^n$. Then $U= U_1\cap\dots\cap U_\l$ is also a dense open subset of $\R^n$ and for each $a\in U$ each of the sets $\lbrace b\in \R^n\mid (a,b)\in V_i\rbrace$ for $i=1,\dots,\l$ is a real algebraic set of dimension at most $n-2$. On the other hand, by Claim \ref{claim-V-union} we have
\[\lbrace b\in \R^n\mid (a,b)\in V\rbrace=\bigcup_{i=1}^{\l}\lbrace b\in \R^n\mid (a,b)\in V_i\rbrace.\]
Thus, for each $a\in U$, by Fact \ref{fact-dim-union}, $\lbrace b\in \R^n\mid (a,b)\in V\rbrace$ is also real algebraic set of dimension at most $n-2$. This finishes the proof of Fact \ref{fact-cutting-dimension} up to proving Claim \ref{claim-i-fixed}.
\end{proof}

\begin{proof}[Proof of Claim \ref{claim-i-fixed}]
Let us fix some $i\in \lbrace 1,\dots,\l\rbrace$. The scheme $\operatorname{Spec} \R[x_1,\dots,x_n,y_1,\dots,y_n]/\mathfrak{p}_i$ is an irreducible variety over $\R$, and by (\ref{eq-dim-pi}) its dimension is at most $2n-2$. Furthermore, $\operatorname{Spec} \R[x_1,\dots,x_n]$ is an irreducible variety over $\R$ of dimension $n$ (see \cite[Theorem 11.2.1]{ravi}). So by \cite[Theorem 11.4.1]{ravi} applied to the map $\pi: \operatorname{Spec} \R[x_1,\dots,x_n,y_1,\dots,y_n]/\mathfrak{p}_i\to \operatorname{Spec} \R[x_1,\dots,x_n]$ there exists a non-empty Zariski-open subset $U_i^{*}\su \operatorname{Spec} \R[x_1,\dots,x_n]$ such that for every $q\in U_i^*$ the fiber of $\pi$ over $q$ has dimension at most $(2n-2)-n=n-2$.

The Zariski-open subset $U_i^{*}\su \operatorname{Spec} \R[x_1,\dots,x_n]$ is given as the complement of the vanishing set $V(J)$ of some ideal $J\su \R[x_1,\dots,x_n]$. As $U_i^{*}$ is non-empty, we have $J\neq (0)$. Now define $U_i=\R^n\sm \mathcal{Z}(J)$. This is clearly an open subset of $\R^n$ and by Fact \ref{fact-poly-vanishing}, it is dense in $\R^n$: Indeed, choose any non-zero polynomial $Q\in J$. Then for any non-empty open subset $U'\su \R^n$, by Fact \ref{fact-poly-vanishing} there is a point $x\in U'$ such that $Q(x)\neq 0$. Hence $x\not\in  \mathcal{Z}(J)$ and therefore $x\in U'\cap U_i$. This establishes that the intersection $U'\cap U_i$ is non-empty for every non-empty open subset $U'\su \R^n$. Thus, $U_i=\R^n\sm \mathcal{Z}(J)$ is indeed a dense open subset of $\R^n$.

Let us fix any point $a=(a_1,\dots, a_n)\in U_i\su \R^n$. We need to show that the set $\lbrace b\in \R^n\mid (a,b)\in V_i\rbrace$ is a real algebraic set of dimension at most $n-2$. First, note that $(a_1,\dots, a_n)\in U_i$ means that $(a_1,\dots, a_n)\not\in \mathcal{Z}(J)$. By Fact \ref{fact-ideal-equiv} this means that $J\not\su (x_1-a_1,\dots, x_n-a_n)$. Therefore the point $q\in \operatorname{Spec} \R[x_1,\dots,x_n]$ corresponding to the prime ideal $(x_1-a_1,\dots, x_n-a_n)$ in $\R[x_1,\dots,x_n]$, does not lie in the vanishing set $V(J)$ of the ideal $J$. Thus, $q\in U_i^{*}$ and the fiber of $\pi$ over $q$ has dimension at most $n-2$. This fiber is given by
\begin{multline*}\operatorname{Spec}\left( \R[x_1,\dots,x_n,y_1,\dots,y_n]/\mathfrak{p}_i\otimes_{\R[x_1,\dots,x_n]}\R[x_1,\dots,x_n]/(x_1-a_1,\dots, x_n-a_n)\right)\\
=\operatorname{Spec} \R[x_1,\dots,x_n,y_1,\dots,y_n]/(\mathfrak{p}_i+(x_1-a_1,\dots, x_n-a_n))
\end{multline*}
(note that on the right-hand side, $(x_1-a_1,\dots, x_n-a_n)$ denotes the ideal generated by $x_1-a_1, \dots, x_n-a_n$ in the ring $\R[x_1,\dots,x_n,y_1,\dots,y_n]$). Thus, the ring
\begin{equation}\label{eq-ring-pi}
\R[x_1,\dots,x_n,y_1,\dots,y_n]/(\mathfrak{p}_i+(x_1-a_1,\dots, x_n-a_n))
\end{equation}
has dimension at most $n-2$.

Now, consider the surjective ring homomorphism $\theta: \R[x_1,\dots,x_n,y_1,\dots,y_n]\to \R[y_1,\dots,y_n]$ sending $x_j$ to $a_j$ for $j=1,\dots,n$. Let the ideal $\mathfrak{t}_i\su \R[y_1,\dots,y_n]$ be the image of the ideal $\mathfrak{p}_i\su \R[x_1,\dots,x_n,y_1,\dots,y_n]$ under $\theta$. In other words, $\mathfrak{t}_i\su \R[y_1,\dots,y_n]$ is the ideal obtained from $\mathfrak{p}_i$ when replacing every variable $x_j$ by $a_j$. Since the kernel of $\theta$ is the ideal $(x_1-a_1,\dots, x_n-a_n)$ in $\R[x_1,\dots,x_n,y_1,\dots,y_n]$, the preimage of $\mathfrak{t}_i$ under $\theta$ is $\mathfrak{p}_i+(x_1-a_1,\dots, x_n-a_n)$. Thus, $\theta$ induces an isomorphism of the ring in (\ref{eq-ring-pi}) and the ring $\R[y_1,\dots,y_n]/\mathfrak{t}_i$. In particular, we obtain $\dim \R[y_1,\dots,y_n]/\mathfrak{t}_i\leq n-2$.

Finally, let us turn to the set $\lbrace b\in \R^n\mid (a,b)\in V_i\rbrace$. As $V_i=\mathcal{Z}(\mathfrak{p}_i)$, this is the set of points $b=(b_1,\dots,b_n)\in \R^n$ such that $Q(a_1,\dots,a_n,b_1,\dots,b_n)=0$ for all $Q\in \mathfrak{p}_i$. But this is the same as the set of points $b=(b_1,\dots,b_n)\in \R^n$ such that $T(b_1,\dots,b_n)=0$ for all $T\in \mathfrak{t}_i$. Thus,
\[\lbrace b\in \R^n\mid (a,b)\in V_i\rbrace=\mathcal{Z}(\mathfrak{t}_i)\]
is a real algebraic set and its dimension is (using that $\mathfrak{t}_i\su \mathcal{I}(\mathcal{Z}(\mathfrak{t}_i))$)
\[\dim \R[y_1,\dots,y_n]/\mathcal{I}(\mathcal{Z}(\mathfrak{t}_i))\leq\dim \R[y_1,\dots,y_n]/\mathfrak{t}_i\leq n-2.\]
This finishes the proof of Claim \ref{claim-i-fixed}.
\end{proof}

\subsection{Proof of Fact \ref{fact-complement-variety}}
\label{sect-a-complement}

First, we will prove Fact \ref{fact-transversality} below using standard transversality arguments. Given smooth manifolds $X$ and $Y$, a smooth map $F: Y\to X$ is called \emph{transverse} to an embedded submanifold $M\su X$ if for every point $p\in F^{-1}(M)$ the vector spaces $T_{F(p)} M$ and $dF_p(T_p Y)$ together span the entire tangent space $T_{F(p)} X$ (see \cite[p. 143]{lee}). Note that in case $F$ is a smooth submersion (which means that the linear map $dF_p: T_p Y\to T_{F(p)} X$ is surjective for each $p\in Y$), the map $F$ is automatically transverse to every embedded submanifold $M\su X$. Also note that if $\dim M+\dim Y<\dim X$, then for every point $p\in Y$ we have
\[\dim T_{F(p)} M+\dim dF_p(T_p Y)\leq \dim T_{F(p)} M+\dim T_p Y= \dim M+\dim Y<\dim X=\dim T_{F(p)} X.\]
Thus, in the case $\dim M+\dim Y<\dim X$, the map $F$ is transverse to $M$ if and only if the preimage $F^{-1}(M)$ is empty (which means that $F(p)\not\in M$ for all $p\in Y$).

\begin{fact}\label{fact-transversality}
Let $X\su \R^m$ be a convex open subset, and let $M\su \R^m$ be an embedded smooth manifold of dimension at most $m-2$. Suppose that the set $X\sm M$ is open. Then the set $X\sm M$ is connected.
\end{fact}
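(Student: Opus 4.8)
The plan is to prove the stronger statement that $X\sm M$ is path-connected, which immediately gives connectedness. Fix two points $p,q\in X\sm M$ (there is nothing to prove if $X\sm M$ has fewer than two points). I will connect them by a path chosen from a smoothly parametrized family and then use a transversality/Sard argument to select a member of the family that avoids $M$. Concretely, for a control point $w\in X$ consider the quadratic B\'ezier path
\[\gamma_w(t)=(1-t)^2 p+2t(1-t)\,w+t^2 q,\qquad t\in[0,1].\]
For every $t\in[0,1]$ the coefficients $(1-t)^2,\ 2t(1-t),\ t^2$ are nonnegative and sum to $1$, so $\gamma_w(t)$ is a convex combination of $p$, $w$, $q$ and hence lies in $X$ by convexity; moreover $\gamma_w(0)=p$ and $\gamma_w(1)=q$.

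Next I would consider the smooth map $\Gamma\colon X\times(0,1)\to X$ given by $\Gamma(w,t)=\gamma_w(t)$. Its partial differential in the $w$-directions is $2t(1-t)$ times the $m\times m$ identity matrix, which is invertible for every $t\in(0,1)$; hence $\Gamma$ is a submersion, and in particular transverse to the embedded submanifold $M\su X$. Therefore $\Gamma^{-1}(M)$ is an embedded submanifold of $X\times(0,1)$ of dimension
\[\dim\bigl(X\times(0,1)\bigr)-\operatorname{codim}_X M=(m+1)-(m-\dim M)=\dim M+1\le m-1.\]
Let $\pi\colon X\times(0,1)\to X$ be the projection onto the first factor. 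Since $\pi|_{\Gamma^{-1}(M)}$ is a smooth map from a manifold of dimension at most $m-1$ into $X\su\R^m$, its image has Lebesgue measure zero in $\R^m$ (this is the parametric transversality theorem in disguise; equivalently, apply Sard's theorem). As $X$ is a nonempty open subset of $\R^m$, it has positive measure, so we may choose a point $w\in X$ with $w\notin\pi(\Gamma^{-1}(M))$; for this $w$ we have $\gamma_w(t)\notin M$ for all $t\in(0,1)$.

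Finally I would assemble the path: since $\gamma_w(0)=p\notin M$ and $\gamma_w(1)=q\notin M$ as well, the path $\gamma_w\colon[0,1]\to X$ lies entirely in $X\sm M$ and joins $p$ to $q$. Hence $X\sm M$ is path-connected, and therefore connected. The one point that needs a little care is the exact form of the genericity statement one invokes: I deliberately restrict the time variable to the open interval $(0,1)$ so that $\Gamma$ is defined on a boundaryless manifold and the submersion-preimage and Sard's-theorem steps apply cleanly, treating the two endpoints of the path separately. I do not anticipate a genuine obstacle here; the hypothesis $\dim M\le m-2$ (rather than merely $m-1$) is used precisely to make the projected bad set have dimension $<m$, which is exactly the observation recorded just before the statement.
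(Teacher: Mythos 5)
Your proof is correct, and it is a genuine (if close) variant of the paper's argument rather than a reproduction of it. Both proofs select a generic member of a smoothly parametrized family of paths between the two given points and exploit the same dimension count $\dim M+1\le m-1<m$; the difference lies in the family and the bookkeeping. The paper uses the parallel straight segments $t\mapsto (t,s)$ for $s\in B_\eps\subseteq\mathbb{R}^{m-1}$; since these do not pass through the given endpoints, it needs the hypothesis that $X\setminus M$ is open in order to place small balls around the endpoints inside $X\setminus M$ and splice in two short connecting legs, and it then quotes the parametric transversality theorem \cite[Theorem 6.35]{lee}. Your quadratic B\'ezier family has the endpoints $p,q$ built in for every control point $w$, so no endpoint surgery is needed; as a consequence you never use the openness of $X\setminus M$ and in fact prove the statement without that hypothesis, which is a mild strengthening. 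In place of citing parametric transversality you essentially rerun its proof in this special case: the evaluation map $\Gamma$ is a submersion, hence $\Gamma^{-1}(M)$ is an embedded submanifold of dimension $\dim M+1\le m-1$, and the image of its projection to the control-point space has measure zero, so a good $w$ exists; restricting the time parameter to $(0,1)$ and handling the endpoints separately, as you do, makes this clean. Both arguments use the convexity of $X$ for the same purpose, namely to keep every competing path inside $X$.
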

\begin{proof}
We will prove that the set $X\sm M$ is path connected, which implies that it is connected. So let us fix two distinct points $a,b\in X\sm M$. We need to show that there is a (continuous) path in $X\sm M$ connecting $a$ and $b$. Without loss of generality we may assume that $a=(0,0,\dots,0)\in \R^m$ and $b=(1,0,\dots,0)\in \R^m$ (otherwise we can apply an invertible affine linear transformation of $\R^m$ mapping $a$ and $b$ to these points).

As $X\sm M$ is open, there exists some $\eps>0$ such that the open balls of radius $\eps$ around $a$ and $b$ are both entirely contained in $X\sm M$. Let $B_\eps\su \R^{m-1}$ denote the open ball of radius $\eps$ around the origin in $\R^{m-1}$. Then, for every $s\in B_\eps$, the point $(0,s)\in \R\times \R^{m-1}=\R^m$ is contained in $X\sm M$ and furthermore the entire segment connecting $a=(0,0,\dots,0)$ and $(0,s)$ is also contained in $X\sm M$. Thus, there exists a path entirely contained in $X\sm M$ that connects $a$ and $(0,s)$. Similarly, there exists a path entirely contained in $X\sm M$ that connects $b=(1,0,\dots,0)$ and $(1,s)$.

For each $s\in B_\eps$, let $F_s:(0,1)\to \R^m$ be the map given by $F_s(t)=(t,s)\in \R\times \R^{m-1}=\R^m$. Our goal is to prove that for some $s\in B_\eps$ we have $F_s(t)=(t,s)\in X\sm M$ for all $t\in (0,1)$. This would yield a path entirely contained in $X\sm M$ that connects the points $(0,s)$ and $(1,s)$ (recall from above that these points are contained in $X\sm M$). As we already saw that there are paths in $X\sm M$ connecting $a$ and $(0,s)$ as well as connecting $b$ and $(1,s)$, this gives a path in $X\sm M$ connecting $a$ and $b$, as desired. Thus, it indeed suffices to prove that for some $s\in B_\eps$ we have $F_s(t)=(t,s)\in X\sm M$ for all $t\in (0,1)$.

For all $s\in B_\eps$, we have $(0,s)\in X$ and $(1,s)\in X$ and therefore by convexity of $X$ also $F_s(t)=(t,s)\in X$ for all $t\in (0,1)$. So we need to show that for some $s\in B_\eps$ we have $F_s(t)\not\in M$ for all $t\in (0,1)$.

Note that the map $F:(0,1)\times B_\eps\to \R^m$ given by $F(t,s)=F_s(t)=(t,s)$ is an smooth submersion and therefore transverse to $M\su \R^m$. Thus, by \cite[Theorem 6.35]{lee} there exists $s\in B_\eps$ such that the map $F_s:(0,1)\to \R^m$ is transverse to $M$. However, as $\dim M+\dim\, (0,1)\leq m-2+1<m$, this means for this $s\in B_\eps$ we have $F_s(t)\not\in M$ for all $t\in (0,1)$. This finishes the proof of Fact \ref{fact-transversality}.
\end{proof}

For a real algebraic set $V\su \R^m$ and $0\leq d\leq m$, a point $p\in V$ is  \emph{a non-singular point of $V$ in dimension $d$} if there exist $m-d$ polynomials $Q_1,\dots,Q_{m-d}\in \mathcal{I}(V)$ and an open neighborhood $U\su \R^m$ of $p$ such that $V\cap U= \mathcal{Z}((Q_1,\dots,Q_{m-d}))\cap U$ and such that the Jacobian matrix $\left(\partial_{x_j} Q_i(p)\right)_{i,j}$ has rank $m-d$ (see \cite[Proposition 3.3.10]{real-ag-book}). Note that if $p\in V$ is a non-singular point of $V$ in dimension $d$, then there exists an open subset $U'\su \R^m$ such that every point $p'\in V\cap U$ is a non-singular point of $V$ in dimension $d$. Furthermore, it follows from the implicit function theorem that the set of non-singular points of $V$ in dimension $d$ forms an embedded smooth manifold in $\R^m$ of dimension $d$ (see also \cite[Proposition 3.3.11]{real-ag-book}).

Now we are finally ready for the proof of Fact \ref{fact-complement-variety}.

\begin{proof}[Proof of Fact \ref{fact-complement-variety}] First, note that the set $U\sm V$ is clearly open, since $U$ is open and $V$ is closed (since $V$ is a real algebraic set). Furthermore, note that the statement is clearly true if $V=\emptyset$, so we may assume that $V$ is non-empty.

Let us suppose for contradiction that there exists a non-empty real algebraic set $V\su \R^m$ of dimension $d\leq m-2$ and a connected open set $U\su \R^m$ such that $U\sm V$ is not connected. Then let us choose such sets $U$ and $V$ with minimum dimension $d=\dim V$. This way, we may assume that $U\sm V'$ is connected for all real algebraic sets $V'$ of dimension $\dim V'<d$.

Let $M$ be the set of all non-singular points of $V$ in dimension $d$, and let $V'=V\sm M$. By \cite[Proposition 3.3.14]{real-ag-book} the set $V'$ is a real algebraic set in $\R^m$ of dimension $\dim V'<\dim V=d$. In particular, by our choice of $U$ and $V$, we obtain that $U\sm V'$ is connected. Let us define $U'=U\sm V'$, then $U'$ is connected and open (as $V'$ is a real algebraic set and therefore closed).

The set $M$ of all non-singular points of $V$ in dimension $d$ is an embedded smooth manifold in $\R^m$ of dimension $d\leq m-2$. Furthermore, note that for every open ball $B\su U'=U\sm V'$ we have $B\sm M=B\sm (M\cup V')=B\sm V$. Since $V$ is closed (it is a real algebraic set), we can conclude that $B\sm M=B\sm V$ is open for every open ball $B\su U'$. Thus, by Fact \ref{fact-transversality}, for every open ball $B\su U'$ the set $B\sm M$ is connected.

Note that $U'\sm M=(U\sm V')\sm M=U\sm (V'\cup M)=U\sm V$ and recall that we assumed that this open set is not connected. Hence there exist disjoint non-empty open sets $U_1,U_2\su \R^m$ such $U'\sm M=U_1\cup U_2$.

Now let us define open subsets $T_1, T_2\su U'\su \R^m$ as follows: Let $T_1$ be the set of all those points $x\in \R^m$ that are contained in some open ball $B\su U'\cap (U_1\cup M)$. In other words, $T_1$ is the interior of the set $U'\cap (U_1\cup M)$. Similarly, let $T_2$ be the set of all those points $x\in \R^m$ that are contained in some open ball $B\su U'\cap (U_2\cup M)$.

It is clear from their definitions that the sets $T_1$ and $T_2$ are open subsets of $\R^m$ and that $T_1, T_2\su U'$. We claim that $T_1\cup T_2=U'$. Indeed, fix any point $x\in U'$ and consider any open ball $B\su U'$ containing $x$ (recall that $U'$ is an open set). We saw above that the set $B\sm M$ is connected. However, observe that
\[B\sm M=B\cap (U'\sm M)=B\cap (U_1\cup U_2)=(B\cap U_1)\cup (B\cap U_2).\]
As $B\cap U_1$ and $B\cap U_2$ are disjoint open sets (since $U_1$ and $U_2$ are disjoint open sets), this implies that $B\cap U_1=\emptyset$ or $B\cap U_2=\emptyset$. Let us assume without loss of generality that $B\cap U_1=\emptyset$, then $B\sm M\su B\cap U_2\su U_2$ and hence $B\su U_2\cup M$. As we also have $B\su U'$, this implies $B\su U'\cap (U_2\cup M)$ and therefore $x\in T_2$. This shows that $U'\su T_1\cup T_2$ and therefore $T_1\cup T_2=U'$.

Next, we claim that the sets $T_1$ and $T_2$ are disjoint. Suppose there exists a point $x\in T_1\cap T_2$. Then there are open balls $B_1\su U'\cap (U_1\cup M)$ and $B_2\su U'\cap (U_2\cup M)$ with $x\in B_1\cap B_2$. Now,
$B_1\su U_1\cup M$ and $B_2\su U_2\cup M$, and since $U_1\cap U_2=\emptyset$, this implies $B_1\cap B_2\su (U_1\cap U_2)\cup M=M\su V$. Thus, every polynomial $Q\in \mathcal{I}(V)$ vanishes on the entire set $B_1\cap B_2$. On the other hand, $B_1\cap B_2$ is a non-empty open set (since $x\in B_1\cap B_2$), and so by Fact \ref{fact-poly-vanishing} the only polynomial vanishing on all of $B_1\cap B_2$ is the zero-polynomial. Thus, we can conclude that $\mathcal{I}(V)=(0)$. But then $\dim V=\dim \R[x_1,\dots,x_m]=m$ (see \cite[Theorem 14.98]{goertz-wedhorn} or \cite[Theorem A, p. 221]{eisenbud}), which contradicts our assumption that $\dim V\leq m-2$. Hence there cannot exist a point $x\in T_1\cap T_2$ and consequently the sets $T_1$ and $T_2$ are disjoint.

Finally, we claim that the sets $T_1$ and $T_2$ are non-empty. Indeed, $U_1$ is a non-empty open set and therefore there exists an open ball $B\su U_1$. Note that we have $B\su U_1\su U'\sm M\su U'$ and $B\su U_1\cup M$ and therefore $B\su U'\cap (U_1\cup M)$. Thus, every point $x\in B$ is contained in $T_1$. This establishes that $T_1$ is non-empty, and it can be shown in the same way that $T_2$ is non-empty.

All in all, we have proved that $T_1$ and $T_2$ are disjoint non-empty open subsets of $\R^m$ with $T_1\cup T_2= U'$. But this contradicts $U'$ being connected. This contradiction finishes the proof of Fact \ref{fact-complement-variety}.
\end{proof}

\subsection{Proof of Lemma \ref{lemma-matrix}}
\label{sect-a-lemma-matrix}

Since the $(\l\times d)$-matrix $A(x_0)$ has rank $\l$, it has $\l$ linearly independent columns. Upon reordering the columns of $A$, we may assume without loss of generality that these are the first $\l$ columns. Then the $(\l\times \l)$-matrix formed by the first $\l$ columns of $A(x_0)$ has a non-zero determinant. On the other hand, the determinant of the $(\l\times \l)$-matrix formed by the first $\l$ columns of $A(x)$ is a smooth function of $x\in U$ (as all the coefficients are smooth functions). Thus, there exists an open neighborhood $U'\in U$ of $x_0$ such that for all $x\in U'$ the $(\l\times \l)$-matrix formed by the first $\l$ columns of $A(x)$ is non-singular. In particular, for all $x\in U'$ the matrix $A(x)$ has rank $\l$.

Furthermore, we can construct the desired vector field $w:U'\to \R^d$ as follows. For all $i\in \lbrace \l+1,\dots,d\rbrace$ and all $x\in U'$, define the $i$-th coordinate $w_i(x)$ of $w(x)$ to be $1$. If we denote the coordinates of $A(x)$ by $a_{i,j}(x)$ for $1\leq i\leq \l$ and $1\leq j\leq d$, then the condition $A(x)w(x)=0$ reads
\[
\begin{pmatrix}
a_{1,1}(x)&\dots &a_{1,\l}(x)& a_{1,\l+1}(x)&\dots &a_{1,d}(x)\\
\vdots& &\vdots& \vdots& &\vdots\\
a_{\l,1}(x)&\dots &a_{\l,\l}(x)& a_{\l,\l+1}(x)&\dots &a_{\l,d}(x)
\end{pmatrix}
\begin{pmatrix}
w_1(x)\\
\vdots\\
w_\l(x)\\
1\\
\vdots\\
1
\end{pmatrix}
=
\begin{pmatrix}
0\\
\vdots\\
0
\end{pmatrix},
\]
which is equivalent to
\[
\begin{pmatrix}
a_{1,1}(x)&\dots &a_{1,\l}(x)\\
\vdots& &\vdots\\
a_{\l,1}(x)&\dots &a_{\l,\l}(x)
\end{pmatrix}
\begin{pmatrix}
w_1(x)\\
\vdots\\
w_\l(x)
\end{pmatrix}
=
-\begin{pmatrix}
a_{1,\l+1}(x)+\dots +a_{1,d}(x)\\
\vdots\\
a_{\l,\l+1}(x)+\dots +a_{\l,d}(x)
\end{pmatrix}.
\]
Recall that for all $x\in U'$, the matrix on the left-hand side is non-singular. Thus, we can define the remaining coordinates $w_1(x),\dots,w_\l(x)$ by the equation
\[\begin{pmatrix}
w_1(x)\\
\vdots\\
w_\l(x)
\end{pmatrix}
=
-\begin{pmatrix}
a_{1,1}(x)&\dots &a_{1,\l}(x)\\
\vdots& &\vdots\\
a_{\l,1}(x)&\dots &a_{\l,\l}(x)
\end{pmatrix}^{-1}
\begin{pmatrix}
a_{1,\l+1}(x)+\dots +a_{1,d}(x)\\
\vdots\\
a_{\l,\l+1}(x)+\dots +a_{\l,d}(x)
\end{pmatrix}.
\]
The inverse matrix on the right-hand side can be computed from the determinant and the adjugate matrix. From this description it is clear that all the coefficients of this inverse matrix are smooth functions of $x\in U'$. Therefore we can conclude that $w_1(x),\dots,w_\l(x)$ are also smooth functions of $x\in U'$. All in all we defined a smooth vector field $w:U'\to \R^d$ with $A(x)w(x)=0$ for each $x\in U'$.

It remains to check that $w(x)\neq 0$ for all $x\in U'$. However, this is clear since $\l<d$ and we defined $w_i(x)=1$ for all $i\in \lbrace \l+1,\dots,d\rbrace$ and all $x\in U'$.

\end{document}